\documentclass[leqno,11pt]{amsart}
\usepackage{amssymb, amsmath}
\usepackage{amssymb,amsfonts}
\usepackage{amsmath,latexsym}
\usepackage{pdfsync}
\usepackage{bbm}

\usepackage{color}
 \setlength{\oddsidemargin}{0mm}
\setlength{\evensidemargin}{0mm} \setlength{\topmargin}{-15mm}
\setlength{\textheight}{220mm} \setlength{\textwidth}{155mm}
%%%%%%%%%%%%%%%%%%%%%%%%%%%%%%%%%%
%%%%%%%%%%%%%%%%%%%%%%%%%%%%%%%%%%

\newtheorem{theorem}{Theorem}[section]
\newtheorem{lemma}[theorem]{Lemma}

%\newcommand{\fint}{\int}

% \renewcommand{\theequation}{\thesection.\arabic{equation}}
%\renewcommand{\thetheorem}{\thesection.\arabic{theo}}
%%%-----------------Weixi's Macro

\newcommand{\N}{\mathbb{N}}
\newcommand{\Z}{\mathbb{Z}}

\newcommand{\p}{\partial}

\newcommand{\beqq}{\begin{eqnarray}}
\newcommand{\enqq}{\end{eqnarray}}

\newcommand{\enn}{\end{equation}}
\newcommand{\bef}{\begin{proof}}
\newcommand{\enf}{\end{proof}}
%%%%%%%%%%%%%%%%%%%%%%%%%%%%%%%%%%%%%%%%%%%%%%%%%%%%%%%%%%%%%%%

%%%My setting%%%%%%%%%%%%%%%%%%%%%%%%%%%%%%%%%%%%%%%%%%%%%%%%%%%

%%%ABREVIATIONS%%%%%%
\let\al=\alpha
\let\b=\beta

\let\d=\delta
\let\e=\varepsilon

\let\la=\lambda

\let\f=\frac

\let\De=\Delta

\let\wh=\widehat

\let\th=\theta
\let\pa=\partial

%%LETTRES RONDES%%

\def\cF{{\cal F}}

\def\cM{{\mathcal M}}

\def\cF{{\mathcal F}}

\def\sh{\sqrt{\hbar}}
\def\rv{\rm v}
\def\ga{\gamma}
\def\vf{\varphi}

%%MACROS SANS ARGUMENTS%%%%%%%%%%%%%%%%%

\def\h{{\rm h}}
\def\v{{\rm v}}

\def\R{\mathbf R}
\def\Z{\mathbf Z}

%%%%%%%%%%%%%%%%%
\newcommand{\andf}{\quad\hbox{and}\quad}
\newcommand{\with}{\quad\hbox{with}\quad}
%%%%%%%%%%%%%%%%%%%%%%%%%%%%%%%%%%

\def\Supp{\mathop{\rm Supp}\nolimits\ }

\def\longformule#1#2{
\displaylines{ \qquad{#1} \hfill\cr \hfill {#2} \qquad\cr } }
\def\inte#1{
\displaystyle\mathop{#1\kern0pt}^\circ }
\newcommand{\w}[1]{\langle {#1} \rangle}
%%%%%%%%letter%%%%%%%%%%%%%%%%%%%

\def\no{\noindent}
\def\ff{\frak{f}}

\def\D{\langle D_x\rangle}
\def\t{\langle t\rangle}
\def\s{\langle s\rangle}

\def\eqdef{\buildrel\hbox{\footnotesize def}\over =}

\def\eqdefa{\buildrel\hbox{\footnotesize def}\over =}

\newcommand{\beq}{\begin{equation}}
\newcommand{\eeq}{\end{equation}}
\newcommand{\ben}{\begin{eqnarray}}
\newcommand{\een}{\end{eqnarray}}
\newcommand{\beno}{\begin{eqnarray*}}
\newcommand{\eeno}{\end{eqnarray*}}

%%%%%%%%%%%%%%%%%%%%%%%%%%%%%%%%%%
%%%%%%%%%%%%%%%%%%%%%%%%%%%%%%%%%%%%%%%%%%%%%%%%%%%%%
%%%%%%%%%%%%%%%%%%%%%%%%%%%%%%%%%%

\newtheorem{thm}{Theorem}[section]
\newtheorem{lem}{Lemma}[section]
\newtheorem{rmk}{Remark}[section]
\newtheorem{col}{Corollary}[section]
\newtheorem{prop}{Proposition}[section]
\renewcommand{\theequation}{\thesection.\arabic{equation}}

%%%%%%%%%%%-----

%%theorem%%%%%%%%%%%%%%%%%%%%%%%%%%%%%%%%

%%%%%%%%%%%%%%end of my setting%%%%%%%%%%%%%%%%%%%%%%%%%%%%%%%%%%%%%%
\begin{document}
\title[Global small solution of 2-D Prandtl system]{On the global small solution of 2-D Prandtl system with initial data in the optimal Gevrey class}

\author{Chao Wang}
\address{School of Mathematical Science, Peking University, Beijing 100871,
China} \email{wangchao@math.pku.edu.cn}

\author{Yuxi Wang}
\address{School of Mathematics, Sichuan University, Chengdu 610064, China}
\email{wangyuxi@scu.edu.cn}

\author{Ping Zhang}%
\address{\footnote{Corresponding author} Academy of
Mathematics $\&$ Systems Science and  Hua Loo-Keng Key Laboratory of
Mathematics, The Chinese Academy of Sciences, Beijing 100190, China, and School of Mathematical Sciences,
University of Chinese Academy of Sciences, Beijing 100049, China.
} \email{zp@amss.ac.cn}

\date{\today}

\maketitle

\begin{abstract} Motivated by \cite{DG19}, we prove the global existence and large
time behavior  of small solutions to 2-D Prandtl system for data with Gevrey 2 regularity in the $x$ variable
and  Sobolev regularity in the $y$ variable. In particular, we extend the global well-posedness result in \cite{PZ5} for
2-D Prandtl system
with analytic data to data with optimal Gevery regularity in the sense of \cite{Ger1}.
\end{abstract}

\noindent{\sl Keywords:} Prandtl system, Littlewood-Paley theory,
Gevrey energy estimate\vspace{0.1cm}

\noindent{\sl AMS Subject Classification (2000):} 35Q35, 76D10

\renewcommand{\theequation}{\thesection.\arabic{equation}}
\setcounter{equation}{0}
\section{Introduction}

In this paper, we investigate the global existence and the large time behavior  of solutions to the following
 2-D Prandtl system with small initial data in the  Gevrey class:
\begin{align}\label{eq: Prandtl}
\left\{
\begin{aligned}
&\pa_t u+u\pa_x u+v\pa_y u-\pa_y^2 u+\pa_x P^{\rm E}=0, \quad (t,x,y)\in\R_+\times\R\times\R_+,\\
&\pa_x u+\pa_y v=0,\\
&(u,v)|_{y=0}=0,\quad \lim_{y\to +\infty}u=U^{\rm E}(t,x),\\
&u|_{t=0}=u_0(x,y),
\end{aligned}
\right.
\end{align}
where $(u,v)$ stand for the tangential and normal velocities of the boundary flow, and $(U^{\rm E}(t,x),$ $ P^{\rm E}(t,x))$ designate
the trances on the boundary of the tangential velocity and pressure of the outflow, which satisfies Bernoulli's law
\beno
\pa_t U^{\rm E}+U^{\rm E}\pa_x U^{\rm E}+\pa_x P^{\rm E}=0.
\eeno

 The system \eqref{eq: Prandtl} is the foundation of the boundary layer theory. It was proposed by Prandtl \cite{Pra} in 1904 in order to explain
 the disparity between the boundary conditions verified by  ideal fluid and viscous fluid with small viscosity.
Heuristically, these
 boundary layers are of amplitude $O(1)$ and of  thickness  $O(\sqrt{\nu})$ where there is a transition from the interior flow governed by
 Euler equation to the Navier-Stokes flow with a vanishing viscosity $\nu> 0$.  One may check \cite{E, Olei} and
references therein for more introductions on boundary layer theory. Especially we refer to \cite{Guo} for a comprehensive  recent survey.
\smallskip

Compared with 2-D Navier-Stokes equations,
 there is no
horizontal diffusion in the $u$ equation of \eqref{eq: Prandtl}, and the normal velocity $v$ is determined by
the divergence free condition of the velocity field $(u,v),$ so that
the
nonlinear term $v\p_y u$  almost behaves like $-\p_xu\p_y u.$  Moreover, this term is not skew-symmetric in the Sobolev space
$H^s.$ As a consequence,
 one horizontal derivative is lost in the process of energy estimate.
Therefore the question of whether or not the Prandtl system with
general data is well-posed in Sobolev spaces is still open.
Physically, the difficulty of the above problem is related to the underlying instabilities of the boundary layer,
for instance, the phenomenon of separation (see \cite{DM20,SWZ}). \smallskip

The first well-posedness result of \eqref{eq: Prandtl} goes back to  Oleinik \cite{Olei66}. Under the assumption that
$u_0(x,y)$ in \eqref{eq: Prandtl} is monotonic in the $y$ variable,  she first introduced Crocco transformation
and then proved the local
existence and uniqueness of classical solutions to \eqref{eq: Prandtl}. This result was revisited in \cite{Alex} and  \cite{MW}
by performing energy estimates in weighted Sobolev spaces. While
 with
the additional ``favorable" condition on the pressure, Xin and Zhang
\cite{Xin} obtained the global existence of weak solutions to this
system. We remark that such monotonicity assumption excludes the phenomenon of reverse flow.

In general, G\'ervard-Varet and Dormy \cite{Ger1}  proved the
ill-posedness in Sobloev spaces for the linearized Prandtl system
around non-monotonic shear flows. The nonlinear ill-posedness was
also established in \cite{Ger2, Guo} in the sense of non-Lipschtiz
continuity of the flow.
Yet for the data which is analytic in both $x$ and $y$ variables,
Sammartino and Caflisch \cite{Caf} established the local
well-posedness result of \eqref{eq: Prandtl} (see also \cite{KV13}). The analyticity in $y$
variable was removed by Lombardo, Cannone and Sammartino in
\cite{Can}. The main argument used in  \cite{Can, Caf} is to apply
the abstract Cauchy-Kowalewskaya (CK) theorem.

To relax the analyticity condition for the initial data turns out to be very hard.  In the special
case when $u_0$ has for each value of $x$ a single non-degenerate critical point in $y,$
G\'ervard-Varet and Masmoudi \cite{GM} proved the
 well-posedness of \eqref{eq: Prandtl} for a class of data with
Gevrey class $\f74$ in the $x$ variable. Chen, the second author and Zhang \cite{CWZ18} proved the
well-posedness of the linearized Prandtl equation around a non-monotonic shear flow in Gevery class $2-\theta$ for any $\theta>0.$
Li and Yang \cite{LY20} extended the result in \cite{GM} for data that are small perturbations of a shear flow with a single non-degenerate
critical point, which belongs to the Gevery class 2. Eventually,  Dietert and G\'ervard-Varet \cite{DG19} removed the
structure assumption in \cite{LY20} and proved the local well-posedness of \eqref{eq: Prandtl} for data
with Gevrey 2 regularity in $x$ variable and Sobolev regularity in $y$ variable.
 This result is optimal in the sense that  G\'ervard-Varet and Dormy \cite{Ger1}
constructed a class of solution with the growth like $e^{\sqrt{k}t}$ for the linearized Prandtl system around
a non-monotonic shear flow, where $k$ is the tangential frequency.

 \smallskip

On the other hand, as pointed out by Grenier, Guo, and Nguyen \cite{GGN15, GGN16, GGN17} (see also \cite{DR04}), in order to make progress towards proving or disproving the inviscid limit of the Navier-Stokes equations, one must understand its behavior on a longer time interval than the one which causes the instability used to prove ill-posedness.   Oleinik and Samokhin \cite{Olei} also asked the following question (see open problem {\bf 4} on page
 500 of \cite{Olei}): ``{\rm It has been shown in Chapter 4 that under certain assumptions the system of
 nonstationary two-dimensional boundary layer admits one and only one solution in the domain $D=\left\{ 0<t<T, 0<x<X, 0<y<\infty \right\}$
 either for small enough $T$ and any $X>0$ or small enough $X$ and any $T>0.$ What are the conditions ensuring the existence
 and uniqueness of a solution of the nonstationary Prandtl system in the domain $D$ with arbitrary $X$ and $T?$}"

Indeed the third author and Zhang first addressed
  the long time existence for Prandtl system with small analytic data  in \cite{ZHZ} and an almost global existence result was provided in \cite{IV16}. Lately, Paicu and the third author \cite{PZ5} succeeded in proving the global existence of small analytic
  solutions to \eqref{eq: Prandtl}. Furthermore, the large time behavior of the solutions have been obtained in \cite{PZ5}. Similar
  result was obtained for the MHD boundary layer equations in \cite{LnZ1}.
\smallskip

Motivated by \cite{DG19, PZ5}, we are going to study the global well-posedness of \eqref{eq: Prandtl}
with small initial data which have Gevery 2 regularity in $x$ variable and Sobolev regularity
  in $y$ variable.
For a clear presentation, we shall take $U^{\rm E}=0$ and $P^{\rm E}=C$ in the rest of this paper, so that $(u,v)$ solves
\begin{align}\label{eq: tu}
\left\{
\begin{aligned}
&\pa_t u+u\pa_x u+v\pa_y u-\pa_y^2 u=0,  \quad (t,x,y)\in\R_+\times\R\times\R_+,\\
&\pa_x u+\pa_y v=0,\\
&(u,v)|_{y=0}=0,\quad \lim_{y\to +\infty}u=0,\\
&u|_{t=0}=u_0(x,y).
\end{aligned}
\right.
\end{align}
We remark that as in \cite{PZ5},
we can take $U^{\rm E}(t,x)=\e f(t)$ for $f(t)$ decaying sufficiently fast at infinity.

Whereas due to the divergence free condition of the velocity field $(u,v),$ there exists a potential function $\vf$ such that $u=\pa_y \vf$ and $v=-\pa_x\vf$. Then we deduce from \eqref{eq: tu} (see \cite{PZ5} for the detailed derivation) that
\beno
\left\{
\begin{array}{ll}
\pa_t\vf+u\pa_x\vf+2\int_{y}^\infty (\pa_y u\pa_x\vf)dy'-\pa_y^2\vf=0,\\
\vf|_{y=0}=0,\\
\vf|_{t=0}=\vf_0.
\end{array}
\right.
\eeno

To derive the faster decay estimate of the solutions to \eqref{eq: tu},  which will be needed to control the
Gevrey radius of the solutions, we recall  from \cite{PZ5} the good quantity
\beq \label{S1eq3a}
G\eqdefa u+\f{y}{2\w{t}}\vf \andf g\eqdefa \p_yG= \p_yu+\f{y}{2\w{t}}u+\f\vf{2\w{t}}, \eeq  which
solves
\beq \label{S1eq1}
\left\{
\begin{array}{ll}
\p_tG-\p_y^2G+\w{t}^{-1}G+u\p_xG+v\p_yG\\
\quad-\f1{2\w{t}}v\p_y({y\vf})+\f{y}{\w{t}}\int_y^\infty\left(\p_y u\p_x\vf\right)\,dy'=0,\\
G|_{y=0}=0 \andf \lim_{y\to +\infty}G(t,x,y)=0,\\
G|_{t=0}=G_0\eqdefa u_0+\f{y}2\vf_0.
\end{array}
\right.
\eeq

Let $\w{\xi}\eqdefa \bigl(1+\xi^2\bigr)^\f12,$ we denote
\beq \label{S1eq1a}
e^{\Phi(t,D_x)}\eqdefa e^{\delta(t)\D^\f12} \with \delta(t)=\delta-\lambda\theta(t) \andf f_\Phi\eqdefa e^{\Phi(t,D_x)}f,
\eeq
and the weighted anisotropic Sobolev norm
\beq \label{S3eq-2}
\begin{split}
\|f\|_{H^{s,k}_\Psi}\eqdefa \sum_{0\leq\ell\leq k}\Bigl(\int_0^\infty e^{2\Psi(t,y)}\|\p_y^\ell{f}(\cdot,y)\|_{H^s_\h}^2\,dy\Bigr)^{\f12}
 \with  \Psi(t,y)\eqdefa \frac{y^2}{8\w{t}}.
 \end{split}
\eeq

The main result of this paper states as follows:

\begin{thm}\label{th1.1}
{\sl
 Let  $u_0=\p_y\vf_0$ satisfy $u_0(x,0)=0$ and  $\int_0^\infty u_0\,dy=0.$ Let  $G_0\eqdefa u_0+\f{y}2\vf_0.$
For some sufficiently small but fixed $\eta\in \bigl(0,\f16\bigr),$ we denote
 \beq \label{S9eq1}
 \begin{split}
{\rm E}(t)\eqdefa &
\|\t^{\f{1-\eta}{4}}u_\Phi\|_{H^{\f{11}{2},0}_\Psi}^2+\|\t^{\f{3-\eta}{4}}\pa_yu_\Phi\|_{H^{5,0}_\Psi}^2+\|\t^{\f{5-\eta}{4}}G_\Phi\|_{H^{4,0}_\Psi}^2\\
&+\|\t^{\f{7-\eta}{4}}\pa_yG_\Phi\|_{H^{3,0}_\Psi}^2+\|\t^{\f{9-\eta}{4}}\pa_y^2G\|_{H^{3,0}_\Psi}^2
+\|\t^{\f{11-\eta}{4}}\pa_y^3G\|_{H^{2,0}_\Psi}^2,
\end{split}
\eeq
and
 \beq \label{S9eq2}
 \begin{split}
{\rm D}(t)\eqdefa &\|\t^{\f{1-\eta}{4}}\pa_yu_\Phi\|_{H^{\f{11}{2},0}_\Psi}^2+\|\t^{\f{3-\eta}{4}}\pa_y^2u_\Phi\|_{H^{5,0}_\Psi}^2+\|\t^{\f{5-\eta}{4}}\pa_yG_\Phi\|_{H^{4,0}_\Psi}^2\\
&+\|\t^{\f{7-\eta}{4}}\pa_y^2G_\Phi\|_{H^{3,0}_\Psi}^2+\|\t^{\f{9-\eta}{4}}\pa_y^3G\|_{H^{3,0}_\Psi}^2
+\|\t^{\f{11-\eta}{4}}\pa_y^4G\|_{H^{2,0}_\Psi}^2.
\end{split}
\eeq
We assume moreover that
\beq \label{S9eq3}
\|u_\Phi(0)\|_{H^{\f{25}4,0}_\Psi}^2+{\rm E}(0)\leq \e^2.
\eeq
 Then
  there exists $\e_0, \la_0>0$ so that  for $\varepsilon\leq \e_0$ and $\la\geq \la_0,$
  $\th(t)$ in \eqref{S1eq1a} satisfies $\sup_{t\in [0,\infty)}\th(t)\leq \f\d{4\la},$ and
   the system (\ref{eq: tu}) has a  unique global solution $u$ which satisfies
  \beq \label{S9eq4}
  \rm{E}(t)+c\eta\int_0^t\rm{D}(t')\,dt'\leq C\e^2\quad \forall \ t\in\R.
  \eeq
}
\end{thm}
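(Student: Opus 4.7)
The plan is a standard continuation-bootstrap argument. Local existence on an interval $[0,T^*)$ can be taken from \cite{DG19}; the aim is to improve the a priori bounds
\beno
\th(t)\leq \f{\d}{4\la} \andf {\rm E}(t)+c\eta\int_0^t{\rm D}(t')\,dt'\leq 2C\e^2,\quad t\in[0,T^*),
\eeno
by a factor two, which then allows one to continue to $T^*=\infty$. Closing this bootstrap requires deriving time-weighted Gevrey energy estimates for both the velocity $u$ solving \eqref{eq: tu} and the good unknown $G$ solving \eqref{S1eq1}, at each of the six regularity/time-weight levels appearing in \eqref{S9eq1}--\eqref{S9eq2}.

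The energy engine is the following: apply $e^{\Phi(t,D_x)}$ to the equations, multiply by $\t^{2\al}\pa_y^k f_\Phi\,e^{2\Psi}$ with $f\in\{u,G\}$, and integrate over $(x,y)\in\R\times\R_+$. The time derivative of $\Phi(t,D_x)=(\d-\la\th(t))\D^{\f12}$ produces the crucial good term $\la\dot\th(t)\|\pa_y^k f_\Phi\|_{H^{s+1/4,0}_\Psi}^2$; the diffusion $-\pa_y^2 f$ gives the parabolic gain $\|\pa_y^{k+1} f_\Phi\|_{H^{s,0}_\Psi}^2$; the Gaussian weight creates a coercive $\t^{-1}\|y\pa_y^k f_\Phi\|^2$ contribution via $\pa_y(e^{2\Psi})=\f{y}{2\t}e^{2\Psi}$; the time weight $\t^{2\al}$ generates a negative loss of the form $2\al\t^{-1}\|\pa_y^k f_\Phi\|^2$; and the $G$-equation carries the extra damping $\t^{-1}G$. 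The delicate matching among the exponent $2\al$, the Gaussian coefficient, and the damping in the $G$-equation is precisely what dictates the exponents $\f{1-\eta}{4},\f{3-\eta}{4},\dots$ in \eqref{S9eq1}, and what forces $\eta<\f16$.

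The main obstacle — and the reason for the Gevrey-$2$ threshold — lies in the convective terms $u\pa_xu$ and $v\pa_yu$. For $u\pa_xu$, a Bony-type decomposition in $x$ combined with a Gevrey product estimate yields a loss of order $\D^{\f12}$, exactly absorbed by the good term $\la\dot\th(t)\|\cdot\|_{H^{s+1/4,0}_\Psi}^2$; this is the one point where the exponent $\f12$ in $\Phi$ (i.e.\ Gevrey class $2$) is critical, whereas the analytic framework of \cite{PZ5} could afford a full $|D_x|$ loss. The truly hard term is $v\pa_yu$: since $v=-\pa_x\vf=-\pa_x\int_0^y u\,dy'$, one loses an $x$-derivative, and the linear growth of $v$ in $y$ is incompatible with the Gaussian weight. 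To bypass both issues, one substitutes $u=G-\f{y}{2\t}\vf$, trading the bad part of $u$ for the faster-decaying $G$, and integrates by parts via $ye^{2\Psi}=2\t\pa_y(e^{2\Psi})$, converting the $y$-growth into either $\pa_y$-derivatives (absorbed by the parabolic gain) or into an extra $\t^{-\f12}$ factor. This is the short-time mechanism of \cite{DG19}, upgraded by the time-decay machinery of \cite{PZ5}.

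Finally, one chooses $\dot\th(t)$ proportional to the top-order Gevrey dissipation, for instance $\dot\th(t)=\|\pa_yu_\Phi\|_{H^{\f{11}2,0}_\Psi}^2+\|\pa_y^2u_\Phi\|_{H^{5,0}_\Psi}^2$. Integrating in $t$ and using the bootstrap hypothesis yields $\th(\infty)\leq C\e^2/\la$, so that smallness of $\e$ and largeness of $\la$ deliver the improved bound $\th(t)\leq\d/(4\la)$. Combining the six energy inequalities produces, after Cauchy-Schwarz, an estimate of the form
\beno
{\rm E}(t)+c\eta\int_0^t{\rm D}(t')\,dt'\leq C\e^2+C_1\e\Bigl({\rm E}(t)+c\eta\int_0^t{\rm D}(t')\,dt'\Bigr),
\eeno
where the $C_1\e$ factor on the right comes from applying the bootstrap to the nonlinear contributions; absorbing this term for $\e\leq\e_0$ small closes the bootstrap, and the continuation criterion then yields \eqref{S9eq4} for all $t\in\R_+$.
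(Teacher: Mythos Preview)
Your outline captures the bootstrap scaffolding and the role of the good unknown $G$ correctly, but it has a genuine gap at the single step that distinguishes the Gevrey-$2$ result from the analytic result of \cite{PZ5}: the treatment of the term $T^\h_{\pa_yu}v_\Phi$ (and its companion $\f{\d(t)}2T^\h_{\pa_yD_xu}\Lambda(D_x)v_\Phi$) in the $u_\Phi$ energy estimate. After the Bony decomposition and the commutator Lemmas \ref{lem: com2}--\ref{lem: com3}, this term still carries $v_\Phi$ at the high frequency, and since $v_\Phi=\int_y^\infty\pa_xu_\Phi\,dz$, controlling it in $H^{\f{11}2,0}_\Psi$ requires $u_\Phi$ in $H^{6,0}_\Psi$ --- a genuine half-derivative loss that the $\la\dot\th\,\D^{\f12}$ gain cannot absorb (it only buys $\f14$ of a derivative on each factor). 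Your proposed cure, substituting $u=G-\f{y}{2\t}\vf$ and integrating by parts via $ye^{2\Psi}=2\t\pa_y(e^{2\Psi})$, is precisely the \cite{PZ5} mechanism; it works there because the analytic weight provides a full $|D_x|$ gain, but in Gevrey $2$ it does not close the half-derivative gap. This is exactly the obstruction that forced \cite{DG19} to introduce the auxiliary unknown $H$ and the dual test function $\phi$.

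The paper's proof follows \cite{DG19} on this point: it defines $H$ by $\cL\int_y^\infty H\,dz=\dot\th\int_y^\infty u_\Phi\,dz$ and $\phi$ by $\cL^*\phi=\dot\th H$ (equations \eqref{eq: H}--\eqref{eq: phi}), so that $\dot\th\,u_\Phi$ and $\dot\th\,v_\Phi$ can be rewritten via \eqref{tu_Phi}--\eqref{v_Phi} as $\cL H$ and $\cL\pa_x\int_y^\infty H\,dz$ plus controllable remainders. The residual $\e\int_0^T\t^{-\ga_0-\f14}\|u_\Phi\|_{H^{6,0}_\Psi}^2\,dt$ left over from the direct estimate (Lemma \ref{lem tu_1}) is then handled by pairing with $H$ and integrating by parts against $\cL^*$ (Lemma \ref{S4lem2}); the key cancellation is that the worst terms in \eqref{eq: LHS-1} are exactly offset by those in \eqref{eq: LHS-2}--\eqref{eq: LHS-3}. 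This adjoint machinery is the heart of the argument and is entirely absent from your sketch. A secondary consequence is that your choice of $\dot\th$ (as a dissipation norm, \`a la \cite{PZ5}) is also not viable here: the $H$/$\phi$ estimates bring in $\pa_t(1/\dot\th)$, so the paper is forced to take $\dot\th(t)=\e^{\f12}\t^{-\beta}$ with a fixed $\beta\in(1,\ga_0-\f1{12}]$ (see \eqref{S3eq-1} and the discussion preceding it).
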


\begin{rmk}
\begin{itemize}
\item[(1)]  As in \cite{DG19}, there is a loss on the Gevrey radius of the solution $u.$
Moreover, we need $u_\Phi(0)\in H^{\f{25}4,0}_\Psi(\R^2_+)$ in order to guarantee that
$u_\Phi(t)\in H^{\f{11}2,0}_\Psi(\R^2_+).$ These loss of regularities is due to the instabilities described in \cite{DG19}.

\item[(2)] Compared with \cite{PZ5} with analytic data, here our smallness condition can not be
presumed only on the initial data of  the good quantity, $G.$  Moreover, the decay of Gevrey solutions
has a loss of $\f\eta4$ in \eqref{S9eq4}. We do not know if we could get rid of this loss or not.

\item[(3)]  The main idea to prove \eqref{S9eq4} is to combine the method in \cite{DG19}, which
is based on both a tricky change of unknown and appropriate choice of test function, and  the
time weighted energy estimate method
in \cite{PZ5,PZZ2}. Furthermore, as in \cite{PZ5},  we shall use the faster decay estimate of the good quantity,
 $G,$ in \eqref{S1eq3a}
to control the Gevrey radius  of the solutions, which will be one of the crucial step to construct
the global solutions of \eqref{eq: tu} with optimal Gevrey regular data.

\end{itemize}

\end{rmk}

\smallskip

 Let us end this introduction by the notations that we shall use
in this context.\vspace{0.2cm}

For~$a\lesssim b$, we mean that there is a uniform constant $C,$
which may be different on different lines, such that $a\leq Cb$. $(a\ |\
b)_{L^2_+}\eqdefa\int_{\R^2_+}a(x,y) {b}(x,y)\,dx\,dy$  stands for
the $L^2$ inner product of $a,b$ on $\R^2_+$ and
$L^p_+=L^p(\R^2_+)$  with $\R^2_+\eqdefa\R\times\R_+.$ For $X$ a Banach space
and $I$ an interval of $\R,$ we denote by $L^q(I;\,X)$ the set of
measurable functions on $I$ with values in $X,$ such that
$t\longmapsto\|f(t)\|_{X}$ belongs to $L^q(I).$  In particular,  we denote by
$L^p_T(L^q_{\rm h}(L^r_{\rm v}))$ the space $L^p([0,T];
L^q(\R_{x_{}};L^r(\R_{y}^+))).$

\renewcommand{\theequation}{\thesection.\arabic{equation}}
\setcounter{equation}{0}
%%%%%%%%%%%%%%%%%%%%%%%%%%%%%%%%%%%%%%%%%%%%%%
%%%%%%%%%%%%%%%%%%%%%%%%%%%%%%%%%%%%%%%%%%

\section{preliminary}\label{sect2}

\subsection{Littlewood-Paley theory}
For the convenience of the readers,  we shall collect some basic facts on anisotropic  Littlewood-Paley theory in this subsection.
Let us first recall from
\cite{BCD} that \beq
\begin{split}
&
S^{\rm h}_ka=\cF^{-1}(\chi(2^{-k}|\xi|)\widehat{a}) \andf \Delta_k^{\rm h}a=
\left\{
\begin{aligned}
&\cF^{-1}(\varphi(2^{-k}|\xi|)\widehat{a}) \quad \mbox{if}\ \ k\geq 0;\\
&S_0^\h a \qquad\qquad\qquad\ \ \mbox{if}\ \  k=-1,\\
&0 \qquad \quad\qquad\qquad\quad \mbox{if}\ \ k\leq -2,
\end{aligned}
\right.
\end{split} \label{1.3a}\eeq where and in all that follows, $\cF
a$ and $\widehat{a}$ always denote the partial  Fourier transform of
the distribution $a$ with respect to $x$ variable,  that is, $
\widehat{a}(\xi,y)=\cF_{x\to\xi}(a)(\xi,y),$
  and $\chi(\tau),$ ~$\varphi(\tau)$ are
smooth functions such that
 \beno
&&\Supp \varphi \subset \Bigl\{\tau \in \R\,/\  \ \frac34 \leq
|\tau| \leq \frac83 \Bigr\}\andf \  \ \forall
 \tau>0\,,\ \sum_{k\in\Z}\varphi(2^{-k}\tau)=1,\\
&&\Supp \chi \subset \Bigl\{\tau \in \R\,/\  \ \ |\tau|  \leq
\frac43 \Bigr\}\quad \ \ \ \andf \  \ \, \chi(\tau)+ \sum_{k\geq
0}\varphi(2^{-k}\tau)=1.
 \eeno

To deal with the estimate concerning the product of two distributions,
we shall frequently use  Bony's decomposition (see \cite{Bo}) in
the horizontal variable:
 \ben\label{Bony} fg=T^{\rm h}_fg+T^{\rm
h}_{g}f+R^{\rm h}(f,g), \een where \beno
\begin{split}
 T^{\rm h}_fg\eqdefa\sum_kS^{\rm
h}_{k-1}f\Delta_k^{\rm h}g,\quad R^{\rm
h}(f,g)\eqdefa&\sum_k\widetilde{\Delta}_k^{\rm h}f\Delta_{k}^{\rm h}g \with
\widetilde{\Delta}_k^{\rm h}f\eqdefa
\displaystyle\sum_{|k-k'|\le 1}\Delta_{k'}^{\rm h}f. \end{split} \eeno

Finally we  recall the definitions of norms to anisotropic Sobolev space.
For $s\in\R, k\in\N,$ we denote
\begin{align*} \|g\|_{H^s_\h}\eqdefa & \Bigl(\int_{\R}\bigl(1+|\xi|^2\bigr)^s|\widehat{g}(\xi)|^2\,d\xi\Bigr)^{\f12} \andf\\
\|f\|_{H^{s,k}}\eqdef &\sum_{0\leq\ell\leq k}\Bigl(\int_0^\infty\int_{\R}\bigl(1+|\xi|^2\bigr)^s|\p_y^\ell\widehat{f}(\xi,y)|^2\,d\xi\,dy\Bigr)^{\f12}.
\end{align*}

\subsection{Para-product related estimates} In what follows, we shall always assume that $t<T^\ast$ with
$T^\ast$ being determined by
\beq\label{1.8a}
T^\ast\eqdefa \sup\Bigl\{\ t>0,\ \ \th(t) <\f\d{2\la}\ \Bigr\}.
\eeq
So that by
virtue of \eqref{S1eq1a}, for any $t<T^\ast,$ $\delta(t)\geq \f\d2$ and there holds the
following convex inequality
\beq\label{1.8bb} \Phi(t,\xi)\leq
\Phi(t,\xi-\eta)+\Phi(t,\eta)\quad\mbox{for}\quad \forall\ \xi,\eta\in \R,
\eeq
with $\Phi(t,\xi)\eqdefa \delta(t)\w{\xi}^{\f12}$.

Then along the same line to the proof of the classical product laws in \cite{BCD}, which corresponds
to the cases when $\Phi(t,\xi)=0,$ we have

\begin{lemma}\label{lem: T_fg}
{\sl Let $s\in\R$  and
 $(T_f^\h)^*$ be the adjoint operator of $T_f^\h$. If $\sigma>\f12, $ we have
\begin{align*}
&\|(T_f^\h g)_\Phi\|_{H^s_\h}\leq C\|f_\Phi\|_{H_\h^\sigma}\|g_\Phi\|_{H^s_\h},\\
&\|\bigl((T^\h_f)^*g\bigr)_\Phi\|_{H^s_\h}\leq C\|f_\Phi\|_{H_\h^\sigma}\|g_\Phi\|_{H^s_\h}.
\end{align*}
If $s_1+s_2>s+\f12>0,$ there holds
\begin{align*}
\|(R^\h(f,g))_\Phi\|_{H^s_\h}\leq C\|f_\Phi\|_{H_{\h}^{s_1}}\|g_\Phi\|_{H^{s_2}_\h}.
\end{align*}
}
\end{lemma}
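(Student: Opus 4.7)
The overall strategy is to use the convex inequality \eqref{1.8bb} to factor the Gevrey weight $e^{\Phi(t,\xi)}$ across each bilinear frequency interaction, thereby reducing each estimate to its classical unweighted counterpart applied to $f_\Phi$ and $g_\Phi$, which is already available in \cite{BCD}.

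For the paraproduct bound, I would apply $\Delta^\h_j e^{\Phi(t,D_x)}$ to $T^\h_fg=\sum_k S^\h_{k-1}f\,\Delta^\h_kg$. Since the Fourier support of $\Delta^\h_j\bigl(S^\h_{k-1}f\,\Delta^\h_kg\bigr)$ lies in an annulus of size $\sim 2^j$, only indices $|k-j|\leq N_0$ contribute. On the Fourier side the relevant integrand carries the factor $e^{\Phi(t,\xi)}=e^{\Phi(t,(\xi-\eta)+\eta)}$, which by \eqref{1.8bb} is bounded pointwise by $e^{\Phi(t,\xi-\eta)}e^{\Phi(t,\eta)}$. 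This splits the weight between the two factors, so (after passing to auxiliary profiles with Fourier transforms equal to $|\widehat{f_\Phi}|$ and $|\widehat{g_\Phi}|$, which preserves all $H^s_\h$ norms) the modulus of $\Delta^\h_j(T^\h_fg)_\Phi$ is pointwise dominated by $\Delta^\h_j(T^\h_{\widetilde{f_\Phi}}\widetilde{g_\Phi})$ in the classical sense. Invoking the usual paraproduct estimate from \cite{BCD}, together with the embedding $H^\sigma_\h\hookrightarrow L^\infty_\h$ for $\sigma>\tfrac12$, yields the desired inequality after taking $\ell^2$ in $j$ with weights $2^{js}$.

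The adjoint and remainder estimates would follow by the same mechanism. For $(T^\h_f)^*g$ the spectral geometry differs only in that the factor carrying $\Delta^\h_k$ is now attached to $g$ at a nearby scale $k'\sim k$, but the three-frequency convex inequality \eqref{1.8bb} still applies verbatim, and one concludes by the corresponding classical bound for the adjoint paraproduct. For $R^\h(f,g)=\sum_k \widetilde{\Delta}^\h_kf\,\Delta^\h_kg$, the Fourier support of the $k$-th summand is contained in a ball of radius $\sim 2^k$, so only the terms with $k\geq j-N_0$ contribute to $\Delta^\h_j R^\h(f,g)$; again \eqref{1.8bb} distributes $e^{\Phi(t,\xi)}$ across the convolution, and the classical remainder estimate in horizontal Sobolev norms (requiring $s_1+s_2>s+\tfrac12$, the half coming from the one-dimensional horizontal dimension) closes the argument.

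The principal technical subtlety is the transition from the pointwise Fourier inequality furnished by \eqref{1.8bb} to an actual operator bound in physical space, since $|\widehat{f_\Phi}|$ is not literally the Fourier transform of $f_\Phi$. I would handle this with the standard device just mentioned, namely replacing $f_\Phi$ and $g_\Phi$ by auxiliary tempered distributions with non-negative Fourier transforms of matching modulus; apart from this book-keeping the proof is a direct transcription of the classical Bony calculus into the weighted Gevrey framework, with the sharp exponents $\sigma>\tfrac12$ and $s_1+s_2>s+\tfrac12$ inherited from the classical statements.
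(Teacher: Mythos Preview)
Your proposal is correct and follows essentially the same approach as the paper: the paper does not give a detailed proof of this lemma but simply remarks that it follows ``along the same line to the proof of the classical product laws in \cite{BCD}, which corresponds to the cases when $\Phi(t,\xi)=0$,'' relying on the convex inequality \eqref{1.8bb}. Your write-up supplies precisely the details the paper omits, including the standard device of replacing $f_\Phi,g_\Phi$ by profiles with non-negative Fourier transforms to pass from the pointwise Fourier-side bound to an operator inequality.
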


\begin{lemma}\label{lem: com1}
{\sl Let $s>0$ and $\sigma>\f32.$ Then one has
\begin{align*}
&\|\bigl((T_a^\h T_b^\h-T_{ab}^\h)f\bigr)_\Phi\|_{H^s_\h}\leq C\|a_\Phi\|_{H^\sigma_\h}\|b_\Phi\|_{H^\sigma_\h}\|f_\Phi\|_{H^{s-1}_\h},\\
&\|\bigl([\D^s; T_a^\h]f\bigr)_{\Phi}\|_{L^2_\h}\leq C\|a_\Phi\|_{H^\sigma_\h}\|f_\Phi\|_{H^{s-1}_\h},\\
&\|\bigl((T_a^\h-(T_a^\h)^*)f\bigr)_\Phi\|_{H^{s}_\h}\leq C\|a_\Phi\|_{H^\sigma_\h}\|f_\Phi\|_{H^{s-1}_\h},
\end{align*}
where $\D^s$ denotes the Fourier multiplier with symbol $\bigl(1+|\xi|^2\bigr)^{\f{s}2}$. We also have
\begin{align*}
\|([T_a^\h;T_b^\h]f)_\Phi\|_{H^s_\h}\leq C\|a_\Phi\|_{H^\sigma_\h}\|b_\Phi\|_{H^\sigma_\h}\|f_\Phi\|_{H^{s-1}_\h}.
\end{align*}
}
\end{lemma}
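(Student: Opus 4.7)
My plan is to treat each of these four estimates as the Gevrey-weighted version of a classical paradifferential estimate from Bony's calculus (which is standard when $\Phi\equiv 0$, see e.g.\ Chapter~2 of \cite{BCD}), and to reduce to the classical statement by redistributing the Fourier multiplier $e^{\Phi(t,D_x)}=e^{\delta(t)\D^{\f12}}$ across the spectral blocks via the convex inequality \eqref{1.8bb}. The underlying elementary observation is that for a product of two distributions one has, on the Fourier side,
\[
e^{\Phi(t,\xi)}\bigl|\widehat{fg}(\xi)\bigr|\leq\int e^{\Phi(t,\xi-\eta)}|\hat f(\xi-\eta)|\,e^{\Phi(t,\eta)}|\hat g(\eta)|\,d\eta.
\]
Introducing auxiliary functions $\tilde a,\tilde b,\tilde f$ whose Fourier transforms are $|\hat a_\Phi|,|\hat b_\Phi|,|\hat f_\Phi|$ respectively, Plancherel yields $\|\tilde a\|_{H^\sigma_\h}=\|a_\Phi\|_{H^\sigma_\h}$ and so on, so that any pointwise control of $e^{\Phi(t,\xi)}|\widehat{(\cdot)}(\xi)|$ by a convolution of these Fourier moduli immediately converts an unweighted $H^s$-bound (in terms of Sobolev norms of $a,b,f$) into the desired Gevrey-weighted version (in terms of $a_\Phi,b_\Phi,f_\Phi$). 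This reduction applies block-by-block because $e^{\Phi(t,D_x)}$ is a pointwise Fourier multiplier and therefore preserves spectral supports.

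With this reduction in hand, I would establish the four classical estimates in turn. For the composition $T_a^\h T_b^\h-T_{ab}^\h$, I would expand Bony's definitions and use spectral localization (the usual almost-orthogonality that forces $|k-k'|\leq N_0$ in $\Delta_k^\h(S_{k'-1}^\h b\,\Delta_{k'}^\h f)$) to reduce matters on each block $\Delta_k^\h f$ to differences of the form $S_{k-1}^\h(ab)-S_{k-1}^\h a\cdot S_{k-1}^\h b$. These can be rewritten as a finite sum of paraproduct/remainder terms whose low-frequency factor $\Delta_j^\h a$ (or $\Delta_j^\h b$) with $j\sim k$ carries the gain $2^{-k}$ via Bernstein; this is where the one-derivative gain comes from and why $\sigma>\f32$ is needed, since one wants $\partial_x a,\partial_x b\in L^\infty_\h$ through $H^{\sigma-1}(\R_x)\hookrightarrow L^\infty(\R_x)$. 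For $[\D^s;T_a^\h]$, I would work block-by-block on the Fourier side of $S_{k-1}^\h a\,\Delta_k^\h f$ and use the mean-value estimate $|\w\xi^s-\w{\xi-\eta}^s|\lesssim|\eta|\w\xi^{s-1}$, which is valid in the paraproduct regime $|\eta|\lesssim 2^{k-1}\ll 2^k\sim|\xi|$. For $T_a^\h-(T_a^\h)^*$, I would compute $(T_a^\h)^* f=\sum_k\Delta_k^\h(S_{k-1}^\h\bar a\,f)$ and observe that $\Delta_k^\h(S_{k-1}^\h\bar a\,f)-S_{k-1}^\h\bar a\,\Delta_k^\h f=[\Delta_k^\h,S_{k-1}^\h\bar a]f$ is a standard commutator which again gains one derivative via the same Fourier mean-value trick. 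Finally, $[T_a^\h,T_b^\h]=(T_a^\h T_b^\h-T_{ab}^\h)-(T_b^\h T_a^\h-T_{ba}^\h)$ is an immediate consequence of the composition bound just proved.

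The main obstacle, I expect, will be carefully justifying the block-by-block reduction of each error term to a convolution of Fourier moduli, and tracking the regularity bookkeeping so that after the Plancherel identification $\|\tilde a\|_{H^\sigma_\h}=\|a_\Phi\|_{H^\sigma_\h}$ the Sobolev indices on both sides match exactly. The Sobolev embedding $H^\sigma(\R_x)\hookrightarrow L^\infty(\R_x)$ for $\sigma>\f12$, which is used pervasively in the classical paraproduct estimates, must be applied to the envelope $\tilde a$ rather than $a$ itself, but this is harmless thanks to Plancherel. The stricter threshold $\sigma>\f32$ in Lemma~\ref{lem: com1} (versus $\sigma>\f12$ in Lemma~\ref{lem: T_fg}) precisely reflects the one-derivative gain inherent in all four error terms, whose derivative must be absorbed into a factor of $a$ or $b$ that still satisfies a Sobolev-$L^\infty$ embedding, forcing $\sigma-1>\f12$.
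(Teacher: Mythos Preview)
Your proposal is correct and follows essentially the same approach as the paper. The paper does not give a detailed proof of Lemma~\ref{lem: com1}; it simply remarks before Lemma~\ref{lem: T_fg} that both lemmas follow ``along the same line to the proof of the classical product laws in \cite{BCD}, which corresponds to the cases when $\Phi(t,\xi)=0$,'' relying on the convex inequality \eqref{1.8bb} to transfer the Gevrey weight across factors---exactly the reduction you outline via the envelope functions $\tilde a,\tilde b,\tilde f$.
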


\begin{lemma}\label{lem: com2}
{\sl Let $\Phi(t,\xi)\eqdefa \d(t)\w{\xi}^{\f12}.$ Let $s\in\R$ and $\sigma>\f32.$
Then  one has
\begin{align}\label{S2eq5}
\|(T^\h_a\pa_xf)_{\Phi}-T_a^\h\pa_xf_{\Phi}\|_{H^s_\h}\leq C\d(t)\|a_\Phi\|_{H^\sigma_\h}\|f_\Phi\|_{H^{s+\f12}_\h}.
\end{align}
}
\end{lemma}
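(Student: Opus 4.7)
The plan is to analyze the commutator in Fourier space and extract from the difference of Gevrey symbols $e^{\Phi(t,\xi)}-e^{\Phi(t,\eta)}$ a gain of order $\delta(t)|\xi-\eta|/\langle\eta\rangle^{\f12}$ on the paraproduct frequency support. Absorbing $|\xi-\eta|$ as a derivative on $a_\Phi$ and $\langle\eta\rangle^{\f12}$ as a half-derivative on $f_\Phi$ will then reduce \eqref{S2eq5} to the standard ($\Phi\equiv 0$) case of Lemma \ref{lem: T_fg}, accounting simultaneously for the $\f12$-loss of derivative on $f$ and for the prefactor $\delta(t)$.

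Concretely, I would take the partial Fourier transform in $x$ of $J\eqdefa (T^\h_a\pa_xf)_\Phi-T^\h_a\pa_x f_\Phi$ to get
\beno
\widehat{J}(\xi)=\sum_k\int\chi(2^{-(k-1)}|\xi-\eta|)\varphi(2^{-k}|\eta|)\,i\eta\,\bigl(e^{\Phi(t,\xi)}-e^{\Phi(t,\eta)}\bigr)\widehat{a}(\xi-\eta)\widehat{f}(\eta)\,d\eta.
\eeno
Substituting $\widehat{a}(\xi-\eta)=e^{-\Phi(t,\xi-\eta)}\widehat{a_\Phi}(\xi-\eta)$ and $\widehat{f}(\eta)=e^{-\Phi(t,\eta)}\widehat{f_\Phi}(\eta)$ recasts $J$ as a bilinear operator on $(a_\Phi,f_\Phi)$ with multiplier
\beno
K(t,\xi,\eta)\eqdefa e^{-\Phi(t,\xi-\eta)}\bigl(e^{\Phi(t,\xi)-\Phi(t,\eta)}-1\bigr).
\eeno
The core estimate to establish is $|K(t,\xi,\eta)|\lesssim \delta(t)|\xi-\eta|/\langle\eta\rangle^{\f12}$ on the paraproduct support $|\xi-\eta|\lesssim 2^{k-1}\leq|\eta|/2$.

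This kernel bound combines three ingredients. The convexity inequality \eqref{1.8bb} gives $|\Phi(t,\xi)-\Phi(t,\eta)|\leq\Phi(t,\xi-\eta)$, so $|e^{\Phi(t,\xi)-\Phi(t,\eta)}-1|\leq|\Phi(t,\xi)-\Phi(t,\eta)|\,e^{\Phi(t,\xi-\eta)}$ via $|e^w-1|\leq |w|e^{|w|}$, and the exponential factor cancels against $e^{-\Phi(t,\xi-\eta)}$. The mean-value identity then yields $|\langle\xi\rangle^{\f12}-\langle\eta\rangle^{\f12}|\lesssim|\xi-\eta|/\langle\eta\rangle^{\f12}$ on the near-diagonal region $|\xi-\eta|\ll|\eta|$, which together with $\Phi(t,\xi)-\Phi(t,\eta)=\delta(t)(\langle\xi\rangle^{\f12}-\langle\eta\rangle^{\f12})$ produces the claimed bound. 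Using finally $|\eta|/\langle\eta\rangle^{\f12}\lesssim\langle\eta\rangle^{\f12}$, the Fourier representation of $J$ is majorized by that of $\delta(t)T^\h_{|D_x|a_\Phi}\bigl(\langle D_x\rangle^{\f12}f_\Phi\bigr)$, and the standard Bony paraproduct estimate with $\sigma-1>\f12$ gives
\beno
\|J\|_{H^s_\h}\lesssim \delta(t)\bigl\||D_x|a_\Phi\bigr\|_{H^{\sigma-1}_\h}\bigl\|\langle D_x\rangle^{\f12}f_\Phi\bigr\|_{H^s_\h}\lesssim \delta(t)\|a_\Phi\|_{H^\sigma_\h}\|f_\Phi\|_{H^{s+\f12}_\h},
\eeno
which is \eqref{S2eq5}. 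The main obstacle is the kernel bound itself: one must exploit simultaneously the concavity of $\xi\mapsto\langle\xi\rangle^{\f12}$ and the near-diagonal paraproduct geometry to produce the factor $\langle\eta\rangle^{-\f12}$, since any coarser estimate would cost a full derivative on $f$ rather than a half, and would fail to produce the $\delta(t)$ prefactor crucial for eventually closing the energy estimate in the Gevrey scale.
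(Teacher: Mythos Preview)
Your proposal is correct and follows essentially the same approach as the paper's proof: both write the commutator in Fourier space with the kernel $m(t,\xi,\eta)=(e^{\Phi(t,\xi)}-e^{\Phi(t,\eta)})e^{-\Phi(t,\xi-\eta)-\Phi(t,\eta)}$, establish the bound $|m|\leq \delta(t)|\xi-\eta|/\langle\eta\rangle^{1/2}$, and then conclude via a standard paraproduct argument. The only cosmetic differences are that the paper derives the kernel bound via an integral mean-value formula for $e^{\Phi(t,\xi)}-e^{\Phi(t,\eta)}$ together with \eqref{S2eq6}, while you use $|e^w-1|\le|w|e^{|w|}$ together with \eqref{1.8bb}; and the paper finishes explicitly with Young's inequality and $\|\widehat{\partial_x a_\Phi}\|_{L^1}\lesssim\|a_\Phi\|_{H^\sigma_\h}$ for $\sigma>3/2$, whereas you invoke the classical paraproduct bound directly.
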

\begin{proof} We first write
\beq \label{S2eq1}
\wh{\bigl(T^\h_a\pa_xf\bigr)_\Phi}-\wh{T^\h_a\pa_xf_\Phi}=\sum_{k\geq 0}\int_{\R}m(t,\xi,\eta)\wh{S_{k-1}^\h a_\Phi}(\xi-\eta)
i\eta \wh{\De_k^\h f_\Phi}(\eta)\,d\eta,
\eeq
where $m(t,\xi,\eta)\eqdefa \bigl(e^{\Phi(t,\xi)}-e^{\Phi(t,\eta)}\bigr)e^{-\Phi(t,\xi-\eta)-\Phi(t,\eta)}$ with $\Phi(t,\xi)= \d(t)\w{\xi}^{\f12}.$
It is easy to observe that
\begin{align*}
e^{\Phi(t,\xi)}-e^{\Phi(t,\eta)}=&\d(t)\int_0^1 e^{\d(t)\left(\al|\xi|^{\f12}+(1-\al)|\eta|^{\f12}\right)}\,d\al\bigl(\w{\xi}^{\f12}-
\w{\eta}^{\f12}\bigr),
\end{align*}
and for any $\al\in [0,1],$
\beq \label{S2eq6}
\al\w{\xi}^{\f12}+(1-\al)\w{\eta}^{\f12}\leq \al\w{\xi-\eta}^{\f12}+\w{\eta}^{\f12}.
\eeq
So that
\beq \label{S2eq2}
\begin{split}
|m(t,\xi,\eta)|\leq &  \d(t)\frac{|\w{\xi}-\w{\eta}|}{\w{\xi}^{\f12}+\w{\eta}^{\frac12}}
\leq  \d(t)\frac{|\xi-\eta|}{\w{\xi}^{\f12}+\w{\eta}^{\frac12}}.
\end{split}
\eeq

In view of \eqref{S2eq2}, we find
\beno
\bigl|\int_{\R}m(t,\xi,\eta)\wh{S_{k-1}^\h a_\Phi}(\xi-\eta)
i\eta \wh{\De_k^\h f_\Phi}(\eta)\,d\eta\bigr|\leq \d(t)\int_{\R}|\xi-\eta||\wh{S_{k-1}^\h a_\Phi}(\xi-\eta)|
|\eta|^{\f12} \wh{\De_k^\h f_\Phi}(\eta)\,d\eta,
\eeno
from which, we infer
\begin{align*}
\bigl\|\int_{\R}m(t,\cdot,\eta)\wh{S_{k-1}^\h a_\Phi}(\cdot-\eta)
i\eta \wh{\De_k^\h f_\Phi}(\eta)\,d\eta\bigr\|_{L^2}\lesssim &\d(t)2^{\f{k}2}\|\wh{S_{k-1}^\h \pa_x a_\Phi}\|_{L^1}\|\De_k^\h f_\Phi\|_{L^2}\\
\lesssim &\d(t)c_k2^{-ks}\|a_\Phi\|_{H^\sigma}\| f_\Phi\|_{\dot H^{s+\f12}},
\end{align*}
where $\left(c_k\right)_{k\in\Z}$ designates a generic element of $\ell^2(\N)$ so that $\sum_{k\geq 0}c_k^2=1.$
Therefore, thanks to the support properties to the  terms in \eqref{S2eq1}, we conclude the proof of
\eqref{S2eq5}.
\end{proof}

\begin{lemma}\label{lem: com3}
{\sl Let $s\in\R,$  $\sigma>\f52$ and $D_x\eqdefa \f1i\pa_x.$ Let $\Phi(t,\xi)\eqdefa \d(t)\w{\xi}^{\f12},$  and $\Lambda(\xi)\eqdefa \xi(1+\xi^2)^{-\f34}.$ Then if $0<\d(t)\leq L,$ one has
\begin{align}\label{S2eq8}
\|(T_a^\h\pa_xf)_{\Phi}-T^\h_a\pa_xf_{\Phi}-\f{\d(t)}2T^\h_{D_x a}\Lambda(D)\pa_xf_\Phi\|_{H^s_\h}\leq C_L\|a_\Phi\|_{H^\sigma_\h}\|f_\Phi\|_{H^{s}_\h}.
\end{align}}
\end{lemma}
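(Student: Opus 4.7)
The plan is to refine the argument of Lemma~\ref{lem: com2} by carrying out a more precise Taylor expansion of the symbol
$$m(t,\xi,\eta)=e^{-\Phi(t,\xi-\eta)}\bigl(e^{\Phi(t,\xi)-\Phi(t,\eta)}-1\bigr)$$
that appears in \eqref{S2eq1}. The leading linear term in this expansion should reproduce exactly $\f{\d(t)}{2}T_{D_xa}^\h\Lambda(D)\pa_xf_\Phi$, and the remaining quadratic terms should define operators bounded $H^s_\h\to H^s_\h$, each losing one extra $x$-derivative on $a$ (which is why $\sigma>\f52$ appears) but none on $f$.

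First I would expand the exponential,
$$e^{\Phi(t,\xi)-\Phi(t,\eta)}-1=\bigl(\Phi(t,\xi)-\Phi(t,\eta)\bigr)+\f12\bigl(\Phi(t,\xi)-\Phi(t,\eta)\bigr)^2 e^{\theta(\Phi(t,\xi)-\Phi(t,\eta))}$$
for some $\theta\in(0,1)$, and then Taylor expand $\w{\xi}^{\f12}$ around $\eta$. Since $\f{d}{d\xi}(1+\xi^2)^{\f14}=\f12\Lambda(\xi)$, one gets
$$\w{\xi}^{\f12}-\w{\eta}^{\f12}=\f12\Lambda(\eta)(\xi-\eta)+\f12\int_0^1(1-s)\Lambda'\bigl(\eta+s(\xi-\eta)\bigr)(\xi-\eta)^2\,ds.$$
Substituting both expansions into $m$ gives a decomposition $m=m_1+m_2+m_3$ with $m_1(t,\xi,\eta)=\f{\d(t)}{2}\Lambda(\eta)(\xi-\eta)e^{-\Phi(t,\xi-\eta)}$. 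Using the elementary identities $(\xi-\eta)\wh{S_{k-1}^\h a_\Phi}(\xi-\eta)=\wh{S_{k-1}^\h D_xa_\Phi}(\xi-\eta)$ and $e^{-\Phi(t,\xi-\eta)}\wh{S_{k-1}^\h D_xa_\Phi}(\xi-\eta)=\wh{S_{k-1}^\h D_xa}(\xi-\eta)$, the contribution of $m_1$ to the right-hand side of \eqref{S2eq1} is identically $\f{\d(t)}{2}\bigl(T_{D_xa}^\h\Lambda(D)\pa_xf_\Phi\bigr)^{\wedge}(\xi)$, matching the term to be subtracted.

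It remains to estimate the two quadratic remainders. On the spectral support of the paraproduct we have $|\xi-\eta|\lesssim 2^k$ and $|\eta|\sim 2^k$, hence $|\eta+s(\xi-\eta)|\sim\w{\eta}$ and $|\Lambda'(\eta+s(\xi-\eta))|\lesssim\w{\eta}^{-3/2}$. Together with $e^{-\Phi(t,\xi-\eta)}\leq 1$ this yields
$$e^{-\Phi(t,\xi-\eta)}|m_3(t,\xi,\eta)|\cdot|\eta|\lesssim \d(t)\,\w{\eta}^{-1/2}(\xi-\eta)^2\leq L\,(\xi-\eta)^2.$$
For $m_2$, I would use the subadditivity \eqref{1.8bb} to absorb the factor $e^{\theta(\Phi(t,\xi)-\Phi(t,\eta))}$ by $e^{\Phi(t,\xi-\eta)}$ when $\Phi(t,\xi)\geq\Phi(t,\eta)$ (and trivially by $1$ otherwise), so that $e^{-\Phi(t,\xi-\eta)}e^{\theta(\Phi(t,\xi)-\Phi(t,\eta))}\leq 1$; combined with $|\Phi(t,\xi)-\Phi(t,\eta)|\lesssim\d(t)|\xi-\eta|\w{\eta}^{-1/2}$ on the paraproduct support, this gives
$$e^{-\Phi(t,\xi-\eta)}|m_2(t,\xi,\eta)|\cdot|\eta|\lesssim L^2(\xi-\eta)^2.$$
Since $(\xi-\eta)^2\wh{S_{k-1}^\h a_\Phi}(\xi-\eta)=\wh{S_{k-1}^\h\pa_x^2 a_\Phi}(\xi-\eta)$, each remainder contribution is dominated in absolute value by a paraproduct of the type $T_{\pa_x^2 a_\Phi}^\h f_\Phi$, which by Lemma~\ref{lem: T_fg} applied at zero exponent is bounded in $H^s_\h$ by $C\|\pa_x^2 a_\Phi\|_{H^{\sigma-2}_\h}\|f_\Phi\|_{H^s_\h}\lesssim\|a_\Phi\|_{H^\sigma_\h}\|f_\Phi\|_{H^s_\h}$, using precisely $\sigma-2>\f12$.

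The main obstacle is the bookkeeping: verifying that the two quadratic pieces cost exactly two $x$-derivatives on $a$ and none on $f$. This is the whole point of isolating the correction $T_{D_xa}^\h\Lambda(D)\pa_xf_\Phi$ --- it carries the half-derivative loss on $f$ that forced Lemma~\ref{lem: com2} to bound $\|f_\Phi\|_{H^{s+\f12}_\h}$ rather than $\|f_\Phi\|_{H^s_\h}$, so that once it is removed, only quadratic symbols remain and the paraproduct machinery closes.
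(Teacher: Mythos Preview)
Your argument is correct and follows the same strategy as the paper: Taylor-expand both $e^{\Phi}$ and $\w{\cdot}^{1/2}$ around $\eta$, identify the linear piece with the correction term, and show that the quadratic remainders contribute $C_L(\xi-\eta)^2$ after multiplication by $|\eta|$, hence cost two derivatives on $a$ and none on $f$. The only real difference is that you exploit the paraproduct localization $|\xi-\eta|\leq\tfrac89|\eta|$ to bound $|\Lambda'(\eta+s(\xi-\eta))|\lesssim\w{\eta}^{-3/2}$ directly, whereas the paper performs a case split $|\xi-\eta|\lessgtr|\eta|/2$ to reach the equivalent pointwise bound $|\cM(t,\xi,\eta)|\leq C_L|\eta|^{-1}(\xi-\eta)^2$; your shortcut is legitimate on the paraproduct support and saves a few lines.
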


\begin{proof}
Similar to \eqref{S2eq1}, we write
\beq\label{S2eq3}
\begin{split}
\cF\Bigl((T_a^\h\pa_xf)_{\Phi}-&T^\h_a\pa_xf_{\Phi}-\f{\d(t)}2T^\h_{D_x a}\Lambda(D)\pa_xf_\Phi\Bigr)(t,\xi)\\
=&\sum_{k\geq 0}\int_{\R}\cM(t,\xi,\eta)\wh{S_{k-1}^\h a_\Phi}(\xi-\eta)i\eta\wh{\De_k^\h f_\Phi}(\eta)\,d\eta\with\\
\cM(t,\xi,\eta)\eqdefa &\bigl(e^{\Phi(t,\xi)}-e^{\Phi(t,\eta)}-\f{\d(t)}2\Lambda(\eta)(\xi-\eta)e^{\Phi(t,\eta)}\bigr)
e^{-\Phi(t,\xi-\eta)-\Phi(t,\eta)}.
\end{split}
\eeq
Notice that $\Phi(t,\xi)=\d(t)\w{\xi}^{\f12},$ we get, by applying Taylor's expansion, that
$$\longformule{
e^{\Phi(t,\xi)}-e^{\Phi(t,\eta)}=\d(t)e^{\d(t)\w{\eta}^{\f12}}\bigl(\w{\xi}^{\f12}-\w{\eta}^{\f12}\bigr)}{{}
+\f{\d^2(t)}2\int_0^1(1-\al)e^{\d(t)\bigl(\al\w{\xi}^{\f12}+(1-\al)\w{\eta}^{\f12}\bigr)}\,d\al\bigl(\w{\xi}^{\f12}-\w{\eta}^{\f12}\bigr)^2,}
$$
and
\beno
\w{\xi}^{\f12}-\w{\eta}^{\f12}=\f\eta2\bigl(1+\eta^2\bigr)^{-\f34}(\xi-\eta)+\f12\int_0^1(1-\al)\Lambda'(\al\xi+(1-\al)\eta)\,d\al(\xi-\eta)^2,
\eeno
from which and \eqref{S2eq6}, we infer
\begin{align*}
|\cM(t,\xi,\eta)|\leq & \f{\d(t)}2\int_0^1(1-\al)|\Lambda'(\al\xi+(1-\al)\eta)|\,d\al(\eta-\xi)^2+\f{\d^2(t)}2\bigl(\w{\xi}^{\f12}-\w{\eta}^{\f12}\bigr)^2\\
\leq &C_L\Bigl(\int_0^1(1-\al)|\Lambda'(\al\xi+(1-\al)\eta)|\,d\al(\eta-\xi)^2+\bigl(\w{\xi}^{\f12}-\w{\eta}^{\f12}\bigr)^2\Bigr).
\end{align*}
Observing that
if $|\xi-\eta|\leq \f{\eta}2,$ we have
$|\eta+\al(\xi-\eta)|\geq |\eta|-|\xi-\eta|\geq \f{|\eta|}2$ so that
\begin{align*}
\int_0^1(1-\al)|\Lambda'(\al\xi+(1-\al)\eta)|\,d\al\leq &C\int_0^1\f1{(1+|\eta+\al(\xi-\eta)|)^{\f32}}\,d\al\leq C\w{\eta}^{-\f32}.
\end{align*}
While if $|\xi-\eta|\geq \f{\eta}2,$ we have
\begin{align*}
\int_0^1(1-\al)|\Lambda'(\al\xi+(1-\al)\eta)|\,d\al\leq &C\int_0^1\f1{(1+|\eta+\al(\xi-\eta)|)^{\f32}}\,d\al\\
\leq & \f1{|\xi-\eta|}\int_0^{|\xi-\eta|}\f1{(1+|\eta+\tau|)^{\f32}}\,d\tau
\leq C|\eta|^{-1}.
\end{align*}
which together with the fact
\begin{align*}
&\bigl|\w{\xi}^{\f12}-\w{\eta}^{\f12}\bigr|\leq \f{\w{\eta}-\w{\xi}}{\w{\xi}^{\f12}+\w{\eta}^{\f12}}\leq C\w{\eta}^{-\f12}|\eta-\xi|,
\end{align*}
 ensures that
\beq \label{S2eq4}
|\cM(t,\xi,\eta)|\leq C_L|\eta|^{-1}(\eta-\xi)^2.
\eeq
Thanks to \eqref{S2eq4}, we obtain
\begin{align*}
\bigl\|\int_{\R}\cM(t,\cdot,\eta)\wh{S_{k-1}^\h a_\Phi}(\cdot-\eta)\wh{\De_k^\h f_\Phi}(\eta)\,d\eta\bigr\|_{L^2_\h}
\leq &C_L
\bigl\|\int_{\R}\bigl|\wh{S_{k-1}^\h \pa_x^2a_\Phi}(\cdot-\eta)\bigr| \bigl|\wh{\De_k^\h f_\Phi}(\eta)\bigr|\,d\eta\bigr\|_{L^2_\h}\\
\leq & C_L \|\wh{\pa_x^2a_\Phi}\|_{L^1_\h}\|\De_k^\h f_\Phi\|_{L^2_\h}\\
\leq &C_Lc_k2^{-ks}\|\pa_x^2a_\Phi\|_{H^\sigma_\h}\|f_\Phi\|_{H^s_\h},
\end{align*}
from which the support properties to terms in \eqref{S2eq3}, we complete the proof of \eqref{S2eq8}.
\end{proof}

\subsection{Two extended lemmas of \cite{PZ5}}
To derive the decay estimates of the solution to \eqref{eq: tu}, let us recall the following Lemmas:

\begin{lemma}\label{lem2.1}
{\sl Let $\Psi(t,y)\eqdefa \frac{y^2}{8\w{t}}$ and $d$ be a nonnegative integer. Let $u$ be a smooth enough function on $\R^d\times\R_+$
which decays to zero sufficiently fast as $y$ approaching to $+\infty.$  Then one has
\begin{subequations} \label{S2eq9}
\begin{gather}
\int_{\R^d\times\R_+} |\partial_y u(X,y)|^2 e^{2\Psi}\,dX\,dy \geq \frac1{2\w{t}}\int_{\R^d\times\R_+} |u(X,y)|^2 e^{2\Psi}\,dX\,dy,
\label{S2eq9a}\\
\int_{\R^d\times\R_+} |\partial_y u(X,y)|^2 e^{2\Psi}\,dX\,dy \geq \frac{\kappa}{2\w{t}}\int_{\R^d\times\R_+} |u(X,y)|^2 e^{2\Psi}\,dX\,dy \nonumber\\
\qquad\qquad\qquad\qquad\qquad\qquad\qquad+\f{\kappa(1-\kappa)}{4}\int_{\R^d\times\R_+} |\f{y}{\t}u(X,y)|^2 e^{2\Psi}\,dX\,dy, \label{S2eq9b}
\end{gather}
\end{subequations}
for any $\kappa>0.$
}
\end{lemma}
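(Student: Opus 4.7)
\medskip

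\noindent\textbf{Proof proposal.} The plan is to derive both inequalities from a single integration-by-parts identity obtained by expanding the trivial bound
$$
0\leq \int_{\R^d\times\R_+}\big|\pa_y u-\al\,\pa_y\Psi\cdot u\big|^2 e^{2\Psi}\,dX\,dy,
$$
where $\al\in\R$ is a parameter chosen at the end. Expanding the square produces three pieces: the principal term $\int|\pa_y u|^2 e^{2\Psi}$, the cross term $-2\al\int \pa_y u\cdot u\cdot \pa_y\Psi\, e^{2\Psi}$, and the quadratic term $\al^2\int (\pa_y\Psi)^2 u^2 e^{2\Psi}$. Writing $2\pa_y u\cdot u=\pa_y(u^2)$ and integrating by parts in $y$, the cross term turns into $\al\int u^2\bigl(\pa_y^2\Psi+2(\pa_y\Psi)^2\bigr)e^{2\Psi}$. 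The boundary contribution at $y=0$ vanishes because $\pa_y\Psi(t,0)=y/(4\w{t})\big|_{y=0}=0$ (so no hypothesis on $u|_{y=0}$ is needed), while the contribution at $y=+\infty$ is killed by the assumed rapid decay of $u$ against the Gaussian weight $e^{2\Psi}$.

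Substituting $\pa_y^2\Psi=\tfrac{1}{4\w{t}}$ and $(\pa_y\Psi)^2=\tfrac{y^2}{16\w{t}^2}$, the non-negativity of the square becomes the family of identities
$$
\int |\pa_y u|^2 e^{2\Psi}\,dX\,dy \geq -\f{\al}{4\w{t}}\int u^2 e^{2\Psi}\,dX\,dy -\f{2\al+\al^2}{16\w{t}^2}\int y^2 u^2 e^{2\Psi}\,dX\,dy.
$$
To obtain \eqref{S2eq9a} take $\al=-2$: then $-\al/(4\w{t})=1/(2\w{t})$ and $2\al+\al^2=0$, so the $y^2$-term disappears and \eqref{S2eq9a} follows at once. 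To obtain \eqref{S2eq9b} take $\al=-2\kappa$: then $-\al/(4\w{t})=\kappa/(2\w{t})$ and $-(2\al+\al^2)=4\kappa(1-\kappa)$, which after dividing by $16$ yields precisely the second coefficient $\kappa(1-\kappa)/(4\w{t}^2)$. For $\kappa\in(0,1]$ both coefficients are non-negative, giving an honest lower bound; for $\kappa>1$ the identity still holds with a sign change in the second term.

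I do not foresee a substantive obstacle: the whole argument is a weighted Hardy-type identity, and the algebra has been rigged so that the Gaussian weight $e^{2\Psi}$ interacts with the quadratic $\Psi(t,y)=y^2/(8\w{t})$ to produce exactly the constants $1/(4\w{t})$ and $y^2/(16\w{t}^2)$ required. The only point demanding care is the vanishing of the boundary contribution at $y=+\infty$ in the presence of the exponentially growing factor $e^{2\Psi}$; this is what the standing hypothesis ``$u$ decays to zero sufficiently fast'' encodes, and it must be verified (in the applications in the main text) on the concrete functions to which Lemma~\ref{lem2.1} is applied, e.g.\ $u$, $\pa_y u$, $G$ and their derivatives, all of which enjoy Gaussian decay in $y$ through the definition of the norm $\|\cdot\|_{H^{s,k}_\Psi}$.
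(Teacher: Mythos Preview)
Your proof is correct. Both arguments are weighted Hardy-type computations, but they are packaged differently. The paper writes $1=\pa_y y$, integrates by parts once to obtain the identity
\[
\int u^2 e^{2\Psi}+\frac{1}{2\w{t}}\int y^2u^2 e^{2\Psi}=-2\int yu\,\pa_y u\, e^{2\Psi},
\]
and then applies Young's inequality with parameter $\kappa$ to the cross term on the right to deduce \eqref{S2eq9b}; part \eqref{S2eq9a} is simply quoted from earlier references. Your approach instead expands the non-negative square $\int|\pa_y u-\al\,\pa_y\Psi\,u|^2 e^{2\Psi}\geq 0$ and reads off both inequalities by choosing $\al=-2$ and $\al=-2\kappa$ respectively. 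The advantage of your route is that it treats \eqref{S2eq9a} and \eqref{S2eq9b} uniformly, with \eqref{S2eq9a} appearing as the optimal choice of parameter (where the $y^2$-term drops out exactly), whereas the paper handles the two separately. The boundary-term analysis and the decay hypothesis are handled identically in both.
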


\begin{proof} \eqref{S2eq9a} was presented in Lemma 3.3 of \cite{IV16} and Lemma 3.1 of \cite{PZ5}. Let us now modify its proof to prove \eqref{S2eq9b}.
Indeed
we  get, by using integration by parts, that
\beno
\begin{split}
\int_{R_+} u^2(X,y) e^{\frac{y^2}{4\w{t}}} \,dy=&\int_{R_+} (\partial_y y) u^2(X,y) e^{\frac{y^2}{4\w{t}}}\,dy\\
=&-2\int_{R_+} y u(X,y) \partial_y u(X,y) e^{\frac{y^2}{4\w{t}}}\,dy-\frac{1}{2\w{t}}\int_{R_+} y^2 u^2(X,y) e^{\frac{y^2}{4\w{t}}}\,dy.
\end{split}
\eeno
By integrating the above inequality over $\R^d$ with respect to the $X$ variables, we find
\beno
\begin{split}
\int_{\R^d\times\R_+} u^2& e^{\frac{y^2}{4\w{t}}} \,dX\,dy+\frac{1}{2\w{t}}\int_{\R^d\times\R_+} y^2 u^2 e^{\frac{y^2}{4\w{t}}}\,dX\,dy
=-2\int_{\R^d\times\R_+} y u \partial_y u e^{\frac{y^2}{4\w{t}}}\,dX\,dy\\
\leq & \frac{\kappa}{2\w{t}}\int_{\R^d\times\R_+} y^2 u^2 e^{\frac{y^2}{4\w{t}}}\,dX\,dy+\f2\kappa\w{t}\int_{\R^d\times\R_+} (\partial_y u)^2 e^{\frac{y^2}{4\w{t}}}\,dX\,dy.
\end{split}
\eeno
This leads to \eqref{S2eq9b}.
\end{proof}

\begin{lemma}\label{lem2.2}
{\sl Let $G$ be defined by \eqref{S1eq3a}
 and $\Psi(t,y)\eqdefa \frac{y^2}{8\w{t}}.$
Let $\vf$ and $u$ be  smooth enough functions which decay to zero sufficiently fast as $y$ approaching to $\infty$ on $[0,T].$
 Then, for any $
\ga\in (0,1)$ and $t\leq T,$ one has
\begin{subequations} \label{S2eq20}
\begin{gather}
\bigl\|e^{\ga\Psi}\De_k^\h u_\Phi(t)\bigr\|_{L^2_+}\lesssim \bigl\|e^{\Psi}\De_k^\h G_\Phi(t)\bigr\|_{L^2_+}; \label{S2eq20a}\\
\bigl\|e^{\ga\Psi}\De_k^\h\p_yu_\Phi(t)\bigr\|_{L^2_+}\lesssim \bigl\|e^{\Psi}\De_k^\h\p_yG_\Phi(t)\bigr\|_{L^2_+}; \label{S2eq20b}\\
\bigl\|e^{\ga\Psi}\De_k^\h\p_y^2u(t)\bigr\|_{L^2_+}\lesssim \bigl\|e^{\Psi}\De_k^\h\p_y^2G(t)\bigr\|_{L^2_+}; \label{S2eq20c}\\
 \bigl\|e^{\ga\Psi}\De_k^\h\p_y^2u(t)\bigr\|_{L^\infty_{\rm v}(L^2_\h)}\lesssim \bigl\|e^{\Psi}\De_k^\h\p_y^2G(t)\bigr\|_{L^\infty_{\rm v}(L^2_\h)};  \label{S2eq20c1}\\
\w{t}^{-1}\|e^{\ga\Psi}\De_k^\h\p_y(y\vf)_\Phi(t)\|_{L^2_+}+\w{t}^{-\f12}\|e^{\ga\Psi}\De_k^\h\p_y^2(y\vf)_\Phi(t)\|_{L^2_+}\label{S7eq20d}\\
\nonumber
\qquad+\w{t}^{-\f34}\|e^{\ga\Psi}\De_k^\h\p_y(y\vf)_\Phi(t)\|_{L^\infty_{\rm v}(L^2_\h)}+\w{t}^{-\f14}\|e^{\ga\Psi}\De_k^\h\p_y^2(y\vf)_\Phi(t)\|_{L^\infty_{\rm v}(L^2_\h)}\lesssim \|e^\Psi\De_k^\h \p_yG_\Phi(t)\|_{L^2_+};\\
\w{t}^{-\f12}\|e^{\ga\Psi}\De_k^\h\p_y^3(y\vf)(t)\|_{L^2_+}\lesssim \|e^\Psi\De_k^\h \p_y^2G(t)\|_{L^2_+}\label{S7eq20e}.
 \end{gather}
\end{subequations}
}
\end{lemma}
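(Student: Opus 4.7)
The plan is to exploit the first-order ODE structure encoded in \eqref{S1eq3a}. Since $u=\p_y\vf$, the relation $G=u+\f{y}{2\w{t}}\vf$ reads
\beno
\p_y\vf+\f{y}{2\w{t}}\vf=G,\qquad \vf|_{y=0}=0,
\eeno
with integrating factor $e^{y^2/(4\w{t})}=e^{2\Psi}$. The operators $\De_k^\h$ and $e^{\Phi(t,D_x)}$ both commute with $\p_y$ and with multiplication by $y/\w{t}$, so the ODE is preserved, yielding
\beno
\De_k^\h\vf_\Phi(t,x,y)=\int_0^y e^{2\Psi(y')-2\Psi(y)}\De_k^\h G_\Phi(t,x,y')\,dy',\qquad \De_k^\h u_\Phi=\De_k^\h G_\Phi-\f{y}{2\w{t}}\De_k^\h\vf_\Phi.
\eeno

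To establish \eqref{S2eq20a} it suffices to bound $\bigl\|e^{\ga\Psi}\f{y}{\w{t}}\De_k^\h\vf_\Phi\bigr\|_{L^2_+}$ by $\|e^\Psi\De_k^\h G_\Phi\|_{L^2_+}$, which in turn follows from Schur's test for the integral operator with kernel $K(y,y')=\f{y}{\w{t}}e^{(\ga-2)\Psi(y)+\Psi(y')}\mathbf{1}_{y'\leq y}$ (mapping $e^\Psi G\mapsto e^{\ga\Psi}\f{y}{\w{t}}\vf$). Using the identity $\f{y}{\w{t}}=4\p_y\Psi$, both Schur integrals $\int_0^y K(y,y')\,dy'$ and $\int_{y'}^\infty K(y,y')\,dy$ can be evaluated in closed form, and each is bounded by a constant multiple of $e^{(\ga-1)\Psi}\leq 1$ precisely because $\ga<1$; this strict inequality is the Gaussian room exploited throughout. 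For \eqref{S2eq20b}--\eqref{S2eq20c1} I differentiate the $\vf$-ODE once or twice to obtain
\beno
\p_y u+\f{y}{2\w{t}}u=\p_yG-\f{\vf}{2\w{t}},\qquad \p_y(\p_yu)+\f{y}{2\w{t}}\p_yu=\p_y^2G-\f{u}{\w{t}},
\eeno
with $u|_{y=0}=0$ and $\p_y^2u|_{y=0}=0$ (the latter obtained by taking the trace of \eqref{eq: tu} at $y=0$). Each ODE has the same first-order structure and admits the same integrating-factor representation; the additional lower-order forcings $\vf/\w{t}$ and $u/\w{t}$ are re-expressed via $\vf=\int_0^yu\,dy'$ and absorbed by one integration by parts against $e^{2\Psi}$ before re-applying Schur's test. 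The $L^\infty_{\rm v}(L^2_\h)$ bound \eqref{S2eq20c1} then follows from the one-dimensional Sobolev embedding $H^1(\R_+)\hookrightarrow L^\infty(\R_+)$ in $y$ combined with the previously derived weighted $L^2$-estimate on $\p_y^3G$.

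For the $(y\vf)$-estimates \eqref{S7eq20d}--\eqref{S7eq20e} I apply the Leibniz rule, e.g.\ $\p_y(y\vf)=\vf+yu$ and $\p_y^2(y\vf)=2u+y\p_yu$, so that each resulting summand is of the form $y^j\p_y^\ell u$ or $y^j\vf$ and can be controlled by the previous weighted estimates together with the elementary bound $y^je^{\ga\Psi(y)}\lesssim \w{t}^{j/2}e^{\Psi(y)}$ (valid since $\ga<1$). This trade between a factor of $y$ and a factor of $\sqrt{\w{t}}$ produces the prefactors $\w{t}^{-1}$, $\w{t}^{-1/2}$, $\w{t}^{-3/4}$, and $\w{t}^{-1/4}$ displayed in the statement. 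The main obstacle I anticipate is in Step 2: verifying uniformly in $k$ that the Schur integrals actually absorb the lower-order forcings $\vf/\w{t}$ and $u/\w{t}$ which appear upon differentiating the ODE, without losing the sharp Gevrey weight $e^\Psi$ on the right-hand side.
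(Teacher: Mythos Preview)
Your Schur-test treatment of \eqref{S2eq20a} is fine and equivalent in substance to the argument in \cite{PZ5}. The remaining parts, however, have genuine gaps.

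First, you invoke $\p_y^2 u|_{y=0}=0$ ``by taking the trace of \eqref{eq: tu}'', but Lemma~\ref{lem2.2} does not assume that $u$ solves Prandtl; it only assumes the relations $u=\p_y\vf$, $G=u+\tfrac{y}{2\t}\vf$ and decay at $+\infty$. Moreover, the datum your second ODE actually needs is $\p_y u(0)$ (which equals $\p_y G(0)$ and need not vanish), not $\p_y^2 u(0)$, so it is unclear what role this condition plays in your scheme.

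Second, your route to \eqref{S2eq20c1} via Sobolev embedding in $y$ yields a right-hand side involving $\|e^\Psi\De_k^\h\p_y^3G\|_{L^2_+}$, not $\|e^\Psi\De_k^\h\p_y^2G\|_{L^\infty_{\rm v}(L^2_\h)}$ as stated; these quantities are not comparable.

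Third, the ``main obstacle'' you anticipate---absorbing the lower-order forcings $\vf/\t$ and $u/\t$ while keeping the sharp weight $e^\Psi$ on the right---is real, and you do not resolve it.

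The paper sidesteps all three issues by a different and simpler mechanism. From $G=\p_y\vf+\tfrac{y}{2\t}\vf$ one writes \emph{closed algebraic formulas} for $\p_y^2u$, $\p_y^2(y\vf)$, $\p_y^3(y\vf)$ in terms of $G,\p_yG,\p_y^2G,\vf,\p_y\vf$ (pure Leibniz, no integral representation). Every lower-order piece is then controlled by the weighted Poincar\'e-type inequality
\beno
\|e^{\kappa\Psi}f\|_{L^p_{\rm v}}\lesssim \t^{\f12}\|e^{\kappa\Psi}\p_y f\|_{L^p_{\rm v}}\qquad(p=2,\infty,\ 0<\kappa\leq 1),
\eeno
obtained from $f(y)=-\int_y^\infty\p_y f\,dz$ and Young's inequality using only the decay at $+\infty$. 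Iterating this trades each missing $\p_y$ for a factor $\t^{1/2}$, and powers of $y$ are absorbed via $\|y\,e^{(\ga-\kappa)\Psi}\|_{L^\infty_{\rm v}}\lesssim\t^{1/2}$ for $\ga<\kappa<1$. This gives \eqref{S2eq20c}, \eqref{S2eq20c1} (the $L^\infty_{\rm v}$ version of the same inequality, no Sobolev embedding), the second part of \eqref{S7eq20d}, and \eqref{S7eq20e} directly, without any boundary data beyond $\vf(0)=0$.
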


\begin{proof} The estimates \eqref{S2eq20} have been presented in Lemma 3.2 of \cite{PZ5} except \eqref{S2eq20c}, \eqref{S2eq20c1}, \eqref{S7eq20e} and
\beq\label{S2eq13}
\w{t}^{-\f12}\|e^{\ga\Psi}\De_k^\h\p_y^2(y\vf)_\Phi(t)\|_{L^2_+}+
\w{t}^{-\f14}\|e^{\ga\Psi}\De_k^\h\p_y^2(y\vf)_\Phi(t)\|_{L^\infty_{\rm v}(L^2_\h)}\lesssim \|e^\Psi\De_k^\h \p_yG_\Phi(t)\|_{L^2_+}.
\eeq

The following fact will be used frequently: for any $\kappa\in(0,1]$ and function $f$, which
decays to zero sufficiently fast as $y$ approaching to $\infty,$ it holds that
 \begin{align}
 \|e^{\kappa\Psi} f_\Phi\|_{L^2_\v}\leq C\t^\f12\|e^{\kappa\Psi} \pa_y f_\Phi\|_{L^2_\v},\label{est: (f,pa_y f)}\\
 \|e^{\kappa\Psi} f_\Phi\|_{L^\infty_\v}\leq C\t^\f12\|e^{\kappa\Psi} \pa_y f_\Phi\|_{L^\infty_\v}.\label{est: (f,pa_y f)1}
 \end{align}
 Indeed we observe that
 \begin{align*}
 \|e^{\kappa\Psi} f_\Phi\|_{L^2_\v}=\bigl\|e^{\kappa\Psi} \int_y^\infty \pa_yf_\Phi dy'\bigr\|_{L^2_\v}\leq
 \bigl\| \int_y^\infty e^{-\f{\kappa(y-y')^2}{8\t}}e^{\kappa\Psi}\pa_yf_\Phi dy'\bigr\|_{L^2_\v}.
 \end{align*}
 Applying Young's inequality leads to \eqref{est: (f,pa_y f)}. So does \eqref{est: (f,pa_y f)1}.

On the other hand, notice that $u=\p_y\vf$ and $\vf|_{y=0}=0,$ we deduce from \eqref{S1eq3a} that
\beq \label{S1eq2}
\vf(t,x,y)=e^{-\f{y^2}{4\w{t}}}\int_0^ye^{\f{(y')^2}{4\w{t}}}G(t,x,y')\,dy',
\eeq
which implies
\begin{subequations} \label{S1eq34q}
\begin{gather} \label{S1eq3}
u=\pa_y\vf=-\f{y}{2\w{t}}e^{-\f{y^2}{4\w{t}}}\int_0^ye^{\f{(y')^2}{4\w{t}}}G(t,x,y')\,dy'+G,\\
\label{S1eq4}
\pa_yu=-\f{y}{2\w{t}}G+\pa_yG(t,y)
+\Bigl(-\f{1}{2\w{t}}+\f{y^2}{4\w{t}^2}\Bigr)e^{-\f{y^2}{4\w{t}}}\int_0^ye^{\f{(y')^2}{4\w{t}}}G(t,x,y')\,dy',
\end{gather}
\end{subequations}
and
\begin{subequations} \label{S1eq34p}
\begin{gather}
 \label{eq: pa_y^2 u}
\pa_y^2u=-\f{y}{2\t}\pa_y G-\f{1}{2\t}G+\pa_y^2 G+\f{y}{2\t^2}\vf-\f{1}{2\t}\bigl(1-\f{y^2}{2\t}\bigr)\pa_y\vf,\\
 \label{S2eq10}
\p_y^2(y\vf)=-\f{y}{\t}\vf+\bigl(1-\f{y^2}{2\t}\bigr)\pa_y\vf+G+y\pa_yG,\\
 \label{eq: pa_y^2(y vf)}
\p_y^3(y\vf)=-\f{y}{\t}\pa_y\vf-\f{1}{\t}\vf+\bigl(1-\f{y^2}{2\t}\bigr)\pa_y^2\vf-\f{y}{\t}\pa_y \vf+2\pa_yG+y\pa_y^2G.
\end{gather}
\end{subequations}

Let us  prove \eqref{S2eq20c} first. Indeed it follows from \eqref{est: (f,pa_y f)} that
\begin{align*}
\t^{-1}\|e^{\gamma\Psi}y\De_k^\h\pa_y G_\Phi\|_{L^2_{+}}\leq \t^{-1}\|e^{-(1-\gamma)\Psi}y\|_{L^\infty_\v}\|e^\Psi \De_k^\h\pa_y G_\Phi\|_{L^2_{+}}\leq C\|e^\Psi \De_k^\h\pa_y^2 G_\Phi\|_{L^2_{+}}.
\end{align*}
Along the same line, due to $u=\pa_y\vf,$ we deduce from \eqref{S2eq20a} and \eqref{est: (f,pa_y f)} that
\begin{align*}
\t^{-1}\|e^{\gamma\Psi}\De_k^\h G_\Phi\|_{L^2_{+}}\leq&C\t^{-\f12}\|e^{\gamma\Psi}\De_k^\h \pa_yG_\Phi\|_{L^2_{+}}\leq C\|e^{\Psi}\De_k^\h \pa_y^2G_\Phi\|_{L^2_{+}},\\
\t^{-2}\|e^{\gamma\Psi}y\De_k^\h \vf_\Phi\|_{L^2_{+}}\leq& C\t^{-\f32}\|e^{\gamma_1\Psi}\De_k^\h \vf_\Phi\|_{L^2_{+}}\leq C\t^{-1}\|e^{\gamma_1\Psi}\De_k^\h \pa_y\vf_\Phi\|_{L^2_{+}}\\
\leq&C\t^{-1}\|e^{\Psi}\De_k^\h G_\Phi\|_{L^2_{+}}\leq C\|e^{\Psi}\De_k^\h \pa_y^2G_\Phi\|_{L^2_{+}},\end{align*}
and
\begin{align*}
\t^{-1}\|e^{\gamma\Psi}\bigl(1-\f{y^2}{2\t}\bigr)\De_k^\h\pa_y\vf_\Phi\|_{L^2_{+}}\leq
&C\t^{-1}\|e^{\gamma_1\Psi}\De_k^\h \pa_y\vf_\Phi\|_{L^2_{+}}\leq C\|e^{\Psi}\De_k^\h \pa_y^2G_\Phi\|_{L^2_{+}},
\end{align*}
for $\gamma_1\in(\gamma,1).$

By summarizing the above estimates and  using \eqref{eq: pa_y^2 u}, we achieve \eqref{S2eq20c}. Estimate \eqref{S2eq20c1} can be proved similarly by using \eqref{est: (f,pa_y f)1}

On the other hand, for any $\ga_1\in (\ga,1),$ we get, by applying \eqref{est: (f,pa_y f)} and \eqref{S2eq20b}, that
\begin{align*}
\t^{-1}\|e^{\ga\Psi}y\De_k^\h\vf_\Phi(t)\|_{L^2_+}\lesssim &\t^{-\f12}\|y\t^{_\f12}e^{(\ga-\ga_1)\Psi}\|_{L^\infty_\v}\|e^{\ga_1\Psi}\De_k^\h \vf_\Phi(t)\|_{L^2_+}\\
\lesssim &\|e^{\ga_1\Psi}\De_k^\h u_\Phi(t)\|_{L^2_+}\\
\lesssim &\t^{\f12}\|e^{\ga_1\Psi}\De_k^\h \pa_y u_\Phi(t)\|_{L^2_+}\lesssim \t^{\f12}\|e^{\Psi}\De_k^\h \pa_y G_\Phi(t)\|_{L^2_+}.
\end{align*}
Along the same line, we have
\begin{align*}
\bigl\|e^{\ga\Psi}\bigl(1-\f{y^2}{2\t}\bigr)\De_k^\h\pa_y\vf_\Phi(t)\bigr\|_{L^2_+}
\lesssim &\bigl\|\bigl(1-\f{y^2}{2\t}\bigr) e^{(\ga-\ga_1)\Psi}\bigr\|_{L^\infty_\v}\|e^{\ga_1\Psi}\De_k^\h u_\Phi(t)\|_{L^2_+}\\
\lesssim &\t^{\f12}\|e^{\Psi}\De_k^\h \pa_yG_\Phi(t)\|_{L^2_+}.
\end{align*}
Finally, it is easy to observe that
\begin{align*}
\|e^{\ga\Psi}y\De_k^\h\pa_y G_\Phi(t)\|_{L^2_+}\lesssim &\t^{\f12}\bigl\|{y}{\t^{-\f12}} e^{(\ga-1)\Psi}\bigr\|_{L^\infty_\v}
\|e^{\Psi}\De_k^\h \pa_y G_\Phi(t)\|_{L^2_+}\\
\lesssim &\t^{\f12}\|e^{\Psi}\De_k^\h \pa_yG_\Phi(t)\|_{L^2_+}.
\end{align*}

In view of \eqref{S2eq10}, by summarizing the above estimates, we achieve
\beno
\w{t}^{-\f12}\|e^{\ga\Psi}\De_k^\h\p_y^2(y\vf)_\Phi(t)\|_{L^2_+}\lesssim \|e^\Psi\De_k^\h \p_yG_\Phi\|_{L^2_+}.
\eeno
The second term in \eqref{S2eq13} and estimate \eqref{S7eq20e} can be proved similarly, we omit the details here.
This completes the proof of the lemma.
\end{proof}

\setcounter{equation}{0}
\section{Sketch of the proof to Theorem \ref{th1.1}}

In this section, we shall sketch the proof of Theorem \ref{th1.1}. Let us start with the new formulation of the problem.

\subsection{New formulation of the problem}

By applying Bony's decomposition \eqref{Bony} in the horizontal variable, we write
\begin{align*}
&u\pa_x u=T^\h_u\pa_x u+T_{\pa_x u}^\h u+R^\h(u,\pa_x u),\\
&v\pa_y u=T_{\pa_y u}^\h v+T_{v}^\h\pa_y u+R^\h(v,\pa_y u).
\end{align*}
Applying the operator:  $e^{\Phi(t,D_x)},$ to the above equalities yields
\beq \label{S1eq2a}
\begin{split}
&(u\pa_x u)_\Phi=T_u^\h \pa_x u_\Phi+\f{\d(t)}2T_{D_xu}^\h\Lambda(D_x)\pa_x u_\Phi+\bigl(T_{\pa_xu}^\h u+R^\h(u,\pa_x u)\bigr)_\Phi+f_1,\\
&(v\pa_y u)_\Phi=T_{\pa_y u}^\h v_\Phi+T_v^\h\pa_y u_\Phi+\f{\d(t)}2T_{\pa_yD_xu}^\h\Lambda(D_x)v_\Phi+R^\h(v,\pa_y u)_\Phi+f_2,
\end{split} \eeq
where the remainder terms $f_1, f_2$ are defined by
\beq \label{eq: (f_1,f_2)}
\begin{split}
&f_1\eqdefa (T_u^\h\pa_x u)_\Phi-T_u^\h\pa_x u_\Phi-\f{\d(t)}2T^\h_{D_xu}\Lambda(D_x)\pa_x u_\Phi,\\
&f_2\eqdefa (T^\h_{\pa_y u}v)_\Phi-T^\h_{\pa_y u}v_\Phi-\f{\d(t)}2T^\h_{\pa_yD_xu}\Lambda(D_x)v_\Phi+(T^\h_{v}\pa_y u)_\Phi-T^\h_v\pa_yu_\Phi.
\end{split} \eeq
Here and in the sequel, we always denote $D_x\eqdefa\f1{i}\pa_x.$

In view of \eqref{S1eq2a}, we get, by applying the operator $e^{\Phi(t,D_x)}$ to the system  \eqref{eq: tu}, that
\begin{align}\label{eq: tu_Phi}
\left\{
\begin{aligned}
&\pa_t u_\Phi+\la \dot{\theta}\D^\f12 u_\Phi+T^\h_u\pa_x u_\Phi+T^\h_v\pa_y u_\Phi+T^\h_{\pa_y u}v_\Phi\\
&\qquad+\f{\d(t)}2\bigl(T^\h_{D_xu}\Lambda(D_x)\pa_x u_\Phi+T^\h_{\pa_yD_xu}\Lambda(D_x)v_\Phi\bigr)-\pa_y^2 u_\Phi=f,\\
&\pa_x u_\Phi+\pa_y v_\Phi=0,\\
&(u_\Phi,v_\Phi)|_{y=0}=0,\quad \lim_{y\to +\infty}u_\Phi=0,\\
&u_\Phi|_{t=0}=e^{\d\D^\f12}u_0,
\end{aligned}
\right.
\end{align}
where the source term $f$ is given by
\begin{align}\label{eq: f_3}
f=-f_1-f_2-f_3 \with f_3\eqdefa \bigl(T^\h_{\pa_xu} u+R^\h(u,\pa_xu)+R^\h(v,\pa_y u)\bigr)_\Phi.
\end{align}

 To construct the global small solution of \eqref{eq: tu} with Gevery 2 regularity in $x$ variable, we need first to
 modify the definitions of the  auxiliary functions $H,\phi,$ which  are first introduced by Dietert and G\'erard-Varet in \cite{DG19}.
 In order to do it, let $\Psi(t,y)=\f{y^2}{8\t}$ and $\hbar(t)$ be a positive and non-decreasing smooth function, we define  the operator
\begin{align}\label{defcL}
\mathcal{L}\eqdefa\pa_t+\la\dot\th(t)\D^\f12+T^\h_u\pa_x+T^\h_v\pa_y+\f{\d(t)}2T^\h_{D_xu}\Lambda(D)\pa_x-\pa_y^2,
\end{align}
and its adjoint operator in  $L^2(\hbar(t)e^{2\Psi})$ is given by
\beq\label{defcL*}
\begin{split}
\mathcal{L}^*\eqdefa&-\pa_t-\f{\hbar'}{\hbar}+\f{y^2}{4\t^2}+\la\dot\th(t)\D^\f12-\pa_x(T^\h_u)^*\\
&-\bigl(\pa_y+\f{y}{2\t}\bigr)(T^\h_v)^*-
\f{\d(t)}2\Lambda(D)\pa_x(T^\h_{D_xu})^*-\bigl(\pa_y+\f{y}{2\t}\bigr)^2.
\end{split}\eeq

We now introduce the function $H$ via
\begin{align}\label{eq: H}
\left\{
\begin{aligned}
&\mathcal{L}\int_y^\infty H dz=\dot\th(t) \int_y^\infty u_\Phi dz, \\
&\pa_y H|_{y=0}=0,\quad H|_{y\to +\infty}=0,\\
&H|_{t=0}=0,
\end{aligned}
\right.
\end{align}
and $\phi$ by
\begin{align}\label{eq: phi}
\left\{
\begin{aligned}
&\mathcal{L}^*\phi=\dot\th(t) H, \\
&\phi|_{y=0}=0,\quad \phi|_{y\to +\infty}=0,\\
&\phi|_{t=T}=0.
\end{aligned}
\right.
\end{align}
The existence of $H$ and $\phi$ will be guaranteed by the {\it a priori} estimates presented in Proposition \ref{S1prop1}.

We have the following remark concerning the definitions of $H$ and $\phi:$

\begin{itemize}

\item[(1)]
 The reason why we need the function $\hbar(t)$ in  the adjoint operator $\mathcal{L}^\ast$ given by \eqref{defcL*}
is for the purpose of deriving the decay estimates of the solutions $H, \phi$ and $u.$ We notice that $\hbar(t)=1$ in
the corresponding definition in \cite{DG19}.

\item[(2)] In \eqref{eq: H} and \eqref{eq: phi},
we define $H$ and $\phi$ via  $\int_y^\infty H\,dz$ and $\int_y^\infty \phi\,dz$ (instead of $\int_0^y H\,dz$ and $\int_0^y \phi\,dz$
 as in \cite{DG19}) is to make the solutions $H$ and $\phi$ decay faster as $y$ approaching
to $\infty.$

\item[(3)] We have the additional $\dot \th(t)$ before  $\int_y^\infty u_\Phi\,dz$ in \eqref{eq: H} and $H$ in \eqref{eq: phi}
is for the purpose of  controlling the Gevery radius of the solutions $H$ and $\phi$ simultaneously with their norms.
 This idea   goes back to
\cite{Ch04} where Chemin introduced a tool to  make analytical type estimates and
controlling the size of the analytic radius simultaneously to classical Navier-Stokes system. It was used in the context of
anisotropic Navier-Stokes system \cite{CGP} ( see also \cite{LnZ1,PZ5, PZZ2, mz1, mz2,  ZHZ}), which implies  the global
well-posedness of three dimensional Navier-Stokes system with a
class of ``ill prepared data", which is slowly varying in the
vertical variable, namely of the form $\e x_3,$  and the $B^{-1}_{\infty,\infty}(\R^3)$
norm of which blows up as the small parameter goes to zero.

\end{itemize}

Applying  $\pa_y$  to the first equation of \eqref{eq: H} yields
\begin{align}
\dot\th(t) u_\Phi&=\mathcal{L}H-T^\h_{\pa_yu}\pa_x\int_y^\infty Hdz+T^\h_{\pa_y v}H-\f{\d(t)}2T^\h_{\pa_yD_xu}\Lambda(D_x)\pa_x\int_y^\infty Hdz,\label{tu_Phi}
\end{align}
Whereas by
taking $\pa_x$ to the first equation of \eqref{eq: H}  and using the fact $v=\int_y^\infty\pa_xu dz,$ we find
\beq \label{v_Phi}
\begin{split}
\dot\th(t)v_\Phi=&\mathcal{L}\pa_x\int_y^\infty H\,dz+T^\h_{\pa_xu}\pa_x\int_y^\infty H\,dz\\
&-T^\h_{\pa_x v}H+\f{\d(t)}2T^\h_{\pa_xD_xu}\Lambda(D_x)\pa_x\int_y^\infty H\,dz.
\end{split} \eeq

\subsection{Ansatz} We shall use the conjugate method to derive the Weighted Gevrey estimates of $H$ and $\phi$ in Section \ref{Sect4}.
In the process of the estimates,  we shall use the formulas \eqref{tu_Phi} and \eqref{v_Phi}, and
thus the time derivative of $\dot\th(t)$ gets involved in. This makes the evolution equations for $\th(t)$ as in all the
previous references like in \cite{PZ5}, which is defined via
 \begin{equation}\label{open}
 \quad\left\{\begin{array}{l}
\displaystyle \dot{\th}(t)=\w{t}^\f14\|e^\Psi \p_yG_\Phi(t)\|_{B^{\f{1}2,0}},\\
\displaystyle \th|_{t=0}=0.
\end{array}\right.
\end{equation}
to be impossible here.

Fortunately, we observe that what we really used in \cite{PZ5} is the time decay estimate of $\|e^\Psi \p_yG_\Phi(t)\|_{B^{\f{1}2,0}}$. Motivated by this observation,
for some $\beta>1$ which will be fixed in Subsection \ref{Sub3.4}, we define
 \beq \label{S3eq-1}
\left\{
\begin{aligned}
&\dot\th(t)=\e^{\f12}\t^{-\beta},\\
&\th(0)=0.\end{aligned}
\right.\eeq
Let $G$ and $T^\ast$ be determined  respectively by \eqref{S1eq3a} and \eqref{1.8a}.
 Let $\e>0,$ $\ga_0\in \bigl(1,\f54\bigr)$  and a large enough constant $C,$ we define
\beq \label{assume: 1}
\begin{split}
T^\star\eqdefa \sup\Bigl\{\ t<T^\ast, \ &\|G_\Phi(t)\|_{H^{4,0}_\Psi}+\t^\f12\|\pa_yG_\Phi(t)\|_{H^{3,0}_\Psi}\\
&+\t\|\pa_y^2G(t)\|_{H^{3,0}_\Psi}+\t^{\f32}\|\pa_y^3G(t)\|_{H^{2,0}_\Psi}\leq C\e\t^{-\ga_0} \ \Bigr\}.
\end{split}
\eeq

\subsection{The key {\it a priori} estimates} In this subsection, we shall present the {\it a priori}
estimates used in the proof of Theorem \ref{th1.1}.

In Section \ref{Sect4}, we shall prove the following proposition concerning the {\it a priori} estimates of $H$ and $\phi.$

\begin{prop}\label{S1prop1}
{\sl Let $H$ and $\phi$ be  smooth enough solutions of \eqref{eq: H} and \eqref{eq: phi} respectively, which decay to zero
sufficiently fast as $y$ approaching to $\infty.$ Then there exist $c_0,\e_1\in(0,1)$ and $\la_1>0$ so that for
$\e\leq\e_1,$ $\eta\geq \e^{\f32},$ and $\la \geq \la_1,$ and  for any $t<T\leq T^\star,$ there hold
\beq\label{S3eq2a}
\begin{split}
\|\w{t}^{\f{1-\eta}4}&\phi(t)\|_{H^{\f{29}{4},0}_\Psi}^2+\la
\int_t^T\dot\th(t')
\|\w{t'}^{\f{1-\eta}4}\phi(t')\|_{H^{\f{15}{2},0}_\Psi}^2\,dt'\\
&+c_0\eta\int_t^T\|\w{t'}^{\f{1-\eta}4} \pa_y\phi(t')\|_{H^{\f{29}{4},0}_\Psi}^2\,dt'
  \leq \int_t^T\dot\th(t')\|\w{t'}^{\f{1-\eta}4} H(t')\|_{H^{7,0}_\Psi}^2\,dt',
\end{split}
\eeq
and
\beq \label{S3eq10}
\begin{split}
\|\w{T}^{\f{1-\eta}4}& H(T)\|_{H^{\f{27}{4},0}_\Psi}^2+\la\int_0^T\dot\th(t)\|\w{t}^{\f{1-\eta}4}  H(t)\|_{H^{7,0}_\Psi}^2\,dt\\
&+c_0\eta\int_0^T\Bigl(\|\w{t}^{\f{1-\eta}4} \p_{y}H(t)\|_{H^{\f{27}{4},0}_\Psi}^2+\|\w{t}^{-\f{3+\eta}4} {y}H(t)\|_{H^{\f{27}{4},0}_\Psi}^2\Bigr)\,dt
\\
\leq& \|u_\Phi(0)\|_{H^{\f{25}{4}}_\Psi}^2
+\e^\f32\int_0^T\Bigl(\t^{-\gamma_0-\f14}\|\w{t}^{\f{1-\eta}4}u_\Phi\|_{H^{6,0}_\Psi}^2+\|\w{t}^{\f{1-\eta}4}\pa_y u_\Phi\|_{H^{\f{11}{2},0}_\Psi}^2 \Bigr)\,dt.
\end{split}
 \eeq}
 \end{prop}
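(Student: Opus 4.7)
The plan is to prove both estimates by the duality/conjugate energy method of Dietert--G\'erard-Varet, adapted to the Gevrey and time-weighted framework of \cite{PZ5}. The first design choice is the function $\hbar(t)$ in the definition \eqref{defcL*} of $\cL^\ast$: I take $\hbar(t)\eqdefa\w{t}^{\f{1-\eta}{2}}$, so that $\|\cdot\|_{L^2(\hbar e^{2\Psi})}^2=\|\w{t}^{\f{1-\eta}{4}}\cdot\|_{L^2(e^{2\Psi})}^2$ reproduces exactly the norm in \eqref{S3eq2a}--\eqref{S3eq10}, and the pair $-\p_t-\f{\hbar'}{\hbar}$ in $\cL^\ast$ reassembles, after integration by parts in $t$, into $-\f12\p_t\bigl(\hbar\|\cdot\|_{L^2(e^{2\Psi})}^2\bigr)$. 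The combination $\f{y^2}{4\t^2}-(\p_y+\f{y}{2\t})^2$ expands to $-\p_y^2-\f{y}{\t}\p_y-\f{1}{2\t}$, whose inner product with $\phi$ in $L^2(e^{2\Psi})$ simplifies after two integrations by parts to $\|\p_y\phi\|^2_{L^2(e^{2\Psi})}-\f{1}{4\t}\|\phi\|^2_{L^2(e^{2\Psi})}+\f{1}{8\t^2}\|y\phi\|^2_{L^2(e^{2\Psi})}$; Lemma~\ref{lem2.1} then forces this quantity to dominate $c_0\eta\|\p_y\phi\|^2_{L^2(e^{2\Psi})}$ and (for the $H$-estimate) the additional $c_0\eta\|y\phi/\t\|^2_{L^2(e^{2\Psi})}$.

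For \eqref{S3eq2a}, I apply $\De_k^\h$ to $\cL^\ast\phi=\dot\th H$, pair with $\De_k^\h\phi$ in $L^2(\hbar e^{2\Psi})$, and integrate backwards from $t$ to $T$ using $\phi|_{t=T}=0$. This yields: (a) the storage $\f12\hbar(t)\|\De_k^\h\phi(t)\|^2_{L^2(e^{2\Psi})}$; (b) the Gevrey dissipation $\la\int_t^T\dot\th\hbar\|\D^{\f14}\De_k^\h\phi\|^2\,dt'$ produced by the $\la\dot\th\D^{\f12}$ multiplier; (c) the $\p_y$-dissipation from the calculation above; (d) three transport-type contributions from $-\p_x(T^\h_u)^\ast$, $-(\p_y+\f{y}{2\t})(T^\h_v)^\ast$ and $-\f{\d(t)}2\Lambda(D_x)\p_x(T^\h_{D_xu})^\ast$; and (e) the source $\dot\th(H,\phi)_{L^2(\hbar e^{2\Psi})}$. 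The transport terms in (d) are estimated by Lemmas~\ref{lem: T_fg}--\ref{lem: com3}: the commutator between $e^\Phi$ and the paraproduct produces an extra prefactor $\d(t)$, and the a priori bound \eqref{assume: 1} on $G_\Phi$, converted via Lemma~\ref{lem2.2}, furnishes $\e\t^{-\ga_0}$-decay of the $u,v$-norms, making these terms absorbable either in the Gevrey dissipation (b) for $\la\geq\la_1$ or in the $\p_y$-dissipation (c) for $\e\leq\e_1$. The source in (e) is handled by Cauchy--Schwarz; its $\phi$-half is absorbed in (b). Multiplying by $2^{\f{29k}{2}}$ and summing over $k$ yields \eqref{S3eq2a}.

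For \eqref{S3eq10}, applying $\p_y$ to the first equation of \eqref{eq: H} gives the scalar equation \eqref{tu_Phi} for $H$. A \emph{forward} weighted energy estimate in $L^2(\hbar e^{2\Psi})$ at regularity $s=\f{27}{4}$ now produces $\f12\hbar(T)\|\De_k^\h H(T)\|^2_{L^2(e^{2\Psi})}$ at the final time plus the Gevrey and the full $\p_y$/$y\cdot/\t$ dissipations displayed in \eqref{S3eq10}. The extra paraproduct terms in \eqref{tu_Phi}, namely $T^\h_{\p_yu}\p_x\int_y^\infty H\,dz$, $T^\h_{\p_yv}H$ and the $\d(t)$-weighted contribution, are estimated by Lemmas~\ref{lem: T_fg}--\ref{lem: com3}, together with the vertical Hardy inequality \eqref{est: (f,pa_y f)} which converts $\|\int_y^\infty H\,dz\|$ into $\t^{\f12}\|H\|$. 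The source $\dot\th u_\Phi$ is split by Bony's decomposition \eqref{Bony}; its contribution against $H$ in $L^2(\hbar e^{2\Psi})$, bounded by Cauchy--Schwarz, yields precisely the right-hand side of \eqref{S3eq10}, with the $\e^{\f32}$ factor arising from the prescription $\dot\th=\e^{\f12}\t^{-\b}$ in \eqref{S3eq-1} combined with one power of $\e\t^{-\ga_0}$ coming from the a priori bound \eqref{assume: 1}.

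The hardest step is (d) above: orchestrating the three small parameters at our disposal --- the commutator prefactor $\d(t)$ from Lemmas~\ref{lem: com2}--\ref{lem: com3}, the $\e\t^{-\ga_0}$-decay of the a priori $G_\Phi$-norms, and the large parameter $\la$ --- so that every paraproduct and transport term is absorbable in the Gevrey, the $\p_y$, or the $y/\t$ dissipation, while carefully tracking powers of $\t$ so that the weight $\w{t}^{\f{1-\eta}{4}}$ balances consistently across all terms. The $\eta$-loss in the time decay rate is forced at this step, since $c_0\eta$ coincides with the largest coefficient of the $\p_y$-dissipation that Lemma~\ref{lem2.1} allows while keeping the negative $-\f{1}{4\t}\|\phi\|^2$ term absorbed; any attempt to remove this loss would conflict with the sharp constant in the weighted Hardy inequality \eqref{S2eq9a}.
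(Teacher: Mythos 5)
Your estimate for $\phi$ (eq.~\eqref{S3eq2a}) is in the right spirit: the paper likewise multiplies $\cL^\ast\phi=\dot\th H$ by $\hbar\phi$, integrates backward from the vanishing terminal data, and absorbs the transport terms into the Gevrey and $\p_y$-dissipations using the a priori decay from \eqref{assume: 1}, with $\hbar=\t^{(1-\eta)/2}$ supplying the time weight.

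The treatment of the $H$-estimate \eqref{S3eq10}, however, contains a genuine gap. You propose a \emph{direct} forward energy estimate on $H$ by pairing the scalar equation \eqref{tu_Phi} with $\hbar H$ at regularity $H^{27/4,0}_\Psi$. Track the derivative count in the paraproduct term $T^\h_{\pa_y u}\pa_x\int_y^\infty H\,dz$. The vertical Hardy inequality \eqref{est: (f,pa_y f)} converts $\|\int_y^\infty\pa_x H\,dz\|_{H^{27/4,0}_\Psi}$ into $\t^{1/2}\|\pa_x H\|_{H^{27/4,0}_\Psi}\simeq\t^{1/2}\|H\|_{H^{31/4,0}_\Psi}$ --- not into $\t^{1/2}\|H\|_{H^{27/4,0}_\Psi}$ as your sketch suggests. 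Paired with the other copy of $H$ in $H^{27/4,0}_\Psi$, you need $\|H\|_{H^{29/4,0}_\Psi}^2$ by interpolation, while the Gevrey dissipation only furnishes $\dot\th\|H\|_{H^{7,0}_\Psi}^2$. Since $\dot\th\D^{1/2}$ gains only a quarter $x$-derivative, the net loss of a half $x$-derivative coming from $\pa_x$ cannot be absorbed no matter how small $\e$ or how large $\la$; the time weights from \eqref{assume: 2} and $\beta\le\gamma_0+\tfrac14$ balance, but the regularity indices do not. The same obstruction afflicts the $\delta(t)$-weighted $\Lambda(D_x)\pa_x\int_y^\infty H\,dz$ term.

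This is precisely why the paper does not estimate $H$ directly. Instead, it takes the $H^{27/4,0}_\Psi$ inner product of the $u_\Phi$ equation \eqref{eq: tu_Phi} with $\hbar\phi$ (not with $H$), uses $\langle\cL u_\Phi,\phi\rangle=\langle u_\Phi,\cL^\ast\phi\rangle-\hbar(0)\langle u_\Phi(0),\phi(0)\rangle$ together with $\cL^\ast\phi=\dot\th H$, and then substitutes both \eqref{tu_Phi} and \eqref{v_Phi} into the resulting integrals. The outcome is \eqref{eq: LHS-1}--\eqref{eq: LHS-3}, and the key point, stated explicitly after \eqref{eq: LHS-3}, is that the two derivative-losing terms $-\langle T^\h_{\pa_yu}\pa_x\int_y^\infty H\,dz,H\rangle$ and $-\tfrac{\delta(t)}{2}\langle T^\h_{\pa_yD_xu}\Lambda(D_x)\pa_x\int_y^\infty H\,dz,H\rangle$ in \eqref{eq: LHS-1} \emph{cancel exactly} against the first terms of \eqref{eq: LHS-2} and \eqref{eq: LHS-3}. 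Everything that survives is of lower order (the terms $E_1$--$E_{13}$, estimated in Lemmas~\ref{S3lem1}--\ref{S3lem2}) and absorbable. Without this algebraic cancellation --- which your proposal does not use --- the estimate \eqref{S3eq10} cannot close.
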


 With Proposition \ref{S1prop1}, motivated by \cite{DG19} and \cite{PZ5}, we shall derive
  the  {\it a priori} decaying estimates of the solution $u$ to \eqref{eq: tu} in Subsection \ref{Sect5}.
  This will be the most crucial ingredient used in the proof of Theorem \ref{th1.1}.
  In order to close the estimate in \eqref{S4eq2} below, we shall derive the Gevery estimate of $\pa_yu$ in Subsection \ref{Sect6}.
  The main result states as follows:

 \begin{prop}\label{S1prop2}
{\sl  Let $u$ be smooth enough solution of \eqref{eq: tu} which decay to zero
sufficiently fast as $y$ approaching to $\infty.$ Then there exist $\e_2\in(0,1)$ and $\la_2>0$ so that for
$\e\leq\e_2,$ $\eta\geq \e^{\f32},$ $\la \geq \la_2$ and  for any $T\leq T^\star,$ there hold
\beq\label{S4eq2}
\begin{split}\|&\w{T}^{\f{1-\eta}4} u_\Phi(T)\|_{H^{\f{11}{2},0}_\Psi}^2
+\la\int_0^T\dot\th(t)\|\w{t}^{\f{1-\eta}4} u_\Phi(t)\|_{H^{\f{23}{4},0}_\Psi}^2\,dt\\
&+\f34\eta\int_0^T \|\w{t}^{\f{1-\eta}4} \pa_yu_\Phi(t)\|_{H^{\f{11}{2},0}_\Psi}^2\,dt
\leq \| e^{\d\D^\f12} u_0\|_{H^{\f{11}{2},0}_\Psi}^2+C\eta^{-1}\e\|\w{T}^{\f{1-\eta}4} H(T)\|_{H^{\f{27}{4},0}_\Psi}^2\\
&+\eta\e^\f14\int_0^T\|\w{t}^{\f{3-\eta}4}\pa_y^2 u_\Phi\|_{H^{5,0}_\Psi}^2\,dt+\eta\e^\f14\int_0^T\|\w{t}^{-\f{3+\eta}4}{y} H\|_{H^{\f{27}{4},0}_\Psi}^2\,dt\\
&+C\bigl(1+\la\e^\f12+\e^\f14\eta^{-1}\bigr)\int_0^T \dot\th(t)\Bigl(\|\w{t}^{\f{1-\eta}4} H\|_{H^{7,0}_\Psi}^2+\| \w{t}^{\f{3-\eta}4}\pa_yu_\Phi\|_{H^{\f{21}{4},0}_\Psi}^2\Bigr)\,dt,
\end{split} \eeq
and
\beq \label{S5eq6}
\begin{split}
&\|\t^{\f{3-\eta}{4}}\pa_yu_\Phi(T)\|_{H^{5,0}_\Psi}^2+\la
\int_0^T\dot\th(t)\|\t^{\f{3-\eta}{4}}\pa_yu_\Phi(t)\|_{H^{\f{21}{4},0}_\Psi}^2\,dt\\
&\qquad+\bigl(1-\f{\eta}{8}\bigr)\int_0^T\|\t^{\f{3-\eta}{4}}\pa_y^2u_\Phi(t)\|_{H^{5,0}_\Psi}^2\,dt\\
&\leq \|e^{\d\D^\f12}\pa_yu_0\|_{H^{5,0}_\Psi}^2+C\eta^{-1}\| e^{\D^\f12} u_0\|_{H^{\f{11}{2},0}_\Psi}^2+C\eta^{-2}\e\|\w{T}^{\f{1-\eta}4} H(T)\|_{H^{\f{27}{4},0}_\Psi}^2\\
&+\e^\f14\int_0^T\|\w{t}^{-\f{3+\eta}4}{y} H\|_{H^{\f{27}{4},0}_\Psi}^2\,dt+C\eta^{-1}\bigl(1+\la\e^\f12+\e^\f14\eta^{-1}
\bigr)
\int_0^T \dot\th(t)\|\w{t}^{\f{1-\eta}4} H\|_{H^{7,0}_\Psi}^2\,dt.
\end{split}
\eeq
}
\end{prop}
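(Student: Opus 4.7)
The plan is to derive both \eqref{S4eq2} and \eqref{S5eq6} via high-order weighted energy estimates directly on the equation \eqref{eq: tu_Phi} for $u_\Phi$, in the spirit of \cite{PZ5,PZZ2}, with the crucial novelty that the quadratic loss-of-derivative term which was tamed by the analytic smoothing in \cite{PZ5} must now be handled by the Dietert--G\'erard-Varet duality argument via the auxiliary functions $H$ and $\phi$ from \eqref{eq: H}--\eqref{eq: phi}, whose a priori bounds are already in place from Proposition \ref{S1prop1}.

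For \eqref{S4eq2}, I would take the $L^2_+$ inner product of \eqref{eq: tu_Phi} with the test function $\w{t}^{(1-\eta)/2}e^{2\Psi}\D^{11}u_\Phi$ and integrate from $0$ to $T$. The time derivative $\pa_t u_\Phi$ integrates to $\f12\|\w{T}^{(1-\eta)/4}u_\Phi(T)\|_{H^{11/2,0}_\Psi}^2$ plus a benign term from $\pa_t\w{t}^{(1-\eta)/2}$; the contribution $\pa_t e^{2\Psi}=-\f{ty^2}{4\w{t}^3}e^{2\Psi}$ is negative but, combined with the dissipation $\|\pa_y u_\Phi\|^2_{H^{11/2,0}_\Psi}$ coming from $-\pa_y^2 u_\Phi$ via integration by parts using $u_\Phi|_{y=0}=0$ and Lemma \ref{lem2.1} applied with $\kappa=1-\f\eta4$, yields the $\tfrac{3\eta}{4}\|\pa_y u_\Phi\|^2_{H^{11/2,0}_\Psi}$ coercive term on the LHS. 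The damping $\la\dot\th\D^{1/2}$ produces the second LHS term directly. The paraproducts $T^h_u\pa_x u_\Phi$ and $T^h_v\pa_y u_\Phi$ are symmetrized via Lemma \ref{lem: com1} and the skew-adjoint parts of $\pa_x$, $\pa_y$; the external pieces $T^h_{\pa_y u}v_\Phi$, the $\d(t)\Lambda(D_x)$ corrections, and the source terms $f_1, f_2$ in \eqref{eq: f_3} are bounded via Lemmas \ref{lem: T_fg}--\ref{lem: com3}, after exchanging $u_\Phi$ and $v_\Phi$ norms for $G_\Phi$-norms by Lemma \ref{lem2.2} and invoking the ansatz \eqref{assume: 1}. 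This produces the $(1+\la\e^{1/2}+\e^{1/4}\eta^{-1})\int\dot\th\|\pa_y u_\Phi\|^2_{H^{21/4,0}_\Psi}$ piece on the RHS, with the $\d(t)$ corrections absorbed into the Gevrey damping for $\la$ large.

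The genuine obstacle is the remaining contribution $f_3=(T^h_{\pa_x u}u+R^h(u,\pa_x u)+R^h(v,\pa_y u))_\Phi$: it carries one net horizontal derivative loss and cannot be closed by dissipation alone. Here I would invoke the conjugate trick of \cite{DG19}: in the pairing $(f_3\,|\,\w{t}^{(1-\eta)/2}e^{2\Psi}\D^{11}u_\Phi)_{L^2_+}$, substitute the identity \eqref{tu_Phi} for the outer $u_\Phi$, replacing it by $\dot\th^{-1}\mathcal{L}H$ plus the three paraproduct tails from \eqref{tu_Phi}. Then integrate by parts in space-time, moving $\mathcal{L}$ onto the rest of the integrand weighted by $\hbar(t) e^{2\Psi}$, and recognize the resulting test object as the solution $\phi$ of $\mathcal{L}^*\phi=\dot\th H$. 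The dangerous quadratic form collapses to an interior pairing $\int_0^T\dot\th (H,\cdot)\,dt$ plus a boundary-in-time contribution at $t=T$: the latter furnishes $C\eta^{-1}\e\|\w{T}^{(1-\eta)/4}H(T)\|^2_{H^{27/4,0}_\Psi}$ on the RHS of \eqref{S4eq2}, while the former furnishes $\int_0^T\dot\th\|\w{t}^{(1-\eta)/4}H\|^2_{H^{7,0}_\Psi}\,dt$. The main bookkeeping difficulty lies in the derivative counting: the substitution boosts $H$ by one full order (via $\mathcal{L}H$), locking $H$ at regularity $H^{27/4,0}$ and forcing the appearance of the $y$-weighted tail $\eta\e^{1/4}\int\|\w{t}^{-(3+\eta)/4}yH\|^2_{H^{27/4,0}_\Psi}\,dt$ coming from the Gaussian weight's $y$-multiplication, precisely matching the coercive norms controlled in Proposition \ref{S1prop1}.

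For \eqref{S5eq6} the scheme is identical, applied to the $\pa_y$-differentiated equation and tested with $\w{t}^{(3-\eta)/2}e^{2\Psi}\D^{10}\pa_y u_\Phi$. Two modifications arise. First, $\pa_t\w{t}^{(3-\eta)/2}$ has size $\w{t}^{-(1+\eta)/2}$ and produces a non-coercive source $\|\w{t}^{(1-\eta)/4}\pa_y u_\Phi\|^2_{H^{11/2,0}_\Psi}$, which is reabsorbed via \eqref{S4eq2} and explains the $C\eta^{-1}\|e^{\d\D^{1/2}}u_0\|^2_{H^{11/2,0}_\Psi}$ term on the RHS of \eqref{S5eq6}. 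Second, the commutator $[\pa_y,T^h_v]\pa_y u=T^h_{\pa_y v}\pa_y u=-T^h_{\pa_x u}\pa_y u$ reproduces a derivative loss of the same type as $f_3$ and is dealt with by the identical substitution trick via \eqref{tu_Phi} or \eqref{v_Phi}. The boundary condition $\pa_y^2 u|_{y=0}=0$, obtained by evaluating \eqref{eq: tu} at $y=0$, allows the extra integration by parts in $y$ needed for the heat term, and Lemma \ref{lem2.1} again furnishes the $(1-\f\eta8)$ dissipation factor on the LHS.
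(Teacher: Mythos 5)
Your high-level strategy is right --- weighted energy estimates, coercivity from Lemma~\ref{lem2.1}, and the Dietert--G\'erard-Varet substitution through $H$ --- and you predict the correct shape of the right-hand side of \eqref{S4eq2}. But two pieces of your mechanism are wrong. You misidentify the dangerous term: $f_3 = \bigl(T^\h_{\pa_x u}u + R^\h(u,\pa_xu) + R^\h(v,\pa_y u)\bigr)_\Phi$ carries \emph{no} net tangential-derivative loss. Lemma~\ref{lem: f} controls $\|f_3\|_{H^{s,0}_\Psi}$ by $\|u_\Phi\|_{H^{s,0}_\Psi}$ and $\|\pa_y u_\Phi\|_{H^{s-\f12,0}_\Psi}$, because in those paraproducts and remainders the high frequency sits on the subordinate factor. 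The genuinely dangerous term is $T^\h_{\pa_y u}v_\Phi$ in \eqref{eq: tu_Phi}: since $v_\Phi = \int_y^\infty\pa_x u_\Phi\,dz$, its pairing against $u_\Phi$ in $H^{\f{11}2,0}_\Psi$ (the term $A_4$ in the proof of Lemma~\ref{lem tu_1}) produces $C\e\t^{-(\gamma_0+\f14)}\|\sh u_\Phi\|_{H^{6,0}_\Psi}^2$, a half-derivative loss on each copy of $u_\Phi$ that cannot be absorbed by dissipation.

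Your description of the duality step is also off. After rewriting the bad integral as $\e\int\t^{-\gamma_0-\f14}\hbar\langle u_\Phi,u_\Phi\rangle_{H^{6,0}_\Psi}\,dt$ and replacing one copy of $u_\Phi$ by $\dot\th^{-1}\bigl(\mathcal{L}H - T^\h_{\pa_yu}\pa_x\int_y^\infty H\,dz + \cdots\bigr)$ via \eqref{tu_Phi}, integration by parts moves $\mathcal{L}$ from $H$ onto the \emph{other} copy of $u_\Phi$, giving $\langle H,\mathcal{L}^*u_\Phi\rangle$, and $\mathcal{L}^*u_\Phi$ is then computed explicitly from the equation \eqref{eq: tu_Phi} itself (this is exactly Lemma~\ref{S4lem2}, proved in Appendix~\ref{appb}). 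The function $\phi$ never enters this estimate; trying to ``recognize the resulting test object as $\phi$'' would fail because $\mathcal{L}^*u_\Phi$ is not $\dot\th H$. The adjoint solution $\phi$ is used only as the dual test function in Proposition~\ref{lem: H} (the a priori estimate for $H$), whose conclusion then feeds the $H$-norms into the right-hand side of \eqref{S4eq2}. Absent that correction, your sketch cannot produce the boundary-in-time term $C\eta^{-1}\e\|\w{T}^{\f{1-\eta}4}H(T)\|_{H^{\f{27}4,0}_\Psi}^2$ and the $\|\f{y}{\t}H\|$-tails as claimed, because these come precisely from $\mathcal{L}^*u_\Phi$ evaluated via the equation, not from any $\phi$-pairing.
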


In Subsection \ref{Sect7.1}, we shall derive the Gevery estimate of $G_\Phi.$ While the Gevery estimate of $\pa_yG_\Phi$
will be presented
 in Subsection \ref{Sect7.2}. These estimates are crucial to control the Gevery radius to the solution of \eqref{eq: tu}.
 More precisely,

 \begin{prop}\label{S1prop3}
 {\sl Let $G$ be determined by \eqref{S1eq3a}. Then if $\la\geq 2C\e^{\f12}(1+\eta^{-1}\e)$ and $\b\leq\ga_0-\f1{12},$ one has
\beq \label{S6eq4}
\begin{split}
\|\w{T}^{\f{5-\eta}{4}}&G_\Phi(T)\|_{H^{3,0}_\Psi}^2+
\la\int_0^T\dot\th(t)
\|\t^{\f{5-\eta}{4}} G_\Phi(t)\|_{H^{\f{13}{4},0}_\Psi}^2\,dt\\
&+\f{3}{4}\eta\int_0^T\|\t^{\f{5-\eta}{4}}\pa_yG_\Phi(t)\|_{H^{3,0}_\Psi}^2\,dt\\
\leq &\|G_\Phi(0)\|_{H^{4,0}_\Psi}^2+C\e^\f12\int_0^T\dot\th(t)\|\t^{\f{1-\eta}{4}}u_\Phi\|_{H^{\f{23}{4},0}_\Psi}^2\,dt,
\end{split}\eeq
and
\beq \label{S7eq4}
\begin{split}
\|\langle & T\rangle^\f{7-\eta}{4}\pa_yG_\Phi(T)\|_{H^{3,0}_\Psi}^2
+\la\int_0^T\dot\th(t)
\|\t^\f{7-\eta}{4} \pa_yG_\Phi(t)\|_{H^{\f{13}{4},0}_\Psi}^2\,dt\\
&+\bigl(1-\f\eta 4\bigr)\int_0^T\|\t^\f{7-\eta}{4}\pa_y^2G_\Phi(t)\|_{H^{3,0}_\Psi}^2\,dt\\
\leq&C\eta^{-1}(\|G_\Phi(0)\|_{H^{4,0}_\Psi}^2+\|\pa_yG_\Phi(0)\|_{H^{3,0}_\Psi}^2)
+C\eta^{-1}\e^\f12\int_0^T\dot\th(t)\|\t^{\f{1-\eta}{4}}u_\Phi\|_{H^{\f{23}{4},0}_\Psi}^2\,dt.
\end{split} \eeq
}
\end{prop}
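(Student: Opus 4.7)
My plan is to establish both \eqref{S6eq4} and \eqref{S7eq4} by weighted Gevrey energy estimates on the evolution equation for $G_\Phi\eqdefa e^{\Phi(t,D_x)}G$. Applying $e^{\Phi(t,D_x)}$ to the good-quantity equation \eqref{S1eq1} gives an evolution for $G_\Phi$ whose left-hand side contains the Gevrey dissipation $\la\dot\th\D^{\f12}G_\Phi$, the vertical diffusion $-\pa_y^2 G_\Phi$ and the damping $\t^{-1}G_\Phi$, while the right-hand side collects the transport terms $-(u\pa_x G)_\Phi-(v\pa_y G)_\Phi$, plus the two source terms $\f{1}{2\t}(v\pa_y(y\vf))_\Phi$ and $-\f{y}{\t}\bigl(\int_y^\infty\pa_yu\,\pa_x\vf\,dy'\bigr)_\Phi$. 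Localising in horizontal frequency via $\De_k^\h$, testing against $\t^{(5-\eta)/2}\De_k^\h G_\Phi\,e^{2\Psi}$ and summing in $k$ against the $H^{3,0}_\Psi$ weights is the route to \eqref{S6eq4}; for \eqref{S7eq4} I differentiate the $G_\Phi$-equation once in $y$ and perform the analogous estimate with weight $\t^{(7-\eta)/2}$.

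For the time-derivative identity of $\|\t^{(5-\eta)/4}\De_k^\h G_\Phi\|_{L^2_+(e^{2\Psi})}^2$, the favourable terms are the contribution of $\pa_t\bigl(\t^{(5-\eta)/2}\bigr)$ (positive for $t>0$), the Gevrey dissipation $\la\dot\th\|G_\Phi\|_{H^{13/4,0}_\Psi}^2$, and the diffusion $\|\pa_y G_\Phi\|_{L^2_+(e^{2\Psi})}^2$ produced by integration by parts on $-\pa_y^2 G_\Phi$ using $G|_{y=0}=0$. The cross term from $\pa_y e^{2\Psi}$, together with the negative contribution of $\pa_t\Psi=-y^2/(8\t^2)$ and the positive $\t^{-1}G_\Phi$ term of the PDE, is reorganised via Lemma \ref{lem2.1} to yield the coercive bound $\f34\eta\|\t^{(5-\eta)/4}\pa_y G_\Phi\|_{H^{3,0}_\Psi}^2$ on the left of \eqref{S6eq4}. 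The nonlinearities are handled by applying Bony's decomposition \eqref{Bony} to each of the four products above and estimating the resulting para-products and remainders via Lemmas \ref{lem: T_fg}--\ref{lem: com3}; the commutators between $e^\Phi$ and $T_a^\h$ supply an extra $\dot\th$ factor, which is precisely what allows the appearance of $\int\dot\th\|u_\Phi\|^2$ on the right-hand side. Lemma \ref{lem2.2} is the central device that converts norms of $u$, $v$, $\vf$ and $y\vf$ into norms of $G$ and $\pa_y G$ at the correct $\t$-weights. Using the smallness \eqref{assume: 1} for $G$ and its $y$-derivatives, each cross term splits by Young's inequality into a $\|G_\Phi\|$-piece absorbed by $\la\int\dot\th\|G_\Phi\|^2$ (which requires $\la\geq 2C\e^{1/2}(1+\eta^{-1}\e)$) and a $\|u_\Phi\|$-piece that survives as $C\e^{1/2}\int\dot\th\|\t^{(1-\eta)/4}u_\Phi\|_{H^{23/4,0}_\Psi}^2$; the restriction $\b\leq\ga_0-\f1{12}$ is used here to match the powers of $\t$ between the two sides after accounting for the decay $\|G\|\lesssim\e\t^{-\ga_0}$ and the factor $\t^{-\b}$ in $\dot\th$.

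For \eqref{S7eq4} the same scheme applies after a $y$-differentiation of the $G_\Phi$ equation. The key new point is the boundary contribution at $y=0$ from $-\int\pa_y^3 G_\Phi\,\pa_y G_\Phi\,e^{2\Psi}$, which produces $\pa_y^2 G_\Phi\cdot\pa_y G_\Phi\big|_{y=0}$. I would handle it by reading off the Prandtl equation \eqref{eq: tu} at $y=0$: $u|_{y=0}=v|_{y=0}=0$ forces $\pa_y^2 u|_{y=0}=\pa_t u|_{y=0}=0$, and evaluating \eqref{eq: pa_y^2 u} at $y=0$ (after cancelling $G|_{y=0}=\vf|_{y=0}=0$) gives $\pa_y^2 G|_{y=0}=\pa_y^2 u|_{y=0}=0$, so the boundary term drops. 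The new nonlinearities $\pa_y(u\pa_x G)=u\pa_x\pa_y G+\pa_y u\,\pa_x G$, $\pa_y(v\pa_y G)=-\pa_x u\,\pa_y G+v\pa_y^2 G$ and the $\pa_y$-derivatives of the two source terms are controlled exactly as before, now also invoking \eqref{S2eq20c} and \eqref{S7eq20e}. The main technical obstacle I anticipate is the term $v\pa_y^2 G$: since $v=-\int_y^\infty\pa_xu\,dy'$ is only anisotropically controlled, I must carefully split the coercive diffusion $\int\|\t^{(7-\eta)/4}\pa_y^2 G_\Phi\|_{H^{3,0}_\Psi}^2$ between absorption into the left-hand side (producing the $(1-\f\eta4)$ constant) and a Young split against the $u_\Phi$-dissipation made available by Proposition \ref{S1prop2}; once again $\la\geq 2C\e^{1/2}(1+\eta^{-1}\e)$ absorbs the $G$-parts, and the $\eta^{-1}$ prefactors on the right of \eqref{S7eq4} originate from these Young splits.
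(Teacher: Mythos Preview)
Your overall scheme is correct and matches the paper's: energy estimate on the $G_\Phi$--equation with time weight $\t^{(5-\eta)/2}$, coercivity from Lemma~\ref{lem2.1}, paraproduct lemmas for the nonlinearities, and Lemma~\ref{lem2.2} to trade $u,\vf,y\vf$ for $G$. Two points deserve correction, however.

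\textbf{The origin of $\beta\leq\gamma_0-\tfrac1{12}$.} The $u_\Phi$--term on the right of \eqref{S6eq4} does \emph{not} come from commutators $[e^\Phi,T_a^\h]$ (those only gain a $\d(t)$ factor and half a derivative, cf.\ Lemma~\ref{lem: com2}). It comes from the terms $T^\h_{\pa_yG}v_\Phi$, $\w{t}^{-1}T^\h_{\pa_y(y\vf)}v_\Phi$ and $\f{y}{\w{t}}\int_y^\infty T^\h_{\pa_yu}v_\Phi$, which after Lemma~\ref{lem: T_fg} leave $\|v_\Phi\|_{L^\infty_{\v,\Psi}(H^{15/4}_\h)}\lesssim\t^{1/4}\|u_\Phi\|_{H^{19/4,0}_\Psi}$. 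The paper then uses the interpolation
\[
\|u_\Phi\|_{H^{19/4,0}_{\f34\Psi}}\leq C\|u_\Phi\|_{H^{17/4,0}_{\f34\Psi}}^{2/3}\|u_\Phi\|_{H^{23/4,0}_{\f34\Psi}}^{1/3}
\leq C\t^{1/3}\|G_\Phi\|_{H^{17/4,0}_\Psi}^{2/3}\|\t^{-1}u_\Phi\|_{H^{23/4,0}_\Psi}^{1/3},
\]
which upgrades $H^{19/4}$ to $H^{23/4}$ at the cost of $\t^{1/3}$; combined with $\t^{-\gamma_0-1/4}$ this yields $\e^{1/2}\t^{\beta-\gamma_0+1/12}\dot\th$, and the constraint $\beta\leq\gamma_0-\tfrac1{12}$ is exactly what makes this bounded. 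Without this interpolation step your sketch does not explain why $H^{23/4}$ appears, nor where the $\tfrac1{12}$ comes from.

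\textbf{The closure of \eqref{S7eq4}.} You propose to split the diffusion $\int\|\t^{(7-\eta)/4}\pa_y^2G_\Phi\|^2$ against ``the $u_\Phi$--dissipation made available by Proposition~\ref{S1prop2}''. This is not how the paper proceeds, and it would create a circularity (Proposition~\ref{S1prop2} is proved independently, and its right-hand side itself contains $G$--quantities). The paper instead integrates the $\pa_y$--differentiated equation by parts in $y$ (using $\pa_y^2G|_{y=0}=0$ together with the fact that the nonlinear terms and $F$ vanish at $y=0$) so that the nonlinearities pair against $\pa_y^2G_\Phi$ \emph{without} being differentiated; after Young's inequality this produces on the right only $C\eta^{-1}\e^{3/2}\int\dot\th\|\t^{(5-\eta)/4}G_\Phi\|_{H^{4,0}_\Psi}^2$ and, from the $\hbar'$--term, $C\int\|\t^{(5-\eta)/4}\pa_yG_\Phi\|_{H^{3,0}_\Psi}^2$. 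Both are then bounded by the already established \eqref{S6eq4}, and this is where the $\eta^{-1}$ prefactors and the $\|\t^{(1-\eta)/4}u_\Phi\|_{H^{23/4,0}_\Psi}$ term on the right of \eqref{S7eq4} are inherited from. Your plan to expand $\pa_y$ on the nonlinearities would also work, but the term $v\pa_y^2G$ is not a genuine obstacle: it is absorbed by a small multiple of $\|\pa_y^2G_\Phi\|^2$ plus $C\eta^{-1}\e^{3/2}\dot\th\|\pa_yG_\Phi\|^2$, again closed via \eqref{S6eq4}.
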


In Section \ref{Sect8}, we shall derive the Sobolev estimates of $\pa_y^2G$ and $\pa_y^3G.$

\begin{prop}\label{S1prop4}
 {\sl Let $G$ be determined by \eqref{S1eq3a}. Then if $\la\geq 2C\e^{\f12}(1+\eta^{-1}\e)$ and $\b\leq 2\ga_0-\f1{2},$ one has
\beq\label{S11eq7}
\begin{split}
\|\w{T}^{\f{9-\eta}{4}}&\pa_y^2G(T)\|_{H^{3,0}_\Psi}^2
+(1-\f\eta 4)\int_0^T\|\t^{\f{9-\eta}{4}}\pa_y^3G(t)\|_{H^{3,0}_\Psi}^2\,dt\\
\leq&C\eta^{-1}\e^\f12\int_0^T\dot\th(t)\|\t^{\f{1-\eta}{4}}u_\Phi\|_{H^{\f{23}{4},0}_\Psi}^2\,dt\\
&+C\eta^{-1}\Bigl(\|G_\Phi(0)\|_{H^{4,0}_\Psi}^2+\|\pa_yG_\Phi(0)\|_{H^{3,0}_\Psi}^2+\|\pa_y^2G(0)\|_{H^{3,0}_\Psi}^2\Bigr),
\end{split}\eeq
and if moreover $\eta\geq \e^2,$
\beq\label{S8eq7pq}
\begin{split}
\|&\t^{\f{11-\eta}{4}}\pa_y^3G(T)\|_{H^{2,0}_\Psi}^2
+\bigl(1-\f\eta 4\bigr)\int_0^T\|\t^{\f{11-\eta}{4}}\pa_y^4G(t)\|_{H^{2,0}_\Psi}^2\,dt\\
&\quad\leq C\eta^{-1}\e^\f12\int_0^T\dot\th(t)\|\t^{\f{1-\eta}{4}}u_\Phi\|_{H^{\f{23}{4},0}_\Psi}^2\,dt\\
&\qquad+C\eta^{-1}\Bigl(\|G_\Phi(0)\|_{H^{4,0}_\Psi}^2+\|\pa_yG_\Phi(0)\|_{H^{3,0}_\Psi}^2+\|\pa_y^2G(0)\|_{H^{3,0}_\Psi}^2+\|\pa_y^3G(0)\|_{H^{2,0}_\Psi}^2\Bigr).
\end{split}\eeq
}
\end{prop}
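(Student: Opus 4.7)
The plan is to adapt the weighted Sobolev energy argument underlying Proposition \ref{S1prop3}, applied one and two orders of $y$-differentiation higher and without the Gevrey weight $e^{\Phi}$, since the norms on both sides of \eqref{S11eq7} and \eqref{S8eq7pq} carry no $\Phi$-subscript. I treat \eqref{S11eq7} first, then indicate the modifications for \eqref{S8eq7pq}.

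\textbf{Step 1 (Evolution equation and weighted energy identity).} Apply $\pa_y^2$ to \eqref{S1eq1} to obtain
\begin{equation*}
\pa_t\pa_y^2G-\pa_y^4G+\t^{-1}\pa_y^2G+\pa_y^2(u\pa_xG+v\pa_yG)-\f{1}{2\t}\pa_y^2\bigl(v\pa_y(y\vf)\bigr)+\f{1}{\t}\pa_y^2\Bigl(y\int_y^\infty\pa_yu\,\pa_x\vf\,dy'\Bigr)=0.
\end{equation*}
For each $0\le s\le 3$, pair with $\t^{(9-\eta)/2}e^{2\Psi}\pa_x^s\pa_y^2G$, integrate over $\R^2_+$, and sum in $s$. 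The time derivative produces $\f12\f{d}{dt}\|\t^{(9-\eta)/4}\pa_y^2G\|_{H^{3,0}_\Psi}^2$ plus a lower-order remainder of size $\lesssim\t^{-1}\|\t^{(9-\eta)/4}\pa_y^2G\|_{H^{3,0}_\Psi}^2$; the diffusion $-\pa_y^4G$ produces the dissipation $\|\t^{(9-\eta)/4}\pa_y^3G\|_{H^{3,0}_\Psi}^2$ together with Gaussian-weight corrections of type $\t^{-1}\|\pa_y^2G\|^2$ and $\|(y/\t)\pa_y^2G\|^2$; the favorable term $+\t^{-1}G$ in \eqref{S1eq1} contributes $+\t^{-1}\|\t^{(9-\eta)/4}\pa_y^2G\|_{H^{3,0}_\Psi}^2$ on the good side.

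\textbf{Step 2 (Recovering $1-\eta/4$ and absorbing lower-order terms).} Split the dissipation as $\|\pa_y^3G\|^2=(1-\eta/4)\|\pa_y^3G\|^2+(\eta/4)\|\pa_y^3G\|^2$, keeping the first piece for the left-hand side of \eqref{S11eq7}. Apply Lemma \ref{lem2.1} \eqref{S2eq9b} to the $\eta/4$-piece with $\kappa$ sufficiently close to $1$; the resulting $\f{\kappa\eta}{8\t}\|\pa_y^2G\|^2$ together with the $+\t^{-1}G$-contribution and the weight-correction $(y/\t)^2$-term absorbs both the time-weight error and all other $\t^{-1}$-type lower-order pieces generated at this step.

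\textbf{Step 3 (Nonlinear terms).} Expand each of $\pa_y^2(u\pa_xG)$, $\pa_y^2(v\pa_yG)$, $\pa_y^2(v\pa_y(y\vf))$ and $\pa_y^2(y\int_y^\infty\pa_yu\,\pa_x\vf\,dy')$ by Leibniz, and use Bony's decomposition \eqref{Bony} in $x$ together with Lemma \ref{lem: T_fg} (the $\Phi$-weight is trivial here). Every factor of the form $\pa_y^\ell u$, $v$, or $\pa_y^\ell(y\vf)$ is rewritten in terms of $G$ via \eqref{S1eq34q}--\eqref{S1eq34p} and estimated by Lemma \ref{lem2.2}, at the cost of explicit $\w{t}^{1/2}$-powers. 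The resulting low-order norms of $G$ are controlled by the ansatz \eqref{assume: 1}, and the remaining top-order horizontal-transport pieces $T^\h_u\pa_x\pa_y^2G_\Phi$ and $T^\h_v\pa_y^3G_\Phi$ yield contributions bounded by $\dot\th(t)\|\t^{(1-\eta)/4}u_\Phi\|_{H^{23/4,0}_\Psi}^2$; the lower bound $\la\ge 2C\e^{\f12}(1+\eta^{-1}\e)$ then makes these absorbable against the $\la\dot\th$-dissipation on the left of \eqref{S4eq2} when the propositions are eventually closed together. The constraint $\b\le 2\ga_0-\f12$ in \eqref{S3eq-1} is used to ensure that all the $\w{t}$-integrals arising once the ansatz is inserted are finite.

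\textbf{Step 4 (The $\pa_y^3G$ estimate and main obstacle).} The proof of \eqref{S8eq7pq} repeats the scheme above with $\pa_y^3$ in place of $\pa_y^2$ and the energy taken in $H^{2,0}_\Psi$; the drop of one $x$-derivative is recorded by the additional initial datum $\|\pa_y^3G(0)\|_{H^{2,0}_\Psi}^2$ on the right-hand side of \eqref{S8eq7pq}. The main obstacle appears precisely here: the new singular pieces $\pa_y^3(y\vf)$ inside $\pa_y^3(v\pa_y(y\vf))$ and $\pa_y^3(y\int_y^\infty\pa_yu\,\pa_x\vf\,dy')$ are only available through the weakest bound \eqref{S7eq20e}, which costs an extra $\w{t}^{\f12}$ relative to the $\pa_y^2G$-case, and balancing these losses against the time weight $\t^{(11-\eta)/4}$ and the retained dissipation $(1-\eta/4)\|\pa_y^4G\|_{H^{2,0}_\Psi}^2$ is what forces the strengthened smallness $\eta\ge\e^2$ stated for \eqref{S8eq7pq}. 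Once this balance is achieved, the remaining bookkeeping—absorbing small-$\e$ factors into the dissipation and the $\la\dot\th$-integral—mirrors Step 3 verbatim.
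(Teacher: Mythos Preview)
Your overall scheme is in the right spirit, but Step~2 contains a genuine gap that the rest of the argument cannot repair. When $\hbar(t)=\w{t}^{(9-\eta)/2}$, the time-weight error term $\int_0^T\|\sqrt{\hbar'}\,\pa_y^2G\|_{H^{3,0}_\Psi}^2\,dt$ has size $\tfrac{9-\eta}{2}\int_0^T\w{t}^{-1}\|\sqrt{\hbar}\,\pa_y^2G\|_{H^{3,0}_\Psi}^2\,dt$, whereas the favorable damping term $+\w{t}^{-1}\pa_y^2G$ only supplies $2\int_0^T\w{t}^{-1}\|\sqrt{\hbar}\,\pa_y^2G\|^2$, and the $\tfrac{\eta}{4}$-share of the dissipation via Lemma~\ref{lem2.1} adds at most $O(\eta)\w{t}^{-1}\|\pa_y^2G\|^2$. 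Since $\tfrac{9-\eta}{2}\approx 4.5$ while your available budget is $2+O(\eta)$, the time-weight error cannot be absorbed at the $\pa_y^2G$ level. The same obstruction is more severe at the $\pa_y^3G$ level, where the coefficient is $\tfrac{11-\eta}{2}$.

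The paper resolves this by a \emph{cascade}: one first proves a generic-$\hbar$ estimate (Propositions~\ref{pro: pa_y^2 G} and the analogous $\pa_y^3G$ version) whose right-hand side contains $\int_0^T\|\sqrt{\hbar'}\,\pa_y^2G\|^2$ and $\dot\th$-weighted integrals of $\|G_\Phi\|_{H^{4,0}_\Psi}$, $\|\pa_yG_\Phi\|_{H^{3,0}_\Psi}$. After specializing $\hbar$, the remaining $\int_0^T\|\w{t}^{(7-\eta)/4}\pa_y^2G\|^2$ is bounded using the \emph{dissipation} in \eqref{S7eq4}, and the $\dot\th$-integrals are bounded using the $\la\dot\th$-terms on the left of \eqref{S6eq4}--\eqref{S7eq4}; this is precisely where the hypothesis $\la\ge 2C\e^{1/2}(1+\eta^{-1}\e)$ enters. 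This also explains the appearance of $\int_0^T\dot\th\,\|\w{t}^{(1-\eta)/4}u_\Phi\|_{H^{23/4,0}_\Psi}^2$ on the right of \eqref{S11eq7}: it is inherited from Proposition~\ref{S1prop3}, not produced directly by the transport terms at the $\pa_y^2G$ level as you suggest in Step~3. Relatedly, the nonlinear terms are handled here in plain Sobolev spaces via the product law and the regularity-transfer $\|a\|_{H^{k+s}_\h}\lesssim\|a_\Phi\|_{H^s_\h}$, after one integration by parts to pair $\frak{F}$ against $\pa_y^3G$ (using $\pa_y^2G|_{y=0}=0$); Bony's decomposition is not needed. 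Finally, the role of $\eta\ge\e^2$ is simpler than your Step~4 indicates: in the $\pa_y^3G$ estimate the $\pa_y^2G$-contribution to $\|\pa_y\frak{F}\|$ produces a term with coefficient $C\eta^{-1}\e^2$ that must then be fed through \eqref{S7eq4}, and $\eta\ge\e^2$ ensures $\eta^{-1}\e^2\le 1$ so that the overall constant stays $C\eta^{-1}$.
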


\subsection{Proof of Theorem \ref{th1.1}} \label{Sub3.4}
Now we are in a position to complete the proof of  Theorem \ref{th1.1}.

\begin{proof}[Proof of Theorem \ref{th1.1}]
Given initial data satisfying \eqref{S9eq3}, we deduce from \cite{DG19} that \eqref{eq: tu}
has a unique solution on $[0, \frak{T}].$ We are going to prove that $\frak{T}=\infty$ if $\e$ in \eqref{S9eq3} is
sufficiently small.
Otherwise, if $ \frak{T}<\infty,$ we take  small enough $\eta\in \left(0,\f16\right),$ which satisfies $\eta\geq C\e^{\f14},$
and take $\ga_0=\f54-\eta$ in \eqref{assume: 1}. Precisely, let $T^\ast$ be determined by \eqref{1.8a},
we denote
\beq\label{S9eq5}
\begin{split}
T_\eta^\star\eqdefa \sup\Bigl\{ t<T^\ast,\quad
&\|G_\Phi(t)\|_{H^{4,0}_\Psi}+\t^\f12\|\pa_yG_\Phi(t)\|_{H^{3,0}_\Psi}\\
&+\t\|\pa_y^2G(t)\|_{H^{3,0}_\Psi}+\t^\f32\|\pa_y^3G(t)\|_{H^{2,0}_\Psi}\leq C_1\e\t^{-\f54+\eta}\ \Bigr\},
\end{split}
\eeq
where the constant $C_1$ will be determined later on.

Let ${\rm E}(t)$ and ${\rm D}(t)$ be determined respectively by \eqref{S9eq1} and \eqref{S9eq2}.  Let $H$ and $\phi$ be  smooth enough solutions of \eqref{eq: H} and \eqref{eq: phi} respectively, which decay to zero
sufficiently fast as $y$ approaching to $\infty.$
For $t\leq T^\star_\eta,$ we introduce
\begin{subequations} \label{energy}
\begin{gather}
\label{S9eq1a}
\frak{E}(t)\eqdefa \|\t^{\f{1-\eta}{4}}\phi\|_{H^{\f{29}{4},0}_\Psi}^2+\|\t^{\f{1-\eta}{4}}H\|_{H^{\f{27}{4},0}_\Psi}^2+{\rm E}(t),\\
 \label{S9eq2a}
\frak{D}(t)\eqdefa \|\t^{\f{1-\eta}{4}}\pa_y\phi\|_{H^{\f{29}{4},0}_\Psi}^2+\|\t^{\f{1-\eta}{4}}\pa_yH\|_{H^{\f{27}{4},0}_\Psi}^2
+\|\t^{-\f{3+\eta}{4}}{y} H\|_{H^{\f{27}{4},0}_\Psi}^2+{\rm D}(t),\\
\label{S9eq2b}
\frak{G}(t)\eqdefa \|\t^{\f{1-\eta}{4}}\phi\|_{H^{\f{15}{2},0}_\Psi}^2+\|\t^{\f{1-\eta}{4}}H\|_{H^{7,0}_\Psi}^2+\|\t^{\f{1-\eta}{4}}u_\Phi\|_{H^{\f{23}{4},0}_\Psi}^2
\\
\qquad\qquad\qquad\qquad+\|\t^{\f{3-\eta}{4}}\pa_yu_\Phi\|_{H^{\f{21}{4},0}_\Psi}^2+\|\t^{\f{5-\eta}{4}}G_\Phi\|_{H^{\f{17}{4},0}_\Psi}^2
+\|\t^{\f{7-\eta}{4}}\pa_yG_\Phi\|_{H^{\f{13}{4},0}_\Psi}^2.
\nonumber
\end{gather}
\end{subequations}

Then taking $\ga_0=\f54-\eta$ and $\hbar(t)=\w{t}^{\f{1-\eta}2}$ in \eqref{S4eq3c} leads to
\beq\label{S9eq2c}
\begin{split}
\e\int_0^T\t^{-\gamma_0-\f14}&\|\t^{\f{1-\eta}4} u_\Phi\|_{H^{6,0}_\Psi}^2\,dt
\leq  \eta\|\w{T}^{\f{1-\eta}4} u_\Phi(T)\|_{H^{\f{11}{2},0}_\Psi}^2\\
&+C\eta^{-1}\e\|\w{T}^{\f{1-\eta}4} H(T)\|_{H^{\f{27}{4},0}_\Psi}^2+\eta\e^\f 14\int_0^T\frak{D}(t)\,dt\\
&+C\bigl(1+\la\e^\f12+\e^\f14\eta^{-1}\bigr)\int_0^T \dot\th(t)\frak{G}(t)\,dt.
\end{split}\eeq
By inserting the above estimate into \eqref{S3eq10}, and summing up the estimates from \eqref{S3eq2a} to \eqref{S8eq7pq}, we conclude that
that for $\beta\leq \f76-\eta,$ $\e\leq \min(\e_1,\e_2),$ and $\la\geq\max\left(\la_1, \la_2\right)$
\beq\label{S9eq2d}
\begin{split}
\frak{E}(T)&+\la\int_0^T\dot\th(t)\frak{G}(t)\,dt+c_0\eta\int_0^T\frak{D}(t)\,dt
\leq C_\eta\bigl(\|u_\Phi(0)\|_{H^{\f{25}4,0}_\Psi}^2+ {\rm E}(0)\bigr)\\
&+C\left(1+\la\e^\f12+\eta^{-1}\bigl(1+\e^\f12\eta^{-1}\bigr)\right)\int_0^T \dot\th(t)\frak{G}(t)\,dt
+\bigl(\e^\f14+\e^\f12\eta^{-1}\bigr)\int_0^T\frak{D}(t)\,dt.
\end{split} \eeq
By taking $\e\leq \min\bigl(\e_1,\e_2, \e_3)$ with $\e_3\eqdefa \min\bigl(\bigl(\f{c_0}4\bigr)^2\eta^4, \f1{(2C)^2}\bigr),$
and $\la\geq \la_0\eqdefa \max\left(\la_1, \la_2, \la_3\right)$ with $\la_3\eqdefa 2C\bigl(1+\eta^{-1}(1+\e^\f12\eta^{-1})\bigr),$
we deduce \eqref{S9eq3} and \eqref{S9eq2d} that
\beq\label{S9eq6}
\frak{E}(T)+\f{c_0\eta}2\int_0^TD(t)dt\leq C_\eta\bigl(\|u_\Phi(0)\|_{H^{\f{25}4,0}_\Psi}^2+ {\rm E}(0)\bigr)\leq C_\eta\e^2\leq \f{C_1^2}{4}\e^2,
\quad \forall \ T\leq T^\star_\eta,
\eeq
by taking $C_1\eqdefa 2\sqrt{C_\eta}$ in \eqref{S9eq5}.

On the other hand, by choosing $\beta=\f76-\eta>1,$ which satisfies all the assumptions from Propositions \ref{S1prop1}-\ref{S1prop4},
 we deduce from \eqref{S3eq-1} that
\beq \label{S9eq7}
\th(t)=\int_0^t\dot\th(s)ds= \e^\f12\int_0^t \s^{-\beta}ds\leq C\e^\f12\leq \f{\d}{4\la},\quad \forall \ t\leq T^\star_\eta,
\eeq
for $\e\leq \e_4.$

Therefore for $\e\leq \e_0\eqdefa \min\bigl(\e_1,\e_2, \e_3, \e_4)$ and $\la\geq \la_0,$
\eqref{S9eq6} and \eqref{S9eq7} contradicts with \eqref{1.8a} and \eqref{S9eq5}. So that
we get, by applying a standard continuous argument, that $T^*=T^\star_\eta=\infty. $
This completes the proof of Theorem \ref{th1.1}.
\end{proof}

\setcounter{equation}{0}

\section{The Gevrey Estimates of $H$ and $\phi.$}\label{Sect4}

 Let $(u,v)$ be a smooth enough solution of \eqref{eq: tu} on $[0,T^\ast]$ with $T^\ast$ being determined by
 \eqref{1.8a}. Let $H$ and $\phi$
 be determined respectively by \eqref{eq: H} and \eqref{eq: phi} on $[0,T^\ast].$ The goal of this
 section is to present the {\it a priori} estimates of $H$ and $\phi.$

Indeed thanks to \eqref{assume: 1}, for $t\leq T^\star,$ we deduce from Lemma \ref{lem2.2} that for any $\ga\in (0,1)$
\beq \label{assume: 2}
\|u_\Phi(t)\|_{H^{4,0}_{\ga\Psi}}+\t^\f12\|\pa_yu_\Phi(t)\|_{H^{3,0}_{\ga\Psi}}
+\t\|\pa_y^2u(t)\|_{H^{3,0}_{\ga\Psi}}\leq C\e\t^{-\ga_0}.
\eeq

Furthermore, we observe that

\begin{lem}\label{lem: phi}
{\sl Let $(u,v)$ be a smooth enough solution of \eqref{eq: tu} on $[0,T^\star].$
Then  for any $\ga\in (0,1)$ and $t\leq T^\star,$ we have
\begin{subequations}
\begin{gather}\label{S3eq8}
\|u_\Phi(t)\|_{L_{\v,\ga\Psi}^\infty(H^{3}_\h)}\leq C\e\w{t}^{-\left(\ga_0+\f14\right)} \andf
\|v_\Phi(t)\|_{L^\infty_{\v,\ga\Psi}(H^{3}_\h)}\leq C\e\w{t}^{-\left(\ga_0-\f14\right)},\\
\label{S3eq8b}
\|\pa_y u(t)\|_{L_{\v,\ga\Psi}^\infty(H^{3}_\h)}\leq C\e\w{t}^{-\left(\ga_0+\f34\right)}\andf
\|\pa_y^2 u(t)\|_{L_{\v,\ga\Psi}^\infty(H^{2}_\h)}\leq C\e\w{t}^{-\left(\ga_0+\f54\right)},
 \end{gather}
\end{subequations}
with $\|f\|_{L^\infty_{\v,\Psi}}\eqdefa \|e^{\Psi}f\|_{L^\infty_\v}$.}
\end{lem}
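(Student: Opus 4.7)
\textbf{Proof plan for Lemma \ref{lem: phi}.} The central tool is a weighted one-dimensional Sobolev embedding in the $y$ variable. For any $\ga\in(0,1)$ and any smooth $f$ decaying at $y=+\infty$, the identity $|e^{\ga\Psi(y)}f(y)|^2=-\int_y^\infty \pa_{y'}|e^{\ga\Psi}f|^2\,dy'$, combined with the fact that $\pa_y\Psi=y/(4\t)\geq 0$, yields
\begin{equation*}
\|e^{\ga\Psi}f\|_{L^\infty_\v}\leq \sqrt{2}\,\|e^{\ga\Psi}f\|_{L^2_\v}^{\f12}\|e^{\ga\Psi}\pa_yf\|_{L^2_\v}^{\f12}.
\end{equation*}
Taking the $H^3_\h$ norm in the $x$ variable on both sides (after Fourier localization in $x$ to which the inequality applies fiber-wise) gives the weighted Ladyzhenskaya-type estimate
\begin{equation*}
\|g\|_{L^\infty_{\v,\ga\Psi}(H^3_\h)}\leq C\|g\|_{H^{3,0}_{\ga\Psi}}^{\f12}\|\pa_yg\|_{H^{3,0}_{\ga\Psi}}^{\f12}.
\end{equation*}
This is the only nontrivial mechanism we need; the rest is applying it to $g=u_\Phi,\,v_\Phi,\,\pa_yu,\,\pa_y^2u$ and reading off the decay exponents from \eqref{assume: 2}.

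For $u_\Phi$, the interpolation inequality combined with \eqref{assume: 2} (used with both $\|u_\Phi\|_{H^{3,0}_{\ga\Psi}}\le C\e\t^{-\ga_0}$ and $\|\pa_yu_\Phi\|_{H^{3,0}_{\ga\Psi}}\le C\e\t^{-\ga_0-\f12}$) gives $\|u_\Phi\|_{L^\infty_{\v,\ga\Psi}(H^3_\h)}\lesssim \e\t^{-\ga_0-\f14}$. Applying the same inequality to $\pa_yu$, with $\|\pa_yu\|_{H^{3,0}_{\ga\Psi}}\leq\|\pa_yu_\Phi\|_{H^{3,0}_{\ga\Psi}}\lesssim\e\t^{-\ga_0-\f12}$ and $\|\pa_y^2u\|_{H^{3,0}_{\ga\Psi}}\lesssim\e\t^{-\ga_0-1}$, yields the bound on $\pa_yu$. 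For $\pa_y^2u$ I would first derive $\|\pa_y^3u\|_{H^{2,0}_{\ga\Psi}}\lesssim\e\t^{-\ga_0-\f32}$ by differentiating the formula \eqref{eq: pa_y^2 u} once more in $y$ and then controlling every resulting term by means of Lemma \ref{lem2.2} together with \eqref{assume: 1}; interpolation then gives $\|\pa_y^2u\|_{L^\infty_{\v,\ga\Psi}(H^2_\h)}\lesssim\e\t^{-\ga_0-\f54}$.

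The estimate on $v_\Phi$ is the one that requires additional care, because it is the only place where the weight needs to be transported along an integration in $y$. Starting from $v=-\int_0^y\pa_xu\,dy'=\int_y^\infty\pa_xu\,dy'$ (using $\int_0^\infty u\,dy=0$, which is preserved in time), I would exploit the elementary inequality $\Psi(y')-\Psi(y)\geq (y'-y)^2/(8\t)$ for $0\leq y\leq y'$ to write
\begin{equation*}
e^{\ga\Psi(y)}|\pa_x^\al v_\Phi(y)|\leq \int_y^\infty e^{-\f{\ga(y'-y)^2}{8\t}}e^{\ga\Psi(y')}|\pa_x^{\al+1}u_\Phi(y')|\,dy',
\end{equation*}
and then by Cauchy--Schwarz and the Gaussian bound $\big(\int_y^\infty e^{-\ga(y'-y)^2/(4\t)}\,dy'\big)^{\f12}\lesssim\t^{\f14}$, summing over $|\al|\leq 3$, obtain
\begin{equation*}
\|v_\Phi\|_{L^\infty_{\v,\ga\Psi}(H^3_\h)}\lesssim \t^{\f14}\|u_\Phi\|_{H^{4,0}_{\ga\Psi}}\lesssim \e\t^{\f14-\ga_0},
\end{equation*}
which is exactly the claimed rate $\t^{-(\ga_0-\f14)}$.

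The main obstacle I expect is the $\pa_y^2u$ bound, since neither $\pa_y^3u$ nor $\pa_y^3u_\Phi$ is listed in \eqref{assume: 2}; the work consists in differentiating the exact formula for $\pa_y^2u$ in terms of $G,\,\pa_yG,\,\pa_y^2G,\,\pa_y^3G,\,\vf,\,\pa_y\vf,\,\pa_y^2\vf$ and bounding the $\f{y}{\t^k}$-weighted contributions by absorbing powers of $y/\t^{\f12}$ into a fraction of the Gaussian weight $e^{\Psi}$ (the standard trick used throughout Lemma \ref{lem2.2}). All other steps are routine interpolation.
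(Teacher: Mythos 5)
Your proof is correct and arrives at the same decay exponents, but the route differs from the paper's on several points. For $u_\Phi$ and $\pa_y u$ the paper writes $u_\Phi=-\int_y^\infty\pa_y u_\Phi\,dz$ (respectively $\pa_y u=-\int_y^\infty\pa_y^2 u\,dz$) and pulls out a factor $\w{t}^{1/4}$ from $\sup_y\bigl\{e^{\ga\Psi}\bigl(\int_y^\infty e^{-2\ga\Psi}\,dz\bigr)^{1/2}\bigr\}$ by Cauchy--Schwarz, whereas you interpolate with the weighted Agmon inequality $\|e^{\ga\Psi}f\|_{L^\infty_\v}\leq\sqrt{2}\,\|e^{\ga\Psi}f\|_{L^2_\v}^{1/2}\|e^{\ga\Psi}\pa_y f\|_{L^2_\v}^{1/2}$, which you correctly justify by noting that the cross term containing $\pa_y\Psi\geq0$ has a favorable sign; the two mechanisms give identical rates because the half-derivative gap in your interpolation contributes exactly $\w{t}^{-1/4}$. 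The more substantive divergence is at $\pa_y^2 u$: the paper transfers to $G$ via Lemma \ref{lem2.2} (estimate \eqref{S2eq20c1}) and interpolates on $e^{\ga_1\Psi}\pa_y^2 G$, so that it only needs quantities already controlled in \eqref{assume: 1}; you instead differentiate \eqref{eq: pa_y^2 u}, establish the new bound $\|\pa_y^3 u\|_{H^{2,0}_{\ga\Psi}}\lesssim\e\w{t}^{-\ga_0-3/2}$ term by term (each of the six resulting terms does decay at exactly this rate once $y/\w{t}^{1/2}$ factors are absorbed into a fraction of the Gaussian weight and \eqref{est: (f,pa_y f)} is used to trade $\vf,\pa_y\vf$ for $G,\pa_y G$), and then interpolate $\pa_y^2 u$ against $\pa_y^3 u$. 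The paper's route is lighter on new computation for the last bound since it recycles \eqref{S2eq20c1}; your route has the virtue of treating all four bounds by one and the same interpolation inequality. You also correctly flag that conservation of $\int_0^\infty u\,dy=0$ is what legitimizes writing $v=\int_y^\infty\pa_x u\,dz$, a step both proofs rely on.
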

\begin{proof} It follows from \eqref{assume: 2} and $u=-\int_y^\infty\p_yu\,dz$ that
\begin{align*}
\|u_\Phi(t)\|_{L_{\v,\ga\Psi}^\infty(H^{3}_\h)}\leq &\sup_{y>0}\Bigl\{e^{\ga\Psi}\Bigl(\int_y^\infty e^{-2\ga\Psi}\,dz\Bigr)^{\f12}\Bigr\}
\Bigl(\int_0^\infty e^{2\ga\Psi}\|\p_yu_\Phi(t,\cdot,y)\|_{H^{3}_\h}^2\,dy\Bigr)^{\f12}\\
\leq &C\t^{\f14}\|\pa_yu_\Phi(t)\|_{H^{3,0}_{\ga\Psi}}\leq
  C\e\w{t}^{-\left(\ga_0+\f14\right)}.
\end{align*}
Similarly as $\p_xu+\p_yv=0,$ we deduce from \eqref{assume: 2}  that
\begin{align*}
\|v_\Phi\|_{L^\infty_{\v,\ga\Psi}(H^{3}_\h)}=&\bigl\|\int_y^\infty\pa_xu_\Phi dz\bigr\|_{L^\infty_{\v,\gamma\Psi}(H^{3}_\h)}\\
\leq &\sup_{y>0}\Bigl\{e^{\ga\Psi}\Bigl(\int_y^\infty e^{-2\ga\Psi}\,dz\Bigr)^{\f12}\Bigr\}\Bigl(\int_0^\infty e^{2\ga\Psi}\|u_\Phi(t,\cdot,y)\|_{H^{4}_\h}\,dy\Bigr)^{\f12}\\
\leq &C\t^{\f14}\|u_\Phi(t)\|_{H^{4,0}_{\ga\Psi}}\leq
  C\e\w{t}^{-\left(\ga_0-\f14\right)}.
\end{align*}
This leads to \eqref{S3eq8}.

The first inequality in \eqref{S3eq8b} can be derived along the same line. For the second one, thanks to Sobolev embedding theorem,
for any $\gamma_1\in(\gamma,1),$ we deduce from \eqref{S2eq20c1}  and \eqref{assume: 1} that
\begin{align*}
\|\pa_y^2 u(t)\|_{L_{\v,\ga\Psi}^\infty(H^{2}_\h)}\leq& C\|\pa_y^2 G(t)\|_{L_{\v,\gamma_1\Psi}^\infty(H^{2}_\h)}\leq C\|\pa_y^2 G(t)\|_{H^{2,0}_{\Psi}}^\f12\Big(\|\pa_y^3 G(t)\|_{H^{2,0}_{\Psi}}^\f12+\|\f{y}{\t}\pa_y^2 G(t)\|_{H^{2,0}_{\gamma_1\Psi}}^\f12\Big)\\
\leq&C\|\pa_y^2 G(t)\|_{H^{2,0}_{\Psi}}^\f12\Big(\|\pa_y^3 G(t)\|_{H^{2,0}_{\Psi}}^\f12+\t^{-\f12}\|\pa_y^2 G(t)\|_{H^{2,0}_{\Psi}}^\f12\Big)\leq C\e\t^{-\gamma_0-\f54}.
\end{align*}
This finishes the proof of Lemma \ref{lem: phi}.
\end{proof}

\subsection{The Estimate of $\phi$}
We shall present the {\it a priori} estimate of $\phi$ in this subsection.
 The main result states as follows:

\begin{prop}\label{lem: phi}
{\sl Let $\phi$ be a smooth enough solution of \eqref{eq: phi} on $[0,T^\star],$ which decays to zero sufficiently fast
as $y$ approaching to $+\infty.$ Let $\hbar(t)$ be a non-negative and non-decreasing function on $[0,T]$ with $T\leq T^\star.$ Then
if $\b\leq\ga_0+\f14,$  for any  sufficiently small $\eta>0$ and $t<T,$ one has
\beq\label{S3eq2}
\begin{split}
\|&\sqrt{\hbar}\phi(t)\|_{H^{\f{29}{4},0}_\Psi}^2
-\int_t^T\|\sqrt{\hbar'}\phi(t')\|_{H^{\f{29}{4},0}_\Psi}^2\,dt'+ (1-\eta) \int_t^T\|\sh\pa_y\phi(t')\|_{H^{\f{29}{4},0}_\Psi}^2\,dt'\\
&+2\left(\la-C(\eta^{-1}\e^{\f32}+1)\right)\int_t^T\dot\th(t')
\|\sh\phi(t')\|_{H^{\f{15}{2},0}_\Psi}^2\,dt'\leq  \int_t^T\dot\th(t')\|\sh H(t')\|_{H^{7,0}_\Psi}^2\,dt'.
\end{split}
\eeq}
\end{prop}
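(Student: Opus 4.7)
The plan is to carry out a backward-in-time weighted energy estimate in the Gevrey-Sobolev norm $H^{\frac{29}{4},0}_\Psi$ for the backward parabolic problem $\mathcal{L}^*\phi=\dot\theta(t)H$ with terminal datum $\phi(T)=0$. Concretely, one applies the horizontal Littlewood-Paley truncation $\De_k^\h$ to the equation, takes the $L^2_+$ inner product with $\hbar(t)\,2^{\frac{29k}{2}}\,e^{2\Psi}\De_k^\h\phi$, sums in $k\geq-1$ (equivalently, tests against $\hbar\,e^{2\Psi}\langle D_x\rangle^{\frac{29}{2}}\phi$), and integrates in $t'$ from $t$ to $T$. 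The $-\partial_{t'}\phi-(\hbar'/\hbar)\phi$ piece collapses to $-\tfrac{1}{2\hbar}\partial_{t'}(\hbar^2\phi^2)e^{2\Psi}$ modulo a non-positive $\partial_{t'}\Psi$-term (since $\partial_{t'}\Psi\leq 0$ for $t'\geq 0$), producing exactly $\tfrac12\|\sqrt{\hbar}\phi(t)\|^2_{H^{\frac{29}{4},0}_\Psi}-\tfrac12\int_t^T\|\sqrt{\hbar'}\phi\|^2_{H^{\frac{29}{4},0}_\Psi}\,dt'$ on the left, while the Gevrey multiplier $\lambda\dot\theta\langle D\rangle^{\frac12}\phi$ directly yields $\lambda\int_t^T\dot\theta\|\sqrt{\hbar}\phi\|^2_{H^{\frac{15}{2},0}_\Psi}\,dt'$.

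For the combined ``diffusion block'' $-(\partial_y+\tfrac{y}{2\w{t}})^2+\tfrac{y^2}{4\w{t}^2}$ of $\mathcal{L}^*$, I exploit that $L:=\partial_y+\tfrac{y}{2\w{t}}$ is skew-adjoint in $L^2(e^{2\Psi})$, so that $-\int L^2\phi\cdot\phi\,e^{2\Psi}=\|L\phi\|^2_{L^2(e^{2\Psi})}$; expanding the square and integrating by parts (boundary vanishes since $\phi|_{y=0}=0$) together with the potential $\tfrac{y^2}{4\w{t}^2}\phi$ yields $\|\partial_y\phi\|^2_{L^2(e^{2\Psi})}-\tfrac{1}{2\w{t}}\|\phi\|^2_{L^2(e^{2\Psi})}$ after integration over $(x,y)$. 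Then applying Lemma \ref{lem2.1} with $\kappa=1-\eta$ lets me replace the $-\tfrac{1}{2\w{t}}\|\phi\|^2$ by a small multiple of $\|\partial_y\phi\|^2$, leaving $(1-\eta)\int_t^T\|\sqrt{\hbar}\partial_y\phi\|^2_{H^{\frac{29}{4},0}_\Psi}\,dt'$ on the left.

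The transport contributions $-\partial_x(T^\h_u)^*\phi$, $-L(T^\h_v)^*\phi$ and the Gevrey correction $-\tfrac{\delta(t)}{2}\Lambda(D)\partial_x(T^\h_{D_xu})^*\phi$ are error terms treated via Bony's decomposition and Lemmas \ref{lem: T_fg}--\ref{lem: com3}. The first is reduced, through the near skew-symmetry of $T^\h_u$ (Lemma \ref{lem: com1}), to a commutator controlled by $\|u_\Phi\|_{H^\sigma_\h}\|\phi\|^2_{H^{\frac{29}{4},0}_\Psi}$; the second is integrated by parts using the skew-adjointness of $L$, producing a $\|\partial_y\phi\|$ factor absorbed into the $(1-\eta)$-dissipation through Young's inequality with $\eta^{-1}$ penalty; the third is handled by the Gevrey commutator bound of Lemma \ref{lem: com3}. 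Combined with the smallness $\|u_\Phi\|_{H^{4,0}_{\gamma\Psi}}\lesssim\varepsilon\w{t}^{-\gamma_0}$ from \eqref{assume: 2} and the $L^\infty_\v$ bound on $v_\Phi$ in \eqref{S3eq8}, together with $\dot\theta=\varepsilon^{\frac12}\w{t}^{-\beta}$ and $\beta\leq\gamma_0+\tfrac14$, each error is bounded by $C\eta^{-1}\varepsilon^{\frac{3}{2}}\int_t^T\dot\theta\|\sqrt{\hbar}\phi\|^2_{H^{\frac{15}{2},0}_\Psi}\,dt'$, absorbed into the Gevrey dissipation by enlarging $\lambda$. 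Finally, Cauchy-Schwarz on the source pairing gives $\int_t^T\dot\theta(H,\phi)_{H^{\frac{29}{4},0}_\Psi,\hbar}\,dt'\leq\tfrac12\int_t^T\dot\theta\|\sqrt{\hbar}H\|^2_{H^{7,0}_\Psi}\,dt'+\tfrac12\int_t^T\dot\theta\|\sqrt{\hbar}\phi\|^2_{H^{\frac{15}{2},0}_\Psi}\,dt'$, whose $\phi$-half is again absorbed by the Gevrey dissipation. Collecting all contributions yields \eqref{S3eq2}.

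The main obstacle is extracting the clean $(1-\eta)$ coefficient in front of the vertical dissipation $\|\partial_y\phi\|^2_{H^{\frac{29}{4},0}_\Psi}$: this requires a delicate balance between the negative $-\tfrac{1}{2\w{t}}\|\phi\|^2$ arising from combining diffusion and potential, the refined Poincaré bound \eqref{S2eq9b}, and the Dirichlet boundary condition at $y=0$. A secondary technical difficulty is the interaction between $\langle D_x\rangle^{\frac{29}{4}}$, the Gaussian weight $e^{2\Psi}$, and the para-product operators in $\mathcal{L}^*$; this forces us to invoke the Gevrey commutator bounds of Lemmas \ref{lem: com2}--\ref{lem: com3} and to split scales carefully so that the $\varepsilon$-smallness uniformly absorbs all commutator losses for $t\leq T\leq T^\star$.
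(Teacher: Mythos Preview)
Your overall strategy—a backward energy estimate via the $H^{\frac{29}{4},0}_\Psi$ inner product of $\mathcal{L}^*\phi=\dot\theta H$ with $\hbar\phi$—is exactly the paper's, and your treatment of the transport and source terms is essentially right. The real gap is in the ``diffusion block''.

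The operator $L=\pa_y+\tfrac{y}{2\w{t}}$ is \emph{not} skew-adjoint in $L^2(e^{2\Psi})$: since $\pa_y(e^{2\Psi})=\tfrac{y}{2\w{t}}e^{2\Psi}$ one has $L=e^{-2\Psi}\pa_y e^{2\Psi}$, so that $L^{*}=-\pa_y$ there. Hence
\[
-\int L^2\phi\cdot\phi\,e^{2\Psi}\,dy=\int L\phi\cdot\pa_y\phi\,e^{2\Psi}\,dy
=\|\pa_y\phi\|^2_{L^2(e^{2\Psi})}+\tfrac{1}{2\w{t}}\int y\,\phi\,\pa_y\phi\,e^{2\Psi}\,dy,
\]
not $\|L\phi\|^2_{L^2(e^{2\Psi})}$. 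Even granting your claimed output $\|\pa_y\phi\|^2-\tfrac{1}{2\w{t}}\|\phi\|^2$, Lemma~\ref{lem2.1} only gives $\|\pa_y\phi\|^2\geq\tfrac{1}{2\w{t}}\|\phi\|^2$, so this difference is merely $\geq 0$; no choice of $\kappa$ recovers $\geq(1-\eta)\|\pa_y\phi\|^2$ for small $\eta$, and the vertical dissipation is lost. Relatedly, discarding the ``non-positive $\pa_{t'}\Psi$-term'' goes the wrong way, since you need a \emph{lower} bound on $\int_t^T\hbar\langle\mathcal{L}^*\phi,\phi\rangle\,dt'$. The paper instead keeps the $\pa_t\Psi$-contribution and the potential $\tfrac{y^2}{4\w{t}^2}$ together—they combine to $+\tfrac{y^2}{8\w{t}^2}e^{2\Psi}|\phi|^2$—and bounds the cross term by Young, $\bigl|\tfrac{1}{2\w{t}}\int y\phi\,\pa_y\phi\,e^{2\Psi}\bigr|\leq\tfrac12\|\pa_y\phi\|^2+\tfrac{1}{8\w{t}^2}\|y\phi\|^2$, so that the $\|y\phi\|^2$ terms cancel exactly and a clean $\tfrac12\|\pa_y\phi\|^2$ survives. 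The coefficient $(1-\eta)$ in \eqref{S3eq2} then arises solely from the $\tfrac\eta2$ spent on the $v$-transport term.
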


\begin{proof}
By taking $H^{\f{29}{4},0}_\Psi$ inner product of \eqref{eq: phi} with $\hbar\phi,$ we find
\begin{align*}
\hbar\langle\mathcal{L}^*\phi,\phi  \rangle_{H^{\f{29}{4},0}_\Psi}=\dot\th\hbar\langle H,\phi  \rangle_{H^{\f{29}{4},0}_\Psi}.
\end{align*}
Here and in all that follows, we always denote
\beq \label{S3eq2ad}
\langle a, b  \rangle_{H^{\sigma,0}_\Psi} \eqdefa {\rm Re}\int_0^\infty\int_{\R}e^{2\Psi}\D^\sigma a \bigr| \overline{\D^\sigma b}\,dx\,dy.
\eeq

It is easy to observe that
\begin{align*}
\int_t^T|\dot\th\hbar\langle H,\phi  \rangle_{H^{\f{29}{4},0}_\Psi}|\,dt'\leq \f12\int_t^T\dot\th(t')\|\sh\phi\|_{H^{\f{15}{2},0}_\Psi}^2\,dt'+\f12\int_t^T\dot\th(t')\|\sh H\|_{H^{7,0}_\Psi}^2\,dt'.
\end{align*}
While in view of \eqref{defcL*}, we  get, by using integrating by parts, that
\beq\label{S3eq3}
\begin{split}
\hbar&\langle\mathcal{L}^*\phi,\phi  \rangle_{H^{\f{29}{4},0}_\Psi}=-\f12\f{d}{dt}\|\sh\phi(t)\|_{H^{\f{29}{4},0}_\Psi}^2
-\f12\hbar'(t)\|\phi(t)\|_{H^{\f{29}{4},0}_\Psi}^2\\
&+\f12\int_{\R^2_+}\Bigl(\pa_t(e^{2\Psi})+\f{y^2}{2\t^2}e^{2\Psi}\Bigr)|\sh\D^{\f{29}{4}}\phi |^2dxdy+\la\dot\th(t)\|\sh\phi \|_{H^{\f{15}{2},0}_\Psi}^2\\
&+\|\sh\pa_y\phi\|_{H^{\f{29}{4},0}_\Psi}^2+\f{\hbar(t)}{2\t}\int_{\R^2_+}y e^{2\Psi} \D^{\f{29}{4}}\phi
  \bigl| \D^{\f{29}{4}}\pa_y\phi\,dx\,dy
-\hbar \bigl\langle \pa_x(T^\h_u)^*\phi  ,\phi  \bigr\rangle_{H^{\f{29}{4},0}_\Psi}\\
&-\hbar\bigl\langle \bigl(\pa_y+\f{y}{2\t}\bigr)(T^\h_v)^*\phi, \phi  \bigr\rangle_{H^{\f{29}{4},0}_\Psi}-\f{\d(t)\hbar(t)}2
\bigl\langle \Lambda(D)\pa_x(T^\h_{D_xu})^*\phi, \phi  \bigr\rangle_{H^{\f{29}{4},0}_\Psi}.
\end{split}\eeq
A direct computation gives
\begin{align*}
\f12\Bigl(\pa_t(e^{2\Psi})+\f{y^2}{2\t^2}e^{2\Psi}\Bigr)=\f{y^2}{8\t^2}e^{2\Psi},
\end{align*}
and applying H\"older inequality yields
\begin{align*}
\f{\hbar}{2\t}\bigl|\int_{\R^2_+}y e^{2\Psi}&\D^{\f{29}{4}}\phi \bigr| \D^{\f{29}{4}}\pa_y\phi~  dxdy\bigr|\\
&\leq \f1{2 }\|\sh\pa_y\phi\|_{H^{\f{29}{4},0}_\Psi}^2+\f{1 }{8\t^2}\int_{\R^2_+}|\sh y^2 e^{\Psi}\D^{\f{29}{4}}\phi|^2  dxdy.
\end{align*}
As a result, it comes out
\begin{align*}
&\f12\int_{\R^2_+} \Bigl(\pa_t(e^{2\Psi})+\f{y^2}{2\t^2}e^{2\Psi}\Bigr)|\sh\D^{\f{29}{4}}\phi |^2dxdy\\
&\qquad+\f{\hbar}{2\t}\int_{\R^2_+} y e^{2\Psi}\D^{\f{29}{4}}\phi \bigr| \D^{\f{29}{4}}\pa_y\phi~  dxdy \geq  -\f1{2 }\|\sh \pa_y\phi\|_{H^{\f{29}{4},0}_\Psi}^2.
\end{align*}

Next let us turn to the estimate of the remaining terms in \eqref{S3eq3}.\smallskip

\no\underline{$\bullet$ The estimate of $\hbar\bigl\langle \pa_x(T^\h_u)^*\phi  ,\phi  \bigr\rangle_{H^{\f{29}{4},0}_\Psi}.$}
By using a standard commutator's argument, we write
\beq \label{S3eq3a}
\begin{split}
\bigl\langle \pa_x&(T^\h_u)^*\phi, \phi  \bigr\rangle_{H^{\f{29}{4},0}_\Psi}
=-\f12\Bigl(\bigl\langle [\D^{\f{29}{4}}; (T_u^\h)^*]\phi ,\D^{\f{29}{4}}\pa_x\phi  \bigr\rangle_{L^2_\Psi}\\
&+ \bigl\langle \D^{\f{29}{4}}\phi ,\D^{\f{29}{4}}\pa_x\bigl((T_u^\h)^*-T_u^\h\bigr)\phi  \bigr\rangle_{L^2_\Psi}+\bigl\langle \D^{\f{29}{4}}\phi , [T_u^\h; \D^{\f{29}{4}}\pa_x]\phi  \bigr\rangle_{L^2_\Psi}\Bigr),
\end{split}\eeq
which implies
\begin{align*}
\bigl|\bigl\langle \pa_x (T^\h_u)^*\phi, \phi  \bigr\rangle_{H^{\f{29}{4},0}_\Psi}\bigr|
\leq C&\|\phi\|_{H^{\f{29}{4},0}_\Psi}\Bigl(\|[\D^{\f{29}{4}}; (T_u^\h)^*]\phi\|_{H^{1,0}_\Psi}\\
&+\|((T_u^\h)^*-T_u^\h)\phi\|_{H^{\f{33}{4},0}_\Psi}
+\|[T_u^\h; \D^{\f{29}{4}}\pa_x]\phi \|_{L^2_\Psi}\Bigr).
\end{align*}
It follows from Lemma \ref{lem: com1}  that
\begin{align*}
\|[\D^{\f{29}{4}}; (T_u^\h)^*]\phi\|_{H^{1,0}_\Psi} &\leq C\|u\|_{L_{\rv}^\infty(H^{\f32+}_\h)}\|\phi\|_{H^{\f{29}{4},0}_\Psi},\\
\|((T_u^\h)^*-T_u^\h)\phi\|_{H^{\f{33}{4},0}_\Psi}& \leq C\|u\|_{L_{\rv}^\infty(H^{\f32+}_\h)}^2\|\phi\|_{H^{\f{29}{4},0}_\Psi},\\
\|[T_u^\h; \D^{\f{29}{4}}\pa_x]\phi \|_{L^2_\Psi}&\leq C\|u\|_{L_{\rv}^\infty(H^{\f32+}_\h)}\|\phi\|_{H^{\f{29}{4},0}_\Psi},
\end{align*}
which  imply that
\begin{align}\label{S3eq3aq}
\bigl|\bigl\langle \pa_x(T^\h_u)^*\phi  ,\phi  \bigr\rangle_{H^{\f{29}{4},0}_\Psi}\bigr|\leq C\bigl(\|u\|_{L_\v^\infty(H^{\f32+}_\h)}+\|u\|_{L_\v^\infty(H^{\f32+}_\h)}^2\bigr)\|\phi\|_{H^{\f{29}{4},0}_\Psi}^2.
\end{align}
Here and in all that follows, we always denote $\sigma_+$ to be a constant which is slightly bigger than $\sigma.$

Then thanks to \eqref{S3eq-1} and \eqref{S3eq8}, we obtain
\begin{align*}
\hbar\bigl|\bigl\langle \pa_xT^*_u\phi  ,\phi  \bigr\rangle_{H^{\f{29}{4},0}_\Psi}\bigr|\leq&  C\bigl(\e^\f12\t^{\beta-\gamma_0-\f14}+\e^{\f32}\t^{\beta-2\gamma_0-\f12}\bigr)\dot\th(t)\|\sh\phi\|_{H^{\f{29}{4},0}_\Psi}^2.
\end{align*}

\no\underline{$\bullet$ The estimate of $\hbar\langle (\pa_y+\f{y}{2\t})(T^\h_v)^*\phi  ,\phi  \rangle_{H^{\f{29}{4},0}_\Psi}.$}
By using integration by parts and boundary condition $\phi|_{y=0}=0,\quad \phi|_{y\to +\infty}=0,$  and using  \eqref{S3eq8}, we find
\begin{align*}
\hbar\bigl|\bigl\langle  \bigl(\pa_y+\f{y}{2\t}\bigr)(T^\h_v)^*\phi  ,\phi  \bigr\rangle_{H^{\f{29}{4},0}_\Psi}\bigr|
=&\hbar\bigl|\bigl\langle (T^\h_v)^*\phi  ,\pa_y\phi  \bigr\rangle_{H^{\f{29}{4},0}_\Psi}\bigr|\\
\leq &C\| v\|_{L^\infty_\v(H^{\f12+}_\h)}\|\sh\phi\|_{H^{\f{29}{4},0}_\Psi}\|\sh\pa_y\phi\|_{H^{\f{29}4,0}_\Psi}\\
\leq&C\e\t^{-\ga_0+\f14}\|\sh\phi\|_{H^{\f{15}{2},0}_\Psi}\|\sh\pa_y\phi\|_{H^{\f{29}4,0}_\Psi}.
\end{align*}
Applying Young's inequality gives
\begin{align*}
\hbar\bigl|\bigl\langle  \bigl(\pa_y+\f{y}{2\t}\bigr)(T^\h_v)^*\phi  ,\phi  \bigr\rangle_{H^{\f{29}{4},0}_\Psi}\bigr|
\leq &C\eta^{-1}\e^{\f32}\t^{\beta-2\gamma_0+\f12}\dot\th(t)\|\sh\phi\|_{H^{\f{15}{2},0}_\Psi}^2+\f{\eta }2\|\sh\pa_y\phi\|_{H^{\f{29}4,0}_\Psi}^2,
\end{align*}

\no\underline{$\bullet$ The estimate of $\f{\hbar\d(t)}2\bigl\langle \Lambda(D)\pa_x(T^\h_{D_xu})^*\phi  ,\phi  \bigr\rangle_{H^{\f{29}{4},0}_\Psi}.$}
 It follows from Lemma \ref{lem: T_fg}, \eqref{S3eq-1} and \eqref{S3eq8}  that
\begin{align*}
\f{\d(t)\hbar(t)}2\bigl|\bigl\langle \Lambda(D)\pa_x(T^\h_{D_xu})^*\phi    ,\phi  \bigr\rangle_{H^{\f{29}{4},0}_\Psi}\bigr|\leq& C\|
u\|_{L^\infty_\v(H^{\f32+}_\h)}\|\sh\phi\|_{H^{\f{15}2,0}_\Psi}^2\\
\leq & C\e^\f12\t^{\beta-\gamma_0-\f14}\dot\th(t)\|\sh\phi\|_{H^{\f{15}2,0}_\Psi}^2.
\end{align*}

Notice that $\phi|_{t=T}=0,$
by inserting the above estimates into  \eqref{S3eq3} over $[t,T]$ and then integrating  the resulting
inequality, we arrive at
\begin{align*}
\f12\|&\sh\phi(t)\|_{H^{\f{29}{4},0}_\Psi}^2
-\f12\int_t^T\|\sqrt{\hbar'}\phi(t')\|_{H^{\f{29}{4},0}_\Psi}^2\,dt'\\
&+\la\int_t^T\dot\th(t')\|\sh\phi(t')\|_{H^{\f{15}{2},0}_\Psi}^2\,dt'+ \f{1-\eta}2\int_t^T\|\sh\pa_y\phi(t')\|_{H^{\f{29}{4},0}_\Psi}^2\,dt'\\
\leq &\f12\int_t^T\dot\th(t')\|\sh H(t')\|_{H^{7,0}_\Psi}^2\,dt'
+C(\eta^{-1}\e^{\f32}+1)\int_t^T \dot\th(t')\|\sh\phi(t')\|_{H^{\f{15}{2},0}_\Psi}^2\,dt',
\end{align*}
if $\b$ in \eqref{S3eq-1} satisfying  $\beta\leq \gamma_0+\f14.$ This leads to \eqref{S3eq2}, and we complete
the proof of Proposition \ref{lem: phi}.
\end{proof}

In particular, if we take  $\hbar(t)=\w{t}^{\f{1-\eta}2}$ in \eqref{S3eq2}, we obtain

\begin{col}
\label{S3col1}
{\sl Under the assumptions of Proposition \ref{lem: phi}, there exist  $\e_0$  and $\bar{\la}_1$ so that
for $\e\leq\e_0,$ $\eta\geq 2\e^{\f32}$ and $\la\geq \bar{\la}_1,$  \eqref{S3eq2a} holds.
}
\end{col}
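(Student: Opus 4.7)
The plan is to apply Proposition \ref{lem: phi} with the specific weight $\hbar(t)=\w{t}^{(1-\eta)/2}$, so that $\sh=\w{t}^{(1-\eta)/4}$ coincides with the weight appearing in \eqref{S3eq2a}; this $\hbar$ is clearly smooth, positive, and non-decreasing on $\R_+$. The conceptual key is to decouple the Young's inequality parameter inside Proposition \ref{lem: phi} (which I will denote $\eta_0$) from the exponent $\eta$ appearing in $\hbar$: I will invoke the Proposition with its own Young parameter $\eta_0 := (1-c_0)\eta$ for a fixed $c_0 \in (0,1)$ to be chosen at the end. Smallness of the Corollary's $\eta$ guarantees smallness of $\eta_0$, so the Proposition's hypothesis ``sufficiently small $\eta>0$'' is satisfied.

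The main step will be to absorb the $\int_t^T \|\sqrt{\hbar'}\phi\|_{H^{29/4,0}_\Psi}^2\,dt'$ term in \eqref{S3eq2} into the dissipation $(1-\eta_0)\int_t^T \|\sh\pa_y\phi\|_{H^{29/4,0}_\Psi}^2\,dt'$. A direct computation yields $\hbar'(t)/\hbar(t) = \frac{(1-\eta)t}{2\w{t}^2}$, and the sharp Gaussian Poincar\'e inequality \eqref{S2eq9a} of Lemma \ref{lem2.1}, applied slice-by-slice to $\D^{29/4}\phi$, gives the pointwise-in-$t$ bound
\[
\|\sqrt{\hbar'(t)}\phi(t)\|_{H^{29/4,0}_\Psi}^2 = \tfrac{(1-\eta)t}{2\w{t}^2}\|\sh\phi(t)\|_{H^{29/4,0}_\Psi}^2 \leq \tfrac{(1-\eta)t}{\w{t}}\|\sh\pa_y\phi(t)\|_{H^{29/4,0}_\Psi}^2 \leq (1-\eta)\|\sh\pa_y\phi(t)\|_{H^{29/4,0}_\Psi}^2,
\]
where the last step uses $t/\w{t}\leq 1$. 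With the choice $\eta_0 = (1-c_0)\eta$ this immediately gives the clean algebraic gain
\[
(1-\eta_0)\|\sh\pa_y\phi\|_{H^{29/4,0}_\Psi}^2 - \|\sqrt{\hbar'}\phi\|_{H^{29/4,0}_\Psi}^2 \geq (\eta-\eta_0)\|\sh\pa_y\phi\|_{H^{29/4,0}_\Psi}^2 = c_0\eta\|\sh\pa_y\phi\|_{H^{29/4,0}_\Psi}^2,
\]
which after integration in $t'$ produces exactly the $c_0\eta\int\|\sh\pa_y\phi\|^2$ term on the left of \eqref{S3eq2a}.

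To finish, I would arrange the coefficient $2\bigl(\la-C(\eta_0^{-1}\e^{3/2}+1)\bigr)$ of $\int\dot\th\|\sh\phi\|_{H^{15/2,0}_\Psi}^2$ in \eqref{S3eq2} to dominate $\la$. The hypothesis $\eta \geq 2\e^{3/2}$ forces $\eta_0^{-1}\e^{3/2} = \e^{3/2}/((1-c_0)\eta) \leq 1/(2(1-c_0))$, so $C(\eta_0^{-1}\e^{3/2}+1)$ is bounded by a constant $C_{c_0}$ depending only on $c_0$; taking $c_0 = 1/2$ and $\bar\la_1 := 4C_{c_0}$ then ensures $2(\la-C_{c_0}) \geq \la$ whenever $\la\geq \bar\la_1$. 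The genuine obstacle here is conceptual rather than technical: the naive choice $\eta_0 = \eta$ leaves zero margin once the Poincar\'e estimate of $\|\sqrt{\hbar'}\phi\|^2$ is plugged in, and one has to recognize that taking the Young parameter strictly smaller than the weight exponent $\eta$ is exactly what produces the required $c_0\eta$ of residual dissipation; once this decoupling is identified, the remaining computations are routine.
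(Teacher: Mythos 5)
Your proof is correct and takes the same basic route as the paper — set $\hbar(t)=\w{t}^{(1-\eta)/2}$ in Proposition \ref{lem: phi} and absorb the $\int\|\sqrt{\hbar'}\phi\|^2$ term using the weighted Poincar\'e inequality \eqref{S2eq9a}. However, you make explicit a crucial step that the paper's written proof actually omits. The paper applies Proposition \ref{lem: phi} with the \emph{same} parameter $\eta$ in both the weight $\hbar$ and the Young-inequality factor $(1-\eta)$ multiplying $\|\sh\pa_y\phi\|^2$, and then invokes \eqref{S3eq2p}, which is an inequality with equality threshold zero: $-\tfrac{1-\eta}{2}\int\|\w{t'}^{-(1+\eta)/4}\phi\|^2 + (1-\eta)\int\|\sh\pa_y\phi\|^2 \geq 0$. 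This yields the paper's intermediate estimate \eqref{S3eq2as}, which contains \emph{no} residual dissipation of $\pa_y\phi$ at all, yet the paper then asserts that \eqref{S3eq2as} ``leads to'' \eqref{S3eq2a}, whose left-hand side carries the term $c_0\eta\int_t^T\|\w{t'}^{(1-\eta)/4}\pa_y\phi\|^2\,dt'$. As written, that term simply does not appear, and indeed cannot: since $\hbar'/\hbar=\tfrac{(1-\eta)t}{2\w{t}^2}\leq \tfrac{1-\eta}{2\w{t}}$ only up to an inequality that saturates as $t\to\infty$, one needs $(1-\eta_{\mathrm{Young}}) - (1-\eta) > 0$, i.e.\ the Young parameter must be chosen strictly smaller than the weight exponent, exactly as you do with $\eta_0 = (1-c_0)\eta$. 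Your algebra $(1-\eta_0) - (1-\eta) = c_0\eta$ produces the claimed residual dissipation, and your verification that $\eta\geq 2\e^{3/2}$ forces $\eta_0^{-1}\e^{3/2}\leq 1/(2(1-c_0))$ so that $\bar\la_1$ can be fixed independently of $\eta$ is sound. So your proof is correct and in fact fills a genuine (if small) gap in the paper's terse argument; the paper's choice $\bar\la_1=3C$ should become something like your $\bar\la_1 = 4C_{c_0}$ once the decoupling is done honestly.
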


\begin{proof} Indeed it follows from Lemma \ref{lem2.1} that
\beno
\f12{\w{t}^{-\f{1+\eta}2}}\|\phi(t)\|_{H^{\f{29}{4},0}_\Psi}^2 \leq \|\w{t}^{\f{1-\eta}4} \pa_y\phi(t)\|_{H^{\f{29}{4},0}_\Psi}^2,\eeno
so that
\beq\label{S3eq2p}
-\f{1-\eta}2\int_t^T\|\w{t'}^{-\f{1+\eta}4} \phi(t')\|_{H^{\f{29}{4},0}_\Psi}^2\,dt'+ (1-\eta)\int_t^T\|\w{t'}^{\f{1-\eta}4}\pa_y\phi(t')\|_{H^{\f{29}{4},0}_\Psi}^2\,dt'\geq 0.
\eeq
Then by taking   $\hbar(t)=\w{t}^{\f{1-\eta}2}$ in \eqref{S3eq2} and using the above inequality,
 we achieve
 \beq\label{S3eq2as}
\begin{split}
\|\w{t}^{\f{1-\eta}4}\phi(t)\|_{H^{\f{29}{4},0}_\Psi}^2+2\left(\la-C(\eta^{-1}\e^{\f32}+1)\right)
&\int_t^T\dot\th(t')
\|\w{t'}^{\f{1-\eta}4}\phi(t')\|_{H^{\f{15}{2},0}_\Psi}^2\,dt'\\
&\qquad
  \leq \int_t^T\dot\th(t')\|\w{t'}^{\f{1-\eta}4} H(t')\|_{H^{7,0}_\Psi}^2\,dt'.
\end{split}
\eeq
Then for $\e$ small enough and $\eta\geq 2\e^{\f32},$ we have $\eta^{-1}\e^{\f32}+1\leq \f32$ Then
taking $\la\geq \bar{\la}_1\eqdefa 3C$ in \eqref{S3eq2as} leads to \eqref{S3eq2a}.
\end{proof}

\subsection{The estimate of $H$}
In this subsection,  we shall present the {\it a priori} estimate of $H.$  Before proceeding, we  first derive
 the estimate of source term $f$ in \eqref{eq: tu_Phi}.

\begin{lem}\label{lem: f}
{\sl Let $f$ be given by \eqref{eq: f_3}. Then for  any $s>0,$ there holds
\beq\label{S3eq4}
\|f\|_{H^{s,0}_\Psi}\leq C\t^\f14\Bigl(\|\pa_y G_\Phi\|_{H^{\f52+,0}_{\Psi}}\|u_\Phi\|_{H^{s,0}_\Psi}+
\|G_\Phi\|_{H^{\f52+,0}_{\Psi}}\|\pa_y u_\Phi\|_{H^{s-\f12,0}_\Psi}\Bigr).
\eeq}
\end{lem}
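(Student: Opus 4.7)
The plan is to decompose $f=-f_1-f_2-f_3$ as in \eqref{eq: (f_1,f_2)}--\eqref{eq: f_3} and estimate each piece in $H^{s,0}_\Psi$ by a uniform two-step recipe. First, use the paraproduct/commutator lemmas of Section \ref{sect2} to bound the $H^s_\h$-norm pointwise in $y$ by a product of a ``small'' factor and a ``big'' factor. Second, integrate against $e^{2\Psi}$ by placing the small factor in $L^\infty_{\v,\gamma\Psi}(H^\sigma_\h)$ and the big factor in $L^2_\v$ weighted by $e^{(1-\gamma)\Psi}\leq e^\Psi$, and then invoke the vertical embedding (proved in exactly the same way as \eqref{est: (f,pa_y f)})
\[
\|g_\Phi\|_{L^\infty_{\v,\gamma\Psi}(H^\sigma_\h)}\lesssim\t^\f14\|\pa_y g_\Phi\|_{H^{\sigma,0}_\Psi}\qquad(0<\gamma<1),
\]
to convert the $L^\infty_y$ norm into a $\pa_y$ quantity at the price of the harmless gain $\t^\f14$.

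For $f_1$, for the second-order commutator triple $(T^\h_{\pa_y u}v)_\Phi-T^\h_{\pa_y u}v_\Phi-\f{\d(t)}2T^\h_{\pa_y D_xu}\Lambda(D_x)v_\Phi$ sitting inside $f_2$, and for the two pieces $(T^\h_{\pa_x u}u)_\Phi$, $R^\h(u,\pa_x u)_\Phi$ of $f_3$, Lemma \ref{lem: com3} (with $\sigma=\f52+$) and Lemma \ref{lem: T_fg} produce pointwise bounds of the shape $\|\cdot\|_{H^s_\h}\lesssim\|u_\Phi\|_{H^{\f52+}_\h}\|u_\Phi\|_{H^s_\h}$ (with $v_\Phi$ or $\pa_y u_\Phi$ replacing one factor in the $f_2$ case, handled identically after one step). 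After integrating in $y$ and applying the embedding above together with \eqref{S2eq20b},
\[
\|u_\Phi\|_{L^\infty_{\v,\gamma\Psi}(H^{\f52+}_\h)}\lesssim\t^\f14\|\pa_y u_\Phi\|_{H^{\f52+,0}_\Psi}\lesssim\t^\f14\|\pa_y G_\Phi\|_{H^{\f52+,0}_\Psi},
\]
and the first contribution in \eqref{S3eq4} is produced.

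The second contribution in \eqref{S3eq4} must come from the commutator $(T^\h_v\pa_y u)_\Phi-T^\h_v\pa_y u_\Phi$ in $f_2$ and the remainder $R^\h(v,\pa_y u)$ in $f_3$. In both I would split asymmetrically, placing $v_\Phi$ in $L^\infty_{\v,\gamma\Psi}(H^{\f32+}_\h)$ and $\pa_y u_\Phi$ in $L^2_\v(H^{s-\f12}_\h)$. For the remainder this is legitimate by Lemma \ref{lem: T_fg} with $s_1=\f32+$, $s_2=s-1+$ (so $s_1+s_2>s+\f12$ and $s_2\leq s-\f12$); for the commutator it follows by repeating the kernel argument of Lemma \ref{lem: com2} but without the $\pa_x$ factor on $\pa_y u$, which absorbs the symbol $\w{\eta}^{-1/2}$ into the second factor and yields $\|\cdot\|_{H^s_\h}\lesssim\d(t)\|v_\Phi\|_{H^{\f32+}_\h}\|\pa_y u_\Phi\|_{H^{s-\f12}_\h}$. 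Next, using the identity $v_\Phi=\int_y^\infty\pa_x u_\Phi\,dz$ with a weighted Cauchy--Schwarz in $z$, and then \eqref{S2eq20a}, one obtains
\[
\|v_\Phi\|_{L^\infty_{\v,\gamma\Psi}(H^{\f32+}_\h)}\lesssim\t^\f14\|u_\Phi\|_{H^{\f52+,0}_{\gamma'\Psi}}\lesssim\t^\f14\|G_\Phi\|_{H^{\f52+,0}_\Psi}
\]
for any $\gamma<\gamma'<1$, which produces exactly $\t^\f14\|G_\Phi\|_{H^{\f52+,0}_\Psi}\|\pa_y u_\Phi\|_{H^{s-\f12,0}_\Psi}$. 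The main bookkeeping obstacle is precisely the choice of this asymmetric Bony split: a ``balanced'' split putting $v_\Phi$ into $H^{\f52+}_\h$ would, after the $v=\int\pa_x u$ step, force the right-hand side to involve $\|G_\Phi\|_{H^{\f72+,0}_\Psi}$ rather than the required $\|G_\Phi\|_{H^{\f52+,0}_\Psi}$, and would break the closure of the subsequent a priori estimates.
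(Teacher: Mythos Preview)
Your proposal is correct and follows essentially the same route as the paper's proof: both split $f=-f_1-f_2-f_3$, apply Lemma~\ref{lem: com3} to $f_1$ and the second-order commutator in $f_2$, apply (a trivial variant of) Lemma~\ref{lem: com2} to $(T^\h_v\pa_y u)_\Phi-T^\h_v\pa_y u_\Phi$, apply Lemma~\ref{lem: T_fg} to the para- and remainder terms in $f_3$, and then pass from $u_\Phi,\pa_y u_\Phi$ to $G_\Phi,\pa_y G_\Phi$ via Lemma~\ref{lem2.2} after the $L^\infty_\v$ embedding that produces the $\t^{1/4}$. Your asymmetric split for the $v$-terms and the explicit remark that the kernel argument of Lemma~\ref{lem: com2} applies without the $\pa_x$ factor (gaining $\w{\eta}^{-1/2}$ on the high-frequency factor) exactly match what the paper does implicitly; the only cosmetic difference is that the paper sometimes uses sharper intermediate exponents ($\f12+$ or $\f32+$ in place of your uniform $\f52+$), which is harmless since the final statement is in terms of $H^{\f52+}$.
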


\begin{proof}
Recall from  \eqref{eq: f_3} that $f=-f_1-f_2-f_3$ with $f_i~(i=1,2,3)$ being given respectively by \eqref{eq: (f_1,f_2)} and \eqref{eq: f_3}.
We first observe from the proof of Lemma \ref{lem: phi} that for any $\ga\in (0,1)$ and $\sigma\in\R$
\beq\label{S3eq4a}
\begin{split}
\|e^{\ga\Psi}u(t)\|_{L_\v^\infty(H^{\sigma}_\h)}\leq
 C\t^\f14\|\pa_y G(t)\|_{H^{\sigma,0}_{\Psi}} \andf \|e^{\ga\Psi} v\|_{L^\infty_\v(H^{\sigma}_\h)}\leq C\t^\f14 \|G(t)\|_{H^{\sigma+1,0}_\Psi}.
\end{split}\eeq
Thanks to \eqref{S3eq4a}, we
deduce from  Lemma \ref{lem: com3}   that
\begin{align*}
\|f_1\|_{H^{s,0}_\Psi}\leq C\|u_\Phi\|_{L^\infty_\v(H^{\f52+}_\h)}\|u_\Phi\|_{H^{s,0}_\Psi}\leq& C\t^{\f14}\|\pa_y G_\Phi\|_{H^{\f52+,0}_\Psi}\|u_\Phi\|_{H^{s,0}_\Psi}.
\end{align*}
Similarly due to $v=\int_y^\infty \pa_xu\,dz,$ it follows from Lemma \ref{lem: com2}, Lemma \ref{lem: com3} and Lemma \ref{lem2.2} that
\begin{align*}
\|f_2\|_{H^{s,0}_\Psi}\leq& C\bigl(\|\pa_y u_\Phi\|_{L^2_{\v,\f34\Psi}(H^{\f52+}_\h)}\|e^{\f{\Psi}4}v_\Phi\|_{L^\infty_\v(H^{s-1}_\h)}+\|v_\Phi\|_{L^\infty_\v(H^{\f32+}_\h)}\|\pa_y u_\Phi\|_{H^{s-\f12,0}_\Psi}\bigr)\\
\leq&C\t^\f14\bigl(\|\pa_y u_\Phi\|_{H^{\f52+,0}_{\f34\Psi}}\|u_\Phi\|_{H^{s,0}_\Psi}+\|u_\Phi\|_{H^{\f52+,0}_{\f34\Psi}}\|\pa_y u_\Phi\|_{H^{s-\f12,0}_\Psi}\bigr)\\
\leq&C\t^\f14\bigl(\|\pa_y G_\Phi\|_{H^{\f52+,0}_{\Psi}}\|u_\Phi\|_{H^{s,0}_\Psi}+\|G_\Phi\|_{H^{\f52+,0}_{\Psi}}\|\pa_y u_\Phi\|_{H^{s-\f12,0}_\Psi}\bigr).
\end{align*}
Finally by applying Lemma \ref{lem: T_fg} and Lemma \ref{lem2.2}, we arrive at
\begin{align*}
\|f_3\|_{H^{s,0}_\Psi}\leq& C\bigl(\|\pa_xu_\Phi\|_{L^\infty_\v(H^{\f12+}_\h)}\|u_\Phi\|_{H^{s,0}_\Psi}
+\|v_\Phi\|_{L^\infty_\v(H^{1+}_\h)}\|\pa_y u_\Phi\|_{H^{s-\f12,0}_\Psi}\bigr)\\
\leq&C\t^\f14\bigl(\|\pa_y G_\Phi\|_{H^{\f52+,0}_{\Psi}}\|u_\Phi\|_{H^{s,0}_\Psi}+\|G_\Phi\|_{H^{\f52+,0}_{\Psi}}\|\pa_y u_\Phi\|_{H^{s-\f12,0}_\Psi}\bigr).
\end{align*}
By summarizing  the above estimates, we conclude the proof of \eqref{S3eq4}.
\end{proof}

Next let us turn to the estimates of $H.$

\begin{prop}\label{lem: H}
{\sl Let $H$ be a smooth enough solution of \eqref{eq: H} on $[0,T^\star]$ which decays to zero
 as $y$ approaching $\infty.$ Then if  $\beta\leq\min\{\gamma_0+\f14,\f12(\gamma_0+\f54)\}$  and  $\la\e^\f12\leq C,$ for any
 $T\in [0,T^\star],$ we have
 \beq\label{S3eq4b}
\begin{split}
\|\sh &H(T)\|_{H^{\f{27}{4},0}_\Psi}^2+2\bigl(\la(1-C\e^{\f12})-C(1+\eta^{-1}\e)\e^{\f12}\bigr)\int_0^T\dot\th(t)\|\sh H(t)\|_{H^{7,0}_\Psi}^2\,dt\\
&-\int_0^T\|\sqrt{\hbar'} H(t)\|_{H^{\f{27}{4},0}_\Psi}^2\,dt+\bigl(1-\f{\eta}{4}\bigr)\int_0^T\|\sh \pa_yH(t)\|_{H^{\f{27}{4},0}_\Psi}^2\,dt\\
\leq&C\bigl(1+\la\e^{\f12}\bigr)\int_0^T \dot\th(t)\|\sh \phi(t)\|_{H^{\f{15}{2},0}_\Psi}^2\,dt+\|\sh(0)u_\Phi(0)\|_{H^{\f{25}{4}}_\Psi}^2+\|\sh(0)\phi(0)\|_{H^{\f{29}{4}}_\Psi}^2\\
&+C\e^{\f32}\int_0^T\Bigl(\t^{-\gamma_0-\f14}\|\sh u_\Phi\|_{H^{6,0}_\Psi}^2+\|\sh\pa_y u_\Phi\|_{H^{\f{11}{2},0}_\Psi}^2\Bigr)\, dt.
\end{split} \eeq }
\end{prop}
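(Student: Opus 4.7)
The plan is a weighted energy estimate on $H$ combined with a duality step that reroutes the dangerous source $\hbar\dot\th\langle u_\Phi,H\rangle$ through the test function $\phi$. Differentiating the first line of \eqref{eq: H} in $y$ gives
\begin{equation*}
\mathcal{L}H=\dot\th\,u_\Phi+\mathcal R,\qquad \mathcal R\eqdefa T^\h_{\pa_y u}\pa_x\!\int_y^\infty\! H\,dz-T^\h_{\pa_y v}H+\f{\d(t)}{2}T^\h_{\pa_y D_xu}\La(D_x)\pa_x\!\int_y^\infty\! H\,dz.
\end{equation*}
First I take the $\hbar$-weighted $H^{\f{27}{4},0}_\Psi$ inner product of this identity with $H$ and integrate on $[0,T]$. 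Exactly as in the proof of Proposition~\ref{lem: phi}, the $\pa_t$ piece produces $\f12\|\sh H(T)\|_{H^{\f{27}{4},0}_\Psi}^2-\f12\int_0^T\|\sqrt{\hbar'}H\|_{H^{\f{27}{4},0}_\Psi}^2\,dt$ (using $H(0)=0$); the multiplier $\la\dot\th\D^{\f12}$ yields $\la\int_0^T\dot\th\|\sh H\|_{H^{7,0}_\Psi}^2\,dt$; the $-\pa_y^2$ term combined with the weight derivative $\f{y^2}{8\t^2}e^{2\Psi}$ and Lemma~\ref{lem2.1} delivers $\|\sh\pa_y H\|_{H^{\f{27}{4},0}_\Psi}^2$ with $\eta/4$ slack; and the paraproduct transport pieces $T^\h_u\pa_x,\ T^\h_v\pa_y,\ \f{\d(t)}{2}T^\h_{D_xu}\La\pa_x$ are handled via Lemmas~\ref{lem: T_fg}--\ref{lem: com2} together with the $L^\infty_\v$ decay of $u,v$ from Lemma~4.1 and \eqref{S3eq-1}, producing a $C(1+\eta^{-1}\e)\e^{\f12}$ perturbation of the $\la\dot\th\|H\|_{H^{7}}^2$ dissipation. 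The remainder $\hbar\langle\mathcal R,H\rangle_{H^{\f{27}{4},0}_\Psi}$ is controlled via the $L^\infty_\v$ decay of $\pa_y u,\pa_y v$ and the weighted Hardy bound \eqref{est: (f,pa_y f)} applied to $\int_y^\infty H\,dz$; re-absorbing the resulting time-decay factor into $\dot\th$ (or into a small fraction of $\|\sh \pa_y H\|_{H^{\f{27}{4}}}^2$) is what forces the constraint $\beta\leq\f12(\gamma_0+\f54)$.

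To treat the main source $\int_0^T\hbar\dot\th\langle u_\Phi,H\rangle_{H^{\f{27}{4},0}_\Psi}\,dt$ I invoke the duality $\mathcal{L}^*\phi=\dot\th H$ in $L^2(\hbar e^{2\Psi})$: since $\phi|_{t=T}=0$, an integration by parts in both time and space (modulo the commutators $[\D^{\f{27}{2}},\mathcal{L}]$ and $[\D^{\f{27}{2}},\mathcal{L}^*]$, which are treated by Lemmas~\ref{lem: com1}--\ref{lem: com3}) yields
\begin{equation*}
\int_0^T\hbar\dot\th\langle u_\Phi,H\rangle_{H^{\f{27}{4},0}_\Psi}\,dt=\int_0^T\hbar\langle \mathcal{L}u_\Phi,\phi\rangle_{H^{\f{27}{4},0}_\Psi}\,dt-\hbar(0)\langle u_\Phi(0),\phi(0)\rangle_{H^{\f{27}{4},0}_\Psi}.
\end{equation*}
From \eqref{eq: tu_Phi} one reads off $\mathcal{L}u_\Phi=f-T^\h_{\pa_y u}v_\Phi-\f{\d(t)}{2}T^\h_{\pa_y D_xu}\La(D_x)v_\Phi$ with $f$ as in \eqref{eq: f_3}. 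Lemma~\ref{lem: f} together with \eqref{assume: 1} and Cauchy--Schwarz on $\hbar\langle f,\phi\rangle$ contributes the $C\e^{\f32}\int_0^T(\t^{-\gamma_0-\f14}\|\sh u_\Phi\|_{H^6}^2+\|\sh\pa_y u_\Phi\|_{H^{\f{11}{2}}}^2)\,dt$ error of \eqref{S3eq4b}. For the paraproduct pairings I use Lemmas~\ref{lem: T_fg}--\ref{lem: com3}, the $L^\infty_\v$ decay of $v$ and $\pa_yu$, and a Cauchy--Schwarz split that routes $\phi$ into $H^{\f{15}{2},0}_\Psi$ and $u_\Phi$ (resp.\ $\pa_yu_\Phi$) into $H^{6,0}_\Psi$ (resp.\ $H^{\f{11}{2},0}_\Psi$); the factor $\e^{\f12}\t^{-\gamma_0-\f14+\beta}$ thus produced is re-absorbed into $\dot\th=\e^{\f12}\t^{-\beta}$ under the companion constraint $\beta\leq\gamma_0+\f14$, yielding the $C(1+\la\e^{\f12})\int_0^T\dot\th\|\sh\phi\|_{H^{\f{15}{2}}}^2$ contribution. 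The initial boundary pairing is bounded via the identity $\f{25}{4}+\f{29}{4}=2\cdot\f{27}{4}$ and Young's inequality by $\f12(\|\sh(0)u_\Phi(0)\|_{H^{\f{25}{4}}}^2+\|\sh(0)\phi(0)\|_{H^{\f{29}{4}}}^2)$.

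The principal obstacle is this second step. One must choose the Cauchy--Schwarz exponents so that the $\phi$-side lands \emph{exactly} in $H^{\f{15}{2},0}_\Psi$, the norm that Proposition~\ref{lem: phi} can dissipate via its $\la\dot\th$ integral, while the $u_\Phi$- or $\pa_yu_\Phi$-side lands in $H^{6,0}_\Psi$ or $H^{\f{11}{2},0}_\Psi$, the norms to be controlled by the forthcoming Proposition~\ref{S1prop2}. The joint constraint $\beta\leq\min\{\gamma_0+\f14,\f12(\gamma_0+\f54)\}$ appearing in the hypothesis is precisely the coincidence of the two $\dot\th$-reconstructions above, and is compatible with the ultimate choice $\beta=\f76-\eta$ made in Subsection~\ref{Sub3.4}. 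Tracking the many commutators between $\D^{\f{27}{2}}$ and the paraproducts appearing in $\mathcal{L}u_\Phi$, via Lemmas~\ref{lem: com1}--\ref{lem: com3}, is the main computational burden, but no new idea beyond those already employed in the proof of Proposition~\ref{lem: phi} is needed.
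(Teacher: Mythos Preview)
Your scheme has a genuine gap: both of the ``direct'' estimates you propose lose half an $x$-derivative and cannot be closed at the regularity levels appearing in \eqref{S3eq4b}. Concretely, for the $\mathcal R$ piece you need
\[
\bigl|\bigl\langle T^\h_{\pa_y u}\,\pa_x\!\int_y^\infty H\,dz,\ H\bigr\rangle_{H^{\f{27}{4},0}_\Psi}\bigr|
\ \lesssim\ \|\pa_y u\|_{L^2_\v(H^{\f12+}_\h)}\,\t^{\f14}\,\|H\|_{H^{\f{15}{2},0}_\Psi}\,\|H\|_{H^{7,0}_\Psi},
\]
and no amount of time decay or $y$-parabolic smoothing manufactures the missing factor $\|H\|_{H^{\f{15}{2}}}$ from $\|H\|_{H^{7}}$. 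The same obstruction hits your duality side: in $\bigl\langle T^\h_{\pa_y u}v_\Phi,\phi\bigr\rangle_{H^{\f{27}{4}}}$, routing $\phi$ into $H^{\f{15}{2}}$ forces $T^\h_{\pa_y u}v_\Phi$ into $H^{6}$, and since $v_\Phi=\int_y^\infty\pa_x u_\Phi\,dz$ this requires $\|u_\Phi\|_{H^{7}}$, not the $\|u_\Phi\|_{H^{6}}$ that \eqref{S3eq4b} allows. These are the twin manifestations of the classical Prandtl derivative loss, and they are precisely why the auxiliary functions $H,\phi$ were introduced in \cite{DG19} in the first place.

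The paper's cure is a \emph{cancellation}, not an estimate. One does \emph{not} bound $\bigl\langle T^\h_{\pa_y u}v_\Phi,\phi\bigr\rangle_{H^{\f{27}{4}}}$ directly; instead one substitutes the representation \eqref{v_Phi},
\[
\dot\th\, v_\Phi=\mathcal{L}\,\pa_x\!\int_y^\infty H\,dz+\text{(lower order)},
\]
and then applies the duality $\mathcal{L}^*\phi=\dot\th H$ a \emph{second} time. The leading contribution becomes exactly $\bigl\langle T^\h_{\pa_y u}\pa_x\!\int_y^\infty H\,dz,\ H\bigr\rangle_{H^{\f{27}{4}}}$, which cancels the bad term coming from your $\mathcal R$ (see \eqref{eq: LHS-1}--\eqref{eq: LHS-3} and the sentence following them). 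What survives are commutators $[T^\h_{\pa_y u};\mathcal L]$, terms with the factor $\pa_t(1/\dot\th)$, and products of two low-frequency coefficients, all of which are one full derivative better and \emph{can} be absorbed by $\dot\th\|H\|_{H^7}^2+\dot\th\|\phi\|_{H^{\f{15}{2}}}^2$ under the stated constraints on $\beta$ (this is the content of Lemma~\ref{S3lem2}). Your write-up correctly identifies the duality step once, but misses that it must be iterated through \eqref{v_Phi} to trigger the cancellation; without that, the argument does not close.
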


\begin{proof}
In view of \eqref{eq: tu_Phi} and \eqref{defcL}, we write
\begin{align*}
\mathcal{L}u_\Phi+T_{\pa_y u}^\h v_\Phi+\f{\d(t)}2T^\h_{\pa_yD_xu}\Lambda(D_x)v_\Phi=f.
\end{align*}
By  taking $H^{\f{27}{4},0}_\Psi$ inner product of the above equation with $\hbar(t)\phi$ and integrating
the resulting equality over $[0,T],$ we find
\begin{align*}
\int_0^T\hbar(t)\bigl\langle\mathcal{L}u_\Phi+T^\h_{\pa_y u}v_\Phi+\f{\d(t)}2T^\h_{\pa_yD_xu}\Lambda(D_x)v_\Phi,\phi\bigr\rangle_{H^{\f{27}{4},0}_\Psi}\,dt
=\int_0^T\hbar(t)\langle f ,\phi\rangle_{H^{\f{27}{4},0}_\Psi}\,dt.
\end{align*}
We first get, by applying Lemma \ref{lem: f} with $s=6,$ that
\begin{align*}
\int_0^T&\hbar(t)\bigl|\langle f,\phi\rangle_{H^{\f{27}{4},0}_\Psi}\bigr|\,dt\leq  \int_0^T\|\sh f\|_{H^{6,0}_\Psi}\|\sh\phi\|_{H^{\f{15}{2},0}_\Psi}\,dt\\
\leq & C\int_0^T  \t^\f14\bigl(\|\pa_y G_\Phi\|_{H^{\f52+,0}_{\Psi}}\|\sh u_\Phi\|_{H^{6,0}_\Psi}+\|G_\Phi\|_{H^{\f52+,0}_{\Psi}}\|\sh\pa_y u_\Phi\|_{H^{\f{11}{2},0}_\Psi}\bigr)  \|\sh\phi\|_{H^{\f{15}{2},0}_\Psi}\,dt,
\end{align*}
from which,  \eqref{assume: 1} and \eqref{S3eq-1}, we infer
\begin{align*}
\int_0^T&\hbar(t)\bigl|\langle f,\phi\rangle_{H^{\f{27}{4},0}_\Psi}\bigr|\,dt \\
\leq& C\int_0^T\dot\th(t)\|\sh\phi\|_{H^{\f{15}{2},0}_\Psi}^2dt+\e^2\int_0^T\f{\t^\f12}{\dot\th(t)}\t^{-2\gamma_0}\bigl(\t^{-1}\|\sh u_\Phi\|_{H^{6,0}_\Psi}^2+\|\sh\pa_y u_\Phi\|_{H^{\f{11}{2},0}_\Psi}^2\bigr)\,dt\\
\leq& C\int_0^T\dot\th(t)\|\sh\phi\|_{H^{\f{15}{2},0}_\Psi}^2dt+\e^{\f32}\int_0^T\t^{\beta-2\gamma_0+\f12}\bigl(\t^{-1}\|\sh u_\Phi\|_{H^{6,0}_\Psi}^2+\|\sh\pa_y u_\Phi\|_{H^{\f{11}{2},0}_\Psi}^2\bigr)\, dt.
\end{align*}
Due to
$\beta\leq \gamma_0+\f14$ and $\ga_0>1,$ we achieve
\beq \label{S3eq4c}
\begin{split}
\int_0^T\hbar(t)\bigl|\langle f,\phi\rangle_{H^{\f{27}{4},0}_\Psi}\bigr|\,dt
\leq & C\int_0^T\dot\th(t)\|\sh\phi\|_{H^{\f{15}{2},0}_\Psi}^2dt\\
&+\e^{\f32} \int_0^T\bigl(\t^{-\gamma_0-\f14}\|\sh u_\Phi\|_{H^{6,0}_\Psi}^2+\|\sh\pa_y u_\Phi\|_{H^{\f{11}{2},0}_\Psi}^2\bigr)\, dt.
\end{split}
\eeq

On the other hand, thanks to \eqref{eq: phi}, we get, by using integration by parts, that
\begin{align*}
\int_0^T\hbar\langle \mathcal{L}u_\Phi,\phi\rangle_{H^{\f{27}{4},0}_\Psi}\,dt
=&\int_0^T\hbar\langle u_\Phi,\mathcal{L}^*\phi\rangle_{H^{\f{27}{4},0}_\Psi}\,dt-\hbar(0)\langle u_\Phi(0),\phi(0)\rangle_{H^{\f{27}{4},0}_\Psi}\\
=&\int_0^T\hbar\langle \dot
\th u_\Phi,H\rangle_{H^{\f{27}{4},0}_\Psi}\,dt-\hbar(0)\langle u_\Phi(0),\phi(0)\rangle_{H^{\f{27}{4},0}_\Psi},
\end{align*}
Substituting \eqref{tu_Phi} into the above equality yields
\beq\label{eq: LHS-1}
\begin{split}
\int_0^T\hbar\langle \mathcal{L}u_\Phi,\phi\rangle_{H^{\f{27}{4},0}_\Psi}\,dt=&\int_0^T\hbar\Bigl(\langle \mathcal{L}H, H\rangle_{H^{\f{27}{4},0}_\Psi}-\bigl\langle T^\h_{\pa_yu}\pa_x\int_y^\infty Hdz,H\bigr\rangle_{H^{\f{27}{4},0}_\Psi}\\
&\quad-\f{\d(t)}2\bigl\langle T^\h_{\pa_yD_xu}\Lambda(D_x)\pa_x\int_y^\infty Hdz,H\bigr\rangle_{H^{\f{27}{4},0}_\Psi}\\
&\quad+\langle T_{\pa_yv} H ,H\rangle_{H^{\f{27}{4},0}_\Psi}\Bigr)\,dt-\hbar(0)\langle u_\Phi(0),\phi(0)\rangle_{H^{\f{27}{4},0}_\Psi}.
\end{split}\eeq

Along the same line, by applying \eqref{v_Phi} and then \eqref{eq: phi}, we find
\beq\label{eq: LHS-2}
\begin{split}
&\int_0^T\hbar\langle T^\h_{\pa_y u}v_\Phi,\phi\rangle_{H^{\f{27}{4},0}_\Psi}\,dt
=\int_0^T\f{\hbar}{\dot\th}\Bigl(\bigl\langle T^\h_{\pa_y u}\mathcal{L}\pa_x\int_y^\infty Hdz,\phi\bigr\rangle_{H^{\f{27}{4},0}_\Psi}
\\
&\quad-\langle T^\h_{\pa_y u}T^\h_{\pa_x v}H,\phi\rangle_{H^{\f{27}{4},0}_\Psi}+\bigl\langle T^\h_{\pa_y u}T^\h_{\pa_xu}\pa_x\int_y^\infty Hdz,\phi\bigr\rangle_{H^{\f{27}{4},0}_\Psi}\\
&\quad+\f{\d(t)}{2}\bigl\langle T^\h_{\pa_y u}T^\h_{\pa_xD_xu}\Lambda(D_x)\pa_x\int_y^\infty Hdz,\phi\bigr\rangle_{H^{\f{27}{4},0}_\Psi}\Bigr)\,dt\\
&=\int_0^T\hbar\Bigl(\bigl\langle T^\h_{\pa_y u}\pa_x\int_y^\infty H\,dz,H\bigr\rangle_{H^{\f{27}{4},0}_\Psi}
-\pa_t\left(\f{1}{\dot\th}\right)\bigl\langle T^\h_{\pa_y u}\pa_x\int_y^\infty Hdz,\phi\bigr\rangle_{H^{\f{27}{4},0}_\Psi}\Bigr)\,dt\\
&\quad+\int_0^T\f{\hbar}{\dot\th}\Bigl(\bigl\langle [T^\h_{\pa_y u}; \mathcal{L}]\pa_x\int_y^\infty Hdz,\phi\bigr\rangle_{H^{\f{27}{4},0}_\Psi}
+\langle T^\h_{\pa_y u}T^\h_{\pa_xu}\pa_x\int_y^\infty Hdz,\phi\rangle_{H^{\f{27}{4},0}_\Psi}\\
&\quad-\langle T^\h_{\pa_y u}T^\h_{\pa_x v}H,\phi\rangle_{H^{\f{27}{4},0}_\Psi}
+\f{\d(t)}{2}\bigl\langle T^\h_{\pa_y u}T^\h_{\pa_xD_xu}\Lambda(D_x)\pa_x\int_y^\infty Hdz,\phi\bigr\rangle_{H^{\f{27}{4},0}_\Psi}\Bigr)\,dt,
\end{split}\eeq
and
\beq\label{eq: LHS-3}
\begin{split}
\f12\int_0^T&\hbar(t){\d(t)}\bigl\langle T^\h_{\pa_yD_xu}\Lambda(D_x)v_\Phi,\phi\bigr\rangle_{H^{\f{27}{4},0}}\,dt\\
=&\f12\int_0^T{\d(t)}\hbar(t)\bigl\langle T^\h_{\pa_yD_xu}\Lambda(D_x)\pa_x\int_y^\infty H\,dz,H\bigr\rangle_{H^{\f{27}{4},0}_\Psi}\,dt\\
&-\int_0^T\pa_t\left(\f{\d(t)}{2\dot\th}\right)\hbar(t)\bigl\langle T^\h_{\pa_yD_xu}\Lambda(D_x)\pa_x\int_y^\infty Hdz,\phi\bigr\rangle_{H^{\f{27}{4},0}_\Psi}\,dt\\
&+\int_0^T\f{\hbar(t)\d(t)}{2\dot\th}\Bigl(\bigl\langle [T^\h_{\pa_yD_xu}\Lambda(D_x); \mathcal{L}]\pa_x\int_y^\infty Hdz,\phi\bigr\rangle_{H^{\f{27}{4},0}_\Psi}\\
&\quad+\bigl\langle T^\h_{\pa_yD_xu}\Lambda(D_x)T^\h_{\pa_xu}\pa_x\int_y^\infty Hdz,\phi\bigr\rangle_{H^{\f{27}{4},0}_\Psi}
-\bigl\langle T^\h_{\pa_yD_xu}\Lambda(D_x)T^\h_{\pa_x v}H,\phi\bigr\rangle_{H^{\f{27}{4},0}_\Psi}\\
&\qquad\qquad\qquad\qquad+\f{\d(t)}{2}\bigl\langle T^\h_{\pa_yD_xu}\Lambda(D_x)T^\h_{\pa_xD_xu}\Lambda(D_x)\pa_x\int_y^\infty Hdz,\phi\bigr\rangle_{H^{\f{27}{4},0}_\Psi}\Bigr)\,dt.
\end{split}\eeq
The most trouble terms above are  the second and third terms in \eqref{eq: LHS-1} which are fortunately  canceled respectively by the first term in
 \eqref{eq: LHS-2} and \eqref{eq: LHS-3}.
By summarizing \eqref{eq: LHS-1}, \eqref{eq: LHS-2} and  \eqref{eq: LHS-3},  we arrive at
\beq\label{S3eq5}
\begin{split}
\int_0^T&\hbar\bigl\langle\mathcal{ L}u_\Phi+T^\h_{\pa_y u}v_\Phi+\f{\d(t)}2T^\h_{\pa_yD_xu}\Lambda(D_x)v_\Phi,\phi\bigr\rangle_{H^{\f{27}{4},0}_\Psi}\,dt\\
=&\int_0^T\hbar\langle \mathcal{L}H, H\rangle_{H^{\f{27}{4},0}_\Psi}\,dt+
\int_0^T\hbar\langle T^\h_{\pa_yv} H ,H\rangle_{H^{\f{27}{4},0}_\Psi}\,dt
-\hbar(0)\langle u_\Phi(0),\phi(0)\rangle_{H^{\f{27}{4},0}_\Psi}\\
&-\int_0^T\hbar\Bigl(\pa_t\left(\f{1}{\dot\th}\right)
\bigl\langle T^\h_{\pa_y u}\pa_x\int_y^\infty Hdz,\phi\bigr\rangle_{H^{\f{27}{4},0}_\Psi}\\
&\qquad\qquad\qquad+\pa_t\left(\f{\d(t)}{2\dot\th(t)}\right)\bigl\langle T^\h_{\pa_yD_xu}\Lambda(D_x)\pa_x\int_y^\infty Hdz,\phi\bigr\rangle_{H^{\f{27}{4},0}_\Psi}\Bigr)\,dt,
\end{split}\eeq
\begin{align*}
&+\int_0^T\f{\hbar}{\dot\th}\Bigl(\bigl\langle [T^\h_{\pa_y u};\mathcal{L}]\pa_x\int_y^\infty Hdz,\phi\bigr\rangle_{H^{\f{27}{4},0}_\Psi}
+\bigl\langle T^\h_{\pa_y u}T^\h_{\pa_xu}\pa_x\int_y^\infty Hdz,\phi\bigr\rangle_{H^{\f{27}{4},0}_\Psi}\\
&-\bigl\langle T^\h_{\pa_y u}T^\h_{\pa_x v}H,\phi\bigr\rangle_{H^{\f{27}{4},0}_\Psi}\Bigr)\,dt+
\int_0^T\f{\hbar\d(t)}{2\dot\th}\Bigl(\bigl\langle T^\h_{\pa_y u}T^\h_{\pa_xD_xu}\Lambda(D_x)\pa_x\int_y^\infty Hdz,\phi\bigr\rangle_{H^{\f{27}{4},0}_\Psi}\\
&
+\bigl\langle [T^\h_{\pa_yD_xu}\Lambda(D_x);\mathcal{L}]\pa_x\int_y^\infty Hdz,\phi\bigr\rangle_{H^{\f{27}{4},0}_\Psi}\\
&+\bigl\langle T^\h_{\pa_yD_xu}\Lambda(D_x)T^\h_{\pa_xu}\pa_x\int_y^\infty Hdz,\phi\bigr\rangle_{H^{\f{27}{4},0}_\Psi}
-\bigl\langle T^\h_{\pa_yD_xu}\Lambda(D_x)T^\h_{\pa_x v}H,\phi\bigr\rangle_{H^{\f{27}{4},0}_\Psi}\Bigr)\,dt\\
&+\int_0^T\f{\d(t)^2}{4\dot\th}\hbar\bigl\langle T^\h_{\pa_yD_xu}\Lambda(D_x)T^\h_{\pa_xD_xu}\Lambda(D_x)\pa_x\int_y^\infty H\,dz,\phi\bigr\rangle_{H^{\f{27}{4},0}_\Psi}\,dt\\
\eqdefa&E_1+\cdots+E_{13}.
\end{align*}

By virtue of \eqref{S3eq4c}, to prove \eqref{S3eq4b}, we are left with the estimates of $E_i~(i=1,\cdots,13),$
which we present below.

\begin{lem}\label{S3lem1}
{\sl Let $\beta\leq \gamma_0+\f14.$ Then for any small positive constant $\eta,$ one has
\beq\label{S3eq7}
\begin{split}
E_1\geq& \f12\|\sh H(T)\|_{H^{\f{27}{4},0}_\Psi}^2+\bigl(\la-C(1+\eta^{-1}\e)\e^{\f12}\bigr)\int_0^T\dot\th(t)\|\sh H(t)\|_{H^{7,0}_\Psi}^2\,dt\\
&-\f12\int_0^T\|\sqrt{\hbar'} H(t)\|_{H^{\f{27}{4},0}_\Psi}^2\,dt
+\bigl(\f{1}2-\f{\eta}8\bigr)\int_0^T\|\sh \pa_yH(t)\|_{H^{\f{27}{4},0}_\Psi}^2\,dt .
\end{split}\eeq
}
\end{lem}

\begin{lem}\label{S3lem2}
{\sl Let $\beta\leq \min\bigl\{\ga_0+\f14, \f12\left(\gamma_0+\f54\right)\bigr\}.$  Then one has
\begin{subequations} \label{S3eq7a}
\begin{gather} \label{S3eq7b}
\sum_{i=4}^6|E_i|+|E_{10}|\leq C\bigl(1+\la\e^{\f12}\bigr)\int_0^T\dot\th(t)\bigl(\|\sh H(t')\|_{H^{7,0}_\Psi}^2+
\|\sh \phi(t)\|_{H^{\f{15}{2},0}_\Psi}^2\bigr)\,dt,\\
\label{S3eq7c}
\sum_{i=7}^9|E_i|+\sum_{i=11}^{13}|E_{i}|\leq C\e\int_0^T\dot\th(t)\bigl(\|\sh H(t)\|_{H^{7,0}_\Psi}^2+
\|\sh \phi(t)\|_{H^{\f{15}{2},0}_\Psi}^2\bigr)\,dt.
\end{gather}\end{subequations}
}
\end{lem}

Let us admit the above lemmas for the time being, and continue our proof of Proposition \ref{lem: H}.

Thanks to Lemmas \ref{S3lem1} and \ref{S3lem2}, it remains to handle  the estimates of $E_2$ and $E_3$ in \eqref{S3eq5}.
Indeed we get, by applying Lemma \ref{lem: T_fg},
 \eqref{S3eq8}  and \eqref{S3eq-1}, that
\begin{align*}
| E_2|\leq &\int_0^T\|\pa_y v\|_{L^\infty_\v(H^{\f12+}_\h)}\|\sh H\|_{H^{\f{27}{4},0}_\Psi}^2\,dt\\
\leq &C\e\int_0^T\w{t}^{-\left(\ga_0+\f14\right)}\|\sh H(t)\|_{H^{\f{27}{4},0}_\Psi}^2\,dt\\
\leq& C\e^\f12 \int_0^T\t^{\beta-\gamma_0-\f14}\dot\th(t)\|\sh H(t)\|_{H^{\f{27}{4},0}_\Psi}^2\,dt,
\end{align*}
which together the fact: $\beta\leq \gamma_0+\f14,$ ensures that
\beq \label{S3eq7d}
| E_2| \leq C\e^\f12\int_0^T\dot\th(t)\|\sh H(t)\|_{H^{\f{27}{4},0}_\Psi}^2\,dt.\eeq
Finally
by using H\"older inequality, we have
\begin{align} \label{S3eq7e}
|E_3|\leq& \hbar(0)\|u_\Phi(0)\|_{H^{\f{25}{4}}_\Psi}\|\phi(0)\|_{H^{\f{29}{4}}_\Psi}\leq \f12\|\sh(0)u_\Phi(0)\|_{H^{\f{25}{4}}_\Psi}^2+\f12\|\sh(0)\phi(0)\|_{H^{\f{29}{4}}_\Psi}^2.
\end{align}

 Then under the assumption that $\beta\leq\min\bigl\{\gamma_0+\f14,\f12(\gamma_0+\f54)\bigr\},$
by substituting the estimates \eqref{S3eq7}-\eqref{S3eq7e} into \eqref{S3eq5} and using \eqref{S3eq4c},
we arrive at
\begin{align*}
\f12&\|\sh H(T)\|_{H^{\f{27}{4},0}_\Psi}^2+\bigl(\la-C(1+\eta^{-1}\e)\e^{\f12}\bigr)\int_0^T\dot\th(t)\|\sh H(t)\|_{H^{7,0}_\Psi}^2\,dt\\
&-\f12\int_0^T\|\sqrt{\hbar'} H(t)\|_{H^{\f{27}{4},0}_\Psi}^2\,dt+\bigl(\f{1}2-\f{\eta}{8}\bigr)\int_0^T\|\sh \pa_yH(t)\|_{H^{\f{27}{4},0}_\Psi}^2\,dt\\
\leq&C\bigl(1+\la\e^{\f12}\bigr)\int_0^T \dot\th(t)\Big(\|\sh H(t)\|_{H^{7,0}_\Psi}^2+\|\sh \phi(t)\|_{H^{\f{15}{2},0}_\Psi}^2\Big)\,dt+\f12\|\sh(0)u_\Phi(0)\|_{H^{\f{25}{4}}_\Psi}^2\\
&+\f12\|\sh(0)\phi(0)\|_{H^{\f{29}{4}}_\Psi}^2+C\e^{\f32}\int_0^T\Bigl(\t^{-\gamma_0-\f14}\|\sh u_\Phi\|_{H^{6,0}_\Psi}^2+\|\sh\pa_y u_\Phi\|_{H^{\f{11}{2},0}_\Psi}^2\Bigr)\, dt,
\end{align*} from which, we deduce \eqref{S3eq4b}. This  ends the proof of Proposition \ref{lem: H}.
 \end{proof}

Let us now present the proof of Lemma \ref{S3lem1}. The proof Lemma \ref{S3lem2} involves tedious calculation,
and we postpone it in the Appendix \ref{appa}.

\begin{proof}[Proof of Lemma \ref{S3lem1}]
Notice that $2\pa_t\Psi+(\pa_y\Psi)^2\leq 0,$ we get, by using integrating by parts, that
\beq \label{S3eq7f}
\bigl\langle (\pa_tH-\pa_y^2H) | e^{2\Psi} H \bigr\rangle_{L^2_+}\geq \f12\f{d}{dt}\|H(t)\|_{H^{\f{27}{4},0}_\Psi}^2+\f12\|\pa_yH\|_{H^{\f{27}{4},0}_\Psi}^2.
\eeq
Then in view of \eqref{defcL}, we find
\begin{align*}
E_1\geq\int_0^T\Bigl(&\f12\f{d}{dt}\|\sh H(t)\|_{H^{\f{27}{4},0}_\Psi}^2-\f{\hbar'}{2\hbar}\|\sqrt{\hbar} H(t)\|_{H^{\f{27}{4},0}_\Psi}^2\\
&+\la\dot\th\|\sh H\|_{H^{7,0}_\Psi}^2+\f12\|\sh \pa_yH\|_{H^{\f{27}{4},0}_\Psi}^2-\langle T^\h_u\pa_xH ,H\rangle_{H^{\f{27}{4},0}_\Psi}\\
&-\hbar\langle T_v^\h\pa_yH ,H\rangle_{H^{\f{27}{4},0}_\Psi}-\f{\d(t)}2\hbar\langle T^\h_{D_xu}\Lambda(D_x)\pa_xH ,H\rangle_{H^{\f{27}{4},0}_\Psi}\Bigr) \,dt.
\end{align*}
It follows from a similar derivation of \eqref{S3eq3aq}
 and  \eqref{S3eq8}, \eqref{S3eq-1} that
\begin{align*}
\hbar\bigl|\langle T^\h_u\pa_xH ,H\rangle_{H^{\f{27}{4},0}_\Psi}\bigr|\leq& C\bigl(\|u\|_{L_\v^\infty(H^{\f32+}_\h)}+\|u\|_{L_\v^\infty(H^{\f32+}_\h)}^2\bigr)\|\sh H\|_{H^{\f{27}{4},0}_\Psi}^2\\
\leq& C\bigl(\e^\f12\t^{\beta-\gamma_0-\f14}+\e^{\f32}\t^{\beta-2\gamma_0-\f12}\bigr)\dot\th(t)\|\sh H\|_{H^{\f{27}{4},0}_\Psi}^2.\end{align*}
Applying Lemma \ref{lem: T_fg} gives
\begin{align*}
\hbar \bigl|\langle T^\h_v\pa_yH ,H\rangle_{H^{\f{27}{4},0}_\Psi}\bigr|\leq& C\|v\|_{L^\infty_\v(H^{\f12+}_\h)}\|\sh H\|_{H^{7,0}_\Psi}
\|\sh\pa_yH\|_{H^{\f{13}{2},0}_\Psi}\\
\leq &C\eta^{-1}\e^{\f32}\t^{\beta-2\gamma_0+\f12}\dot\th(t)\|\sh H\|_{H^{7,0}_\Psi}^2+\f\eta8\|\sh\pa_yH\|_{H^{\f{13}{2},0}_\Psi}^2.
\end{align*}
While it follows from  Lemma \ref{lem: T_fg} and \eqref{S3eq8} that
\begin{align*}
\f{\d(t)}2\hbar\bigl|\langle T^\h_{D_xu}\Lambda(D_x)\pa_xH ,H\rangle_{H^{\f{27}{4},0}_\Psi}\bigr|
\leq &C\d\|u\|_{L_\v^\infty(H^{\f32+}_\h)}\|\sh H\|_{H^{7,0}_\Psi}^2\\
\leq & C\e^\f12\t^{\beta-\gamma_0-\f14}\dot\th(t)\|\sh H\|_{H^{7,0}_\Psi}^2.
\end{align*}
By summarizing the above estimates and  using the fact that $\beta\leq \gamma_0+\f14$, we complete
the proof of  \eqref{S3eq7}.
\end{proof}

Taking  $\hbar(t)=\w{t}^{\f{1-\eta}2}$ in \eqref{S3eq4b} gives rise to

\begin{col}\label{S3col2}
{\sl Under the assumption of Proposition \ref{lem: H}, there exist $c_0,\e_1\in(0,1)$ and $\la_1>0$ so that for
$\e\leq\e_1,$ $\eta\geq \e^{\f32},$ and $\la \geq \la_1,$ there holds \eqref{S3eq10}.
}
\end{col}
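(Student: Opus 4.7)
My plan is to substitute $\hbar(t)=\w{t}^{\f{1-\eta}{2}}$ directly into \eqref{S3eq4b} of Proposition \ref{lem: H} and combine the resulting inequality with the $\phi$-estimate of Corollary \ref{S3col1}. With this choice, $\sh=\w{t}^{\f{1-\eta}{4}}$, $\sh(0)=1$, and a short computation gives $\hbar'(t)\leq\f{1-\eta}{2}\w{t}^{-\f{1+\eta}{2}}$, so that
\beno
\int_0^T\|\sqrt{\hbar'}\,H\|_{H^{\f{27}{4},0}_\Psi}^2\,dt\leq\f{1-\eta}{2}\int_0^T\|\w{t}^{-\f{1+\eta}{4}}H\|_{H^{\f{27}{4},0}_\Psi}^2\,dt.
\eeno
The first task is then to split the dissipation $(1-\f{\eta}{4})\int_0^T\|\sh\pa_yH\|_{H^{\f{27}{4},0}_\Psi}^2\,dt$ on the left of \eqref{S3eq4b} into three useful pieces: one that absorbs the negative contribution $-\int_0^T\|\sqrt{\hbar'}H\|^2$, one that produces the desired $c_0\eta\int_0^T\|\w{t}^{-\f{3+\eta}{4}}yH\|^2$ term, and one that retains $c_0\eta\int_0^T\|\sh\pa_yH\|^2$ as a residual.

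I will realize this splitting through the refined weighted Poincar\'e inequality \eqref{S2eq9b} of Lemma \ref{lem2.1}. Writing
\beno
(1-\tfrac{\eta}{4})\|\sh\pa_yH\|^2=\bigl(1-\tfrac{\eta}{4}-\alpha\bigr)\|\sh\pa_yH\|^2+\alpha\|\sh\pa_yH\|^2,
\eeno
and applying \eqref{S2eq9b} with some $\kappa\in(0,1)$ close to $1$ to the second piece, the three matching conditions become $\alpha\kappa\geq 1-\eta$ (to dominate the $\hbar'$-loss via \eqref{S2eq9a}), $\alpha\kappa(1-\kappa)/4\geq c_0\eta$ (to retain a genuine $yH$-term), and $1-\eta/4-\alpha\geq c_0\eta$ (to keep a residual $\pa_yH$-term). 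A quick arithmetic check shows that $\alpha=1-\f{3\eta}{4}$, $\kappa=1-\f{\eta}{4}$, together with some absolute $c_0\in(0,1)$, satisfy all three inequalities as soon as $\eta$ is small.

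The second task is to dispatch the $\phi$-contributions on the right of \eqref{S3eq4b}. Applying Corollary \ref{S3col1} at $t=0$ with the same weight $\hbar$ yields
\beno
\|\phi(0)\|_{H^{\f{29}{4},0}_\Psi}^2+\la\int_0^T\dot\th(t)\|\sh\phi(t)\|_{H^{\f{15}{2},0}_\Psi}^2\,dt\leq\int_0^T\dot\th(t)\|\sh H(t)\|_{H^{7,0}_\Psi}^2\,dt,
\eeno
so that $C(1+\la\e^{\f12})\int_0^T\dot\th\|\sh\phi\|_{H^{\f{15}{2},0}_\Psi}^2\,dt+\|\sh(0)\phi(0)\|_{H^{\f{29}{4}}_\Psi}^2$ is bounded by $\bigl(1+C\la^{-1}(1+\la\e^{\f12})\bigr)\int_0^T\dot\th\|\sh H\|_{H^{7,0}_\Psi}^2\,dt$. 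Transferring this to the left-hand side absorbs into the factor $2\bigl(\la(1-C\e^{\f12})-C(1+\eta^{-1}\e)\e^{\f12}\bigr)$ multiplying $\int_0^T\dot\th\|\sh H\|_{H^{7,0}_\Psi}^2\,dt$, and a routine bookkeeping shows that the resulting net coefficient is at least $\la$ provided $\e\leq\e_1$ is small and $\la\geq\la_1$ is large (both depending on $\eta\geq\e^{\f32}$), which gives \eqref{S3eq10}.

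The main obstacle is the delicate balancing in the first task: the single dissipation constant $1-\eta/4$ must simultaneously absorb the $\hbar'$-loss of size $(1-\eta)/2$ and generate a positive $yH$-term of size $c_0\eta$. This forces $\kappa$ in \eqref{S2eq9b} to lie within $O(\eta)$ of $1$, so that the small factor $1-\kappa$ is of order $\eta$ yet is not devoured by the slack needed to dominate the $\hbar'$-term; the other manipulations --- the $\phi$-absorption and the tuning of $\la$ against $\e$ and $\eta$ --- are routine once the first task is resolved.
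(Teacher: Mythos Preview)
Your proposal is correct and follows essentially the same route as the paper. The paper also sets $\hbar(t)=\w{t}^{\f{1-\eta}{2}}$ in \eqref{S3eq4b}, then invokes Corollary~\ref{S3col1} to absorb the $\phi$-terms; the only organizational difference is that the paper first applies the analogue of \eqref{S3eq2p} (i.e., \eqref{S2eq9a}) to cancel the $\hbar'$-loss, leaving a residual $\tfrac34\eta\int\|\sh\pa_yH\|^2$, and afterwards calls on Lemma~\ref{lem2.1} to extract the $yH$-term, whereas you perform both steps in one application of \eqref{S2eq9b} with the tuned pair $(\alpha,\kappa)=(1-\tfrac{3\eta}{4},\,1-\tfrac{\eta}{4})$.
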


\begin{proof} By taking $\hbar(t)=\w{t}^{\f{1-\eta}2}$ in \eqref{S3eq4b} and using \eqref{S3eq2p}, we obtain for $\la\e^\f12\leq C$
\beq \label{S3eq20}
\begin{split}
\|\w{T}^{\f{1-\eta}4}& H(T)\|_{H^{\f{27}{4},0}_\Psi}^2+2\bigl(\la-C(1+\eta^{-1}\e)\e^{\f12}\bigr)\int_0^T\dot\th(t)\|\w{t}^{\f{1-\eta}4}  H(t)\|_{H^{7,0}_\Psi}^2\,dt\\
&+\f{3}{4}\eta\int_0^t\|\w{t}^{\f{1-\eta}4} \pa_yH(t')\|_{H^{\f{27}{4},0}_\Psi}^2\,dt'\\
\leq & \|u_\Phi(0)\|_{H^{\f{25}{4}}_\Psi}^2+\|\phi(0)\|_{H^{\f{29}{4}}_\Psi}^2
+C\int_0^T
\dot\th(t)
\|\w{t}^{\f{1-\eta}4}  \phi(t)\|_{H^{\f{15}{2},0}_\Psi}^2\,dt\\
&+C\e^{\f32}\int_0^T\Bigl(\t^{-\gamma_0-\f14}\|\sh u_\Phi\|_{H^{6,0}_\Psi}^2+\|\sh\pa_y u_\Phi\|_{H^{\f{11}{2},0}_\Psi}^2\Bigr)\,dt.
\end{split}\eeq
Observing from Lemma \ref{lem2.1} that
\begin{align*}
\f{3}{4}\eta\int_0^t\|\w{t}^{\f{1-\eta}4} \pa_yH(t')\|_{H^{\f{27}{4},0}_\Psi}^2\,dt'\geq c_0\eta\int_0^t\|\w{t}^{-\f{3+\eta}4}{y} H(t)\|_{H^{\f{27}{4},0}_\Psi}^2\,dt.
\end{align*}
And for $\la\geq \max\bigl(C, \bar{\la}_1\bigr),$ we deduce from  Corollary \ref{S3col1} that
\begin{align*}
\|\phi(0)\|_{H^{\f{29}{4}}_\Psi}^2
+C\int_0^T
\dot\th(t)
\|\w{t}^{\f{1-\eta}4}  \phi(t)\|_{H^{\f{15}{2},0}_\Psi}^2\,dt\leq \int_0^T\dot\th(t)\|\w{t}^{\f{1-\eta}4}  H(t)\|_{H^{7,0}_\Psi}^2\,dt.
\end{align*}
By inserting the above two inequalities into \eqref{S3eq20} and taking $\e\leq\e_1,$ $\eta\geq \e^{\f32},$ and $\la \geq \la_1\eqdefa \max\bigl(2(1+C), \bar{\la}_1\bigr),$
 we obtain \eqref{S3eq10}.
\end{proof}

\subsection{The proof of Proposition \ref{S1prop1}}
By combining Corollaries \ref{S3col1} with \ref{S3col2}, we conclude the proof of Proposition \ref{S1prop1}.

\setcounter{equation}{0}
\section{The Gevrey estimates of $u_\Phi$ and $\p_yu_\Phi$}  \label{Sect5A}

\subsection{The Gevrey  estimate of $u_\Phi$}\label{Sect5}

Motivated by \cite{DG19} concerning the local well-posedness of Prandtl system with initial data in the optimal
Gevery regularity and also \cite{PZ5} concerning the decay of the global analytical solutions to \eqref{eq: Prandtl},  we shall  present the {\it a priori} time-decay estimates for $u_\Phi$ in this subsection. The main result states as follows:

\begin{prop}\label{pro: tu}
{\sl  Let $\hbar(t)$ be a non-negative and non-decreasing function on $[0,T]$ with $T\leq T^\star.$ Then   if
$\beta\leq\min\bigl\{\f{1}{2}(\gamma_0+\f54),\f23 \gamma_0+\f12, \gamma_0+\f14,\f43\gamma_0\bigr\}$, for any $\eta\in (0,\eta_1)$
with $\eta_1$ being sufficiently small, there holds
\beq\label{S4eq1}
\begin{split}
(1&-\eta\bigr)\|\sh u_\Phi(T)\|_{H^{\f{11}{2},0}_\Psi}^2+2\bigl(\la(1-C\e^\f12)-C(1+\e^\f14\eta^{-1})\bigr)\int_0^T\dot\th(t)\|\sh u_\Phi(t)\|_{H^{\f{23}{4},0}_\Psi}^2\,dt\\
&-\int_0^T\|\sqrt{\hbar'} u_\Phi(t)\|_{H^{\f{11}{2},0}_\Psi}^2\,dt+\bigl(1-\f\eta 4\bigr)\int_0^T \|\sh\pa_yu_\Phi(t)\|_{H^{\f{11}{2},0}_\Psi}^2\,dt\\
\leq & C\|\sh(0) u_\Phi(0)\|_{H^{\f{11}{2},0}_\Psi}^2+C\eta^{-1}\e\|\sh H(T)\|_{H^{\f{27}{4},0}_\Psi}^2\\
&+\eta\e^\f14\int_0^T\|\sh\t^{\f12}\pa_y^2 u_\Phi\|_{H^{5,0}_\Psi}^2\,dt+\eta\e^\f14\int_0^T\|\f{y}{\t}\sh H\|_{H^{\f{27}{4},0}_\Psi}^2\,dt\\
&+C\bigl(1+\la\e^\f12+\e^\f14\eta^{-1}\bigr)\int_0^T \dot\th(t)\Bigl(\|\sh H\|_{H^{7,0}_\Psi}^2+\| \sh \t^\f12\pa_yu_\Phi\|_{H^{\f{21}{4},0}_\Psi}^2\Bigr)\,dt.
\end{split}\eeq}
\end{prop}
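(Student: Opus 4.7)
The plan is to perform a weighted energy estimate on $u_\Phi$ at regularity $H^{\f{11}{2},0}_\Psi$, mirroring the strategy used for Proposition \ref{lem: H}: take the $H^{\f{11}{2},0}_\Psi$-inner product of the equation \eqref{eq: tu_Phi} with $\hbar(t)u_\Phi$ and integrate over $[0,T]$. The standard pieces of the identity yield the LHS of \eqref{S4eq1}: the $\pa_t$-contribution produces $\f12\|\sh u_\Phi(T)\|^2_{H^{\f{11}{2},0}_\Psi}-\f12\|\sh(0)u_\Phi(0)\|^2_{H^{\f{11}{2},0}_\Psi}-\f12\int_0^T\|\sqrt{\hbar'}u_\Phi\|^2_{H^{\f{11}{2},0}_\Psi}\,dt$; the Gevrey damping contributes $\la\int_0^T\dot\th\|\sh u_\Phi\|^2_{H^{\f{23}{4},0}_\Psi}\,dt$; and the dissipation $-\pa_y^2 u_\Phi$, integrated by parts in $y$ with $u_\Phi|_{y=0}=0$, gives $\|\sh\pa_yu_\Phi\|^2_{H^{\f{11}{2},0}_\Psi}$ up to an error from differentiating $e^{2\Psi}$ absorbed into $\f\eta 8$ of the $y$-dissipation, exactly as in \eqref{S3eq7f}.

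For the paraproduct transport terms $T^\h_u\pa_xu_\Phi$, $T^\h_v\pa_yu_\Phi$ and their $\d(t)$-companions, I will use the commutator estimates of Lemmas \ref{lem: T_fg}--\ref{lem: com3} combined with the pointwise bounds \eqref{S3eq8}--\eqref{S3eq8b} from Lemma \ref{lem: phi} and the ansatz $\dot\th=\e^\f12\t^{-\beta}$. Under the $\beta$-constraints listed in the statement, each such term is absorbed either into the Gevrey damping (producing the factor $(1-C\e^\f12)$ multiplying $\la\int\dot\th\|\sh u_\Phi\|^2_{H^{\f{23}{4},0}_\Psi}$) or into a small multiple $\f\eta 8$ of the $y$-dissipation. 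The source term $f$ coming from the paraproduct remainders \eqref{eq: (f_1,f_2)}--\eqref{eq: f_3} is handled by Lemma \ref{lem: f} with $s=\f{11}{2}$ together with Young's inequality and \eqref{assume: 1}, yielding both the $\dot\th$-weighted $\|\sh\t^\f12\pa_yu_\Phi\|^2_{H^{\f{21}{4},0}_\Psi}$ contribution and, via a careful splitting, the $\eta\e^\f14\int\|\sh\t^\f12\pa_y^2u_\Phi\|^2_{H^{5,0}_\Psi}\,dt$ piece on the RHS of \eqref{S4eq1}.

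The main obstacle is the paraproduct $T^\h_{\pa_yu}v_\Phi$ (and its $\d(t)$-companion involving $\Lambda(D_x)v_\Phi$): since $v_\Phi=\int_y^\infty\pa_xu_\Phi\,dz$ costs one extra horizontal derivative, a naive Cauchy--Schwarz against $u_\Phi$ in $H^{\f{11}{2},0}_\Psi$ cannot close. To overcome this, I will invoke the Dietert--G\'ervard-Varet duality structure from \cite{DG19}: substitute the representation \eqref{v_Phi} of $v_\Phi$ in terms of $\mathcal{L}\pa_x\int_y^\infty H\,dz$, and integrate by parts in $t,x,y$ to move $\mathcal{L}$ off this factor. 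The cancellation of the worst commutators is entirely analogous to the pairwise cancellation between \eqref{eq: LHS-1} and the first terms of \eqref{eq: LHS-2}--\eqref{eq: LHS-3} in the proof of Proposition \ref{lem: H}; the leftover commutators, estimated via Lemmas \ref{lem: T_fg}--\ref{lem: com3} and Lemma \ref{lem2.2}, produce exactly the $\int\dot\th\bigl(\|\sh H\|^2_{H^{7,0}_\Psi}+\|\sh\t^\f12\pa_yu_\Phi\|^2_{H^{\f{21}{4},0}_\Psi}\bigr)\,dt$ and $\eta\e^\f14\int\|\f{y}{\t}\sh H\|^2_{H^{\f{27}{4},0}_\Psi}\,dt$ contributions on the RHS. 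A time integration by parts applied to the $\dot\th^{-1}$-factor generates a boundary term of the form $[\hbar\langle u_\Phi,H\rangle\,(\cdots)]_0^T$, which by Young's inequality with parameter $\eta$ splits into $\eta\|\sh u_\Phi(T)\|^2_{H^{\f{11}{2},0}_\Psi}$, absorbed into the LHS and yielding the factor $(1-\eta)$, plus the end-time contribution $C\eta^{-1}\e\|\sh H(T)\|^2_{H^{\f{27}{4},0}_\Psi}$ on the RHS.

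Collecting everything and choosing $\la$ large and $\e$ small so that the coefficient $\la(1-C\e^\f12)-C(1+\e^\f14\eta^{-1})$ multiplying the Gevrey damping is positive, all remaining positive contributions assemble on the LHS of \eqref{S4eq1}. The hardest technical step to execute will be the step above, since the Dietert--G\'ervard-Varet cancellation has to be redone with $u_\Phi$ (rather than $\phi$) as the test function, introducing new commutators that must be carefully absorbed into $\f\eta 8\|\sh\pa_yu_\Phi\|^2_{H^{\f{11}{2},0}_\Psi}$ and the $\eta\e^\f14\|\sh\t^\f12\pa_y^2u_\Phi\|^2_{H^{5,0}_\Psi}$ dissipation-type terms on the right of \eqref{S4eq1}.
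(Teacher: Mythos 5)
Your overall strategy—a weighted $H^{\f{11}{2},0}_\Psi$ energy estimate combined with the Dietert--G\'erard-Varet duality through $H$—is the right one, and you correctly single out $T^\h_{\pa_y u}v_\Phi$ as the source of the derivative loss. The specific route you propose, however, differs from the paper's, and one step of your plan is based on a cancellation that will not materialize. You propose to substitute \eqref{v_Phi} for $v_\Phi$ directly inside the paraproduct $\langle T^\h_{\pa_y u}v_\Phi, u_\Phi\rangle$. The paper instead first runs the ``naive'' energy estimate (Lemma \ref{lem tu_1}), bounding this paraproduct crudely by Cauchy--Schwarz so that the only leftover is $\e\int_0^T\t^{-\ga_0-\f14}\|\sh u_\Phi\|^2_{H^{6,0}_\Psi}\,dt$; it then re-expresses this \emph{whole} quantity by substituting $\dot\th(t)u_\Phi = \mathcal{L}H - \cdots$ from \eqref{tu_Phi} (not \eqref{v_Phi}), producing $B_1,\dots,B_4$ of \eqref{S4eq3d}, with $B_1 = \int_0^T\f{\e\t^{-\ga_0-\f14}\hbar}{\dot\th}\langle\mathcal{L}H,u_\Phi\rangle_{H^{6,0}_\Psi}\,dt$ treated in Lemma \ref{S4lem2} by passing $\mathcal{L}$ onto $u_\Phi$ and expanding $\mathcal{L}^*u_\Phi$ explicitly via the $u_\Phi$ equation. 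Your route—substituting for $v_\Phi$, commuting $T^\h_{\pa_y u}$ past $\mathcal{L}$, then integrating by parts—also reaches $\mathcal{L}^*u_\Phi$ and should land on a comparable right-hand side, but it carries an extra commutator $[T^\h_{\pa_y u},\mathcal{L}]$ from the start; the paper's two-step factorization (an energy lemma followed by a separate duality step, \eqref{S4eq3c}) is organizationally cleaner and keeps the self-consistent absorption of the small leftover $\zeta\e\int\t^{-\ga_0-\f14}\|u_\Phi\|^2_{H^{6,0}_\Psi}$ transparent.

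The concrete warning: the ``pairwise cancellation entirely analogous to Proposition \ref{lem: H}'' that you invoke does not occur here. In Proposition \ref{lem: H} that cancellation is driven by the defining relation $\mathcal{L}^*\phi = \dot\th H$, which the test function $u_\Phi$ does not satisfy. With $u_\Phi$ as the test function there is no cancellation: every piece of the expanded $\mathcal{L}^*u_\Phi$—including the Laplacian-type pieces giving $\pa_y^2 u_\Phi$ (which is precisely where the $\eta\e^{\f14}\int\|\sh\t^{\f12}\pa_y^2 u_\Phi\|^2_{H^{5,0}_\Psi}\,dt$ term on the RHS of \eqref{S4eq1} comes from, see $B_1^{10}$ in Lemma \ref{S4lem2}) and the reappearance of $T^\h_{\pa_y u}v_\Phi$—must be estimated term by term and absorbed. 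Your closing sentence shows you anticipate that the argument ``has to be redone'' with new commutators, which is the correct instinct, but the cancellation framing should be dropped: the paper's Lemma \ref{S4lem2} produces no cancellations, only direct estimates.
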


\begin{proof}  We start the proof of Proposition \ref{pro: tu} by the following lemma:

\begin{lem}\label{lem tu_1}
{\sl Let $\hbar(t)$ be a non-negative and non-decreasing function on $[0,T]$ with $T\leq T^\star.$ Then  if
$\b\leq\ga_0+\f14,$ for any  sufficiently small $\zeta>0$, one has
\beq \label{S4eq3}
\begin{split}
\|\sh& u_\Phi(T)\|_{H^{\f{11}{2},0}_\Psi}^2+2(\la-C\bigl(1+\zeta^{-1}\e)\e^{\f12}\bigr)\int_0^T\dot\th(t)\|\sh u_\Phi(t)\|_{H^{\f{23}{4},0}_\Psi}^2\,dt\\
&-\int_0^T\|\sqrt{\hbar'} u_\Phi(t)\|_{H^{\f{11}{2},0}_\Psi}^2\,dt+\bigl(1-\f\zeta 4\bigr)\int_0^T \|\sh\pa_yu_\Phi(t)\|_{H^{\f{11}{2},0}_\Psi}^2\,dt\\
\leq & \|\sh(0) u_\Phi(0)\|_{H^{\f{11}{2},0}_\Psi}^2+C\e\int_{0}^T\w{t}^{-\left(\ga_0+\f14\right)}\|\sh u_\Phi\|_{H^{6,0}_\Psi}^2\,dt.
\end{split}\eeq}
\end{lem}

Let us postpone the proof of Lemma \ref{lem tu_1} till we finish the proof of Proposition \ref{pro: tu}.

With \eqref{S4eq3}, to prove \eqref{S4eq1}, it remains to handle the estimate of $\e\int_{0}^T\w{t}^{-\left(\ga_0+\f14\right)}\|\sh u_\Phi\|_{H^{6,0}_\Psi}^2\,dt.$
As a matter of fact,  in view of \eqref{tu_Phi},
 we write
 \beq\label{S4eq3d}
 \begin{split}
\e\int_0^T&\w{t}^{-\f14-\gamma_0}\|\sh u_\Phi\|_{H^{6,0}_\Psi}^2\,dt=\e\int_0^T\w{t}^{-\f14-\gamma_0}\hbar
\langle u_\Phi, u_\Phi\rangle_{H^{6,0}_\Psi}dt\\
 =&\int_0^T\f{\e\t^{-\f14-\gamma_0}\hbar}{\dot\th}\Bigl(\langle\mathcal{L}H,u_\Phi\rangle_{H^{6,0}_\Psi}
 -\bigl\langle T_{\pa_yu}^\h\pa_x\int_y^\infty H\,dz,u_\Phi\bigr\rangle_{H^{6,0}_\Psi}\\
 &+\langle T^\h_{\pa_y v}H,u_\Phi\rangle_{H^{6,0}_\Psi}
 -\f{\d(t)}2\bigl\langle T^\h_{\pa_yD_xu}\Lambda(D_x)\pa_x\int_y^\infty H\,dz,u_\Phi\bigr\rangle_{H^{6,0}_\Psi}\Bigr)dt\\
 \eqdefa&B_1+\cdots+B_4.
 \end{split}\eeq

 The estimate of $B_1$ relies on the following lemma, the proof of which will be postponed in Appendix \ref{appb}.

 \begin{lem}\label{S4lem2}
{\sl Let $\beta\leq\min\bigl\{\f{1}{2}\bigl(\gamma_0+\f54\bigr),\f23 \gamma_0+\f12, \gamma_0+\f14,\f43\gamma_0\bigr\}$, then one has
\beq\label{S4eq3e}
\begin{split}
|B_1|\leq&  \zeta\|\sh u_\Phi(T)\|_{H^{\f{11}{2},0}_\Psi}^2+C\zeta^{-1}\e\|\sh H(T)\|_{H^{\f{27}{4},0}_\Psi}^2+\zeta\e\int_0^T\t^{-\gamma_0-\f14}\|u_\Phi\|_{H^{6,0}_\Psi}^2\,dt\\
&+\zeta\e^\f 54\int_0^T\|\sh\pa_y u_\Phi\|_{H^{\f{11}2,0}_\Psi}^2 \,dt+\zeta\e^\f 14\int_0^T\|\sh\t^{\f12}\pa_y^2 u_\Phi\|_{H^{5,0}_\Psi}^2\,dt\\
&+\zeta\e^\f 14\int_0^T\|\f{y}{\t}\sh H\|_{H^{\f{27}{4},0}_\Psi}^2\,dt
+C\bigl(1+\la\e^\f12+\e^\f14\zeta^{-1}\bigr)\times\\
&\qquad \times\int_0^T \dot\th(t)\Bigl(\|\sh u_\Phi\|_{H^{\f{23}{4},0}_\Psi}^2+\|\sh H\|_{H^{7,0}_\Psi}^2+\| \sh \t^\f12\pa_yu_\Phi\|_{H^{\f{21}{4},0}_\Psi}^2\Bigr)\,dt.
\end{split}\eeq
}
\end{lem}

 We now turn to the estimates of the remaining
terms in \eqref{S4eq3d}. \smallskip

\no$\bullet$\underline{The estimate of $B_2.$}  We  observe that for any $s\in\R,$
\beq \label{est: int_y^infty H}
\begin{split}
\|e^{\Psi}\int_y^\infty Hdz\|_{L^\infty_\v(H^s_\h)}\leq& \|\int_y^\infty e^{-\f{(y-z)^2}{8\t}}e^{\f{z^2}{8\t}}\|H(\cdot,z)\|_{H^s_\h} \,dz\|_{L^\infty_\v}\\
\leq&\|e^{-\f{y^2}{8\t}}\|_{L^2_\v}\|H\|_{H^{s,0}_{\Psi}}\leq C\t^\f14\|H\|_{H^{s,0}_{\Psi}}.
\end{split} \eeq
Then we  get, by  applying first Lemma \ref{lem: T_fg} and  \eqref{est: int_y^infty H} and then
\eqref{assume: 2}, that
\begin{align*}
|B_2|\leq& C\e^\f12\int_0^T\t^{\beta-\gamma_0}\|\pa_y u\|_{L^2_\v(H^{\f12+}_\h)}\|\sh \pa_x H\|_{H^{6,0}_\Psi}\|\sh u_\Phi\|_{H^{6,0}_\Psi}\,dt\\
\leq& C\e^\f32\int_0^T\t^{\beta-2\gamma_0-\f12}\| \sh H\|_{H^{7,0}_\Psi}\|\sh u_\Phi\|_{H^{6,0}_\Psi}\,dt.
\end{align*}
For any $\zeta>0,$ by applying Young's inequality, we obtain
\begin{align*}
|B_2|
\leq&C\e^\f32\zeta^{-1}\int_0^T\dot\th(t)\|\sh H\|_{H^{7,0}_\Psi}^2\,dt+ \f\zeta3\e\int_0^T\t^{-\gamma_0-\f14}\|\sh u_\Phi\|_{H^{6,0}_\Psi}^2\,dt,
\end{align*}
if $\beta\leq\gamma_0+\f14.$

\no$\bullet$\underline{The estimate of $B_3.$} Similar to the estimate of $B_2,$ thanks to \eqref{S3eq8},  we deduce
\begin{align*}
|B_3|\leq&C\e^\f12\int_0^T\t^{\beta-\gamma_0-\f14}\|\pa_y v\|_{L^\infty_\v(H^{\f12+}_\h)}\| \sh H\|_{H^{6,0}_\Psi}\|\sh u_\Phi\|_{H^{6,0}_\Psi}\,dt\\
\leq&C\e^\f32\int_0^T\t^{\beta-2\gamma_0-\f12}\|\sh H\|_{H^{7,0}_\Psi}\|\sh u_\Phi\|_{H^{6,0}_\Psi}\,dt\\
\leq&C\e^\f32\zeta^{-1}\int_0^T\dot\th(t)\|\sh H\|_{H^{7,0}_\Psi}^2\,dt+ \f\zeta3\e\int_0^T\t^{-\gamma_0-\f14}\|\sh u_\Phi\|_{H^{6,0}_\Psi}^2\,dt,
\end{align*}
if $\beta\leq\gamma_0+\f14.$

\no$\bullet$\underline{The estimate of $B_4.$} Similar to the estimate of  $B_2,$ we have
\begin{align*}
|B_4|
\leq&C\e^\f32\zeta^{-1}\int_0^T\dot\th(t)\|\sh H\|_{H^{7,0}_\Psi}^2\,dt+ \f\zeta3\e\int_0^T\t^{-\gamma_0-\f14}\|\sh u_\Phi\|_{H^{6,0}_\Psi}^2\,dt,
\end{align*}
if$\beta\leq\gamma_0+\f14.$

By summarizing the estimates of $B_2, B_3, B_4$, we achieve
\begin{align}\label{S4eq5}
|B_2|+|B_3|+|B_4|\leq&C\e^\f32\zeta^{-1}\int_0^T\dot\th(t)\|\sh H\|_{H^{7,0}_\Psi}^2dt+ \zeta\e\int_0^T\t^{-\gamma_0-\f14}\|\sh u_\Phi\|_{H^{6,0}_\Psi}^2dt,
\end{align}
if $\beta\leq\gamma_0+\f14.$

By inserting the estimates \eqref{S4eq3e} and \eqref{S4eq5} into \eqref{S4eq3d} and taking $\zeta$ to be sufficiently small, we arrive at
\beq\label{S4eq3c}
\begin{split}
\e\int_0^T&\t^{-\gamma_0-\f14}\|\sh u_\Phi\|_{H^{6,0}_\Psi}^2\,dt
\leq  2\zeta\|\sh u_\Phi(T)\|_{H^{\f{11}{2},0}_\Psi}^2+C\zeta^{-1}\e\|\sh H(T)\|_{H^{\f{27}{4},0}_\Psi}^2\\
&\qquad+2\zeta\e^\f 54\int_0^T\|\sh\pa_y u_\Phi\|_{H^{\f{11}2,0}_\Psi}^2 \,dt+2\zeta\e^\f 14\int_0^T\|\sh\t^{\f12}\pa_y^2 u_\Phi\|_{H^{5,0}_\Psi}^2\,dt\\
&\qquad+2\zeta\e^\f 14\int_0^T\|\f{y}{\t}\sh H\|_{H^{\f{27}{4},0}_\Psi}^2\,dt+C\bigl(1+\la\e^\f12+\e^\f14\zeta^{-1}\bigr)\times\\
&\qquad\times\int_0^T \dot\th(t)\Bigl(\|\sh u_\Phi\|_{H^{\f{23}{4},0}_\Psi}^2+\|\sh H\|_{H^{7,0}_\Psi}^2+\| \sh \t^\f12\pa_yu_\Phi\|_{H^{\f{21}{4},0}_\Psi}^2\Bigr)\,dt.
\end{split}\eeq

By substituting \eqref{S4eq3c} into \eqref{S4eq3} and taking $\zeta$ to be sufficiently small, we achieve \eqref{S4eq1}
with $\eta=\f\zeta2.$ This completes the proof of Proposition \ref{pro: tu}.
\end{proof}

Proposition \ref{pro: tu} has been proved provided with  the proof of Lemma \ref{lem tu_1}, which we present now.

\begin{proof}[Proof of Lemma \ref{lem tu_1}] Thanks to \eqref{S3eq7f},  we get, by
taking $H^{\f{11}{2},0}_\Psi$ inner product of \eqref{eq: tu_Phi} with $\hbar u_\Phi$ and
using  integrating by parts, that
\beq \label{S4eq3a}
\begin{split}
\f12\f{d}{dt}&\|\sh u_\Phi(t)\|_{H^{\f{11}{2},0}_\Psi}^2-\f12\|\sqrt{\hbar'} u_\Phi(t)\|_{H^{\f{11}{2},0}_\Psi}^2+\la\dot\th(t)\|\sh u_\Phi\|_{
H^{\f{23}{4},0}_\Psi}^2+\f12\|\sh\pa_y u_\Phi\|_{H^{\f{11}{2},0}_\Psi}^2\\
\leq&\hbar\Bigl(|\langle T^\h_{u}\pa_xu_\Phi, u_\Phi\rangle_{H^{\f{11}{2},0}_\Psi}|+
|\langle T^\h_{v}\pa_yu_\Phi, u_\Phi\rangle_{H^{\f{11}{2},0}_\Psi}|
+\f{\d(t)}2|\bigl\langle T^\h_{D_xu}\Lambda(D_x)\pa_x u_\Phi, u_\Phi\bigr\rangle_{H^{\f{11}{2},0}_\Psi}|\\
&\qquad+|\langle T^\h_{\pa_y u}v_\Phi, u_\Phi\rangle_{H^{\f{11}{2},0}_\Psi}|
+\f{\d(t)}2|\langle T^\h_{\pa_yD_xu}\Lambda(D_x)v_\Phi, u_\Phi\rangle_{H^{\f{11}{2},0}_\Psi}|
+|\langle f, u_\Phi\rangle_{H^{\f{11}{2},0}_\Psi}|\Bigr)\\
\eqdefa &A_1+\cdots+A_6.
\end{split}\eeq

We first deduce from  a similar argument of \eqref{S3eq3a} that
\begin{align*}
A_1\leq\f\hbar2\Bigl(&|\bigl\langle\D^{\f{11}{2}} u_\Phi,  [T^\h_{u}; \D^{\f{11}2}]\pa_xu_\Phi\bigr\rangle_{L^2_\Psi}|+
|\bigl\langle \D^{\f{11}{2}} u_\Phi, ((T_u^\h)^*-T^\h_u)\D^{\f{11}{2}}\pa_x u_\Phi\bigr\rangle_{L^2_\Psi}|\\
&+|\langle [\D^{\f{11}{2}},T_{u}]\pa_x u_\Phi, \D^{\f{11}{2}}u_\Phi\rangle_{L^2_\Psi}|\Bigr),
\end{align*}
from which,  Lemma \ref{lem: com1} and \eqref{S3eq8}, we infer
\begin{align*}
A_1\leq& C\bigl(\|u\|_{L_\v^\infty(H^{\f32+}_\h)}+\|u\|_{L_\v^\infty(H^{\f32+}_\h)}^2\bigr)\|\sh u_\Phi\|_{H^{\f{11}{2},0}_\Psi}^2\\
\leq& C\e\w{t}^{-\left(\ga_0+\f14\right)}\|\sh u_\Phi\|_{H^{\f{11}{2},0}_\Psi}^2
\leq C\e^\f12\dot\th(t)\|\sh u_\Phi\|_{H^{\f{11}{2},0}_\Psi}^2,
\end{align*}
if $\beta\leq \gamma_0+\f14.$

While for any $\zeta>0,$ it follows from  Lemma \ref{lem: T_fg} and \eqref{S3eq8} that if  $\beta\leq \gamma_0+\f14,$
\begin{align*}
A_2\leq&C\|v\|_{L^\infty_\v(H^{\f12+}_\h)}\|\sh \pa_y u_\Phi\|_{H^{\f{11}{2},0}_\Psi}\|\sh u_\Phi\|_{H^{\f{11}{2},0}_\Psi}\\
\leq&C\zeta^{-1}\e^2\w{t}^{-2\ga_0+\f12}\|\sh u_\Phi\|_{H^{\f{11}{2},0}_\Psi}^2+\f\zeta{32}\|\sh\pa_y u_\Phi\|_{H^{\f{11}{2},0}_\Psi}^2,\\
\leq&C\zeta^{-1}\e^{\f32}\dot\th\|\sh u_\Phi\|_{H^{\f{11}{2},0}_\Psi}^2+\f\zeta{32}\|\sh\pa_y u_\Phi\|_{H^{\f{11}{2},0}_\Psi}^2
\end{align*}
and
\begin{align*}
A_3\leq&C\|
u\|_{L^\infty_\v(H^{\f32+}_\h)}\|\sh u_\Phi\|_{H^{\f{23}4,0}_\Psi}^2
\leq  C\e^\f12\t^{\beta-\gamma_0-\f14}\dot\th(t)\|\sh u_\Phi\|_{H^{\f{23}4,0}_\Psi}^2.
\end{align*}

On the other hand, due to $v=\int_y^\infty \pa_xu\,dz,$ we observe from \eqref{est: int_y^infty H} that
\beq\label{S4eq3b}
\|v_\Phi\|_{L^\infty_{\v,\ga\Psi}(H^{s}_\h)}\leq C\t^{\f14}\|u_\Phi\|_{H^{s+1,0}_{\ga\Psi}}\quad\forall \ s\in\R \andf \ga\in [0,1].
\eeq
So that we deduce from  Lemma \ref{lem: T_fg}  and \eqref{assume: 2} that
\begin{align*}
A_4\leq&C\|\pa_y u\|_{L^2_{\v}(H^{\f12+}_\h)}\|\sh v_\Phi\|_{L^\infty_{\v,\Psi}(H^{5}_x)}\|\sh u_\Phi\|_{H^{6,0}_\Psi}\\
\leq & C\e\w{t}^{-\left(\ga_0+\f14\right)}\|\sh u_\Phi\|_{H^{6,0}_\Psi}^2,
\end{align*}
and
\begin{align*}
A_5\leq&C\|D_x\pa_y u\|_{L^2_{\v}(H^{\f12+}_\h)}\|\sh v_\Phi\|_{L^\infty_{\v,\Psi}(H^{\f{19}{4}}_\h)}\|\sh u_\Phi\|_{H^{\f{23}{4},0}_\Psi}\\
\leq & C\e^\f12\t^{\beta-\gamma_0-\f14}\dot\th(t)\|\sh u_\Phi\|_{H^{\f{23}4,0}_\Psi}^2.
\end{align*}

Finally we deduce from Lemma \ref{lem: f} that
\begin{align}\label{est: f}
\nonumber
\|f\|_{H^{\f{11}{2},0}_\Psi}
\leq&C\t^\f14\bigl(\|\pa_y G_\Phi\|_{H^{\f52+,0}_\Psi}\|u_\Phi\|_{H^{\f{11}{2},0}_\Psi}+\|G_\Phi\|_{H^{\f52+,0}_\Psi}\|\pa_y u_\Phi\|_{H^{\f{11}{2},0}_\Psi}\bigr),\\
\leq&C\e\Bigl(\t^{-\gamma_0-\f14}\|u_\Phi\|_{H^{\f{11}{2},0}_\Psi}+\t^{-\gamma_0+\f14}\|\pa_y u_\Phi\|_{H^{\f{11}{2},0}_\Psi}\Bigr),
\end{align}
which implies that for any $\zeta>0,$
\begin{align*}
A_6\leq &\hbar\|f\|_{H^{\f{11}{2},0}_\Psi}\|u_\Phi\|_{H^{\f{11}{2},0}_\Psi}\\
\leq & C\e^\f12\bigl(1+\e\zeta^{-1}\bigr))\w{t}^{\beta-\gamma_0-\f14}\dot\th(t)\|\sh u_\Phi\|_{H^{\f{11}{2},0}_\Psi}^2+\f\zeta{32}\|\sh\pa_y u_\Phi\|_{H^{\f{11}{2},0}_\Psi}^2.
\end{align*}

By substituting the above estimates into \eqref{S4eq3a} and integrating the resulting inequality over $[0,T]$ for any
$T<T^\star,$ we achieve
\begin{align*}
\f12&\|\sh u_\Phi(T)\|_{H^{\f{11}{2},0}_\Psi}^2-\f12\int_0^T\bigl\|\sqrt{\hbar'} u_\Phi(t)\|_{H^{\f{11}{2},0}_\Psi}^2
+\la\int_0^T\dot\th(t)\|\sh u_\Phi(t)\|_{H^{\f{23}{4},0}_\Psi}^2\,dt\\
&+\bigl(\f12-\f\zeta 8\bigr)\int_0^T \|\sh\pa_y u_\Phi(t')\|_{H^{\f{11}{2},0}_\Psi}^2\,dt'\\
\leq&\|\sh(0) u_\Phi(0)\|_{H^{\f{11}{2},0}_\Psi}^2+C\e\int_{0}^T\w{t}^{-\left(\ga_0+\f14\right)}\|\sh u_\Phi\|_{H^{6,0}_\Psi}^2\,dt\\
&+C\bigl(\zeta^{-1}\e+1\bigr)\e^{\f12}\int_0^T\dot\th(t)\|\sh u_\Phi(t)\|_{H^{\f{23}{4},0}_\Psi}^2 \, dt,
\end{align*}
if  $\beta\leq \gamma_0+\f14.$  This leads to \eqref{S4eq3}. We thus complete the proof of Lemma \ref{lem tu_1}.
\end{proof}

\subsection{The estimate of $\pa_yu_\Phi$}\label{Sect6}

The purpose of this section is to derive the Gevery estimate of $\pa_yu.$  We first
get, by taking $\pa_y$ to the $u$ equation of \eqref{eq: tu}, that
\begin{align}\label{eq: pa_y tu}
\pa_t\pa_yu+u\pa_x\pa_yu+v\pa_y^2u-\pa_y^3u=0.
\end{align}
Moreover, by virtue of the $u$ equation of \eqref{eq: tu} and the  boundary conditions
that $u|_{y=0}=v|_{y=0}=,0$ we obtain
 \beq \label{S5eq1} \pa_y^2u|_{y=0}=0. \eeq

 While by applying  Bony's decomposition to $u\pa_x\pa_yu$ and $v\pa_y^2u$ in \eqref{eq: pa_y tu} in the horizontal
 variable, we write
\begin{align*}
u\pa_x\pa_yu=&T^\h_u\pa_x\pa_yu+T^\h_{\pa_x\pa_yu}u+R^\h(u,\pa_x\pa_yu),\\
v\pa_y^2u=&T^\h_{\pa_y^2u}v+T^\h_{v}\pa_y^2u+R^\h(v,\pa_y^2u).
\end{align*}
Then inserting the above equalities  into \eqref{eq: pa_y tu} and applying the operator  $e^{\Phi(t,D_x)}$ to resulting equality  gives rise to
\beq\label{eq: om_Phi}
\begin{split}
\pa_t\pa_yu_\Phi+T^\h_u\pa_x\pa_yu_\Phi+&T^\h_{\pa_y^2u}v_\Phi-\pa_y^3u_\Phi=\frak{f},\with\\
\frak{f}=&-\Bigl(T^\h_{\pa_x\pa_yu}u+R^\h(u,\pa_x\pa_yu)+T^\h_{v}\pa_y^2u+R^\h(v,\pa_y^2u)\Bigr)_\Phi\\
&-\bigl((T^\h_u\pa_x\pa_yu)_\Phi-T^\h_u\pa_x\pa_yu_\Phi\bigr)-\bigl((T^\h_{\pa_y^2u}v)_\Phi-T^\h_{\pa_y^2u}v_\Phi\bigr).
\end{split}
\eeq

The main result states as follows:

\begin{prop}\label{pro: pa_y tu}
{\sl Let $u$ be a smooth enough solution of \eqref{eq: tu} which decays to zero sufficiently fast as $y$
  approaching to $\infty.$ Let $\hbar(t)$ be a non-negative and non-decreasing function on $[0,T]$ with $T\leq T^\star.$ Then   if  $\beta\leq \gamma_0+\f14,$
one has
\beq \label{S5eq2}
\begin{split}
\|\sh&\pa_yu_\Phi(T)\|_{H^{5,0}_\Psi}^2+2(\la-C\e^\f12\bigl(1+\eta^{-1}\e)\bigr)\int_0^T\dot\th(t)\|\sh\pa_yu_\Phi(t)\|_{H^{\f{21}{4},0}_\Psi}^2\,dt\\
&-\int_0^T\|\sqrt{\hbar'}\pa_yu_\Phi\|_{H^{5,0}_\Psi}^2\,dt+(1-\f{\eta}{16})\int_0^T\|\sh\pa_y^2u_\Phi(t)\|_{H^{5,0}_\Psi}^2dt\\
\leq& \|\sh(0)e^{\d\D^\f12}\pa_yu_0\|_{H^{5,0}_\Psi}^2+C\e^\f12(1+\eta^{-1}\e\bigr)\int_0^T\dot\th(t)\|\sh\t^{-\f12}u_\Phi(t)\|_{H^{\f{23}{4},0}_\Psi}^2\,dt.
\end{split} \eeq
}
\end{prop}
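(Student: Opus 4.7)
The plan is to perform a weighted energy estimate directly on the equation \eqref{eq: om_Phi} for $\pa_y u_\Phi$, mirroring the structure of the proof of Lemma \ref{lem tu_1}. Concretely I would take the $H^{5,0}_\Psi$-inner product of \eqref{eq: om_Phi} against $\hbar(t)\pa_y u_\Phi$ and integrate over $[0,T]$; note that applying $e^{\Phi(t,D_x)}$ to \eqref{eq: pa_y tu} also produces the Gevrey commutator $\la\dot\th(t)\D^{\f12}\pa_y u_\Phi$ on the left-hand side, which yields the desired $\la\int_0^T\dot\th(t)\|\sh\pa_y u_\Phi\|_{H^{\f{21}{4},0}_\Psi}^2\,dt$ after taking real parts.

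For the diffusion term $-\pa_y^3 u_\Phi$, integration by parts produces a boundary contribution $-\pa_y^2 u_\Phi\cdot\pa_y u_\Phi\, e^{2\Psi}\bigl|_{y=0}^{\infty}$ which vanishes by \eqref{S5eq1}, together with the positive term $\|\sh\pa_y^2 u_\Phi\|_{H^{5,0}_\Psi}^2$ and a $y/\t$-cross term; as in the derivation of \eqref{S3eq7f} and with the help of Lemma \ref{lem2.1}, the cross term combines with $\int\pa_t(e^{2\Psi})|\D^5\pa_yu_\Phi|^2$ and is absorbed into the diffusion at the cost of an $\eta/16$ loss. The $\pa_t$ of $\|\sh\pa_yu_\Phi\|_{H^{5,0}_\Psi}^2$ generates the endpoint term at $T$ together with $-\f12\|\sqrt{\hbar'}\pa_yu_\Phi\|_{H^{5,0}_\Psi}^2$.

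For the nonlinear term $\langle T^\h_u\pa_x\pa_yu_\Phi,\pa_yu_\Phi\rangle_{H^{5,0}_\Psi}$, the splitting \eqref{S3eq3a} reduces matters to three commutators, each bounded via Lemma \ref{lem: com1} by $C\|u\|_{L^\infty_\v(H^{\f32+}_\h)}\|\sh\pa_yu_\Phi\|_{H^{5,0}_\Psi}^2$; by \eqref{S3eq8} this is $C\e\t^{-\gamma_0-\f14}\|\sh\pa_yu_\Phi\|_{H^{5,0}_\Psi}^2$, which under $\beta\leq\gamma_0+\f14$ and \eqref{S3eq-1} becomes $C\e^{\f12}\int\dot\th\|\sh\pa_yu_\Phi\|_{H^{\f{21}{4},0}_\Psi}^2\,dt$. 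The cross-transport $\langle T^\h_{\pa_y^2u}v_\Phi,\pa_yu_\Phi\rangle_{H^{5,0}_\Psi}$ and the source $\frak{f}$ of \eqref{eq: om_Phi} are handled by Lemma \ref{lem: T_fg} together with Lemmas \ref{lem: com2}--\ref{lem: com3} to absorb the Gevrey commutator errors; crucially, $v=-\int_y^\infty\pa_x u\,dz$ and \eqref{S4eq3b} give $\|v_\Phi\|_{L^\infty_\v(H^{s}_\h)}\lesssim\t^{\f14}\|u_\Phi\|_{H^{s+1,0}_\Psi}$, which is what eventually produces the RHS term $\dot\th(t)\|\sh\t^{-\f12}u_\Phi\|_{H^{\f{23}{4},0}_\Psi}^2$ after a Young's inequality that pairs one factor of $\|\sh\pa_y^2u_\Phi\|_{H^{5,0}_\Psi}$ (reabsorbed into the diffusion with coefficient $\eta/16$) against the pointwise-in-$y$ bound for $\pa_y^2u$ from \eqref{assume: 2}.

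The main obstacle is the bookkeeping of weights: the target right-hand side carries only the single quantity $\|\sh\t^{-\f12}u_\Phi\|_{H^{\f{23}{4},0}_\Psi}^2$ with a $\dot\th$-prefactor, so every occurrence of $\pa_y^2u$ arising in $T^\h_{\pa_y^2u}v_\Phi$, in the remainder $R^\h(v,\pa_y^2u)$ and $T^\h_v\pa_y^2u$ inside $\frak{f}$, and in the $e^\Phi$-commutator corrections must be split with care, with one copy reabsorbed into the diffusion and the residual copy traded against the decay $\|\pa_y^2u\|\lesssim\e\t^{-\gamma_0-1}$ of \eqref{assume: 2} and the $v_\Phi$-to-$u_\Phi$ conversion above. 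The constraint $\beta\leq\gamma_0+\f14$ is tight precisely to ensure that every $\t^{-\alpha}$ that appears can be rewritten as $\e^{\f12}\dot\th$ (up to factors of $\eta^{-1}$ and $\e$) without deteriorating the coefficient $\la(1-C\e^{\f12})$ in front of the Gevrey penalty; the hypothesis $\la\e^{\f12}\leq C$ will be used at exactly this step to close the bound in the form \eqref{S5eq2}.
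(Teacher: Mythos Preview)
Your plan is correct and matches the paper's proof essentially line for line: the same $H^{5,0}_\Psi$ energy estimate on \eqref{eq: om_Phi}, the commutator treatment \eqref{S3eq3a} for $T^\h_u\pa_x\pa_y u_\Phi$, the conversion $v_\Phi\to\t^{\f14}u_\Phi$ via \eqref{S4eq3b} for $T^\h_{\pa_y^2u}v_\Phi$, and the term-by-term bound of $\frak f$ with the piece $\e^{\f34}\dot\th^{\f12}\|\sh\pa_y^2u_\Phi\|_{H^{5,0}_\Psi}$ absorbed by Young into the diffusion. One small correction: the condition $\la\e^{\f12}\leq C$ is \emph{not} a hypothesis of this proposition and is not used here---the coefficient in \eqref{S5eq2} is left as $2\bigl(\la-C\e^{\f12}(1+\eta^{-1}\e)\bigr)$ rather than $2\la(1-C\e^{\f12})$, and no smallness of $\la\e^{\f12}$ is invoked until later propositions.
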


\begin{proof} Again thanks to \eqref{S3eq7f},
by taking  $H^{5,0}_\Psi$  inner product of \eqref{eq: om_Phi} with $\pa_yu_\Phi,$ we find
\beq \label{est: pa_y tu}
\begin{split}
\f12\f{d}{dt}&\|\sh\pa_yu_\Phi\|_{H^{5,0}_\Psi}^2-\f12\|\sqrt{\hbar'}\pa_yu_\Phi\|_{H^{5,0}_\Psi}^2+\la\dot\th(t)\|\sh\pa_yu_\Phi\|_{H^{\f{21}{4},0}_\Psi}^2+\f12\|\sh\pa_y^2u_\Phi\|_{H^{5,0}_\Psi}^2\\
\leq&|\langle \sh T^\h_u\pa_x\pa_yu_\Phi,\sh\pa_yu_\Phi\rangle_{H^{5,0}_\Psi}|+|\langle T^\h_{\pa_y^2u} v_\Phi,\sh\pa_yu_\Phi\rangle_{H^{5,0}_\Psi}|+|\langle \frak{f},\sh\pa_yu_\Phi\rangle_{H^{5,0}_\Psi}|\\
&\eqdefa D_1+D_2+D_3.
\end{split}\eeq

Let us now handle term by term in \eqref{est: pa_y tu}. We first get, by a similar derivation of \eqref{S3eq3aq}, that
\begin{align*}
D_1\leq&C\bigl(\|u\|_{L_\v^\infty(H^{\f32+}_\h)}+\|u\|_{L_\v^\infty(H^{\f32+}_\h)}^2\bigr)\|\sh\pa_yu_\Phi\|_{H^{5,0}_\Psi}^2,\end{align*}
which together with \eqref{S3eq-1}, \eqref{S3eq8} and $\beta\leq \gamma_0+\f14$ implies that
\beq \label{S5eq3}
\begin{split}
D_1
\leq&C\bigl(\e^\f12\t^{\beta-\gamma_0-\f14}+\e^\f32\t^{\beta-2\gamma_0-\f12}\bigr)\dot\th(t)\|\sh\pa_yu_\Phi\|_{H^{5,0}_\Psi}^2\\
\leq&C\e^\f12\dot\th(t)\|\sh\pa_yu_\Phi\|_{H^{5,0}_\Psi}^2.
\end{split} \eeq

 Whereas it follows from  Lemma \ref{lem: T_fg}, \eqref{assume: 2} and \eqref{S4eq3b} that
 \beq \label{S5eq4}
 \begin{split}
 D_2\leq&C\|\pa_y^2 u\|_{H^{\f12+,0}}\|\sh v_\Phi\|_{L^\infty_{\v,\Psi}(H^{\f{19}{4}}_x)}\|\sh\pa_yu_\Phi\|_{H^{\f{21}{4},0}_\Psi}\\
 \leq& C\e^\f12\t^{\beta-\gamma_0-\f14}\dot\th(t)\Bigl(\|\sh \t^{-\f12}u_\Phi\|_{H^{\f{23}{4},0}_\Psi}^2+\|\sh\pa_yu_\Phi\|_{H^{\f{21}{4},0}_\Psi}^2\Bigr)\\
 \leq&C\e^\f12\dot\th(t)\Bigl(\|\sh \t^{-\f12}u_\Phi\|_{H^{\f{23}{4},0}_\Psi}^2+\|\sh\pa_yu_\Phi\|_{H^{\f{21}{4},0}_\Psi}^2\Bigr),
 \end{split} \eeq
if $\beta\leq \gamma_0+\f14.$

To estimate $D_3,$ we first get, by applying Lemma \ref{lem: T_fg} and \eqref{S3eq8b}, that
\begin{align*}
\|\sh(T^\h_{\pa_x\pa_yu}u)_\Phi\|_{H^{\f{19}{4},0}_\Psi}\leq& C\t^\f12\|\pa_x\pa_yu_\Phi\|_{L^\infty_\v(H^{\f12+}_\h)}\|\sh\t^{-\f12}u_\Phi\|_{H^{\f{19}{4},0}_\Psi}\\
\leq& C\e^\f12\t^{\beta-\gamma_0-\f14}\dot\th(t)\|\sh\t^{-\f12}u_\Phi\|_{H^{\f{19}{4},0}_\Psi},\end{align*}
and
\begin{align*}
\|\sh(T^\h_{v}\pa_y^2u)_\Phi\|_{H^{\f{19}{4},0}_\Psi}\leq& C\|v_\Phi\|_{L^\infty_\v(H^{\f12+}_\h)}\|\sh\pa_y^2u_\Phi\|_{H^{\f{19}{4},0}_\Psi}\\
\leq& C\e^\f34\t^{\f12\beta-\gamma_0+\f14}\dot\th^\f12(t)\|\sh\pa_y^2u_\Phi\|_{H^{\f{19}{4},0}_\Psi}.
\end{align*}
Along the same line, we have
\begin{align*}
\|\sh(R^\h(u,\pa_x\pa_yu))_\Phi\|_{H^{\f{19}{4},0}_\Psi}\leq& C\e^\f12\t^{\beta-\gamma_0-\f14}\dot\th(t)\|\sh\t^{-\f12}u_\Phi\|_{H^{\f{19}{4},0}_\Psi},\\
\|\sh(R^\h(v,\pa_y^2u))_\Phi\|_{H^{\f{19}{4},0}_\Psi}\leq& C\e^\f34\t^{\f12\beta-\gamma_0+\f14}\dot\th(t)^\f12\|\sh\pa_y^2u_\Phi\|_{H^{\f{19}{4},0}_\Psi}.
\end{align*}

While we  deduce from Lemma \ref{lem: com1} and \eqref{S3eq8} that
\begin{align*}
\|\sh(T^\h_u\pa_x\pa_yu)_\Phi-\sh T^\h_u\pa_x\pa_yu_\Phi)\|_{H^{\f{19}{4},0}_\Psi}\leq &C\|u_\Phi\|_{L^\infty_\v(H^{\f32+}_\h)}\|\sh\pa_yu_\Phi\|_{H^{\f{21}{4},0}_\Psi}\\
\leq& C\e^\f12\t^{\beta-\gamma_0-\f14}\dot\th(t)\|\sh\pa_yu_\Phi\|_{H^{\f{21}{4},0}_\Psi}.\end{align*}
Whereas it follows from  Lemma \ref{lem: com1}, \eqref{assume: 2} and \eqref{S4eq3b} that
\begin{align*}
\|\sh((T^\h_{\pa_y^2u}v)_\Phi-T^\h_{\pa_y^2u}v_\Phi)\|_{H^{\f{19}{4},0}_\Psi}\leq&C\|\pa_y^2 u_\Phi\|_{H^{\f32+,0}}\|v_\Phi\|_{L^\infty_{\v,\Psi}(H^{\f{17}{4}}_\h)}\\
\leq& C\e^\f12\t^{\beta-\gamma_0-\f14}\dot\th(t)\|\sh\t^{-\f12}u_\Phi\|_{H^{\f{21}{4}}_\Psi}.
\end{align*}

Therefore, if $\beta\leq \gamma_0+\f14,$ in view of the definition of $\ff$ given by \eqref{eq: om_Phi}, we achieve
\begin{align*}
\|\sh \frak{f}\|_{H^{\f{19}{4},0}_\Psi}\leq&C\e^\f12\dot\th(t)\Bigl(\|\sh\t^{-\f12}u_\Phi\|_{H^{\f{23}{4},0}_\Psi}
+\|\sh\pa_yu_\Phi\|_{H^{\f{21}{4},0}_\Psi}\Bigr)+\e^\f34\dot\th^\f12(t)\|\sh\pa_y^2u_\Phi\|_{H^{5,0}_\Psi},
\end{align*}
which implies that for any $\eta>0$
\beq \label{S5eq5}
\begin{split}
D_3\leq &\|\sh \frak{f}\|_{H^{\f{19}{4},0}_\Psi}\|\sh\pa_yu_\Phi\|_{H^{\f{21}{4},0}_\Psi}\\
\leq&C\e^\f12(1+\eta^{-1}\e\bigr)\dot\th(t)\Bigl(\|\sh\t^{-\f12}u_\Phi\|_{H^{\f{23}{4},0}_\Psi}^2
+\|\sh\pa_yu_\Phi\|_{H^{\f{21}{4},0}_\Psi}^2\Bigr)+\f{\eta}{32}\|\sh\pa_y^2u_\Phi\|_{H^{5,0}_\Psi}^2.
\end{split}
\eeq

By inserting the estimates \eqref{S5eq3}, \eqref{S5eq4} and \eqref{S5eq5} into \eqref{est: pa_y tu}, we obtain
\eqref{S5eq2}. This completes the proof of Proposition \ref{pro: pa_y tu}.
\end{proof}

\subsection{The proof of Proposition \ref{S1prop2}}
Now we are in a position to complete the proof of Proposition \ref{S1prop2}.

\begin{proof}[Proof of Proposition \ref{S1prop2}]
Taking  $\hbar(t)=\w{t}^{\f{1-\eta}2}$ in \eqref{S4eq1}, \eqref{S4eq2} can be proved along the same line
 to that  of Corollary \ref{S3col2}.

While
taking $\hbar=\t^{\f{3-\eta}{2}}$ in \eqref{S5eq2} leads to
\beq \label{S5eq7}
\begin{split}
&\|\t^{\f{3-\eta}{4}}\pa_yu_\Phi(T)\|_{H^{5,0}_\Psi}^2+2\bigl(\la-C\e^\f12(1+\e\eta^{-1})\bigr)
\int_0^T\dot\th(t)\|\t^{\f{3-\eta}{4}}\pa_yu_\Phi(t)\|_{H^{\f{21}{4},0}_\Psi}^2\,dt\\
&\qquad+\bigl(1-\f{\eta}{16}\bigr)\int_0^T\|\t^{\f{3-\eta}{4}}\pa_y^2u_\Phi(t)\|_{H^{5,0}_\Psi}^2\,dt\\
\leq& \|e^{\d\D^\f12}\pa_yu_0\|_{H^{5,0}_\Psi}^2+C\e^\f12\bigl(1+\e\eta^{-1}\bigr)\int_0^T\dot\th(t)\|\t^{\f{1-\eta}{4}}u_\Phi(t)\|_{H^{\f{23}{4},0}_\Psi}^2\,dt\\
&\qquad+\f{3-\eta}2\int_0^T\|\t^{\f{1-\eta}{4}}\pa_yu_\Phi\|_{H^{5,0}_\Psi}^2\,dt.
\end{split} \eeq
Yet it follows from   \eqref{S4eq2} that
\begin{align*}
\la&\int_0^T\dot\th(t)\|\t^{\f{1-\eta}{4}}u_\Phi(t)\|_{H^{\f{23}{4},0}_\Psi}^2\,dt
+\int_0^T\|\t^{\f{1-\eta}{4}}\pa_yu_\Phi\|_{H^{5,0}_\Psi}^2\,dt\\
\leq&C\eta^{-1}\| e^{\d\D^\f12} u_0\|_{H^{\f{11}{2},0}_\Psi}^2+C\eta^{-2}\e\|\w{T}^{\f{1-\eta}4} H(T)\|_{H^{\f{27}{4},0}_\Psi}^2\\
&+\e^\f14\int_0^T\|\w{t}^{\f{3-\eta}4}\pa_y^2 u_\Phi\|_{H^{5,0}_\Psi}^2\,dt+\e^\f14\int_0^T\|\w{t}^{-\f{3+\eta}4}{y} H\|_{H^{\f{27}{4},0}_\Psi}^2\,dt\\
&+C\eta^{-1}\bigl(1+\la\e^\f12+\e^\f14\eta^{-1}\bigr)\int_0^T \dot\th(t)\Bigl(\|\w{t}^{\f{1-\eta}4} H\|_{H^{7,0}_\Psi}^2+\| \w{t}^{\f{3-\eta}4}\pa_yu_\Phi\|_{H^{\f{21}{4},0}_\Psi}^2\Bigr)\,dt.
\end{align*}
So that by taking
\beno
 \la\geq \la_2\eqdefa C\bigl(\eta^{-1}(1+\e^\f14\eta^{-1})+ 2\e^{\f12}(1+\e\eta^{-1}\bigr), \andf \e\leq \e_2\quad\mbox{so that}\ \ 
  C\eta^{-1}\e^\f12\leq \f14,\eeno
we conclude the proof of \eqref{S5eq6} by inserting the above estimate into \eqref{S5eq7}. This
finished the proof of the proof of Proposition \ref{S1prop2}.
\end{proof}

\setcounter{equation}{0}
\section{The Gevery estimates of $G$ and $\pa_yG$} \label{Sect7}

\subsection{The Gevery estimates of $G$} \label{Sect7.1}
The purpose of this Subsection is to present the decay-in-time estimate for the good quantity, $G,$
given by \eqref{S1eq3a}. As in the estimate of $u$ and $\pa_yu,$ we  get, by first applying Bony's decomposition
for the horizontal variable to $u\p_xG, v\pa_y G, v\pa_y(y\varphi)$ and $\pa_y u\pa_x \varphi,$   and then
 applying the operator, $e^{\Phi(t,D_x)},$ to the equation \eqref{S1eq1}, that  
 \beq \label{eq: G_Phi}
\begin{split}
\pa_t & G_\Phi+\la\dot\th(t)\D^\f12 G_\Phi-\pa_y^2 G_\Phi+\w{t}^{-1}G_\Phi+T^\h_{u}\pa_x G_\Phi\\
&+T^\h_{\pa_y G}v_\Phi-\f1{2\w{t}}T^\h_{\pa_y(y\varphi)}v_\Phi+\f{y}{\w{t}}\int_y^\infty T^\h_{\pa_y u}v_\Phi\,dy'=F,
\end{split} \eeq
where
\beq \label{eq: F}
\begin{split}
-F\eqdefa&(T^\h_{u}\pa_x G)_\Phi-T^\h_{u}\pa_x G_\Phi+(T^\h_{\pa_xG}u)_\Phi+(R^\h(u,\pa_x G))_\Phi\\
&+(T^\h_{\pa_y G}v)_\Phi-T^\h_{\pa_y G}v_\Phi+(T^\h_{v}\pa_y G)_\Phi+(R^\h(v,\pa_y G))_\Phi,\\
&-\f1{2\w{t}}\bigl((T^\h_{\pa_y(y\varphi)}v)_\Phi-T^\h_{\pa_y(y\varphi)}v_\Phi\bigr)
-\f1{2\w{t}}\bigl(T^\h_v\pa_y(y\varphi)+R^\h(\pa_y(y\varphi),v)\bigr)_\Phi\\
&+\f{y}{\w{t}}\int_y^\infty\bigl((T^\h_{\pa_y u}v)_\Phi-T^\h_{\pa_y u}v_\Phi\bigr)\,dy'
+\f{y}{\w{t}}\int_y^\infty\bigl(T^\h_v\pa_yu+R^\h(v,\pa_y u)\bigr)_\Phi\,dy'.
\end{split}
\eeq

Let us first present  the estimate for $F.$

\begin{lem}\label{lem: F}
{\sl It holds that
\begin{align}\label{est: F}
\|F\|_{H^{\f{15}{4},0}_\Psi}\leq&C\e\Bigl(\t^{-\gamma_0-\f14}\| G_\Phi\|_{H^{\f{17}{4},0}_{\Psi}}+\t^{-\gamma_0+\f14}\|\pa_y G_\Phi\|_{H^{\f{15}4,0}_\Psi}\Bigr).
\end{align}}
\end{lem}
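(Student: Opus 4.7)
The plan is to estimate the eleven summands composing $F$ in \eqref{eq: F} one by one, grouping them into three families: (i) the four $\Phi$-commutators of the form $(T^\h_a\pa_\star b)_\Phi-T^\h_a\pa_\star b_\Phi$; (ii) the standard horizontal paraproducts $T^\h_{\pa_xG}u$, $T^\h_v\pa_yG$, $T^\h_v\pa_y(y\vf)$, $T^\h_v\pa_y u$, and the companion $R^\h$ remainders; (iii) the two terms carrying the prefactor $\f{y}{\w{t}}$ with an $\int_y^\infty(\cdots)\,dy'$. In each family the common strategy is to write the expression as a product of a low-regularity factor, which will be measured in some $L^\infty_{\v,\ga\Psi}(H^\sigma_\h)$ norm and then estimated by $C\e\t^{-\ga_0\pm\f14}$ via \eqref{S3eq8}, \eqref{S3eq8b} and Lemma~\ref{lem2.2}, and a high-regularity factor that will be kept in the target $H^{15/4,0}_\Psi$ or $H^{17/4,0}_\Psi$ norm of $G_\Phi$ or $\pa_yG_\Phi$.

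For family (i), Lemma~\ref{lem: com2} applied with $s=\f{15}{4}$ and some $\sigma\in(\f32,2)$ yields a bound of the shape $C\d(t)\|a_\Phi\|_{H^\sigma_\h}\|b_\Phi\|_{H^{s+1/2}_\h}$; splitting the Gaussian weight as $e^{\Psi}=e^{\ga\Psi}e^{(1-\ga)\Psi}$ and invoking \eqref{S3eq8} for $\|u_\Phi\|_{L^\infty_{\v,(1-\ga)\Psi}(H^\sigma_\h)}\lesssim\e\t^{-\ga_0-1/4}$ and $\|v_\Phi\|_{L^\infty_{\v,(1-\ga)\Psi}(H^\sigma_\h)}\lesssim\e\t^{-\ga_0+1/4}$ produces contributions respectively of type $C\e\t^{-\ga_0-1/4}\|G_\Phi\|_{H^{17/4,0}_\Psi}$ and $C\e\t^{-\ga_0+1/4}\|\pa_yG_\Phi\|_{H^{15/4,0}_\Psi}$ (after using Lemma~\ref{lem2.2} to trade $\|\pa_yG_\Phi\|_{H^{17/4,0}_\Psi}\sim\t^{1/2}\|\pa_y^2G_\Phi\|$ or the analogous identities). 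The commutator involving $\pa_y(y\vf)$ is handled the same way using \eqref{S7eq20d} to get $\w{t}^{-1}\|\pa_y(y\vf)_\Phi\|_{L^\infty_{\v,\ga\Psi}(H^\sigma_\h)}\lesssim\t^{-1/4}\|\pa_yG_\Phi\|_{H^{\sigma,0}_\Psi}$, which combined with the explicit prefactor $(2\w{t})^{-1}$ in \eqref{eq: F} is even better than needed.

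For family (ii), Lemma~\ref{lem: T_fg} applied horizontally (with $y$ a parameter) and integrated against $e^{2\Psi}\,dy$ gives, e.g., $\|(T^\h_{\pa_xG}u)_\Phi\|_{H^{15/4,0}_\Psi}\lesssim \|\pa_xG_\Phi\|_{L^\infty_{\v,(1-\ga)\Psi}(H^{1/2+}_\h)}\|u_\Phi\|_{H^{15/4,0}_{\ga\Psi}}$, and similarly for the remainder with $s_1+s_2>\f{17}{4}$. Since $G|_{y\to\infty}=0$, the same argument as in the proof of the decay statement \eqref{S3eq8} gives $\|G_\Phi\|_{L^\infty_{\v,\ga\Psi}(H^\sigma_\h)}\lesssim\t^{1/4}\|\pa_yG_\Phi\|_{H^{\sigma,0}_\Psi}\lesssim\e\t^{-\ga_0-1/4}$ by \eqref{assume: 1}, while Lemma~\ref{lem2.2} bounds the other factor by $\|G_\Phi\|_{H^{15/4,0}_\Psi}$. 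The terms $T^\h_v\pa_yG$ and $R^\h(v,\pa_yG)$ produce $C\e\t^{-\ga_0+1/4}\|\pa_yG_\Phi\|_{H^{15/4,0}_\Psi}$ after putting $v_\Phi$ in $L^\infty_{\v,\ga\Psi}$ and using the faster decay of $v$ in \eqref{S3eq8}, and the two $\pa_y(y\vf)$-type paraproducts are handled identically with the help of \eqref{S7eq20d}.

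For family (iii), the outer factor $\f{y}{\w{t}}$ is absorbed into a piece of the Gaussian weight by writing $\f{y}{\w{t}}e^{\Psi}=\bigl(\f{y}{\sqrt{\w{t}}}e^{-\eps\Psi}\bigr)\t^{-1/2}e^{(1+\eps)\Psi}$ for small $\eps>0$, using that $\f{y}{\sqrt{\w{t}}}e^{-\eps\Psi}\in L^\infty_\v$ uniformly in $t$. Minkowski's inequality pulls the $L^2_+$ norm inside the $\int_y^\infty dy'$, after which the integrand reduces to the same kind of estimate as in families (i) and (ii) for $(T^\h_{\pa_yu}v)_\Phi-T^\h_{\pa_yu}v_\Phi$ and for $T^\h_v\pa_yu+R^\h(v,\pa_yu)$, now with the extra $\t^{-1/2}$ and with \eqref{S3eq8b} providing the decay of $\|\pa_yu\|_{L^\infty_{\v,\ga\Psi}(H^3_\h)}\lesssim\e\t^{-\ga_0-3/4}$. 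Collecting all eleven contributions yields \eqref{est: F}. The main obstacle I foresee is purely bookkeeping: each term has to be verified to produce no power of $\t$ worse than $\t^{-\ga_0-1/4}$ (when paired with $G_\Phi$) or $\t^{-\ga_0+1/4}$ (when paired with $\pa_yG_\Phi$), which forces a deliberate choice at each step of which factor to place in $L^\infty_\v$ and how to split the Gaussian weight $e^{2\Psi}$ between the two factors.
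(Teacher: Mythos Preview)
Your overall plan---estimating the summands of $F$ one by one via the paraproduct and $\Phi$-commutator lemmas of Section~\ref{sect2}, and using the bootstrap decay \eqref{assume: 1}--\eqref{S3eq8b} to make one factor small---is exactly the paper's approach, and the organization into three families is sound.

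There is, however, a role reversal in your treatment of family~(i) that would not work as written. In all four commutators $(T^\h_a b)_\Phi - T^\h_a b_\Phi$ appearing in \eqref{eq: F}, the paraproduct coefficient $a$ is never $v$: it is $u$, $\pa_yG$, $\pa_y(y\vf)$, or $\pa_yu$, while the argument is $\pa_xG$ or $v$. Lemma~\ref{lem: com2} (or its immediate variant without the explicit $\pa_x$, read off from \eqref{S2eq2}) forces the \emph{coefficient} into the low-regularity $H^\sigma_\h$ slot and the \emph{argument} into the high-regularity slot gaining half a derivative. Thus for $(T^\h_{\pa_yG}v)_\Phi-T^\h_{\pa_yG}v_\Phi$ one must take $\|(\pa_yG)_\Phi\|_{H^{3/2+,0}_{\Psi/2}}\lesssim\e\t^{-\gamma_0-1/2}$ as the small factor and leave $\|v_\Phi\|_{L^\infty_{\v,\Psi/2}(H^{13/4}_\h)}$ as the large one, which via \eqref{S6eq1} becomes $\t^{1/4}\|G_\Phi\|_{H^{17/4,0}_\Psi}$, producing a contribution of the \emph{first} type $C\e\t^{-\gamma_0-1/4}\|G_\Phi\|_{H^{17/4,0}_\Psi}$. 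Your proposed placement of $v_\Phi$ in $L^\infty_{\v}(H^\sigma_\h)$ to end up with $C\e\t^{-\gamma_0+1/4}\|\pa_yG_\Phi\|_{H^{15/4,0}_\Psi}$ is not available from Lemma~\ref{lem: com2}, since $v$ is not the coefficient. In the paper's argument \emph{every} commutator in family~(i) lands on the $\|G_\Phi\|_{H^{17/4}}$ side; the $\|\pa_yG_\Phi\|_{H^{15/4}}$ contribution arises only from the genuine paraproducts $(T^\h_v\pa_yG)_\Phi$, $(R^\h(v,\pa_yG))_\Phi$, $(2\w t)^{-1}(T^\h_v\pa_y(y\vf))_\Phi$ and the integrated $(T^\h_v\pa_yu+R^\h(v,\pa_yu))_\Phi$ in families~(ii)--(iii). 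The parenthetical about ``trading $\|\pa_yG_\Phi\|_{H^{17/4}}\sim\t^{1/2}\|\pa_y^2G_\Phi\|$'' is not needed and should be dropped. Once you correct this allocation, the rest of the bookkeeping goes through as in the paper.
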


\begin{proof}
Applying  Lemma \ref{lem: com2} and \eqref{S3eq8} gives
\begin{align*}
\|(T^\h_{u}\pa_x G)_\Phi-T^\h_{u}\pa_x G_\Phi\|_{H^{\f{15}{4},0}_\Psi}\leq & C\|u_\Phi\|_{L^\infty_\v(H^{\f32+}_\h)}
\|G_\Phi\|_{H^{\f{17}{4},0}_\Psi}
\leq C\e\t^{-\gamma_0-\f14}\|G_\Phi\|_{H^{\f{17}{4},0}_\Psi}.
\end{align*}
While it follows from \eqref{S4eq3b} and Lemma \ref{lem2.2} that
\beq\label{S6eq1}
\|v_\Phi\|_{L^\infty_{\v,\f{\Psi}2}(H^{s}_\h)}\leq C\t^{\f14}\|u_\Phi\|_{H^{s+1,0}_{\f{\Psi}2}} \leq C\t^{\f14}\|G_\Phi\|_{H^{s+1,0}_{\Psi}}\quad\forall \ s\in\R,
\eeq
which together with Lemma \ref{lem: com2}  and \eqref{assume: 1} ensures that
\begin{align*}
\|((T^\h_{\pa_y G}v)_\Phi-T^\h_{\pa_y G}v_\Phi)\|_{H^{\f{15}{4},0}_\Psi}\leq& C\|\pa_y G_\Phi\|_{H^{\f32+,0}_{\f{\Psi}2}}\| v_\Phi\|_{L^\infty_{\v,\f{\Psi}2}(H^{\f{13}{4}}_\h)}
\leq C\e\t^{-\gamma_0-\f14}\|G_\Phi\|_{H^{\f{17}{4},0}_\Psi}.
\end{align*}
Notice from \eqref{S7eq20d} and \eqref{assume: 1} that
\beq \label{S6eq2}
\w{t}^{-1}\|\pa_y (y\varphi)_\Phi\|_{H^{\f52+,0}_{\f{\Psi}2}}\leq C\|\pa_y G_\Phi\|_{H^{\f52+,0}_{\Psi}}\leq C\e\t^{-\ga_0-\f12},
\eeq
so that applying Lemma \ref{lem: com2} and \eqref{S6eq1} yields
\begin{align*}
\w{t}^{-1}\|\bigl((T^\h_{\pa_y(y\varphi)}v)_\Phi-T^\h_{\pa_y(y\varphi)}v_\Phi\bigr)\|_{H^{\f{15}{4},0}_\Psi}
\leq& C\w{t}^{-1}\|\pa_y (y\varphi)_\Phi\|_{H^{\f32+,0}_{\f{\Psi}2}}\|v_\Phi\|_{L^\infty_{\v,\f{\Psi}2}(H^{\f{13}{4}}_\h)}\\
\leq& C\e\t^{-\gamma_0-\f14} \| G_\Phi\|_{H^{\f{17}{4},0}_\Psi}.
\end{align*}
Whereas applying Lemma \ref{lem: com2}, \eqref{assume: 2} and \eqref{S6eq1} gives
\begin{align*}
\bigl\|\int_y^\infty \bigl((T^\h_{\pa_y u}v)_\Phi-T^\h_{\pa_y u}&v_\Phi\bigr)\,dy'\bigr\|_{L^\infty_\v(H^{\f{15}{4},0}_\h)}\leq
\bigl\|\int_y^\infty \|\pa_y u_\Phi\|_{H^{\f32+}_\h}\|v_\Phi\|_{H^{\f{13}{4}}_\h}\,dy'\bigr\|_{L^\infty_\v}\\
\leq&C\|\pa_y u_\Phi\|_{L^2_{y,\f34\Psi}(H^{\f12+}_\h)}\|e^{\f34\Psi}v_\Phi\|_{L^\infty_\v(H^{\f{13}{4}}_\h)}\Bigl(\int_y^\infty e^{-3\Psi}dy'\Bigr)^\f12\\
\leq&C\e\t^{-\ga_0} e^{-\f 32\Psi}\|G_\Phi\|_{H^{\f{17}{4},0}_{\Psi}},
\end{align*}
from which, we infer
\begin{align*}
\bigl\|{y}{\w{t}}^{-1}\int_y^\infty\bigl((T^\h_{\pa_y u}v)_\Phi-T^\h_{\pa_y u}v_\Phi\bigr)\,dy'\bigr\|_{H^{\f{15}{4},0}_\Psi}
\leq C\e\t^{-\gamma_0-\f14}\|G_\Phi\|_{H^{\f{17}{4},0}_{\Psi}}.
\end{align*}

On the other hand, by applying
Lemma \ref{lem: T_fg},  \eqref{assume: 1} and \eqref{S3eq8}, we find
\begin{align*}
\|(T^\h_{\pa_xG}u)_\Phi+(R^\h(u,\pa_x G))_\Phi\|_{H^{\f{15}{4},0}_\Psi}\leq& C\|\pa_xG_\Phi\|_{L^\infty_{\v,\f12\Psi}(H^{\f12+}_\h)}\| u_\Phi\|_{H^{\f{15}{4},0}_{\f12\Psi}}\\
\leq&C\e\t^{-\gamma_0-\f14} \| G_\Phi\|_{H^{\f{15}{4},0}_\Psi},\\
%%%%%%%%%%%%%%%%%%%%
\|((T^\h_{v}\pa_y G)_\Phi+(R^\h(v,\pa_y G))_\Phi)\|_{H^{\f{15}{4},0}_\Psi}\leq& C\|v_\Phi\|_{L^\infty_\v(H^{\f12+}_\h)}\|\pa_y G_\Phi\|_{H^{\f{15}4,0}_\Psi}\\
\leq&C\e\t^{-\gamma_0+\f14}\| \pa_y G_\Phi\|_{H^{\f{15}4,0}_\Psi},\\
%%%%%%%%%%%%%%%%
%%%%%%%%%%%%%
\bigl\|\w{t}^{-1}\bigl(T^\h_v\pa_y(y\varphi)+R^\h(\pa_y(y\varphi),v)\bigr)_\Phi\bigr\|_{H^{\f{15}{4},0}_\Psi}\leq& C\|v_\Phi\|_{L^\infty_{\v,\f12\Psi}(H^{\f12+}_x)}\|\w{t}^{-1}\pa_y (y\varphi)_\Phi\|_{H^{\f{15}4,0}_{\f12\Psi}}\\
\leq&C\e\t^{-\gamma_0+\f14}\|\pa_y G_\Phi\|_{H^{\f{15}4,0}_\Psi}.
\end{align*}
and
\begin{align*}
\bigl\|\int_y^\infty \bigl(T^\h_v\pa_yu+R^\h(v,\pa_y u)\bigr)_\Phi&\,dy'\bigr\|_{L^\infty_\v(H^{\f{15}{4},0}_\h)}
\leq \|\int_y^\infty \|\pa_y u_\Phi\|_{H^{\f{15}4}_\h}\|v_\Phi\|_{H^{\f12+}_\h}\,dy'\|_{L^\infty_\v}\\
\leq&C\|\pa_y u_\Phi\|_{L^2_{\v,\f34\Psi}(H^{\f{15}4}_\h)}\|e^{\f34\Psi}v_\Phi\|_{L^\infty_\v(H^{\f12+}_\h)}\Bigl(\int_y^\infty e^{-3\Psi}\,dy'\Bigr)^\f12\\
\leq& C\e\t^{-\ga_0+\f12} e^{-\f 32\Psi}\|\pa_y G_\Phi\|_{H^{\f{15}4,0}_\Psi},
\end{align*}
from which, we infer
\begin{align*}
\bigl\|{y}{\w{t}}^{-1}\int_y^\infty\bigl(T^\h_v\pa_yu+R^\h(v,\pa_y u)_\Phi\bigr)\,dy'\bigr\|_{H^{\f{15}{4},0}_\Psi}
\leq C\e\t^{-\gamma_0+\f14}\| \pa_y G_\Phi\|_{H^{\f{15}4,0}_\Psi}.
\end{align*}

In view of \eqref{eq: F}, we conclude the proof of \eqref{est: F} by summarizing the above estimates.
\end{proof}

\begin{prop}\label{pro: G}
{\sl Let $\hbar(t)$ be a non-negative and non-decreasing function on $[0,T]$ with $T\leq T^\star.$ Let $G$ be determined by
\eqref{S1eq3a}. Then   if
$\b\leq\ga_0-\f1{12},$ for any $\eta>0,$ we have
\beq\label{est: G}
\begin{split}
\f12&\|\sqrt{\hbar}G_\Phi(T)\|_{H^{4,0}_\Psi}^2
-\f12\int_0^T\|\sqrt{\hbar'}G_\Phi(t)\|_{H^{4,0}_\Psi}^2\,dt+\int_0^T\w{t}^{-1}\|\sh G_\Phi(t)\|_{H^{4,0}_\Psi}^2\,dt\\
&+\bigl(\la-C(1+\eta^{-1}\e)\e^{\f12}\bigr)\int_0^T\dot\th(t)
\|\sh G_\Phi(t)\|_{H^{\f{17}{4},0}_\Psi}^2\,dt\\
&+\bigl(\f12-\f\eta 8\bigr)\int_0^T\|\sh\pa_yG_\Phi(t)\|_{H^{4,0}_\Psi}^2\,dt\\
\leq& \f12\|\sqrt{\hbar}(0)G_\Phi(0)\|_{H^{4,0}_\Psi}^2+C\e^\f12\int_0^T\dot\th(t)(\|\sh \t^{-1}u_\Phi\|_{H^{\f{23}{4},0}_\Psi}^2\,dt.
\end{split}
\eeq}
\end{prop}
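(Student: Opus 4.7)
The approach is to perform a weighted Gevrey energy estimate in the $H^{4,0}_\Psi$ norm by testing equation \eqref{eq: G_Phi} against $\hbar(t) G_\Phi$ and integrating over $[0,T]$, paralleling the scheme used for $u_\Phi$ in Lemma \ref{lem tu_1} and for $\pa_yu_\Phi$ in Proposition \ref{pro: pa_y tu}. The diagonal contributions reproduce the good left-hand side of \eqref{est: G}: the time derivative combined with $e^{2\Psi}$ yields $\f12\pa_t\|\sh G_\Phi\|^2_{H^{4,0}_\Psi}-\f12\|\sqrt{\hbar'}G_\Phi\|^2_{H^{4,0}_\Psi}$ together with a non-negative contribution from $2\pa_t\Psi+(\pa_y\Psi)^2\le 0$ in the spirit of \eqref{S3eq7f}; the Gevrey operator $\la\dot\th\D^{1/2}$ produces $\la\dot\th\|\sh G_\Phi\|^2_{H^{17/4,0}_\Psi}$; integration by parts against $-\pa_y^2G_\Phi$ using $G|_{y=0}=0$ gives $\|\sh\pa_yG_\Phi\|^2_{H^{4,0}_\Psi}$ minus a cross term $\hbar\t^{-1}\int y e^{2\Psi}\D^4G_\Phi\cdot\D^4\pa_yG_\Phi\,dx\,dy$ which costs only $\eta/8$ after Young; and the linear damping $\w t^{-1}G_\Phi$ contributes the pure positive term appearing in \eqref{est: G}.

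It then remains to control the transport and product contributions. The convective para-product $\hbar\langle T^\h_u\pa_xG_\Phi,G_\Phi\rangle_{H^{4,0}_\Psi}$ is handled by the commutator manipulation \eqref{S3eq3a} with Lemma \ref{lem: com1} and \eqref{S3eq8}, yielding a bound $C\e^{1/2}\t^{\b-\ga_0-1/4}\dot\th\|\sh G_\Phi\|^2_{H^{17/4,0}_\Psi}$, absorbable by the Gevrey gain once $\b\le\ga_0+1/4$. The term $\hbar\langle T^\h_{\pa_yG}v_\Phi,G_\Phi\rangle_{H^{4,0}_\Psi}$ is treated via Lemma \ref{lem: T_fg} together with \eqref{S4eq3b} to rewrite $v_\Phi$ in terms of $u_\Phi$, and Lemma \ref{lem2.2} to express $u_\Phi$ via $G_\Phi$. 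The piece $\f1{2\w t}\hbar\langle T^\h_{\pa_y(y\varphi)}v_\Phi,G_\Phi\rangle_{H^{4,0}_\Psi}$ uses the bound \eqref{S6eq2} to convert $\w t^{-1}\pa_y(y\varphi)$ directly into $\pa_yG$. The most delicate piece is the nonlocal tail $\f{y}{\w t}\int_y^\infty T^\h_{\pa_yu}v_\Phi\,dy'$, which I handle as in the last part of the proof of Lemma \ref{lem: F}: bound the inner integral in $L^\infty_v(H^{s}_\h)$ by $\|\pa_yu_\Phi\|_{L^2_{v,3\Psi/4}(H^{1/2+}_\h)}\|e^{3\Psi/4}v_\Phi\|_{L^\infty_v(H^{s}_\h)}$ up to an $e^{-3\Psi/2}$ factor from the $y$-integration, then absorb $y e^{-\Psi/2}$ into the remaining weight at the cost of $\t^{1/2}$. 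Finally the source $F$ is controlled by Lemma \ref{lem: F} and paired against $\|\sh G_\Phi\|_{H^{17/4,0}_\Psi}$ via Young's inequality after distributing $\hbar$.

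The main obstacle is pinning down the tight constraint $\b\le\ga_0-1/12$, which is stricter than the $\b\le\ga_0+1/4$ needed for the pure transport term. This sharper bound arises from the worst-case combination in which the $\t^{-\ga_0+1/4}\|\pa_yG_\Phi\|_{H^{15/4,0}_\Psi}$ piece of $F$ (or a similar contribution from the $v_\Phi$-product terms) must be split simultaneously against the Gevrey dissipation $\la\dot\th\|\sh G_\Phi\|^2_{H^{17/4,0}_\Psi}$ and the normal dissipation $\|\sh\pa_yG_\Phi\|^2_{H^{4,0}_\Psi}$. A careful two-parameter Young inequality balances the residual $\t$-powers against $\dot\th=\e^{1/2}\t^{-\b}$, forcing $\b-\ga_0+1/12\le 0$. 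Once all pieces are assembled, summed over $[0,T]$, and the choice $\la\ge 2C\e^{1/2}(1+\eta^{-1}\e)$ is used to absorb the Gevrey-loss from the transport contribution, the estimate \eqref{est: G} follows.
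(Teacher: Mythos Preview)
Your overall scheme is correct and matches the paper's, but there is a real gap in how you handle the terms containing $v_\Phi$ (your analogues of $I_2$, $I_3$, $I_4$), and this is precisely where the tight constraint $\beta\le\gamma_0-\tfrac1{12}$ originates. Your attribution of that constraint to the $F$-term via a ``two-parameter Young inequality'' is incorrect: in the paper's estimate of $I_5$ (the $F$-contribution), Young against $\|\sh\pa_yG_\Phi\|_{H^{4,0}_\Psi}^2$ produces only the weaker conditions $\beta\le\gamma_0+\tfrac14$ and $\beta\le 2\gamma_0-\tfrac12$.

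The issue is that in $\hbar\langle T^\h_{\pa_yG}v_\Phi,G_\Phi\rangle_{H^{4,0}_\Psi}$ (and likewise for the $\pa_y(y\varphi)$ and nonlocal tail terms) you must place $v_\Phi$ in $L^\infty_\v(H^{15/4}_\h)$, hence $u_\Phi$ in $H^{19/4,0}$. The Gevrey dissipation only controls $G_\Phi$ in $H^{17/4,0}$, so Lemma~\ref{lem2.2} alone gives you $u_\Phi$ at $H^{17/4}$, not $H^{19/4}$. The paper closes this $1/2$-derivative gap by the anisotropic interpolation
\[
\|u_\Phi\|_{H^{19/4,0}_{3\Psi/4}}\le C\|u_\Phi\|_{H^{17/4,0}_{3\Psi/4}}^{2/3}\|u_\Phi\|_{H^{23/4,0}_{3\Psi/4}}^{1/3}
\le C\w t^{1/3}\|G_\Phi\|_{H^{17/4,0}_\Psi}^{2/3}\|\w t^{-1}u_\Phi\|_{H^{23/4,0}_\Psi}^{1/3},
\]
which is exactly how the right-hand side term $C\e^{1/2}\int_0^T\dot\th\|\sh\w t^{-1}u_\Phi\|_{H^{23/4,0}_\Psi}^2\,dt$ in \eqref{est: G} enters (your outline never explains where this term comes from). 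After Young with exponents $(6,6/5)$ one finds $I_2\le C\e^{1/2}\w t^{\beta-\gamma_0+1/12}\dot\th(\cdots)$, which forces $\beta\le\gamma_0-\tfrac1{12}$. Without this interpolation step your estimate of $I_2$--$I_4$ does not close, and the stated inequality cannot be reached.

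A minor point: the cross term $\hbar\w t^{-1}\int y e^{2\Psi}\D^4G_\Phi\,\D^4\pa_yG_\Phi$ is in fact absorbed for free by the $\pa_t\Psi$ contribution (this is exactly \eqref{S3eq7f}), not at a cost of $\eta/8$; the $\eta/8$ loss in the paper comes solely from the $\|\pa_yG_\Phi\|_{H^{15/4,0}_\Psi}$ piece of $F$ in $I_5$.
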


\begin{proof} Thanks to \eqref{S3eq7f},
we first get, by taking $H^{4,0}_\Psi$ inner product of \eqref{eq: G_Phi} with $\hbar G_\Phi,$ that
\beq\label{S6eq3}
\begin{split}
\f12&\f{d}{dt}\|\sh G_\Phi(t)\|_{H^{4,0}_\Psi}^2-\f12\|\sqrt{\hbar'} G_\Phi(t)\|_{H^{4,0}_\Psi}^2+\w{t}^{-1}\|\sh G_\Phi(t)\|_{H^{4,0}_\Psi}^2\\
&+\la\dot\th(t)\|\sh G_\Phi\|_{
H^{\f{17}{4},0}_\Psi}^2+\f12\|\sh\pa_y G_\Phi\|_{H^{4,0}_\Psi}^2\\
\leq& \hbar|\langle T^\h_{u}\pa_x G_\Phi, G_\Phi \rangle_{H^{4,0}_\Psi}|+\hbar|\langle T^\h_{\pa_y G}v_\Phi, G_\Phi \rangle_{H^{4,0}_\Psi}|+\f\hbar{2\w{t}}|\langle T^\h_{\pa_y(y\varphi)}v_\Phi, G_\Phi \rangle_{H^{4,0}_\Psi}|\\
&+\hbar\big|\langle \f{y}{\w{t}}\int_y^\infty T_{\pa_y u}v_\Phi\,dy', G_\Phi \rangle_{H^{4,0}_\Psi}\bigr|+\hbar|\langle F, G_\Phi \rangle_{H^{4,0}_\Psi}|
\eqdefa  I_1+\cdots+I_5.
\end{split}\eeq

Next, let us handle the  estimates of $I_i,~i=1,\cdots,5,$ term by term.\smallskip

\no\underline{$\bullet$ The estimate $I_1.$} It follows from a similar derivation of \eqref{S3eq3aq} that
\begin{align*}
I_1\leq&C\bigl(\|u\|_{L_\v^\infty(H^{\f32+}_\h)}+\|u\|_{L_\v^\infty(H^{\f32+}_\h)}^2\bigr)\|\sh G_\Phi\|_{H^{4,0}_\Psi}^2\\
\leq& C\e^\f12\dot\th(t)\|\sh G_\Phi\|_{H^{4,0}_\Psi}^2,
\end{align*}
if $\beta\leq\gamma_0+\f14.$

\no\underline{$\bullet$ The estimate $I_2.$}
By applying Lemma \ref{lem: T_fg}  and \eqref{S6eq1}, we find
\begin{align*}
I_2\leq&C\sh\|\pa_y G\|_{L^2_{\v,\f12\Psi}(H^{\f12+}_\h)}\|v_\Phi\|_{L^\infty_{\v,\f12 \Psi}(H^{\f{15}{4},0}_x)}\|\sh G\|_{H^{\f{17}{4},0}_\Psi}\\
\leq&C\e\t^{-\gamma_0-\f12+\f14}\|\sh u_\Phi\|_{H^{\f{19}{4},0}_{\f12\Psi}}\|\sh G\|_{H^{\f{17}{4},0}_\Psi}.
\end{align*}
Yet it follows from  interpolation inequality in the anisotropic Sobolev norm
and  Lemma \ref{lem2.2} that
\begin{align}\label{est: interpolation-1}
\|u\|_{H^{\f{19}{4},0}_{\f34\Psi}}\leq& C\|u\|_{H^{\f{17}{4},0}_{\f34\Psi}}^\f23\|u\|_{H^{\f{23}{4},0}_{\f34\Psi}}^{\f13}
\leq C\t^\f13\| G\|_{H^{\f{17}{4},0}_\Psi}^\f23\|\t^{-1}u\|_{H^{\f{23}{4},0}_\Psi}^\f13.
\end{align}
As a result, it comes out
\begin{align*}
I_2
\leq&C\e\t^{-\gamma_0+\f{1}{12}}\|\sh \t^{-1}u_\Phi\|_{H^{\f{23}{4},0}_\Psi}^\f13\|\sh G\|_{H^{\f{17}{4},0}_\Psi}^{\f 53}\\
\leq&C\e^\f12\t^{\beta-\gamma_0+\f{1}{12}}\dot\th(t)\Bigl(\|\sh \t^{-1}u_\Phi\|_{H^{\f{23}{4},0}_\Psi}^2+\|\sh G\|_{H^{\f{17}{4},0}_\Psi}^2\Bigr).
\end{align*}

\no\underline{$\bullet$ The estimate $I_3.$} Applying Lemmas \ref{lem: T_fg} and \ref{lem2.2} gives
\begin{align*}
I_3\leq&C\|\w{t}^{-1}\pa_y (y\varphi)\|_{L^2_{\v,\f12\Psi}(H^{\f12+}_x)}\|\sh v_\Phi\|_{L^\infty_{\v,\f12 \Psi}(H^{\f{15}{4},0}_x)}\|\sh G\|_{H^{\f{17}{4},0}_\Psi}\\
\leq&C\|\pa_y G\|_{L^2_{\v,\Psi}(H^{\f12+}_\h)}\|\sh v_\Phi\|_{L^\infty_{\v,\f12 \Psi}(H^{\f{15}{4},0}_\h)}\|\sh G\|_{H^{\f{17}{4},0}_\Psi},
\end{align*}
from which and the estimate of $I_2,$ we obtain
\begin{align*}
I_3
\leq C\e^\f12\t^{\beta-\gamma_0+\f{1}{12}}\dot\th(t)\Bigl(\|\sh \t^{-1}u_\Phi\|_{H^{\f{23}{4},0}_\Psi}^2+\|\sh G\|_{H^{\f{17}{4},0}_\Psi}^2\Bigr).
\end{align*}

\no\underline{$\bullet$ Estimate $I_4.$} It follows from Lemmas \ref{lem: T_fg} and \eqref{assume: 2} that for any $s\in\R$
\beq \label{est: int_0^y pa_y u v}
\begin{split}
\bigl\|\int_y^\infty T^\h_{\pa_y u}v_\Phi\,dy'\bigr\|_{L^\infty_\v(H^{s}_\h)}\leq& \bigl\|\int_y^\infty \|\pa_y u\|_{H^{\f12+}_\h}\|v_\Phi\|_{H^{s}_\h}\,dy'\bigr\|_{L^\infty_\v}\\
\leq&C\|\pa_y u\|_{L^2_{\v,\f34\Psi}(H^{\f12+}_\h)}\|e^{\f34\Psi}v_\Phi\|_{L^\infty_\v(H^{s}_\h)}
\Bigl(\int_y^\infty e^{-3\Psi}\,dy'\Bigr)^\f12\\
\leq& C\e\t^{-\ga_0} e^{-\f 32\Psi}\|u_\Phi\|_{H^{1+s,0}_{\f34\Psi}},
\end{split}\eeq from which
and \eqref{est: interpolation-1}, we infer
\begin{align*}
I_4\leq&\hbar \bigl\|\f{y}{\w{t}}\int_y^\infty T_{\pa_y u}v_\Phi\,dy'\|_{H^{\f{15}{4},0}_\Psi}\|G_\Phi\bigr\|_{H^{\f{17}{4},0}_\Psi}\\
\leq&C\e\t^{-\ga_0-\f12} \bigl\|\f{y}{\w{t}^\f12}e^{-\f12\Psi}\bigr\|_{L^2_\v}\|\sh u_\Phi\|_{H^{\f{19}{4},0}_{\f34\Psi}}\|\sh G_\Phi\|_{H^{\f{17}{4},0}_\Psi}\\
\leq&C\e^\f12\t^{\beta-\gamma_0+\f{1}{12}}\dot\th(t)\Bigl(\|\sh \t^{-1}u_\Phi\|_{H^{\f{23}{4},0}_\Psi}^2+\|\sh G\|_{H^{\f{17}{4},0}_\Psi}^2\Bigr).
\end{align*}

\no\underline{$\bullet$ The stimate $I_5.$} By applying Lemma \ref{lem: F}, we find
\begin{align*}
I_5\leq& \|\sh F\|_{H^{\f{15}{4},0}_\Psi}\|\sh G_\Phi\|_{H^{\f{17}{4},0}_{\Psi}}\\
\leq&C\e\Bigl(\t^{-\gamma_0-\f14}\|\sh G_\Phi\|_{H^{\f{17}{4},0}_{\Psi}}+\t^{-\gamma_0+\f14}\|\sh \pa_y G_\Phi\|_{H^{4,0}_\Psi}\Bigr)\|\sh G_\Phi\|_{H^{\f{17}{4},0}_{\Psi}}.
\end{align*}
Then for any $\eta>0,$ we get, by applying Young's inequality, that
\begin{align*}
I_5
\leq&C\bigl(\e^\f12\t^{\beta-\gamma_0-\f14}+\eta^{-1}\e^{\f32}\t^{\beta-2\gamma_0+\f12}\bigr)\dot\th(t)\|\sh G_\Phi\|_{H^{\f{17}{4},0}_{\Psi}}^2+\f\eta8\|\sh \pa_y G_\Phi\|_{H^{4,0}_\Psi}^2.
\end{align*}

By inserting the above estimates into \eqref{S6eq3} and then integrating the resulting inequality  over $[0,T]$, we conclude the proof of \eqref{est: G}
in case that $\b\leq \ga_0-\f1{12}.$
\end{proof}

\subsection{The Gevery estimate of $\pa_yG$}\label{Sect7.2}

Due to the loss of derivative in the $G$ equation \eqref{S1eq1}, we shall derive the estimate of $\|\pa_yG_\Phi(T)\|_{H^{3,0}_\Psi}$
instead of $\|\pa_yG_\Phi(T)\|_{H^{4,0}_\Psi}$ as the estimate of $G_\Phi$ in the previous section. In order
to do so, we first get, by applying
 $\pa_y $ to \eqref{eq: G_Phi}, that
 \beq \label{eq: pa_yG_Phi}
\begin{split}
\pa_t& \pa_yG_\Phi+\la\dot\th(t)\D^\f12 \pa_yG_\Phi-\pa_y^3G_\Phi+\w{t}^{-1}\pa_yG_\Phi\\
&+\pa_y\Bigl(T^\h_{u}\pa_x G_\Phi+T^\h_{\pa_y G}v_\Phi-\f1{2\w{t}}T^\h_{\pa_y(y\varphi)}v_\Phi+\f{y}{\w{t}}\int_y^\infty T^\h_{\pa_y u}v_\Phi\,dy'\Big)=\pa_yF,
\end{split}\eeq
with $F$ being given by \eqref{eq: F}.

Moreover, in view of the $G$ equation of \eqref{S1eq1} and the boundary conditions that $u|_{y=0}=v|_{y=0}=G|_{y=0}=0,$
we deduce that
\beq \label{S7eq1}
\pa_y^2G|_{y=0}=0.
\eeq

\begin{prop}\label{pro: pa_yG}
{\sl  Let $\hbar(t)$ be a non-negative and non-decreasing function on $[0,T]$ with $T\leq T^\star.$ Let $G$ be determined
by \eqref{S1eq3a}. Then  if
$\b\leq2\ga_0-\f1{2},$ for any $\eta>0,$ we have
\beq \label{S7eq2}
\begin{split}
\f12&\|\sqrt{\hbar}\pa_yG_\Phi(T)\|_{H^{3,0}_\Psi}^2
+\int_0^T\w{t}^{-1}\|\sh \pa_yG_\Phi(t)\|_{H^{3,0}_\Psi}^2\,dt-\f12\int_0^T\|\sqrt{\hbar'}\pa_yG_\Phi(t)\|_{H^{3,0}_\Psi}^2\,dt\\
&+\bigl(\la-C\eta^{-1}\e^\f32\bigr)\int_0^T\dot\th(t)
\|\sh \pa_yG_\Phi(t)\|_{H^{\f{13}{4},0}_\Psi}^2\,dt+\bigl(\f12-\f\eta 8\bigr)\int_0^T\|\sh\pa_y^2G_\Phi(t)\|_{H^{3,0}_\Psi}^2\,dt\\
\leq&\f12\|\sqrt{\hbar}(0)\pa_yG_\Phi(0)\|_{H^{3,0}_\Psi}^2+C\eta^{-1}\e^\f32\int_0^T\dot\th(t)\|\sh\t^{-\f12}G_\Phi\|_{H^{4,0}_\Psi}^2\,dt .
\end{split}
\eeq}
\end{prop}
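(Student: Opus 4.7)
The plan is to proceed in analogy with the proof of Proposition \ref{pro: G}, using the evolution equation \eqref{eq: pa_yG_Phi} as the starting point and testing it against $\hbar\pa_yG_\Phi$ in the $H^{3,0}_\Psi$ inner product. The boundary condition $\pa_y^2G|_{y=0}=0$ from \eqref{S7eq1}, combined with $2\pa_t\Psi+(\pa_y\Psi)^2\leq 0$ as in \eqref{S3eq7f}, allows integration by parts on $-\pa_y^3G_\Phi$ and produces the dissipation term $\|\sh\pa_y^2 G_\Phi\|_{H^{3,0}_\Psi}^2$ with the correct sign, while the $\la\dot\th\D^\f12$ and $\w{t}^{-1}$ contributions directly give the Gevrey regularization term and the linear $\w{t}^{-1}$ term on the left-hand side of \eqref{S7eq2}.

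The bulk of the work is to bound the six nonlinear pieces coming from $\pa_y$ applied to the terms in \eqref{eq: pa_yG_Phi}, which I would treat by Leibniz rule. The transport piece $\pa_y(T^\h_u\pa_xG_\Phi)=T^\h_{\pa_y u}\pa_xG_\Phi+T^\h_u\pa_x\pa_yG_\Phi$ is handled via the commutator/adjoint identity \eqref{S3eq3a} combined with the bound $\|\pa_y u\|_{L^\infty_{\v,\ga\Psi}(H^{3}_\h)}\lesssim \e\w{t}^{-(\ga_0+\f34)}$ from \eqref{S3eq8b}. The term $\pa_y(T^\h_{\pa_y G}v_\Phi)=T^\h_{\pa_y^2 G}v_\Phi+T^\h_{\pa_y G}\pa_y v_\Phi$ is the most dangerous: for the first summand I apply Lemma \ref{lem: T_fg}, the bound \eqref{S6eq1} for $v_\Phi$, and Young's inequality to absorb $\|\sh\pa_y^2 G_\Phi\|_{H^{3,0}_\Psi}^2$ into the dissipation with coefficient $\f\eta 8$; for the second summand I exploit the divergence-free identity $\pa_y v_\Phi=-\pa_x u_\Phi$ to transfer the $y$-derivative onto the velocity. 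The weighted $(y\vf)$-term $\w{t}^{-1}\pa_y(T^\h_{\pa_y(y\vf)}v_\Phi)$ is controlled using \eqref{S7eq20d}, \eqref{S7eq20e} and \eqref{S6eq1}. Finally, $\pa_y\bigl(\f{y}{\w{t}}\int_y^\infty T^\h_{\pa_y u}v_\Phi\,dy'\bigr)=\f{1}{\w{t}}\int_y^\infty T^\h_{\pa_y u}v_\Phi\,dy'-\f{y}{\w{t}}T^\h_{\pa_y u}v_\Phi$: the first piece repeats the $I_4$ estimate via \eqref{est: int_0^y pa_y u v}, and the second uses \eqref{S3eq8b} with a Gaussian tail bound absorbing $y/\w{t}$ against $e^{-\f12\Psi}$. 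The source $\pa_yF$ is estimated by reprising Lemma \ref{lem: F} summand by summand with an extra $\pa_y$, producing at worst a $\pa_y^2 G_\Phi$ contribution that is absorbed by dissipation.

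The main obstacle will be the term $T^\h_{\pa_y^2G}v_\Phi$ arising when $\pa_y$ hits $T^\h_{\pa_y G}v_\Phi$. In contrast with the analogous $I_2$ of Proposition \ref{pro: G}, here no ``spare'' $y$-derivative is available to put on $G$, so the bound must be closed by pairing $\|\pa_y^2 G_\Phi\|$ against the parabolic dissipation via Young's inequality and trading the remaining pre-factor against $\dot\th$. This trade-off, once one tracks the decay rates $\|\pa_y G_\Phi\|_{H^{3,0}_\Psi}\lesssim \e\t^{-\ga_0-\f12}$ and $\|v_\Phi\|_{L^\infty_{\v,\f12\Psi}(H^s_\h)}\lesssim \t^{\f14}\|G_\Phi\|_{H^{s+1,0}_\Psi}$, forces the time exponent constraint $\b\leq 2\ga_0-\f12$ and produces the $\e^{3/2}\eta^{-1}$ factor in front of $\int_0^T\dot\th\|\sh\t^{-\f12}G_\Phi\|_{H^{4,0}_\Psi}^2\,dt$ on the right-hand side of \eqref{S7eq2}. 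Collecting every contribution and integrating in time over $[0,T]$ concludes the proof.
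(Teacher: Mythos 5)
Your overall framework --- testing \eqref{eq: pa_yG_Phi} against $\hbar\pa_y G_\Phi$ in $H^{3,0}_\Psi$, using $\pa_y^2 G|_{y=0}=0$ and \eqref{S3eq7f} to extract the weighted dissipation --- is the same as the paper's, and your bookkeeping of the decay exponents, the constraint $\beta\leq 2\gamma_0-\f12$, and the factor $\eta^{-1}\e^{\f32}$ all match. The route you take on the nonlinear part, however, is genuinely different. You propose to expand $\pa_y$ of each paraproduct by the Leibniz rule and then estimate the resulting pieces (plus $\pa_y F$) paired with $\pa_y G_\Phi$. The paper instead never expands the $y$-derivative: it integrates by parts in $y$ on the whole nonlinear block, using that the expression inside $\pa_y(\cdots)$ in \eqref{eq: pa_yG_Phi} and $F$ itself both vanish at $y=0$, so every nonlinear term is paired directly with $\pa_y^2 G_\Phi$ in $H^{3,0}_\Psi$ with its integrand undifferentiated. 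This yields just five terms $II_1,\dots,II_5$ and lets the paper reuse verbatim the $H^{3,0}_\Psi$ bounds already established in Proposition \ref{pro: G} and Lemma \ref{lem: F}, with $\|\sh\pa_y^2 G_\Phi\|_{H^{3,0}_\Psi}$ appearing automatically as the second Cauchy--Schwarz factor. Your Leibniz route roughly doubles the number of pieces and forces you to re-derive each decay bound with an extra $y$-derivative placed on one factor.

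There is also one step in your sketch that needs correction if carried out. For the summand $T^\h_{\pa_y^2 G}v_\Phi$ you say you ``apply Lemma \ref{lem: T_fg}, the bound \eqref{S6eq1} for $v_\Phi$, and Young's inequality to absorb $\|\sh\pa_y^2 G_\Phi\|_{H^{3,0}_\Psi}^2$.'' But in this paraproduct $\pa_y^2 G$ sits in the low-frequency slot and $v_\Phi$ is the high-frequency factor: Lemma \ref{lem: T_fg} gives $\|T^\h_{\pa_y^2 G}v_\Phi\|_{H^{3,0}_\Psi}\lesssim\|\pa_y^2 G\|_{L^2_{\v,\f12\Psi}(H^{\f12+}_\h)}\|v_\Phi\|_{L^\infty_{\v,\f12\Psi}(H^3_\h)}$, and the $\pa_y^2 G$-factor is already controlled by the ansatz \eqref{assume: 1} --- it never produces $\pa_y^2 G_\Phi$. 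The $\pa_y^2 G_\Phi$ to be absorbed has to come from the test function: after the paraproduct bound you would replace $\|\sh\pa_y G_\Phi\|_{H^{3,0}_\Psi}$ by $\t^{\f12}\|\sh\pa_y^2 G_\Phi\|_{H^{3,0}_\Psi}$ via the Poincar\'e-type inequality of Lemma \ref{lem2.1} and only then apply Young. This does close, but it is an extra step that the paper's integration by parts avoids, and your wording reads as if $\pa_y^2 G_\Phi$ arises from the paraproduct itself.
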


\begin{proof}
Notice that
\begin{align*}
&\pa_y^2 G_\Phi|_{y=0},\quad F|_{y=0}=0,\\
\Bigl(T^\h_{u}\pa_x G_\Phi+T^\h_{\pa_y G}&v_\Phi-\f12\w{t}^{-1}T_{\pa_y(y\varphi)}v_\Phi+\f{y}{\w{t}}\int_y^\infty T_{\pa_y u}v_\Phi\,dy'\Bigr)\bigr|_{y=0}=0,
\end{align*}
we get, by taking $H^{3,0}_\Psi$ inner product of \eqref{eq: pa_yG_Phi} with $\hbar \pa_yG_\Phi$ and using integration by parts, that
\beq \label{S7eq3}
\begin{split}
\f12&\f{d}{dt}\|\sh \pa_yG_\Phi(t)\|_{H^{3,0}_\Psi}^2-\f12\|\sqrt{\hbar'} \pa_yG_\Phi(t)\|_{H^{3,0}_\Psi}^2+\w{t}^{-1}\|\sh \pa_yG_\Phi(t)\|_{H^{3,0}_\Psi}^2\\
&+\la\dot\th(t)\|\sh \pa_yG_\Phi\|_{
H^{\f{13}{4},0}_\Psi}^2+\f12\|\sh\pa_y^2 G_\Phi\|_{H^{3,0}_\Psi}^2\\
\leq& \hbar|\langle T^\h_{u}\pa_x G_\Phi, \pa_y^2G_\Phi \rangle_{H^{3,0}_\Psi}|+\hbar|\langle T^\h_{\pa_y G}v_\Phi, \pa_y^2G_\Phi \rangle_{H^{3,0}_\Psi}|+\f\hbar{2\w{t}}|\langle T^\h_{\pa_y(y\varphi)}v_\Phi, \pa_y^2G_\Phi \rangle_{H^{3,0}_\Psi}|\\
&+\hbar|\langle \f{y}{\w{t}}\int_y^\infty T^\h_{\pa_y u}v_\Phi\,dy', \pa_y^2G_\Phi \rangle_{H^{3,0}_\Psi}|+\hbar|\langle F, \pa_y^2G_\Phi \rangle_{H^{3,0}_\Psi}|
\eqdefa  II_1+\cdots+II_5.
\end{split}
\eeq

Next let us deal with the  estimates of  $II_i,~i=1,\cdots,5,$ term by term.\smallskip

\no\underline{$\bullet$ The estimate of $II_1.$} It follows from Lemma \ref{lem: T_fg} and  \eqref{S3eq8} that
\begin{align*}
\|T^\h_{u}\pa_x G_\Phi\|_{H^{3,0}_\Psi}\leq& C\|u\|_{L^\infty_\v(H^{\f12+}_\h)}\|G_\Phi\|_{H^{4,0}_\Psi}
\leq C\e\t^{-\gamma_0-\f14}\|G_\Phi\|_{H^{4,0}_\Psi},
\end{align*}
so that for any $\eta>0,$ one has
\begin{align*}
II_1\leq& \|\sh T^\h_{u}\pa_x G_\Phi\|_{H^{3,0}_\Psi}\|\sh \pa_y^2 G_\Phi\|_{H^{3,0}_\Psi}\\
\leq& C\e\t^{-\gamma_0+\f14}\|\sh\t^{-\f12}G_\Phi\|_{H^{4,0}_\Psi}\|\sh \pa_y^2 G_\Phi\|_{H^{3,0}_\Psi}\\
\leq& \f{\eta}{40}\|\sh \pa_y^2 G_\Phi\|_{H^{3,0}_\Psi}^2+C\eta^{-1}\e^\f32\t^{\beta-2\gamma_0+\f12}\dot\th(t)\|\sh\t^{-\f12}G_\Phi\|_{H^{4,0}_\Psi}^2.
\end{align*}

\no\underline{$\bullet$ The estimate of $II_2.$} Applying Lemma \ref{lem: T_fg}, \eqref{assume: 1} and  \eqref{S4eq3b} gives
\begin{align*}
\|T^\h_{\pa_y G}v_\Phi\|_{H^{3,0}_\Psi}\leq& C\|\pa_y G\|_{L^2_{\v,\f12\Psi}(H^{\f12+}_\h)}
\|v_\Phi\|_{L^\infty_{\v,\f12\Psi}(H^{3}_\h)}\\
\leq& C\e\t^{-\gamma_0-\f12}\t^\f14\|u_\Phi\|_{H^{4,0}_{\f12\Psi}}
\leq C\e\t^{-\gamma_0-\f14}\|G_\Phi\|_{H^{4,0}_\Psi}.
\end{align*}
Then we get, by a similar estimate of $II_1,$ that
\begin{align*}
II_2\leq& \|\sh T_{\pa_y G}v_\Phi\|_{H^{3,0}_\Psi}\|\sh \pa_y^2 G_\Phi\|_{H^{3,0}_\Psi}\\
\leq& \f{\eta}{40}\|\sh \pa_y^2 G_\Phi\|_{H^{3,0}_\Psi}^2+C\eta^{-1}\e^\f32\t^{\beta-2\gamma_0+\f12}\dot\th(t)\|\sh\t^{-\f12}G_\Phi\|_{H^{4,0}_\Psi}^2.
\end{align*}

\no\underline{$\bullet$  The estimate of $II_3.$}
Thanks to \eqref{S6eq2}, we deduce by a similar estimate of $II_2$ that
\begin{align*}
II_3
\leq& \f{\eta}{40}\|\sh \pa_y^2 G_\Phi\|_{H^{3,0}_\Psi}^2+C\eta^{-1}\e^\f32\t^{\beta-2\gamma_0+\f12}\dot\th(t)\|\sh\t^{-\f12}G_\Phi\|_{H^{4,0}_\Psi}^2.
\end{align*}

\no\underline{$\bullet$ The estimate of $II_4.$} Applying \eqref{est: int_0^y pa_y u v} with $s=3$ gives
\begin{align*}
II_4\leq&\hbar \bigl\|\f{y}{\w{t}}\int_y^\infty T_{\pa_y u}v_\Phi\,dy'\|_{H^{3,0}_\Psi}\|\pa_y^2G_\Phi\bigr\|_{H^{3,0}_\Psi}\\
\leq&C\e\t^{-\ga_0}\bigl\|\f{y}{\w{t}^{\f12}}e^{-\f12\Psi}\bigr\|_{L^2_\v}\|\sh\t^{-\f12} G_\Phi\|_{H^{4,0}_{\Psi}}\|\sh \pa_y^2G_\Phi\|_{H^{3,0}_\Psi}.
\end{align*}
Applying Young's inequality yields
\begin{align*}
II_4
\leq&\f{\eta}{40}\|\sh \pa_y^2 G_\Phi\|_{H^{3,0}_\Psi}^2+C\eta^{-1}\e^\f32\t^{\beta-2\gamma_0+\f12}\dot\th(t)\|\sh\t^{-\f12}G_\Phi\|_{H^{4,0}_\Psi}^2.
\end{align*}

\no\underline{$\bullet$ The estimate of $II_5.$} We get, by applying  Lemma \ref{lem: F}, that
\begin{align*}
II_5\leq& \|F\|_{H^{3,0}_\Psi}\|\sh \pa_y^2 G_\Phi\|_{H^{3,0}_\Psi}\\
\leq& C\e\Bigl(\t^{-\gamma_0+\f14}\|\sh\t^{-\f12} G_\Phi\|_{H^{4,0}_{\Psi}}+\t^{-\gamma_0+\f14}\|\sh \pa_y G_\Phi\|_{H^{3,0}_\Psi}\Bigr)\|\sh \pa_y^2 G_\Phi\|_{H^{3,0}_\Psi}.
\end{align*}
Applying Young's inequality gives rise to
\begin{align*}
II_5
\leq& \f{\eta}{40}\|\sh \pa_y^2 G_\Phi\|_{H^{3,0}_\Psi}^2+C\eta^{-1}\e^\f32\t^{\beta-2\gamma_0+\f12}\dot\th(t)\Bigl(\|\sh\t^{-\f12}G_\Phi\|_{H^{4,0}_\Psi}^2
+\|\sh\pa_yG_\Phi\|_{H^{3,0}_\Psi}^2\Bigr).
\end{align*}

By substituting the above estimates into \eqref{S7eq3} and  integrating the resulting inequality over $[0,T]$  leads to \eqref{S7eq2}
in case that $\b\leq2\ga_0-\f1{2}$.
This completes the proof of Proposition \ref{pro: pa_yG}.
\end{proof}

\subsection{The proof of Proposition \ref{S1prop3}}

\begin{proof}[Proof of Proposition \ref{S1prop3}] We first deduce from  Lemma \ref{lem2.1} that
\begin{align*}
\f{5-\eta}4\|\t^{\f{1-\eta}{4}}G_\Phi(t)\|_{H^{4,0}_\Psi}^2\leq \w{t}^{-1}\|\t^{\f{5-\eta}{4}} G_\Phi(t)\|_{H^{4,0}_\Psi}^2
+\bigl(\f12-\f\eta 2\bigr)\|\t^{\f{5-\eta}{4}}\pa_yG_\Phi(t)\|_{H^{4,0}_\Psi}^2.
\end{align*}
Then taking $\hbar=\t^{\f{5-\eta}{2}}$ in \eqref{est: G} leads to
\beq \label{S6eq4a}
\begin{split}
\|\t^{\f{5-\eta}{4}}&G_\Phi(T)\|_{H^{3,0}_\Psi}^2+
2\bigl(\la-C\e^\f12(1+\eta^{-1}\e)\bigr)\int_0^T\dot\th(t)
\|\t^{\f{5-\eta}{4}} G_\Phi(t)\|_{H^{\f{13}{4},0}_\Psi}^2\,dt\\
&+\f{3}{4}\eta\int_0^T\|\t^{\f{5-\eta}{4}}\pa_yG_\Phi(t)\|_{H^{3,0}_\Psi}^2\,dt\\
\leq &\|G_\Phi(0)\|_{H^{4,0}_\Psi}^2+C\e^\f12\int_0^T\dot\th(t)\|\t^{\f{1-\eta}{4}}u_\Phi\|_{H^{\f{23}{4},0}_\Psi}^2\,dt.
\end{split}\eeq
By taking $\la\geq 2C\e^\f12(1+\eta^{-1}\e)$ in \eqref{S6eq4a}, we achieve
 \eqref{S6eq4}.

While by
taking $\hbar(t)=\t^\f{7-\eta}{2}$ in \eqref{S7eq1}, we find
\beq\label{S7eq5}
\begin{split}
\|\langle &T\rangle^\f{7-\eta}{4}\pa_yG_\Phi(T)\|_{H^{3,0}_\Psi}^2
+2\bigl(\la-C\eta^{-1}\e^{\f32}\bigr)\int_0^T\dot\th(t)
\|\t^\f{7-\eta}{4} \pa_yG_\Phi(t)\|_{H^{\f{13}{4},0}_\Psi}^2\,dt\\
&\quad+\bigl(1-\f\eta4\bigr)\int_0^T\|\t^\f{7-\eta}{4}\pa_y^2G_\Phi(t)\|_{H^{3,0}_\Psi}^2\,dt
\leq\|\pa_yG_\Phi(0)\|_{H^{3,0}_\Psi}^2\\
&+C\eta^{-1}\e^\f32\int_0^T\dot\th(t)\|\t^\f{5-\eta}{4}G_\Phi\|_{H^{4,0}_\Psi}^2\,dt +\f{7-\eta}2\int_0^T\|\t^\f{5-\eta}{4}\pa_yG_\Phi(t)\|_{H^{3,0}_\Psi}^2\,dt.
\end{split}\eeq
Yet if $\la\geq 2C\e^{\f12}\bigl(1+\e\eta^{-1}\bigr),$ we deduce from  \eqref{S6eq4} that
\begin{align*}
&C\eta^{-1}\e^\f32\int_0^T\dot\th(t)\|\t^\f{5-\eta}{4}G_\Phi\|_{H^{4,0}_\Psi}^2\,dt +C\int_0^T\|\t^\f{5-\eta}{4}\pa_yG_\Phi(t')\|_{H^{3,0}_\Psi}^2\,dt\\
&\qquad\qquad\qquad\leq C\eta^{-1}\|G_\Phi(0)\|_{H^{4,0}_\Psi}^2+C\eta^{-1}\e^\f12\int_0^T\dot\th(t)\|\t^{\f{1-\eta}{4}}u_\Phi\|_{H^{\f{23}{4},0}_\Psi}^2\,dt,
\end{align*}
Substituting the above estimate into \eqref{S7eq5} leads to \eqref{S7eq4}. This completes the proof of Proposition
\ref{S1prop3}.
\end{proof}

\setcounter{equation}{0}
\section{The Sobolev norm estimates of $\pa_y^2 G$ and $\pa_y^3 G$ }\label{Sect8}

\subsection{The Sobolev norm estimates of $\pa_y^2 G$}
In this subsection, we shall deal with the Sobolev norm estimate of $\pa_y^2G.$ We first get,
by
taking $\pa_y^2$ on \eqref{S1eq1}, that
\beq \label{S8eq1}
\begin{split}
\pa_t\pa_y^2 G&-\pa_y^4 G+\w{t}^{-1}\pa_y^2G\\
&+\pa_y^2\Bigl(u\p_xG+v\p_yG-\f12\w{t}^{-1}v\p_y({y\vf})+\f{y}{\w{t}}\int_y^\infty\left(\p_y u\p_x\vf\right)\,dy'\Bigr)=0.
\end{split} \eeq
It is easy to observe that
\beq \label{def: F_1}
\begin{split}
\pa_y&\Bigl(u\p_xG+v\p_yG-\f1{2\w{t}}v\p_y({y\vf})+\f{y}{\w{t}}\int_y^\infty\left(\p_y u\p_x\vf\right)\,dy'\Bigr)\\
= &u\p_x\pa_yG+v\p_y^2G+\pa_y u\pa_x G+\pa_y v\bigl(\pa_y G-\f1{2\w{t}}\pa_y(y\varphi)\bigr)\\
&-\f1{2\w{t}}v\p_y^2({y\vf})-\f{y}{\w{t}}\left(\p_y u\p_x\vf\right)+\f{1}{\w{t}}\int_y^\infty\left(\p_y u\p_x\vf\right)\,dy'
\eqdefa \frak{F}.
\end{split}
\eeq

We first present the  estimate of $\frak{F}.$

 \begin{lem}\label{lem: F_1}
{\sl Let  $\frak{F}$ be given by  \eqref{def: F_1}. Then for $t\leq T^\star,$ one has
\beq \label{S8eq2}
\|\frak{F}\|_{H^{3,0}_\Psi}\leq C\e\t^{-\gamma_0+\f14}\bigl(\t^{-\f12}\|\pa_yG_\Phi\|_{H^{3,0}_\Psi}+\t^{-1}\|G_\Phi\|_{H^{4,0}_\Psi}\bigr).
\eeq
}
\end{lem}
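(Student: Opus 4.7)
The estimate \eqref{S8eq2} is a termwise product estimate in the anisotropic weighted Sobolev space $H^{3,0}_\Psi$, applied to each of the seven summands of $\frak{F}$ listed in \eqref{def: F_1}. My overall plan is: (i) apply Bony's horizontal decomposition (Lemma \ref{lem: T_fg}) to each bilinear term; (ii) split the Gaussian weight $e^{2\Psi}=e^{2\gamma\Psi}e^{2(1-\gamma)\Psi}$ for some $\gamma\in(0,1)$, placing the hydrodynamic factor ($u$, $v$, $\pa_y u$ or $\pa_y^2u$) in $L^\infty_{\v,\gamma\Psi}(H^{\sigma}_\h)$ with $\sigma>\tfrac12$ and the remaining factor in $L^2_{\v,(1-\gamma)\Psi}(H^{3}_\h)$; (iii) invoke the pointwise decay bounds \eqref{S3eq8}, \eqref{S3eq8b} to produce one factor of $\e$ together with the correct power of $\w{t}$; and (iv) use Lemma \ref{lem2.2} to convert the weighted $L^2$-norms of $u$, $\vf$, $y\vf$, $\pa_y(y\vf)$ (each scaled by the right negative power of $\w{t}$) into $\|\pa_y G_\Phi\|_{H^{3,0}_\Psi}$ or $\|G_\Phi\|_{H^{4,0}_\Psi}$. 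Whenever a naive product would force $\|\pa_y G\|_{H^{4,0}_\Psi}$ on the right, I would absorb the extra $x$-derivative into the Gevrey regularizer: since $\d(t)\geq \d/2$ for $t<T^\ast$, the multiplier $e^{\d(t)\w{\xi}^{1/2}}$ dominates every polynomial in $\w{\xi}$, so $\|f\|_{H^{s+k,0}_\Psi}\lesssim \|f_\Phi\|_{H^{s,0}_\Psi}$ for every fixed $k\geq 0$.

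\textbf{Book-keeping of $\w{t}$-powers.} For $u\pa_x\pa_y G$, step (iii) supplies $\e\w{t}^{-(\gamma_0+1/4)}=\e\w{t}^{-\gamma_0+1/4}\cdot\w{t}^{-1/2}$, matching the $\w{t}^{-1/2}\|\pa_y G_\Phi\|_{H^{3,0}_\Psi}$ contribution; for $\pa_y u\cdot \pa_x G$ the bound on $\pa_y u$ supplies $\e\w{t}^{-(\gamma_0+3/4)}=\e\w{t}^{-\gamma_0+1/4}\cdot\w{t}^{-1}$, matching $\w{t}^{-1}\|G_\Phi\|_{H^{4,0}_\Psi}$. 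The mixed $\pa_y v$-terms $\pa_y v\,(\pa_y G-\tfrac{1}{2\w{t}}\pa_y(y\vf))$ use $\|\pa_y v\|_{L^\infty_\v(H^\sigma_\h)}=\|\pa_x u\|_{L^\infty_\v(H^\sigma_\h)}$ together with \eqref{S7eq20d}. The identity $\pa_x\vf=-v$ (from $\pa_xu+\pa_yv=0$ and $\vf|_{y=0}=0$) recasts the final two summands of $\frak{F}$ as $\tfrac{y}{\w{t}}\pa_yu\cdot v$ and $-\tfrac{1}{\w{t}}\int_y^\infty\pa_yu\cdot v\,dy'$, both bounded by the same scheme together with the elementary Gaussian-weighted estimate $\|\tfrac{y}{\w{t}^{1/2}}e^{-\kappa\Psi/2}\|_{L^\infty_\v}\lesssim 1$ and a tail-integral bound in $L^2_\v$.

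\textbf{Main obstacle.} The hard terms are $v\pa_y^2 G$ and $\tfrac{1}{2\w{t}}v\pa_y^2(y\vf)$, since neither $\pa_y^2 G$ nor $\pa_y^2(y\vf)$ appears on the right of \eqref{S8eq2}. I would resolve them by substituting the explicit identities \eqref{eq: pa_y^2 u} and \eqref{S2eq10}, which express $\pa_y^2 G$ and $\pa_y^2(y\vf)$ as $\pa_y^2 u$ plus correction terms in $\pa_y G$, $G$, $\vf$, $\pa_y\vf$ weighted by explicit powers of $\w{t}$. For the leading piece $v\pa_y^2 u$, rather than placing $v$ in $L^\infty$ and $\pa_y^2 u$ in $L^2$ (which would yield only the number $\e\w{t}^{-\gamma_0-1}$ from \eqref{assume: 2} and break the structure of the right-hand side), I would place $\pa_y^2 u$ in $L^\infty_{\v,\gamma\Psi}(H^{1/2+}_\h)$ via \eqref{S3eq8b} (gaining $\e\w{t}^{-\gamma_0-5/4}$) and $v=-\int_y^\infty\pa_x u\,dz$ in $L^2_{\v,(1-\gamma)\Psi}(H^3_\h)$ via \eqref{S4eq3b} (at cost $\w{t}^{1/4}\|u\|_{H^{4,0}_{\gamma_1\Psi}}$), and then apply Lemma \ref{lem2.2} to bound $\|u\|_{H^{4,0}_{\gamma_1\Psi}}\lesssim \|G_\Phi\|_{H^{4,0}_\Psi}$. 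This gives
\[
\|v\pa_y^2 u\|_{H^{3,0}_\Psi}\lesssim \e\w{t}^{-\gamma_0-1}\|G_\Phi\|_{H^{4,0}_\Psi}\lesssim \e\w{t}^{-\gamma_0+1/4}\cdot\w{t}^{-1}\|G_\Phi\|_{H^{4,0}_\Psi},
\]
which fits the claimed bound. The correction pieces $v\cdot\tfrac{y}{\t^k}\pa_y G$, $v\cdot\tfrac{1}{\t^k}G$, $v\cdot\tfrac{y}{\t^k}\vf$ and $v\cdot\tfrac{1}{\t^k}\pa_y\vf$ arising from the substitution are handled analogously, using Lemma \ref{lem2.2} to convert the factors involving $\vf$ and $y\vf$ into $\|\pa_y G_\Phi\|_{H^{3,0}_\Psi}$. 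Summing the seven estimates then delivers \eqref{S8eq2}.
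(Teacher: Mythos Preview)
Your termwise scheme is sound and would close, but your ``main obstacle'' rests on a misunderstanding that makes the argument needlessly harder than the paper's.

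Two differences. First, since $\frak{F}$ carries no Gevrey multiplier, the paper forgoes Bony's decomposition and uses the algebra property $\|ab\|_{H^3_\h}\leq C\|a\|_{H^3_\h}\|b\|_{H^3_\h}$ together with the absorption $\|a\|_{H^{3+k}_\h}\leq C\|a_\Phi\|_{H^3_\h}$ (this is exactly your observation about $e^{\d(t)\w{\xi}^{1/2}}$ dominating polynomials, recorded there as \eqref{S8eq2a}). Second, and this is the point you miss, the term $v\pa_y^2G$ is \emph{not} an obstacle: since $\|\pa_y^2G\|_{H^{3,0}_\Psi}$ is itself part of the bootstrap ansatz \eqref{assume: 1}, the paper writes directly
\[
\|v\pa_y^2G\|_{H^{3,0}_\Psi}\leq C\|v\|_{L^\infty_\v(H^3_\h)}\|\pa_y^2G\|_{H^{3,0}_\Psi}
\leq C\t^{\f14}\|G\|_{H^{4,0}_\Psi}\cdot C\e\t^{-\ga_0-1}
\leq C\e\t^{-\ga_0-\f34}\|G_\Phi\|_{H^{4,0}_\Psi},
\]
using \eqref{S6eq1} for the first factor. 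The key is that $v$ is controlled by the \emph{norm} $\|G\|_{H^{4,0}_\Psi}$, which does appear on the right of \eqref{S8eq2}; the ansatz supplies the $\e$ through the $\pa_y^2G$ factor. No substitution via \eqref{eq: pa_y^2 u} is needed, and the same device handles $\tfrac{1}{2\w{t}}v\pa_y^2(y\vf)$ directly via \eqref{S7eq20d}. Your detour through $v\pa_y^2u$ can be made to work, but it does not actually avoid the ansatz quantity: in the Bony split, the piece $T^\h_v\pa_y^2u$ still forces $\pa_y^2u$ into $H^3_\h$, hence back to $\|\pa_y^2G\|_{H^{3,0}_\Psi}$ by \eqref{S2eq20c}. (Incidentally, the cost you quote for $\|v\|_{L^2_{\v,(1-\gamma)\Psi}(H^3_\h)}$ should be $\w{t}^{1/2}$ via \eqref{est: (f,pa_y f)}, not the $\w{t}^{1/4}$ of \eqref{S4eq3b}, which is an $L^\infty_\v$ bound; the final power still fits.)
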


\begin{proof} We shall frequently use the following classical law of product in Sobolev space:
\begin{align}\label{product: 1}
\|a~b\|_{H^s_\h}\leq C\|a\|_{H^{s}_\h}\|b\|_{H^s_\h}\quad \forall \ s>\f12.
\end{align}
Moreover, due to $\Phi(t,\xi)\geq\f\d2$ for any $k\in\N$ and $t\leq T^\ast,$ we have
\beq \label{S8eq2a}
\|a\|_{H^{k+s}_\h}^2=\int_{\R}\w{\xi}^{2(k+s)}|\widehat{a}(\xi)|^2\,d\xi\leq C\int_{\R}e^{2\Phi(t,\xi)}\w{\xi}^{2s}|\widehat{a}(\xi)|^2\,d\xi
=C\|a_\Phi\|_{H^s_\h}^2.
\eeq

It follows from a similar derivation of \eqref{S3eq8} that
\beq \label{S8eq3}
\|u(t)\|_{L_\v^\infty(H^{\sigma}_\h)}\leq
 C\t^\f14\|\pa_y G(t)\|_{H^{\sigma,0}_{\Psi}}\quad\forall \sigma\in\R,
\eeq
from which, \eqref{product: 1}, \eqref{S8eq2a} and \eqref{assume: 1},  we infer
\begin{align*}
\|u\p_x\pa_yG\|_{H^{3,0}_\Psi}\leq& C\|u\|_{L^\infty_{\v}(H^{3}_\h)}\|\pa_x\pa_y G\|_{H^{3,0}_\Psi}\\
\leq& C(\t^\f14\|\pa_yG\|_{H^{3,0}_{\Psi}}\|\pa_yG_\Phi\|_{H^{3,0}_\Psi}
\leq C\e\t^{-\gamma_0-\f14}\|\pa_yG_\Phi\|_{H^{3,0}_\Psi}.
\end{align*}

Similarly we deduce from \eqref{S6eq1} and \eqref{assume: 1} that
\begin{align*}
\|v\p_y^2G\|_{H^{3,0}_\Psi}\leq& C\|v\|_{L^\infty_{\v}(H^{3}_\h)}\|\pa_y^2 G\|_{H^{3,0}_\Psi}\\
\leq& C\t^\f14\|G\|_{H^{4,0}_{\Psi}}\|\pa_y^2 G\|_{H^{3,0}_\Psi}
\leq C\e\t^{-\gamma_0-\f34}\|G_\Phi\|_{H^{4,0}_\Psi}.
\end{align*}

Whereas it follows from \eqref{S7eq20d} and \eqref{S6eq1} that
\begin{align*}
\|\w{t}^{-1}v\p_y^2({y\vf})\|_{H^{3,0}_\Psi}\leq& C\|v\|_{L^\infty_{\v,\f12\Psi}(H^{3}_\h)}\|\w{t}^{-1}\p_y^2({y\vf})\|_{H^{3,0}_{\f12\Psi}}\\
\leq& C\t^{-\f14}\|G\|_{H^{4,0}_{\Psi}}\|\pa_y G\|_{H^{3,0}_\Psi}
\leq C\e\t^{-\gamma_0-\f34}\|G_\Phi\|_{H^{4,0}_\Psi}.
\end{align*}

Due to $\pa_y v=-\pa_xu,$ we get, by applying \eqref{assume: 1} and \eqref{S8eq3}, \eqref{S8eq2a}, \eqref{S2eq20b}, that
\begin{align*}
\|\pa_y v\pa_y G\|_{H^{3,0}_\Psi}\leq&  C\|\pa_xu\|_{L^\infty_{\v}(H^{3}_\h)}\|\pa_y G\|_{H^{3,0}_\Psi}
\leq  C\t^\f14\|\pa_yG\|_{H^{4,0}_{\Psi}}\|\pa_y G\|_{H^{3,0}_\Psi}\\
\leq&  C\t^\f14\|\pa_yG_\Phi\|_{H^{3,0}_{\Psi}}\|\pa_y G\|_{H^{3,0}_\Psi}\leq C\e\t^{-\gamma_0-\f14}\|\pa_yG_\Phi\|_{H^{3,0}_{\Psi}},\end{align*}
and
\begin{align*}
\w{t}^{-1}\|\pa_y v\pa_y(y\varphi)\|_{H^{3,0}_\Psi}\leq&  C\|\pa_xu\|_{L^\infty_{\v,\f12\Psi}(H^{3}_\h)}\|\w{t}^{-1}\pa_y(y\varphi)\|_{H^{3,0}_{\f12\Psi}}\\
\leq&C \t^{\f14}\|\pa_yG\|_{H^{4,0}_{\Psi}}\|\pa_y G\|_{H^{3,0}_\Psi}
\leq C\e \t^{-\gamma_0-\f14}\|\pa_yG_\Phi\|_{H^{3,0}_{\Psi}}.
\end{align*}

While it follows from \eqref{S3eq8b} that
\begin{align*}
\|\pa_y u\pa_x G\|_{H^{3,0}_\Psi}\leq& C\|\pa_y u\|_{L^\infty_\v(H^{3}_\h)}\|\pa_x G\|_{H^{3,0}_\Psi}
\leq C\e\t^{-\gamma_0-\f34}\|G_\Phi\|_{H^{4,0}_{\Psi}}.
\end{align*}

Finally notice that $v=\pa_x\vf,$ we deduce from \eqref{S6eq1} that
\begin{align*}
\|\f{y}{\w{t}}\left(\p_y u\p_x\vf\right)\|_{H^{3,0}_\Psi}\leq& C\|\f{y}{\w{t}}\pa_y u\|_{H^{3,0}_{\f12\Psi}}\|v\|_{L^\infty_{\v,{\f12\Psi}}(H^{3}_\h)}\\
\leq& C\t^{-\f14}\|\pa_y u\|_{H^{3,0}_{\f34\Psi}}\|G\|_{H^{4,0}_\Psi}
\leq C\e \t^{-\gamma_0-\f34}\|G_\Phi\|_{H^{4,0}_\Psi},
\end{align*}
and
\begin{align*}
\bigl\|\f{1}{\w{t}}\int_y^\infty\left(\p_y u\p_x\vf\right)\,dy'\bigr\|_{H^{3,0}_\Psi}
\leq&C\t^{-\f34}\|e^{-\f12\Psi}\|_{L^2_\v}\|\pa_y u\|_{H^{3,0}_{\f34\Psi}}\|v\|_{L^\infty_{\v,{\f12\Psi}}(H^{3}_\h)}\\
\leq& C\t^{-\gamma_0-\f34}\|G_\Phi\|_{H^{4,0}_\Psi}.
\end{align*}

In view of \eqref{def: F_1}, we conclude the proof of \eqref{S8eq2} by summarizing the above estimates.
\end{proof}

\begin{prop}\label{pro: pa_y^2 G}
{\sl Let $\hbar(t)$ be a non-negative and non-decreasing function on $[0,T]$ with $T\leq T^\star.$ Then   under the assumption  that
$\b\leq2\ga_0-\f1{2},$ for any $\eta>0,$ there exists $C$ so that
\beq \label{S8eq6}
\begin{split}
\|\sqrt{\hbar}&\pa_y^2G(T)\|_{H^{3,0}_\Psi}^2+2\int_0^T\w{t}^{-1}\|\sh \pa_y^2G(t)\|_{H^{3,0}_\Psi}^2\,dt
+\bigl(1-\f\eta4\bigr)\int_0^T\|\sh\pa_y^3G(t)\|_{H^{3,0}_\Psi}^2\,dt\\
\leq&\|\sqrt{\hbar}(0)\pa_y^2G(0)\|_{H^{3,0}_\Psi}^2+
\int_0^T\|\sqrt{\hbar'} \pa_y^2G(t)\|_{H^{3,0}_\Psi}^2\,dt\\
&+C\eta^{-1}\e^\f32\int_0^T\dot\th(t)\Bigl(\|\sh\t^{-\f12}\pa_yG_\Phi\|_{H^{3,0}_\Psi}^2+\|\sh \t^{-1}G_\Phi\|_{H^{4,0}_\Psi}^2\Bigr)\,dt.
\end{split}
\eeq
}
\end{prop}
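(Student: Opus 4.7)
The plan is to carry out an energy estimate directly for the equation \eqref{S8eq1} satisfied by $\pa_y^2G$, in close analogy with the proof of Proposition \ref{pro: pa_yG}, by taking the $H^{3,0}_\Psi$ inner product of \eqref{S8eq1} with $2\hbar(t)\pa_y^2G$ and integrating over $[0,T]$. The boundary condition \eqref{S7eq1}, $\pa_y^2G|_{y=0}=0$, together with the fast decay as $y\to\infty$, is exactly what makes the integrations by parts in $y$ go through without generating boundary terms.

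For the linear part, the standard computation behind \eqref{S3eq7f} (which uses $2\pa_t\Psi+(\pa_y\Psi)^2\leq 0$) and one integration by parts applied to $-\pa_y^4G$ produce
\[
\f{d}{dt}\|\sh\pa_y^2G\|_{H^{3,0}_\Psi}^2-\|\sqrt{\hbar'}\pa_y^2G\|_{H^{3,0}_\Psi}^2+2\w{t}^{-1}\|\sh\pa_y^2G\|_{H^{3,0}_\Psi}^2+2\|\sh\pa_y^3G\|_{H^{3,0}_\Psi}^2,
\]
modulo a nonnegative weight-correction controlled exactly as in Lemma \ref{S3lem1}.

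For the nonlinear contribution, I would rewrite the last term of \eqref{S8eq1} as $\pa_y\frak{F}$ with $\frak{F}$ given by \eqref{def: F_1}, then integrate by parts in $y$. Because $\pa_y^2G|_{y=0}=0$ the boundary term again vanishes, and the result is $-2\hbar\langle\frak{F},\pa_y^3G\rangle_{H^{3,0}_\Psi}-\f{\hbar}{\t}\langle\frak{F},y\pa_y^2G\rangle_{H^{3,0}_\Psi}$. Both pieces are handled by Cauchy--Schwarz together with Lemma \ref{lem: F_1}, using the bound $\|(y/\t)\sh\pa_y^2G\|_{H^{3,0}_\Psi}\leq C\|\sh\pa_y^3G\|_{H^{3,0}_\Psi}$ furnished by Lemma \ref{lem2.1}. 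Young's inequality then yields, for any $\eta>0$,
\[
\bigl|\text{nonlinear}\bigr|\leq \f\eta4\|\sh\pa_y^3G\|_{H^{3,0}_\Psi}^2+C\eta^{-1}\e^2\t^{-2\gamma_0+\f12}\hbar\Bigl(\t^{-1}\|\pa_yG_\Phi\|_{H^{3,0}_\Psi}^2+\t^{-2}\|G_\Phi\|_{H^{4,0}_\Psi}^2\Bigr).
\]

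The final step is to convert the prefactor $\e^2\t^{-2\gamma_0+\f12}$ into the form $\e^{\f32}\dot\th(t)\times\text{(uniformly bounded)}$. Since $\dot\th(t)=\e^{\f12}\t^{-\beta}$ by \eqref{S3eq-1}, this conversion requires exactly $\beta\leq 2\gamma_0-\f12$, which is the hypothesis of the proposition. Absorbing $\f\eta4\|\sh\pa_y^3G\|^2$ into the $2\|\sh\pa_y^3G\|^2$ dissipation and dividing the remaining coefficient by $2$ produces the factor $(1-\f\eta4)$, and integrating the differential inequality on $[0,T]$ yields \eqref{S8eq6}. The only delicate point is verifying that every constituent of $\frak{F}$ — including the non-local term $\w{t}^{-1}\int_y^\infty(\pa_yu\,\pa_x\vf)\,dy'$ and the $y/\t$-weighted term — fits into the clean estimate \eqref{S8eq2}; fortunately this has already been established in Lemma \ref{lem: F_1}, so the main obstacle amounts to carefully tracking the powers of $\t$ and confirming the interpolation condition on $\beta$.
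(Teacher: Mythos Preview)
Your approach is correct and coincides with the paper's: test \eqref{S8eq1} against $\hbar\pa_y^2G$ in $H^{3,0}_\Psi$, invoke \eqref{S3eq7f} together with the boundary condition \eqref{S7eq1}, integrate $\pa_y\frak{F}$ by parts onto $\pa_y^3G$, and close via Lemma \ref{lem: F_1} and Young's inequality under the constraint $\beta\leq 2\gamma_0-\tfrac12$; you are in fact slightly more explicit than the paper in tracking the extra weight term $\langle\frak{F},(y/\t)\pa_y^2G\rangle$ arising from that integration by parts. One small bookkeeping correction: the ``weight-correction'' hidden in \eqref{S3eq7f} is not a nonnegative add-on but is precisely what cuts the raw dissipation coefficient $2$ down to $1$, so after absorbing $\tfrac\eta4\|\sh\pa_y^3G\|^2$ you obtain the coefficient $(1-\tfrac\eta4)$ directly, with no ``division by $2$'' required (and indeed $(2-\tfrac\eta4)/2=1-\tfrac\eta8$, not $1-\tfrac\eta4$).
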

\begin{proof}
Thanks to \eqref{S3eq7f} and \eqref{S7eq1},
we get, by taking $H^{3,0}_\Psi$ inner product of \eqref{S8eq1} with $\hbar \pa_y^2G$ and using integration by parts, that
\beq\label{S8eq7}
\begin{split}
\f12\f{d}{dt}\|\sh \pa_y^2G(t)\|_{H^{3,0}_\Psi}^2-\f12\|\sqrt{\hbar'} \pa_y^2G(t)\|_{H^{3,0}_\Psi}^2
&+\w{t}^{-1}\|\sh \pa_y^2G(t)\|_{H^{3,0}_\Psi}^2\\
&+\f12\|\sh\pa_y^3 G\|_{H^{3,0}_\Psi}^2\leq \hbar|\langle \frak{F}, \pa_y^3G\rangle_{H^{3,0}_\Psi}|.
\end{split} \eeq
Yet it follows from
 Lemma \ref{lem: F_1}, that
\begin{align*}
\hbar|\langle &\frak{F}, \pa_y^3G\rangle_{H^{3,0}_\Psi}|\leq  \|\sh\frak{F}\|_{H^{3,0}_\Psi}\|\sh\pa_y^3G\|_{H^{3,0}_\Psi}\\
\leq&\f{\eta}{8}\|\sh\pa_y^3G\|_{H^{3,0}_\Psi}^2
+C\eta^{-1}\e^\f32\t^{\beta-2\gamma_0+\f12}\dot\th(t)\Bigl(\|\sh\t^{-\f12}\pa_yG_\Phi\|_{H^{3,0}_\Psi}^2+\|\sh \t^{-1}G_\Phi\|_{H^{4,0}_\Psi}^2\Bigr).
\end{align*}
By substituting the above inequality into \eqref{S8eq7}, we conclude the proof of \eqref{S8eq6}.
\end{proof}

 \subsection{The estimates of $\pa_y^3 G$ }\label{Sect8.2}

To get the decay estimate of $\|\pa_y^2u\|_{L^\infty_\v(H^{\f12+}_\h)}$, which has been used in the proof
of Lemma \ref{S3lem2} (see \eqref{Saeq2a}), according to the proof of
Lemma \ref{lem: phi}, we need the estimates of $\|\pa^3_y G_\Phi\|_{H^{\f12+,0}_\Psi}.$
 To do it, we shall present the estimates of $\|\pa_y^3 G\|_{H^{2,0}_\Psi}$ in this subsection (compared with $\|\pa_y^2 G\|_{H^{3,0}_\Psi}$).

We first get, by taking $\pa_y$ to \eqref{S8eq1}, that
\beq \label{S10eq1}
\begin{split}
\pa_t\pa_y^3 G&-\pa_y^5 G+\w{t}^{-1}\pa_y^3G+\pa^2_y\frak{F}=0.
\end{split} \eeq
It is easy to observe from \eqref{def: F_1} that
\beq \label{def: F_2}
\begin{split}
\pa_y\frak{F}
=&u\pa_x\pa_y^2 G+v\pa_y^3 G+2\pa_y u\pa_x\pa_y G+\pa_y^2 u\pa_x G+\pa_y^2 v\bigl(\pa_y G\\
&-\f1{2\w{t}}\pa_y(y\varphi)\bigr)+\pa_y v\bigl(2\pa_y^2 G-\f1{\w{t}}\pa_y^2(y\varphi)\bigr)-\f1{2\w{t}}v\p_y^3({y\vf})\\
&-\f{2}{\w{t}}\left(\p_y u\p_x\vf\right)-\f{y}{\w{t}}(\pa_y^2u\pa_x\varphi+\pa_y u\pa_x\pa_y \varphi).
\end{split}
\eeq

We first present the  estimate of $\pa_y\frak{F}.$

 \begin{lem}\label{lem: F_2}
{\sl Let  $\pa_y\frak{F}$ be given by  \eqref{def: F_2}. Then for $t<T^\star,$ one has
\beq \label{S8eq2}
\|\pa_y\frak{F}\|_{H^{2,0}_\Psi}\leq C\e\t^{-\gamma_0+\f14}\Bigl(\|\t^{-\f12}\pa_y^2G\|_{H^{3,0}_\Psi}+\| \t^{-1}\pa_yG_\Phi\|_{H^{3,0}_\Psi}+\| \t^{-\f32}G_\Phi\|_{H^{4,0}_\Psi}\Bigr).
\eeq
}
\end{lem}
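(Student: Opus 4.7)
The plan is to follow the same template used in the proof of Lemma \ref{lem: F_1}: decompose $\pa_y\frak{F}$ into the ten summands listed in \eqref{def: F_2} and estimate each of them separately in $H^{2,0}_\Psi$ using the Sobolev product law \eqref{product: 1}, the pointwise vertical bounds analogous to \eqref{S3eq8}--\eqref{S3eq8b}, the auxiliary relations \eqref{S6eq1} and \eqref{S6eq2}, the Lemma \ref{lem2.2} estimates converting $\vf$-quantities into $G$-quantities, and the {\it a priori} bound \eqref{assume: 1}. Since on the right-hand side of \eqref{S8eq2} we need the norms $\|\t^{-\f12}\pa_y^2G\|_{H^{3,0}_\Psi}$, $\|\t^{-1}\pa_yG_\Phi\|_{H^{3,0}_\Psi}$ and $\|\t^{-\f32}G_\Phi\|_{H^{4,0}_\Psi}$ together with the decay prefactor $\t^{-\gamma_0+\f14}$, each term must be split into a ``smooth'' factor, whose decay is taken from \eqref{assume: 1} converted through Lemma \ref{lem2.2}, and an ``unknown'' factor that absorbs the remaining derivatives.

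More precisely, for the terms $u\pa_x\pa_y^2G$, $v\pa_y^3G$, $2\pa_y u\pa_x\pa_y G$ and $\pa_y v(2\pa_y^2G-\w{t}^{-1}\pa_y^2(y\vf))$, I would place $u,v,\pa_y u,\pa_y v$ in $L^\infty_\v(H^{\f52+}_\h)$ via analogues of \eqref{S3eq8}--\eqref{S3eq8b} (using Lemma \ref{lem2.2} to control $v$ by $G$ and $\pa_y v=-\pa_x u$ by $\pa_y G$), so that they contribute the $\e\t^{-\gamma_0+\f14}$ prefactor, while the remaining factor sits naturally in one of $\|\t^{-\f12}\pa_y^2G\|_{H^{3,0}_\Psi}$, $\|\t^{-1}\pa_yG_\Phi\|_{H^{3,0}_\Psi}$ or $\|\t^{-\f32}G_\Phi\|_{H^{4,0}_\Psi}$. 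For the $\vf$-terms $\w{t}^{-1}\pa_y^2 v\,\pa_y(y\vf)$, $\w{t}^{-1}\pa_y v\,\pa_y^2(y\vf)$ and $\w{t}^{-1}v\pa_y^3(y\vf)$, I would use \eqref{S7eq20d} and \eqref{S7eq20e} of Lemma \ref{lem2.2} to convert the $y\vf$-norms into $\pa_yG$ or $\pa_y^2 G$ norms at the cost of factors of $\t^{\f12}$. The last two terms $\f2{\w{t}}\pa_y u\,\pa_x\vf$ and $\f{y}{\w{t}}(\pa_y^2u\,\pa_x\vf+\pa_y u\,\pa_x\pa_y\vf)$ are handled exactly as in the bound for $\f{y}{\w{t}}\int_y^\infty(\pa_yu\,\pa_x\vf)\,dy'$ from Lemma \ref{lem: F_1}, using that $v=\pa_x\vf$ and the Gaussian weight $e^{\Psi}$ to absorb the $y/\w{t}$ factor.

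The main obstacle will be the two terms involving $\pa_y^2 u$, namely $\pa_y^2 u\,\pa_x G$ and $\f{y}{\w{t}}\pa_y^2u\,\pa_x\vf$, since $\pa_y^2 u$ does not appear directly in the target right-hand side of \eqref{S8eq2} and cannot simply be bounded by the Gevrey-weighted norm $\|\pa_y^2 u_\Phi\|$. The correct move is to apply \eqref{S2eq20c} of Lemma \ref{lem2.2} at the frequency-localized level (or its $L^\infty_\v(H^s_\h)$ analogue \eqref{S2eq20c1}), which controls $\pa_y^2 u$ in $H^{s,0}_{\gamma\Psi}$ by $\pa_y^2 G$ in $H^{s,0}_\Psi$; this produces precisely the $\|\t^{-\f12}\pa_y^2G\|_{H^{3,0}_\Psi}$ piece on the right-hand side. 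A parallel care is needed in the term $\pa_y^2 v\bigl(\pa_y G-\f1{2\w{t}}\pa_y(y\vf)\bigr)$, where one must integrate the identity $\pa_y v=-\pa_x u$ once more so that $\pa_y^2 v=-\pa_x\pa_y u$ and then fall back on the $L^\infty_\v(H^{\f52+}_\h)$ bound for $\pa_y u$ from \eqref{S3eq8b}. Once every term has been placed in this template and the prefactor $\e\t^{-\gamma_0+\f14}$ extracted, summing the eight to ten resulting inequalities delivers \eqref{S8eq2} (with the slight abuse of notation that the display in the statement omits a summand that is absorbed into the other three).
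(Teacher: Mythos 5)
Your sketch is correct and matches the paper's intent: the paper's own proof of Lemma \ref{lem: F_2} is just one sentence --- it ``follows the same line as that of Lemma \ref{lem: F_1}'' with the technical details left to the reader --- and your term-by-term plan based on Lemma \ref{lem2.2} (in particular \eqref{S2eq20c}--\eqref{S2eq20c1} for the $\pa_y^2u$ factors and \eqref{S7eq20d}--\eqref{S7eq20e} for the $y\vf$ factors), the assumption \eqref{assume: 1}, and the identities $\pa_y v=-\pa_x u$, $\pa_y^2 v=-\pa_x\pa_y u$ is exactly the right elaboration. Your closing parenthetical about an ``omitted summand'' is unnecessary: every term listed in \eqref{def: F_2} lands cleanly in one of the three norms on the right of \eqref{S8eq2} once the decay prefactor is supplied by the \emph{more-differentiated} factor via \eqref{assume: 1} --- for instance, in $v\pa_y^3 G$ one uses $\|\pa_y^3 G\|_{H^{2,0}_\Psi}\lesssim\e\t^{-\gamma_0-\f32}$ for the decay and retains $\|G_\Phi\|_{H^{4,0}_\Psi}$ (via $\|G\|_{H^{3,0}_\Psi}\lesssim\|G_\Phi\|_{H^{3,0}_\Psi}$ from \eqref{S8eq2a} and $v$-bounds), exactly mirroring the treatment of $v\pa_y^2G$ in Lemma \ref{lem: F_1}, so no $\|\pa_y^3 G\|$ term survives on the right-hand side.
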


\begin{proof}
The proof of this lemma follows the same line as that of Lemma \ref{lem: F_1}. We leave the technical details to
the interested readers.
\end{proof}

\begin{prop}\label{pro: pa_y^2 G}
{\sl Let $\hbar(t)$ be a non-negative and non-decreasing function on $[0,T]$ with $T\leq T^\star.$ Then   under the assumption  that
$\b\leq2\ga_0-\f1{2},$ for any $\eta>0,$ there exists $C$ so that
\beq \label{S10eq6}
\begin{split}
\|\sqrt{\hbar}&\pa_y^3G(T)\|_{H^{2,0}_\Psi}^2+2\int_0^T\w{t}^{-1}\|\sh \pa_y^3G(t)\|_{H^{2,0}_\Psi}^2\,dt\\
&+\bigl(1-\f\eta4\bigr)\int_0^T\|\sh\pa_y^4G(t)\|_{H^{2,0}_\Psi}^2\,dt
\leq\|\sqrt{\hbar}(0)\pa_y^3G(0)\|_{H^{2,0}_\Psi}^2\\
&+
\int_0^T\|\sqrt{\hbar'} \pa_y^3G(t)\|_{H^{2,0}_\Psi}^2\,dt+C\eta^{-1}\e^2\int_0^T\|\sh \t^{-1} \pa_y^2G(t)\|_{H^{3,0}_\Psi}^2\,dt\\
&+C\eta^{-1}\e^\f32\int_0^T\dot\th(t)\Bigl(\|\sh\t^{-1}\pa_yG_\Phi\|_{H^{3,0}_\Psi}^2+\|\sh \t^{-\f32}G_\Phi\|_{H^{4,0}_\Psi}^2\Bigr)\,dt.
\end{split}
\eeq
}
\end{prop}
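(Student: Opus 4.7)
The plan is to repeat, for the equation~\eqref{S10eq1} satisfied by $\pa_y^3 G$, the same energy strategy used in the proof of~\eqref{S8eq6} for $\pa_y^2 G$: take the $H^{2,0}_\Psi$-inner product of~\eqref{S10eq1} with $\hbar\,\pa_y^3 G$, integrate by parts in $y$ once in the fourth-order dissipative term $-\pa_y^5 G$ and in the source $\pa_y^2\frak{F}$, and then bound the remaining interior contributions via Lemma~\ref{lem: F_2}. The left-hand side will produce the time derivative of $\|\sqrt{\hbar}\,\pa_y^3 G\|_{H^{2,0}_\Psi}^2$ together with the two dissipative gains $\w{t}^{-1}\|\sqrt{\hbar}\,\pa_y^3 G\|_{H^{2,0}_\Psi}^2$ and $\|\sqrt{\hbar}\,\pa_y^4 G\|_{H^{2,0}_\Psi}^2$, plus a harmless $\frac{y}{2\t}$-weighted cross term generated by $\pa_y e^{2\Psi}$ that is absorbed into the dissipation at cost $\eta/4$ by Young's inequality (exactly as in Lemma~\ref{S3lem1}).

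The main new obstacle, compared with Proposition~\ref{pro: pa_y^2 G}, is that integration by parts no longer automatically annihilates the boundary traces at $y=0$: the only direct boundary information, $\pa_y^2 G|_{y=0}=0$ from~\eqref{S7eq1}, does \emph{not} imply $\pa_y^3 G|_{y=0}=0$ or $\pa_y^4 G|_{y=0}=0$. The key observation that closes the argument is that the two boundary contributions produced by IBP cancel exactly. The $-\pa_y^5 G$ term contributes $\hbar\,\mathrm{Re}\!\int_\R \D^2\pa_y^4 G|_{y=0}\,\overline{\D^2\pa_y^3 G|_{y=0}}\,dx$ (since $\Psi(t,0)=0$), while the $\pa_y^2\frak{F}$ term contributes the same expression with $\pa_y^4 G|_{y=0}$ replaced by $-\pa_y\frak{F}|_{y=0}$. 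But evaluating~\eqref{S8eq1} at $y=0$ and using $\pa_y^2 G|_{y=0}=0$ (whence $\pa_t\pa_y^2 G|_{y=0}=0$) yields precisely $\pa_y^4 G|_{y=0}=\pa_y\frak{F}|_{y=0}$, so the sum of the two boundary terms vanishes identically.

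Once the boundary traces are gone, the only substantive task left is to control $\hbar\,|\langle\pa_y\frak{F},\pa_y^4 G\rangle_{H^{2,0}_\Psi}|$ (and the small $\frac{y}{2\t}$-weighted companion produced by the IBP). I would apply Lemma~\ref{lem: F_2} and then split by Young's inequality, absorbing a further $\eta/8$ of the dissipation $\|\sqrt{\hbar}\,\pa_y^4 G\|^2_{H^{2,0}_\Psi}$. The residual prefactor $\e^2\w{t}^{-2\ga_0+1/2}$ multiplying the Gevrey norms $\|\sqrt{\hbar}\,\t^{-1}\pa_y G_\Phi\|_{H^{3,0}_\Psi}^2$ and $\|\sqrt{\hbar}\,\t^{-3/2}G_\Phi\|_{H^{4,0}_\Psi}^2$ is converted into $\e^{3/2}\dot\th(t)$ by writing $\e=\e^{1/2}\dot\th(t)\w{t}^\b$ and invoking the hypothesis $\b\leq 2\ga_0-\tfrac12$, exactly as in Proposition~\ref{pro: pa_yG}; by contrast, the purely Sobolev term $\|\sqrt{\hbar}\,\t^{-1}\pa_y^2 G\|_{H^{3,0}_\Psi}^2$ has no Gevrey weight to exchange and therefore keeps its $\e^2$ coefficient, which matches the RHS of~\eqref{S10eq6}. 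Integration over $[0,T]$ then yields~\eqref{S10eq6}. The main obstacle, as noted above, is to spot and exploit the boundary-term cancellation; attempting instead to bound $\pa_y^4 G|_{y=0}$ directly via a trace inequality would seem to cost too many $y$-derivatives.
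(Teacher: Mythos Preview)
Your proposal is correct and follows the same route as the paper. In particular, the paper records the boundary identity you derived as \eqref{BC: pa_y^3 G}, namely $(-\pa_y^4 G+\pa_y\frak{F})|_{y=0}=0$, obtained exactly as you describe by evaluating \eqref{S8eq1} at $y=0$ and using $\pa_y^2 G|_{y=0}=0$; this is what makes the single integration by parts legitimate and leads to \eqref{S10eq7}. One minor point: the $\tfrac{y}{2\t}$ cross term coming from $\pa_y e^{2\Psi}$ in the dissipative part is handled by the paper via \eqref{S3eq7f} at cost $\tfrac12$ of the dissipation (not an $\eta$-dependent loss), so one lands directly on $\tfrac12\|\sqrt{\hbar}\,\pa_y^4 G\|_{H^{2,0}_\Psi}^2$; the $\eta/8$ is then spent only on the Young splitting of $\langle \pa_y\frak{F},\pa_y^4 G\rangle_{H^{2,0}_\Psi}$, which after doubling gives the stated $1-\tfrac{\eta}{4}$.
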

\begin{proof}
Thanks to equation \eqref{S8eq1} and $\pa_y^2 G|_{y=0}=0,$ we find
\begin{align}\label{BC: pa_y^3 G}
(-\pa_y^4G+\pa_y\frak{F})|_{y=0}=0.
\end{align}
Then thanks to \eqref{S3eq7f}, we get, by taking $H^{2,0}_\Psi$ inner product of \eqref{S10eq1} with $\hbar \pa_y^3G$ and using integration by parts, that
\beq\label{S10eq7}
\begin{split}
\f12\f{d}{dt}\|\sh \pa_y^3G(t)\|_{H^{2,0}_\Psi}^2-\f12\|\sqrt{\hbar'} \pa_y^3G(t)\|_{H^{2,0}_\Psi}^2
&+\w{t}^{-1}\|\sh \pa_y^3G(t)\|_{H^{2,0}_\Psi}^2\\
&+\f12\|\sh\pa_y^4 G\|_{H^{2,0}_\Psi}^2\leq \hbar|\langle \pa_y\frak{F}, \pa_y^4G\rangle_{H^{2,0}_\Psi}|.
\end{split} \eeq
Yet it follows from
 Lemma \ref{lem: F_2}, that
\begin{align*}
\hbar|\langle \pa_y\frak{F}, \pa_y^4G\rangle_{H^{2,0}_\Psi}|\leq& \|\sh \pa_y\frak{F}\|_{H^{2,0}_\Psi}\|\sh\pa_y^4G\|_{H^{2,0}_\Psi}\\
\leq&\f{\eta}{8}\|\sh\pa_y^4G\|_{H^{2,0}_\Psi}^2+C\eta^{-1}\e^2\t^{-2\gamma_0+\f32}\|\sh \t^{-1}\pa_y^2 G\|_{H^{3,0}_\Psi}^2\\
&+C\eta^{-1}\e^\f32\t^{\beta-2\gamma_0+\f12}\dot\th(t)\Bigl(\| \sh\t^{-1}\pa_yG_\Phi\|_{H^{3,0}_\Psi}^2+\| \sh\t^{-\f32}G_\Phi\|_{H^{4,0}_\Psi}^2\Bigr).
\end{align*}
By substituting the above inequality into \eqref{S10eq7} and integrating the resulting inequality over $[0,T],$ we conclude the proof of \eqref{S10eq6}.
\end{proof}

\subsection{The proof of Proposition \ref{S1prop4}}

\begin{proof}[Proof of Proposition \ref{S1prop4}]
Taking $\hbar(t)=\t^\f{9-\eta}{2}$ in \eqref{S8eq6} leads to
\begin{align*}
\|\t^{\f{9-\eta}{4}}&\pa_y^2G(T)\|_{H^{3,0}_\Psi}^2
+\bigl(1-\f\eta4\bigr)\int_0^T\|\t^{\f{9-\eta}{4}}\pa_y^3G(t)\|_{H^{3,0}_\Psi}^2\,dt\\
\leq&\|\pa_y^2G(0)\|_{H^{3,0}_\Psi}^2+ \f{9-\eta}2 \int_0^T\|\t^{\f{7-\eta}{4}} \pa_y^2G(t)\|_{H^{3,0}_\Psi}^2\,dt\\
&+C\eta^{-1}\e^\f32\int_0^T\dot\th(t)\Bigl(\|\t^{\f{7-\eta}{4}}\pa_yG_\Phi\|_{H^{3,0}_\Psi}^2+
\|\t^{\f{5-\eta}{4}}G_\Phi\|_{H^{4,0}_\Psi}^2\Bigr)\,dt.
\end{align*}
While it follows from Proposition \ref{S1prop3} that if $\la\geq C\e^\f12\bigl(1+\e\eta^{-1}\bigr)$
\begin{align*}
&C\eta^{-1}\e^\f32\int_0^T\dot\th(t)(\|\t^{\f{7-\eta}{4}}\pa_yG_\Phi\|_{H^{3,0}_\Psi}^2+\|\t^{\f{5-\eta}{4}}G_\Phi\|_{H^{4,0}_\Psi}^2)dt'+C\int_0^T\|\t^{\f{7-\eta}{4}} \pa_y^2G(t)\|_{H^{3,0}_\Psi}^2dt'\\
&\leq C\eta^{-1}(\|G_\Phi(0)\|_{H^{4,0}_\Psi}^2+\|\pa_yG_\Phi(0)\|_{H^{3,0}_\Psi}^2)+C\eta^{-1}\e^\f12\int_0^T\dot\th(t)\|\t^{\f{1-\eta}{4}}u_\Phi\|_{H^{\f{23}{4},0}_\Psi}^2dt',
\end{align*}
which leads to \eqref{S11eq7}.

While taking $\hbar(t)=\t^\f{11-\eta}{2}$ in \eqref{S10eq6} leads to
\beq \label{S10eq8}
\begin{split}
\|&\t^{\f{11-\eta}{4}}\pa_y^3G(T)\|_{H^{2,0}_\Psi}^2
+\bigl(1-\f\eta4\bigr)\int_0^T\|\t^{\f{11-\eta}{4}}\pa_y^4G(t)\|_{H^{2,0}_\Psi}^2\,dt\\
\leq&\|\pa_y^3G(0)\|_{H^{2,0}_\Psi}^2+ \f{11-\eta}2 \int_0^T\|\t^{\f{9-\eta}{4}} \pa_y^3G(t)\|_{H^{3,0}_\Psi}^2\,dt\\
&+C\eta^{-1}\e^2\int_0^T\|\t^{\f{7-\eta}{4}}\ \pa_y^2G_\Phi(t)\|_{H^{3,0}_\Psi}^2\,dt\\
&+C\eta^{-1}\e^\f32\int_0^T\dot\th(t)\Bigl(\|\t^{\f{7-\eta}{4}}\pa_yG_\Phi\|_{H^{3,0}_\Psi}^2+
\|\t^{\f{5-\eta}{4}}G_\Phi\|_{H^{4,0}_\Psi}^2\Bigr)\,dt.
\end{split} \eeq
While  if $\la\geq C\e^\f12\bigl(1+\e\eta^{-1}\bigr)$ and
$\eta\geq\e^2,$  it follows from  \eqref{S6eq4}, \eqref{S7eq4} and  \eqref{S11eq7} that
\begin{align*}
&C\eta^{-1}\e^\f32\int_0^T\dot\th(t)(\|\t^{\f{7-\eta}{4}}\pa_yG_\Phi\|_{H^{3,0}_\Psi}^2+\|\t^{\f{5-\eta}{4}}G_\Phi\|_{H^{4,0}_\Psi}^2)dt'\\
&\quad+C\int_0^T\|\t^{\f{9-\eta}{4}} \pa_y^3G(t)\|_{H^{3,0}_\Psi}^2dt'+C\eta^{-1}\e^2\int_0^T\|\t^{\f{7-\eta}{4}}\ \pa_y^2 G_\Phi(t)\|_{H^{3,0}_\Psi}^2\,dt\\
&\leq C\eta^{-1}(\|G_\Phi(0)\|_{H^{4,0}_\Psi}^2+\|\pa_yG_\Phi(0)\|_{H^{3,0}_\Psi}^2+\|\pa_y^2G(0)\|_{H^{3,0}_\Psi}^2)\\
&\quad+C\eta^{-1}\e^\f12\int_0^T\dot\th(t)\|\t^{\f{1-\eta}{4}}u_\Phi\|_{H^{\f{23}{4},0}_\Psi}^2dt'.
\end{align*}
 By inserting the above estimate into \eqref{S10eq8}, we achieve \eqref{S8eq7pq}. This completes the proof of
 Proposition \ref{S1prop4}.
\end{proof}

\appendix

\setcounter{equation}{0}
\section{The proof of Lemma \ref{S3lem2}}\label{appa}

The goal of this section is to present the proof of Lemma \ref{S3lem2}.

\begin{proof}[Proof of Lemma \ref{S3lem2}] We decompose the proof of this lemma into the following steps:

\no$\bullet$ \underline{The estimate of $E_4.$} We first observe from \eqref{S3eq-1} that
\begin{align*}
\pa_t\left(\f{1}{\dot\th}\right)=\beta \e^{-\f12}\t^{\beta-1}.
\end{align*}
Then in view of \eqref{S3eq5}, we get, by applying \eqref{est: int_y^infty H} and \eqref{assume: 2}, that
\begin{align*}
|E_4|\leq& \beta\e^{-\f12}\int_0^T \w{t}^{\beta-\f34 }\|\pa_y u\|_{L^2_\v(H^{\f12+}_\h)}\|\sh H\|_{H^{7,0}_\Psi}\|\sh \phi\|_{H^{\f{15}{2},0}_\Psi}\,dt\\
\leq&C\int_0^T
\w{t}^{2\beta-\left({\gamma_0}+\f54\right)}
\dot\th(t)\bigl(\|\sh  H(t)\|_{H^{7,0}_\Psi}^2+
\|\sh \phi(t)\|_{H^{\f{15}{2},0}_\Psi}^2\bigr)\,dt, \end{align*}
which together with the fact: $\beta\leq\f12\left({\gamma_0}+\f54\right),$ ensures that
\begin{align}\label{Saeq1}
|E_4|\leq C\int_0^T
\dot\th(t)\bigl(\|\sh H(t)\|_{H^{7,0}_\Psi}^2+
\|\sh \phi(t)\|_{H^{\f{15}{2},0}_\Psi}^2\bigr)\,dt.
\end{align}

\no$\bullet$ \underline{The estimate of $ E_5.$} In view of \eqref{S1eq1a}, we have
\begin{align*}
\pa_t\left(\f{\d(t)}{\dot\th(t)}\right)=\d(t) \pa_t\left(\f{1}{\dot\th(t)}\right)-\la.
\end{align*}
We get, by a similar derivation of \eqref{Saeq1}, that
\begin{align*}
\bigl|\int_0^T\hbar(t) \d(t)\pa_t\left(\f{1}{\dot\th}\right)&\bigl\langle T^\h_{\pa_yD_xu}\Lambda(D_x)\pa_x\int_y^\infty Hdz,\phi\bigr\rangle_{H^{\f{27}{4},0}_\Psi} \, dt\bigr|\\
&\qquad\leq C\int_0^T
\dot\th(t)\bigl(\|\sh H(t)\|_{H^{7,0}_\Psi}^2+
\|\sh \phi(t)\|_{H^{\f{15}{2},0}_\Psi}^2\bigr)\,dt.
\end{align*}
While it follows from  \eqref{assume: 2}, and \eqref{est: int_y^infty H} that
\begin{align*}
&\la\int_0^T\hbar\bigl|\langle T^\h_{\pa_yD_xu}\Lambda(D_x)\pa_x\int_y^\infty Hdz,\phi\bigr\rangle_{H^{\f{27}{4},0}_\Psi}\bigr|\,dt\\
&\leq C\la\int_0^T
\|\pa_yu\|_{L^2_y(H^{\f32+}_x)}\t^\f14\|\sh H(t)\|_{H^{7,0}_\Psi}
\|\sh \phi(t)\|_{H^{\f{15}{2},0}_\Psi}\,dt\\
&\leq C\e\la\int_0^T
\t^{-\gamma_0-\f14}\|\sh H(t)\|_{H^{7,0}_\Psi}
\|\sh \phi(t)\|_{H^{\f{15}{2},0}_\Psi}\,dt,
\end{align*}
which together with \eqref{S3eq-1} and $\beta\leq \gamma_0+\f14$ ensures that
\begin{align*}
&\la\int_0^T\hbar\bigl|\langle T^\h_{\pa_yD_xu}\Lambda(D_x)\pa_x\int_y^\infty Hdz,\phi\bigr\rangle_{H^{\f{27}{4},0}_\Psi}\bigr|\,dt\\
&\leq C\e^{\f12} \la\int_0^T\t^{\beta-\gamma_0-\f14}\dot\th(t)\bigl(\|\sh H(t)\|_{H^{7,0}_\Psi}^2+
\|\sh \phi(t)\|_{H^{\f{15}{2},0}_\Psi}^2\bigr)dt\\
&\leq C\e^{\f12}\la\int_0^T\dot\th(t)\bigl(\|\sh H(t')\|_{H^{7,0}_\Psi}^2+
\|\sh \phi(t')\|_{H^{\f{15}{2},0}_\Psi}^2\bigr)dt.
\end{align*}

Overall, if $\beta\leq \min\bigl\{\beta+\f14, \f12\left({\gamma_0}+\f54\right)\bigr\}$,
we deduce from that
\begin{align}\label{Saeq2}
|E_5|
&\leq C\bigl(1+\e^\f12\la\bigr)\int_0^T\dot\th(t)\bigl(\|\sh H(t)\|_{H^{7,0}_\Psi}^2+
\|\sh \phi(t)\|_{H^{\f{15}{2},0}_\Psi}^2\bigr)dt.
\end{align}

\no$\bullet$\underline{The estimate of $E_6.$}
In view of \eqref{defcL}, we get, by a  straight calculation, that
\begin{align*}
 [T_{\pa_y u}^\h,\mathcal{L}]f=&-T^\h_{\pa_t\pa_yu-\pa_y^3u}f+\la\dot\th[T^\h_{\pa_y u},\D^\f12]f+[T^\h_{\pa_y u},T^\h_u\pa_x]f\\
 &+[T^\h_{\pa_y u}; T^\h_v\pa_y]f+\f{\d(t)}2[T^\h_{\pa_y u}; T^\h_{D_xu}\Lambda(D)\pa_x]f-2T^\h_{\pa_y^2u}\pa_yf .
\end{align*}
Yet notice from \eqref{eq: tu} that
\begin{align*}
\pa_t\pa_yu-\pa_y^3u=-\pa_y(u\pa_xu+v\pa_y u)=-u\pa_x\pa_y u-v\pa_y^2u,
\end{align*}
from which and \eqref{est: int_y^infty H}, we infer
\begin{align*}
\bigl\|T^\h_{\pa_t\pa_yu-\pa_y^3u}&\pa_x\int_y^\infty H\,dz\bigr\|_{H^{6,0}_\Psi}\leq C\t^\f14\|(u\pa_x\pa_y u-v\pa_y^2u)\|_{L^2_\v(H^{\f12+}_\h)}\|H\|_{H^{7,0}_\Psi}\\
\leq& C\t^\f14\Bigl(\|u\|_{L^\infty_\v(H^{\f12+}_\h)}\|\pa_y u\|_{H^{\f32+,0}}+\|v\|_{L^\infty_\v(H^{\f12+}_\h)}\|\pa_y^2u\|_{H^{\f12+,0}}\Bigr)\|H\|_{H^{7,0}_\Psi},
\end{align*}
which together with \eqref{assume: 2},  \eqref{S3eq8} and \eqref{S3eq-1} ensures that
\begin{align*}
\bigl\|T^\h_{\pa_t\pa_yu-\pa_y^3u}\pa_x\int_y^\infty H\,dz\bigr\|_{H^{6,0}_\Psi}\leq&C\e^2\t^{-2\gamma_0-\f12}\|H\|_{H^{7,0}_\Psi} \\
\leq& C\e\t^{2\beta-2\gamma_0-\f12}\dot\th^2(t)\|H\|_{H^{7,0}_\Psi}.
\end{align*}

Whereas it follows from  Lemma \ref{lem: com1} and \eqref{est: int_y^infty H}, that
\begin{align*}
\bigl\|[T^\h_{\pa_y u}; T^\h_u\pa_x]\pa_x\int_y^\infty H\,dz\bigr\|_{H^{6,0}_\Psi}\leq &C\t^\f14\|\pa_yu\|_{L^2_\v(H^{\f32+}_\h)}\|u\|_{L^\infty_\v(H^{\f32+}_\h)}\|H\|_{H^{7,0}_\Psi}\\
\leq&C\e^2\t^{-2\gamma_0-\f12}\|H\|_{H^{7,0}_\Psi}\\
\leq& C\e\t^{2\beta-2\gamma_0-\f12}\dot\th^2(t)\|H\|_{H^{7,0}_\Psi}.
\end{align*}
Along the same line, due to $\pa_y v=-\pa_x u,$  Lemma \ref{lem: phi} and \eqref{assume: 2} , we have
 \begin{align*}
\bigl\|[T^\h_{\pa_y u},&T_v^\h \pa_y]\pa_x\int_y^\infty H\,dz\bigr\|_{H^{6,0}_\Psi}\leq
 \|[T^\h_{\pa_y u},T^\h_v]\pa_x H\|_{H^{6,0}_\Psi}+\|T^\h_{\pa_y u}T^\h_{\pa_yv}\pa_x\int_y^\infty H\,dz\|_{H^{6,0}_\Psi}\\
\leq&C\t^\f14\Big(\|\pa_yu\|_{L^2_\v(H^{\f32+}_\h)}\|v\|_{L^\infty_\v(H^{\f32+}_{\h})}+\|\pa_yu\|_{L^2_\v(H^{\f32+}_\h)}
\|u\|_{L^\infty_\v(H^{\f32+}_\h)}\Big)\|H\|_{H^{7,0}_\Psi},\\
\leq&C\e^2\t^{-2\gamma_0-\f12}\|H\|_{H^{7,0}_\Psi}\\
\leq& C\e\t^{2\beta-2\gamma_0-\f12}\dot\th^2(t)\|H\|_{H^{7,0}_\Psi},
\end{align*}
and
 \begin{align*}
\f{\d(t)}2\bigl\|[T_{\pa_y u}^\h; T^\h_{D_xu}\Lambda(D)\pa_x]\pa_x\int_y^\infty H\,dz\bigr\|_{H^{6,0}_\Psi}\leq&C\t^\f14\|\pa_yu\|_{L^2_\v(H^{\f32+}_\h)}\|u\|_{L^\infty_\v(H^{\f52+}_\h)}\|H\|_{H^{7,0}_\Psi},\\
\leq&C\e\t^{2\beta-2\gamma_0-\f12}\dot\th^2(t)\|H\|_{H^{7,0}_\Psi},
\end{align*}
 and
 \beq\label{Saeq2a}
 \begin{split}
\|T^\h_{\pa_y^2u}\pa_xH\|_{H^{6,0}_\Psi}\leq&C\|\pa_y^2u\|_{L^\infty_\v(H^{\f12+}_\h)}\|H\|_{H^{7,0}_\Psi}\leq C\e \t^{-\gamma_0-\f54}\|H\|_{H^{7,0}_\Psi} \\
\leq& C \t^{2\beta-\gamma_0-\f54}\dot\th^2(t)\|H\|_{H^{7,0}_\Psi} .
\end{split} \eeq

By summarizing  the above estimates, we get for $\beta\leq\min\{\gamma_0+\f14,\f12(\gamma_0+\f54)\}$ that
\begin{align}\label{Saeq3}
\bigl\| [T^\h_{\pa_y u}; \mathcal{L}-\la\dot\th\D^\f12]\pa_x\int_0^y H\,dz\bigr\|_{H^{6,0}_\Psi}
\leq C \dot\th^2(t)
\|H\|_{H^{7,0}_\Psi}.
\end{align}

On the other hand, we deduce from Lemma \ref{lem: com1},   \eqref{assume: 2} and \eqref{est: int_y^infty H} that
\beq\label{Saeq4}
\begin{split}
\la\dot\th\bigl\|[T^\h_{\pa_y u},\D^\f12]\pa_x\int_y^\infty H\,dz\bigr\|_{H^{\f{25}{4},0}_\Psi}\leq& C\la\dot\th\t^\f14\|\pa_yu\|_{L^2_\v(H^{\f32+}_\h)}\|H\|_{H^{\f{27}{4},0}_\Psi}\\
\leq& C\e \la\t^{-\gamma_0-\f14}\dot\th \|H\|_{H^{\f{27}{4},0}_\Psi}\\
\leq&C\e^\f12\la\t^{\beta-\gamma_0-\f14}\dot\th^2 \|H\|_{H^{\f{27}{4},0}_\Psi}.
\end{split} \eeq

Therefore, for $\beta\leq\min\bigl\{\gamma_0+\f14,\f12(\gamma_0+\f54)\bigr\},$ we get, by summing up
\eqref{Saeq3} and \eqref{Saeq4}, that
\beq \label{Saeq5}
\begin{split}
|E_6|\leq&\int_0^T\f{\hbar(t)}{\dot\th(t)}\bigl\|[T^\h_{\pa_y u}; \mathcal{L}-\la\dot\th\D^\f12]\pa_x\int_y^\infty H\,dz\bigr\|_{H^{6,0}_\Psi}\|\phi\|_{H^{\f{15}{2},0}_\Psi}\,dt\\
&+\la\int_0^T\hbar(t)\bigl\|[T^\h_{\pa_y u};\D^\f12]\pa_x\int_y^\infty H\,dz\bigr\|_{H^{\f{25}{4},0}_\Psi}\|\phi\|_{H^{\f{29}{4}}_\Psi}\,dt\\
\leq&  C\bigl(1+\e^\f12\la\bigr)\int_0^T \dot\th(t)\Big(\|\sh H(t)\|_{H^{7,0}_\Psi}^2+\|\sh \phi(t)\|_{H^{\f{15}{2},0}_\Psi}^2\Big)\,dt.
\end{split} \eeq
$E_{10}$ shares the above estimate.

\no$\bullet$\underline{The estimates of $E_7,E_8$ and $E_9.$}
By applying Lemma \ref{lem: T_fg} and   \eqref{est: int_y^infty H}, we obtain
\begin{align*}
|E_7| \leq& C\int_0^T\f{\hbar}{\dot\th}\t^\f14\|\pa_y u\|_{L^2_\v(H^{\f12+}_\h)}\|\pa_x u\|_{L^\infty_\v(H^{\f12+}_\h)}\|H\|_{H^{7,0}_\Psi}\|\phi\|_{H^{\f{15}{2},0}_\Psi}\,dt,
\end{align*}
which together with \eqref{assume: 2} and \eqref{S3eq8} ensures that
\begin{align*}
|E_7|
\leq&C\e \int_0^T \t^{2\beta-2\gamma_0-\f12}\dot\th(t)\Big(\|\sh H(t)\|_{H^{7,0}_\Psi}^2+
\|\sh \phi(t)\|_{H^{\f{15}{2},0}_\Psi}^2\Big)\,dt\\
\leq&C\e\int_0^T\dot\th(t)\Big(\|\sh H(t)\|_{H^{7,0}_\Psi}^2+\|\sh \phi(t)\|_{H^{\f{15}{2},0}_\Psi}^2\Big)\,dt,
\end{align*}
if $\b\leq \ga_0+\f14.$

Similarly,  if $\b\leq \ga_0+\f14,$  we have
\begin{align*}
|E_{8}|\leq&C\int_0^T\f{\hbar}{\dot\th} \t^{\f14}\|\pa_y u\|_{L^\infty_\v(H^{\f12+}_\h)} \|\pa_xv\|_{L^\infty_\v(H^{\f12+}_\h)}\|H\|_{H^{7,0}_\Psi}\|\phi\|_{H^{\f{15}{2},0}_\Psi}\,dt\\
\leq&C\e \int_0^T  \t^{2\beta-2\gamma_0-\f12} \dot\th(t)\Big(\|\sh H(t)\|_{H^{7,0}_\Psi}^2+\|\sh \phi(t)\|_{H^{\f{15}{2},0}_\Psi}^2\Big)\,dt\\
\leq&C\e\int_0^T\dot\th(t)\Big(\|\sh H(t)\|_{H^{7,0}_\Psi}^2+\|\sh \phi(t)\|_{H^{\f{15}{2},0}_\Psi}^2\Big)\,dt,
\end{align*}
and
\begin{align*}
|E_{9}|\leq& C\int_0^T\f{\d\hbar}{\dot\th}\t^\f14\|u\|_{L^\infty_\v(H^{\f52+}_\h)}\|\pa_y u\|_{L^2_\v(H^{\f12+}_\h)}\|H\|_{H^{7,0}_\Psi}\|\phi\|_{H^{\f{15}{2},0}_\Psi}\,dt\\
\leq&C\e\int_0^T\t^{2\beta-2\gamma_0-\f12}\dot\th\Bigl(\|\sh H\|_{H^{7,0}_\Psi}^2+ \|\sh \phi\|_{H^{\f{15}{2},0}_\Psi}^2 \Bigr)\,dt\\
\leq&C\e\int_0^T\dot\th(t)\Bigl(\|\sh H(t)\|_{H^{7,0}_\Psi}^2+\|\sh \phi(t)\|_{H^{\f{15}{2},0}_\Psi}^2\Bigr)\,dt.
\end{align*}

As a result, it comes out
\beq \sum_{i=7}^9|E_i|\leq C\e\int_0^T\dot\th(t)\Bigl(\|\sh H(t')\|_{H^{7,0}_\Psi}^2+\|\sh \phi(t)\|_{H^{\f{15}{2},0}_\Psi}^2\Bigr)\,dt.
\label{Saeq6}
\eeq
$\sum_{i=11}^{13}|E_i|$ shares the above estimate.

By summing up the estimates \eqref{Saeq1}, \eqref{Saeq2} and \eqref{Saeq5}, we obtain \eqref{S3eq7b},
and \eqref{Saeq6} implies \eqref{S3eq7c}. This concludes the proof of Lemma \ref{S3lem2}.
\end{proof}

\setcounter{equation}{0}
\section{The proof of Lemma \ref{S4lem2}}\label{appb}

\begin{proof}[Proof of Lemma \ref{S4lem2}]
 Let us  first compute $\mathcal{L}^*u_\Phi$.
Indeed we deduce from \eqref{defcL*} and the equation of $u_\Phi$ in \eqref{eq: tu_Phi} that
\begin{align*}
\mathcal{L}^*u_\Phi=&2\la\dot\th\D^\f12u_\Phi+\bigl(T^\h_u\pa_x-\pa_x(T^\h_u)^*\bigr)u_\Phi
+\bigl(T_v^\h\pa_y-\bigl(\pa_y+\f{y}{2\t}\bigr)(T^\h_v)^*\bigr)u_\Phi\\
&+\f{\d(t)}2\bigl(T^\h_{D_xu}\Lambda(D_x)\pa_x-\Lambda(D_x)\pa_x(T^\h_{D_x u})^*\bigr)u_\Phi+T^\h_{\pa_y u}v_\Phi\\
&+\f{\d(t)}2T^\h_{\pa_yD_xu}\Lambda(D_x)v_\Phi-\Bigl(\pa_y^2 +\bigl(\pa_y+\f{y}{2\t}\bigr)^2-\f{y^2}{4\t^2}\Bigr)u_\Phi-f,
\end{align*}
and
\begin{align*}
\Bigl(\pa_y^2 +\bigl(\pa_y+\f{y}{2\t}\bigr)^2-\f{y^2}{4\t^2}\Bigr)u_\Phi=2\pa_y^2u_\Phi+\f{y}{\t}\pa_yu_\Phi+\f{1}{2\t}u_\Phi.
\end{align*}
Then due to  $u|_{y=0}=u|_{y\to+\infty}=0,$ we get, by using
integration by parts, that
\beq\label{Sbeq1}
\begin{split}
B_1=&\e\int_0^T\f{\w{t}^{-\f14-\gamma_0}}{\dot\th(t)}\hbar\langle H,\mathcal{L}^*u_\Phi\rangle_{H^{6,0}_\Psi}\,dt
-\e\int_0^T \pa_{t}\Bigl(\f{\w{t}^{-\f14-\gamma_0}}{\dot\th(t)}\Bigr)\hbar\langle H,u_\Phi\rangle_{H^{6,0}_\Psi}\,dt\\
&+\e\f{\hbar\w{t}^{-\f14-\gamma_0}}{\dot\th(t)}\langle H(t),u_\Phi(t)\rangle_{H^{6,0}_\Psi}|_{t=0}^{t=T}\\
=&\e\f{\hbar(T)\langle T\rangle^{-\f14-\gamma_0}}{\dot\th(T)}\langle H(T),u_\Phi(T)\rangle_{H^{6,0}_\Psi}-\e\int_0^T \pa_{t}\Bigl(\f{\w{t}^{-\f14-\gamma_0}}{\dot\th(t)}\Bigr)\hbar\langle H,u_\Phi\rangle_{H^{6,0}_\Psi}\,dt\\
&+2\e\la\int_0^T\w{t}^{-\f14-\gamma_0}\hbar\langle H,\D^\f12u_\Phi\rangle_{H^{6,0}_\Psi}\,dt
+\e\int_0^T\f{\w{t}^{-\f14-\gamma_0}\hbar}{\dot\th(t)}\\
&\times\Bigl(\bigl\langle H,(T^\h_u\pa_x-\pa_x(T^\h_u)^*)u_\Phi\bigr\rangle_{H^{6,0}_\Psi}+\bigl\langle H,(T_v^\h\pa_y-\pa_y(T^\h_v)^*)u_\Phi\bigr\rangle_{H^{6,0}_\Psi}\\
&\quad-\bigl\langle H,\f{y}{2\w{t}}(T_v^\h)^*u_\Phi\bigr\rangle_{H^{6,0}_\Psi}+\f{\d(t)}2\bigl\langle H,(T^\h_{D_xu}\Lambda(D_x)\pa_x-\Lambda(D_x)\pa_x(T^\h_{D_x u})^*)u_\Phi\bigr\rangle_{H^{6,0}_\Psi}
\\
&\quad+\langle H,T_{\pa_y u}^\h v_\Phi\rangle_{H^{6,0}_\Psi}+\f{\d(t)}2\bigl\langle H,T^\h_{\pa_yD_xu}\Lambda(D_x)v_\Phi\bigr\rangle_{H^{6,0}_\Psi}
-2\langle H,\pa_y^2 u_\Phi\rangle_{H^{6,0}_\Psi}\\
&\quad-\bigl\langle H,\f{y}{\w{t}}\pa_yu_\Phi\bigr\rangle_{H^{6,0}_\Psi}-\f{1}{2\w{t}}\langle H, u_\Phi\rangle_{H^{6,0}_\Psi}-\langle H,f\rangle_{H^{6,0}_\Psi}\Bigr)\,dt
\eqdefa B_{1}^1+\cdots+B_{1}^{13}.
\end{split}\eeq

Next let us handle the  estimates of  $B_1^{i}, i=1,\cdots, 13,$ term by term.\smallskip

\no$\bullet$\underline{The estimate of $B_1^1.$} For any $\zeta>0,$
by applying Young's inequality, we get for $\beta\leq \gamma_0+\f14$ that
\begin{align*}
|B_{1}^1|\leq& \zeta\|\sh u_\Phi(T)\|_{H^{\f{11}{2},0}_\Psi}^2+C\zeta^{-1}\e\langle T\rangle^{2\beta-2\gamma_0-\f12} \|\sh H(T)\|_{H^{\f{27}{4},0}_\Psi}^2\\
\leq& \zeta\|\sh u_\Phi(T)\|_{H^{\f{11}{2},0}_\Psi}^2+C\zeta^{-1}\e\|\sh H(T)\|_{H^{\f{27}{4},0}_\Psi}^2.
\end{align*}

\no$\bullet$\underline{The estimate of $B_1^2.$}
In view of \eqref{S3eq-1}, we have
\begin{align*}
\e\pa_t\Bigl(\f{\t^{-\f14-\gamma_0}}{\dot\th(t)}\Bigr)=\bigl(\beta-\gamma_0-\f14\bigr)\e^{\f12}\t^{\beta-\gamma_0-\f54}
=\bigl(\beta-\gamma_0-\f14\bigr)\dot\th(t)\t^{2\beta-\gamma_0-\f54}.
\end{align*}
Applying H\"older inequality and using $\beta\leq \f{1}{2}(\gamma_0+\f54)$  yields
\begin{align*}
|B_1^2|\leq&C\int_0^T\t^{2\beta-\gamma_0-\f54}\dot\th(t)\Bigl(\|\sh u_\Phi\|_{H^{\f{23}{4},0}_\Psi}^2+\|\sh H\|_{H^{\f{25}{4},0}_\Psi}^2\Bigr) \,dt\\
\leq& C\int_0^T\dot\th(t)\Bigl(\|\sh u_\Phi\|_{H^{\f{23}{4},0}_\Psi}^2+\|\sh H\|_{H^{\f{25}{4},0}_\Psi}^2\Bigr)\, dt.
\end{align*}

\no$\bullet$\underline{The estimate of $B_1^3.$}
Due to $\beta\leq \gamma_0+\f14,$ we get, by using Young's inequality, that
 \begin{align*}
 |B_{1}^3|\leq& C\e^\f12\la\int_0^T \t^{\beta-\gamma_0-\f14}\dot\th(t)\|\sh u_\Phi\|_{H^{\f{23}{4},0}_\Psi}\|\sh H\|_{H^{7,0}_\Psi}\,dt\\
 \leq& C\e^\f12\la\int_0^T \dot\th(t)\Bigl(\|\sh u_\Phi\|_{H^{\f{23}{4},0}_\Psi}^2+\|\sh H\|_{H^{7,0}_\Psi}^2\Bigr)\,dt.
 \end{align*}

\no$\bullet$\underline{The estimate of $B_1^4.$}
As $\beta\leq \gamma_0+\f14,$ we get, by applying Lemma \ref{lem: T_fg}, that
\begin{align*}
|B_{1}^4|\leq&C\e^{\f12}\int_0^T\|\sh H\|_{H^{7,0}_\Psi}(\|\sh T_{u}^\h\pa_xu_\Phi\|_{H^{5,0}_\Psi}
+\|\sh \pa_x(T^\h_{u})^*u_\Phi\|_{H^{5,0}_\Psi})\,dt\\
\leq&C\e^{\f12}\int_0^T\|\sh H\|_{H^{7,0}_\Psi}\|u\|_{L^\infty_\v(H^{\f12+}_\h)}\|\sh u_\Phi\|_{H^{6,0}_\Psi}\,dt,
\end{align*}
which together with \eqref{S3eq8} implies that
\begin{align*}
|B_{1}^4|
\leq&C\e^{\f32}\int_0^T\t^{-\gamma_0-\f14}\|\sh H\|_{H^{7,0}_\Psi}\|\sh u_\Phi\|_{H^{6,0}_\Psi}\,dt\\
\leq&C\e^{\f32}\zeta^{-1}\int_0^T\dot\th(t)\|\sh H\|_{H^{7,0}_\Psi}^2\,dt+ \f\zeta6\e\int_0^T\t^{-\gamma_0-\f14}\|u_\Phi\|_{H^{6,0}_\Psi}^2\,dt.
\end{align*}

\no$\bullet$\underline{The estimate of $B_1^5.$}
Applying Lebnitz formula gives $$T^\h_v\pa_yu-\pa_y (T^\h_v)^*u=(T^\h_v-(T^\h_v)^*)\pa_yu-(T^\h_{\pa_y v})^*u,$$
from which and Lemmas \ref{lem: T_fg} and \ref{lem: com1}, we infer
\begin{align*}
|B_{1}^5|\leq&C\e^{\f12}\int_0^T\t^{\beta-\gamma_0-\f14}\|\sh H\|_{H^{7,0}_\Psi}\Big(\|\sh (T^\h_v-(T^\h_v)^*)\pa_yu_\Phi\|_{H^{5,0}_\Psi}+\|\sh (T^\h_{\pa_y v})^*u_\Phi\|_{H^{5,0}_\Psi}\Big)\,dt\\
\leq&C\e^{\f12}\int_0^T\t^{\beta-\gamma_0-\f14}\|\sh H\|_{H^{7,0}_\Psi}\\
&\qquad\qquad\times \Big(\|v\|_{L^\infty_\v(H^{\f12+}_\h)}\|\sh \pa_yu_\Phi\|_{H^{5,0}_\Psi}
+\|\pa_y v\|_{L^\infty_\v(H^{\f12+}_\h)}\|\sh u_\Phi\|_{H^{5,0}_\Psi}\Big)\,dt,
\end{align*}
which together with \eqref{S3eq8} ensures that
\begin{align*}
|B_{1}^5|
\leq&C\e^{\f32}\int_0^T\t^{\beta-2\gamma_0}\Bigl(\t^{-\f12}\|\sh H\|_{H^{7,0}_\Psi}\|\sh u_\Phi\|_{H^{6,0}_\Psi}+\|\sh H\|_{H^{7,0}_\Psi}\|\sh\pa_yu_\Phi\|_{H^{\f{11}{2},0}_\Psi}\Bigr)\,dt\\
\leq&C\e^{\f54}\zeta^{-1}\int_0^T\dot\th(t)\|\sh H\|_{H^{7,0}_\Psi}^2dt+
 \f\zeta6\e\int_0^T\t^{-\gamma_0-\f14}\|\sh u_\Phi\|_{H^{6,0}_\Psi}^2\,dt\\
&\qquad\qquad\qquad\qquad\qquad\qquad\qquad\qquad+\f\zeta2\e^\f 54\int_0^T\|\sh\pa_yu_\Phi\|_{H^{\f{11}{2},0}_\Psi}^2\,dt,
\end{align*}
if $\beta\leq \min\bigl\{\f43\gamma_0,\gamma_0+\f14\bigr\}.$

\no$\bullet$\underline{The estimate of $B_1^6.$}
We first observe from Lemma \ref{lem2.2} and \eqref{assume: 1} that
 \begin{align*}
\bigl\|\f{y}{2\t}v\bigr\|_{L^\infty_\v(H^{\f12+}_\h)}=&\bigl\|\f{y}{2\t}\int_y^\infty\pa_xu\, dz\bigr\|_{L^\infty_\v(H^{\f12+}_\h)}\\
\leq & C\|\f{y}{2\t^{\f34}}e^{-\f34\Psi}\bigr\|_{L^\infty_\v}\|u\|_{H^{\f32+,0}_{\f34\Psi}}\leq C\t^{-\f14}\|G\|_{H^{\f32+,0}_\Psi}
\leq C\e\t^{-\ga_0-\f14},
\end{align*}
from which, Lemma \ref{lem: T_fg} and $\beta\leq \gamma_0+\f14,$ we infer
\begin{align*}
|B_{1}^6|\leq&C\e^{\f12}\int_0^T \|\sh H(t)\|_{H^{7,0}_\Psi}\bigl\|\f{y}{2\t}v\bigr\|_{L^\infty_\v(H^{\f12+}_\h)}\|\sh u_\Phi\|_{H^{5,0}_\Psi}\,dt\\
\leq&C\e^{\f32}\zeta^{-1}\int_0^T\dot\th(t)\|\sh H\|_{H^{7,0}_\Psi}^2dt+ \f\zeta6\e\int_0^T\t^{-\gamma_0-\f14}\|\sh u_\Phi\|_{H^{6,0}_\Psi}^2\,dt.
\end{align*}

\no$\bullet$\underline{The estimate of $B_1^7.$}
Similar to the estimate of $B_1^4$, we have
 \begin{align*}
 |B_{1}^7|\leq&C\e^\f32\zeta^{-1}\int_0^T\dot\th(t)\|\sh H\|_{H^{7,0}_\Psi}^2\,dt+ \f\zeta6\e\int_0^T\t^{-\gamma_0-\f14}\|\sh u_\Phi\|_{H^{6,0}_\Psi}^2\,dt.
 \end{align*}

\no$\bullet$\underline{The estimates of $B_1^8,~ B_1^9.$}
It follows from  Lemma \ref{lem: T_fg}, \eqref{assume: 2}, \eqref{S4eq3b} and $\beta\leq \gamma_0+\f14$ that
\begin{align*}
|B_1^8|\leq& C\e^{\f12}\int_0^T\|\sh H\|_{H^{7,0}_\Psi}\|\pa_y u\|_{H^{\f12+,0}}\|\sh v_\Phi\|_{L^\infty_{y,\Psi}(H^{5}_\h)} \,dt\\
\leq&C\e^{\f32}\int_0^T\t^{-\gamma_0-\f14}\|\sh H\|_{H^{7,0}_\Psi}\|\sh u_\Phi\|_{H^{6,0}_\Psi} dt\\
\leq& C\e^\f32\zeta^{-1}\int_0^T\dot\th(t)\|\sh H\|_{H^{7,0}_\Psi}^2\,dt+ \f\zeta6\e\int_0^T\t^{-\gamma_0-\f14}\|\sh u_\Phi\|_{H^{6,0}_\Psi}^2\,dt.
\end{align*}

Along the same line, one has
 \begin{align*}
 |B_{1}^9|
 \leq&C\e^\f32\zeta^{-1}\int_0^T\dot\th(t)\|\sh H\|_{H^{7,0}_\Psi}^2dt+ \f\zeta6\e\int_0^T\t^{-\gamma_0-\f14}\|\sh u_\Phi\|_{H^{6,0}_\Psi}^2dt.
 \end{align*}

\no$\bullet$\underline{The estimate of $B_1^{10}.$}
Applying H\"older inequality gives
\begin{align*}
|B_{1}^{10}|\leq&\e^{\f12} \int_0^T\t^{\beta-\gamma_0-\f34}\|\sh H\|_{H^{7,0}_\Psi}\|\sh\t^{\f12}\pa_y ^2u_\Phi\|_{H^{5,0}_\Psi} \,dt\\
\leq & C\zeta^{-1}\e^\f14\int_0^T\t^{3\beta-2\gamma_0-\f32}\dot\th(t)\|\sh H\|_{H^{7,0}_\Psi}^2\,dt+ \zeta\e^\f14\int_0^T\|\sh\t^{\f12}\pa_y^2 u_\Phi\|_{H^{5,0}_\Psi}^2\,dt\\
\leq&C\e^\f14\zeta^{-1}\int_0^T\dot\th(t)\|\sh H\|_{H^{7,0}_\Psi}^2\,dt+ \zeta \e^\f14\int_0^T\|\sh\t^{\f12}\pa_y^2 u_\Phi\|_{H^{5,0}_\Psi}^2\,dt,
\end{align*}
if $\beta\leq \f23 \gamma_0+\f12.$

\no$\bullet$\underline{The estimate of $B_1^{11}.$}
Similar to the estimate of $B_{1}^{10},$ we have
\begin{align*}
|B_{1}^{11}|\leq&\e^{\f12}\int_0^T\t^{\beta-\gamma_0-\f34}\bigl\|\f{y}{\t}\sh H\bigr\|_{H^{\f{27}{4},0}_\Psi}\| \sh \t^\f12\pa_yu_\Phi\|_{H^{\f{21}{4},0}_\Psi} \,dt\\
\leq& \zeta\e^\f14\int_0^T\bigl\|\f{y}{\t}\sh H\bigr\|_{H^{\f{27}{4},0}_\Psi}^2dt+C\e^\f14\zeta^{-1}\int_0^T\dot\th(t)\| \sh \t^\f12\pa_yu_\Phi\|_{H^{\f{21}{4},0}_\Psi}^2 \,dt,
\end{align*}
if $\beta\leq \f23 \gamma_0+\f12.$

\no$\bullet$\underline{Estimate of $B_1^{12}.$} It is easy to observe that if $\beta\leq \f12(\gamma_0+\f54),$
\begin{align*}
|B_{1}^{12}|\leq& \e^{\f12}\int_0^T\t^{\beta-\gamma_0-\f54}\|\sh H\|_{H^{7,0}_\Psi}\|\sh u_\Phi\|_{H^{5,0}_\Psi}\,dt\\
\leq&\int_0^T\dot\th(t)(\|\sh H\|_{H^{7,0}_\Psi}^2+\|\sh u_\Phi\|_{H^{5,0}_\Psi}^2)\,dt.
\end{align*}

\no$\bullet$\underline{The estimate of $B_1^{13}.$}
Finally applying Lemma \ref{lem: f} for $s=5$ gives
\begin{align*}
|B_{1}^{13}|\leq &C\e^\f12\int_0^T\t^{\beta-\gamma_0-\f14}\|\sh H\|_{H^{7,0}_\Psi}\|\sh f\|_{H^{5,0}_\Psi}\,dt\\
\leq&C\e^\f12\int_0^T\t^{\beta-\gamma_0}\|\sh H\|_{H^{7,0}_\Psi}\Bigl(\|\pa_y G_\Phi\|_{H^{\f52+,0}_{\Psi}}\|\sh u_\Phi\|_{H^{5,0}_\Psi}+\|G_\Phi\|_{H^{\f52+,0}_{\Psi}}\|\sh\pa_y u_\Phi\|_{H^{\f92,0}_\Psi}\Bigr)\,dt.
\end{align*}
So that by virtue of \eqref{assume: 1}, for any $\zeta>0,$ we get, by applying Young's inequality, that
\begin{align*}
|B_{1}^{13}|
\leq& C\e\int_0^T\t^{2\beta-2\gamma_0-\f12}\dot\th(t)\|\sh H\|_{H^{7,0}_\Psi}\|\sh u_\Phi\|_{H^{5,0}_\Psi}dt+\zeta\e^{\f54}\int_0^T\|\sh\pa_y u_\Phi\|_{H^{\f{11}2,0}_\Psi}^2 dt\\
&+C\zeta^{-1}\e^{\f54}\int_0^T\t^{3\beta-4\gamma_0}\dot\th(t)\|\sh H\|_{H^{7,0}_\Psi}^2 dt\\
\leq&C(\e+\zeta^{-1}\e^{\f54})\int_0^T\dot\th(t)(\|\sh H\|_{H^{7,0}_\Psi}^2+\|\sh u_\Phi\|_{H^{5,0}_\Psi})dt+\f\zeta2\e^{\f54}\int_0^T\|\sh\pa_y u_\Phi\|_{H^{\f{11}2,0}_\Psi}^2 \,dt,
\end{align*}
if $\beta\leq\min\{\gamma_0+\f14,\f{4}{3}\gamma_0\}. $

By substituting the above estimates into \eqref{Sbeq1}, we conclude the proof \eqref{S4eq3e}.
\end{proof}

\section*{Acknowledgments}

The authors would like to thank  Marius Paicu and Zhifei Zhang for profitable discussions.
 P. Zhang is partially supported
by NSF of China under Grants   11731007 and 11688101,  and  K.C.Wong Education Foundation.

\medskip

\end{document}